\documentclass[a4paper]{amsart}
\usepackage[textwidth=15cm,top=3cm, bottom=3cm, hcentering]{geometry}
\usepackage[colorlinks=true,linkcolor=red,citecolor=blue]{hyperref}
\usepackage{bbm}
\usepackage{bm}
\usepackage{amsmath,amscd,amsthm,amssymb,mathtools}
\usepackage{color}
\usepackage[dvipsnames,table]{xcolor}
\usepackage[utf8]{inputenc}
\normalfont
\usepackage[T1]{fontenc}
\usepackage{tikz-cd}
\usetikzlibrary{cd,shapes,math}
\usepackage{tikz}
\usetikzlibrary{positioning,calc,decorations.pathreplacing,arrows.meta}
\usepackage{enumerate}
\usepackage{longtable}
\usepackage{graphicx}
\usepackage{relsize}
\usepackage{verbatim}
\usepackage{enumitem}
\usepackage{quiver}
\usepackage{shuffle}

 \usepackage[all]{xy}

\definecolor{light-gray}{HTML}{E5E4E2}

\usepackage[export]{adjustbox}

\newtheorem{theo}{Theorem}[section]
\newtheorem{lemm}[theo]{Lemma}
\newtheorem{prop}[theo]{Proposition}
\newtheorem{coro}[theo]{Corollary}
\newtheorem*{theo*}{Theorem}

\newtheoremstyle{named}{}{}{\itshape}{}{\bfseries}{.}{.5em}{#1 \thmnote{#3}}
\theoremstyle{named}
\newtheorem*{namedtheorem}{Lemma}

\theoremstyle{definition}
\newtheorem{defi}[theo]{Definition}
\newtheorem{rema}[theo]{Remark}

\newtheorem{exam}[theo]{Example}

\theoremstyle{plain}
\newtheorem{thmintro}{Theorem}

\newcommand{\CC}{\mathbb{C}}

\newcommand{\HH}{\mathbb{H}}
\newcommand{\II}{\mathbb{I}}

\renewcommand{\SS}{\mathbb{S}}
\newcommand{\TT}{\mathbb{T}}

\newcommand{\ZZ}{\mathbb{Z}}
\newcommand{\Aa}{\mathcal{A}}
\newcommand{\Bb}{\mathcal{B}}
\newcommand{\Cc}{\mathcal{C}}
\newcommand{\Dd}{\mathcal{D}}

\newcommand{\Mm}{\mathcal{M}}
\newcommand{\Nn}{\mathcal{N}}
\newcommand{\Oo}{\mathcal{O}}
\newcommand{\Pp}{\mathcal{P}}
\newcommand{\Qq}{\mathcal{Q}}
\newcommand{\Rr}{\mathcal{R}}

\newcommand{\Ho}{\mathrm{Ho}}
\newcommand{\Ker}{\mathrm{Ker}}
\newcommand{\Coker}{\mathrm{Coker}}
\newcommand{\Img}{\mathrm{Im}}
\newcommand{\Hom}{\mathrm{Hom}}

\newcommand{\ov}{\overline}

\newcommand{\p}{\mathfrak{p}}
\newcommand{\q}{\mathfrak{q}}
\newcommand{\mup}{\mathfrak{m}}

\newcommand{\kk}{\mathbf{k}}
\newcommand{\del}{\partial}
\newcommand{\delb}{{\bar \partial}}
\newcommand{\A}{H_{\Aa}}
\newcommand{\BC}{H_{\Bb\Cc}}
\newcommand{\ABC}{H_{\Aa\Bb\Cc}}

\newcommand{\lra}{\longrightarrow}
\newcommand{\Lpic}{\llcorner}

\newcommand{\ele}{\tikz[baseline=0mm]{ 
\draw[thick] (0,0.015) -- (0.2,0.015); 
\draw[thick]  (0,0) -- (0,0.22);
}}

\newcommand{\antiele}{\tikz[baseline=0mm]{ 
\draw[thick] (0,0.2) -- (0.2,0.2); 
\draw[thick]  (0.2,0) -- (0.2,0.215);
}}

\newcommand\doq{%
  \mathord{%
    \mspace{1mu}%
    \text{\hspace{0.5ex}\Doq}%
    \mspace{1mu}%
  }%
}
\newcommand\Doq{%
    \tikz[line cap=round,x=1ex,y=1ex,line width=0.3pt]
    {\draw (1,0) |- (0,1) (0.55,0);}%
}

\newcommand{\ops}{\mathrm{Op}(\bico)}
\newcommand{\coops}{\mathrm{CoOp(\bico)}}
\newcommand{\bico}{\mathrm{BiCo}}
\newcommand{\bicok}{\mathrm{BiCo}_{\kk}}
\newcommand{\Ch}{\mathrm{Ch}}

\newcommand{\smod}{\SS\,\text{-}\mathrm{Mod}}
\newcommand{\dgsmod}{\SS\,\mep\Ch^{\leq 0}}
\newcommand{\bicosmod}{\SS\,\mep\mathrm{BiCo}}
\newcommand{\bbsmod}{\mathrm{bb}\text{-}\SS\,}
\newcommand{\bicosmodn}{\SS\,\mep\mathrm{BiCo}^{\doq}}
\newcommand{\pinfal}{\Pp_\infty^{pp}\text{-}\mathrm{Alg}}
\newcommand{\pinfalm}{\infty\text{\,-}\Pp^{pp}_\infty\text{-}\mathrm{Alg}}
\newcommand{\Palg}{\Pp\text{-}\mathrm{Alg}}

\newcommand{\ppinfa}{A^{pp}_\infty}
\newcommand{\ppinfm}{\scalebox{0.80}{\(\stackrel{pp}\infty\)}}
\newcommand{\Endop}{\mathcal{E}nd}
\newcommand{\ppinfc}{C^{pp}_\infty}

\newcommand{\id}{\mathrm{id}}

\newcommand{\vp}{\varphi}

\newcommand{\tens}[1]{
  \mathbin{\mathop{\otimes}\displaylimits_{#1}}
}
\newcommand{\br}{\mathrm{B}}
\newcommand{\cbr}{\Omega}

\newcommand{\varsh}{\varphi^{10}}
\newcommand{\varsv}{\varphi^{01}}

\newcommand{\Tw}{\mathrm{Tw}}

\newcommand{\mep}{\text{-}}

\newcommand{\antshrk}{\mbox{!`}}
\newcommand{\antshrkind}{\text{¡}}

\newcommand{\Inf}{\mathrm{Inf}}

\title{Pluripotential Operadic Calculus}

\author{Anna Sopena-Gilboy}

\thanks{
 Financial support from AGAUR, Govern de Catalunya, through the FI Program and from the Spanish State Research Agency through projects PID2020-117971GB-C22, PID2024-155646NB-I00 and EUR2023-143450.
}

\begin{document}

\begin{abstract}
This paper explores a new context for operadic calculus and  Koszul duality arising in the study of complex manifolds or, more abstractly, of bidifferential bigraded algebras up to pluripotential weak equivalence. In particular, we establish a Pluripotential Homotopy Transfer Theorem. This provides a homotopical framework for the theory of Massey products in Bott-Chern and Aeppli cohomologies, defined for any complex manifold.
\end{abstract}
\maketitle 

\tableofcontents
\newpage

\section{Introduction}
The complexified de Rham algebra of differential forms of a complex manifold 
admits a decomposition into $(p,q)$-differential forms. This decomposition splits the exterior differential into two components $d=\del+\delb$, making it into a bicomplex. From such an object, one can compute several cohomological invariants aside from the cohomology with respect to the total differential: most notably Dolbeault cohomology \cite{Dol53}, as well as Aeppli \cite{Ae64} and Bott–Chern cohomology \cite{BottChern65}, given respectively by:
\[H_\delb:=\frac{\ker \delb}{\Img\, \delb},\quad \BC:=\frac{\ker\del\cap\ker\delb}{\Img\, \del\delb},\quad \A:=\frac{\ker\del\delb}{\Img\,\del+\Img\,\delb}.\]
These cohomologies have been intensively studied in complex geometry as algebraic invariants carrying 
complex-analytic information. In particular, Bott-Chern cohomology plays a key role in the study of non-Kähler geometry, deformation theory, and special Hermitian metrics (see for instance \cite{Angella14}).

If we additionally consider the multiplicative structure of the differential forms one obtains a commutative bidifferential bigraded algebra (cbba for short).
Motivated by the behavior of wedge products with respect to harmonic forms of complex manifolds,
Angella and Tomassini  \cite{AnTo15} introduced
triple Massey-like products defined for Bott-Chern and Aeppli cohomologies, called \textit{ABC-Massey products}. In \cite{MiSte24}, Milivojevic and Stelzig extend these products to higher arities and explore notions of formality obstructed by these invariants. 
This leads to the consideration of \textit{pluripotential weak equivalences}, defined by those maps inducing
isomorphisms in both Bott-Chern and Aeppli cohomologies. 
 Remarkably, compact Kähler manifolds can have non-trivial ABC-Massey products, showing that the pluripotential weak equivalence class of the complexified de Rham algebra is a finer invariant than its cohomology with the real Hodge structure \cite{PSZ24}.

In \cite{Ste25}, Stelzig studies the homotopy theory of cbba's over a field of characteristic zero with respect to pluripotential weak equivalences, through the  construction of minimal models of cbba's à la Sullivan. With this theory, Stelzig obtains various refinements of
the homotopy groups of complex manifolds, sensitive to the analytic structure. 

A complementary  method to study homotopy types of differential graded algebras is through the so-called Homotopy Transfer Theory: over a field, one can always construct a contraction connecting a dga  to its cohomology, which induces an $A_\infty$-structure on cohomology, unique up to $A_\infty$-isomorphism \cite{Ka80}. This $A_\infty$-structure is closely related to Massey products and encodes the homotopy type of the dga. When applied to the differential forms of a smooth manifold or, more generally, to the piece-wise linear forms of a topological space, the $A_\infty$-structure is in fact a $C_\infty$-structure and encodes the rational homotopy type of the space. 

In this paper, we develop homotopy transfer theory in the pluripotential setting.
One main advantage of our approach with respect to Stelzig's theory is that our setting works without the connectivity assumptions. Moreover,
it situates ABC-Massey products in a robust homotopical framework in which new higher operations emerge naturally.

To formalize our theory, we establish a new operadic framework: we work with operads in the symmetric monoidal category of bicomplexes over a field of characteristic zero, and construct minimal models of operads with respect to pluripotential weak equivalences in two complementary ways. First, following the more classical approach of Markl, Shnider, and Stasheff \cite{MaShniSta02} we obtain minimal models for any reduced operad using free extensions. In our setting, the notion of minimal operad requires cofibrancy
together with the condition that the composition $\del\delb$ is decomposable. We prove:
\begin{thmintro}
    Let $\Pp$ be an operad in bicomplexes such that $\Pp(0)=0$ and $\Pp(1)=\kk$. Then, there exists a minimal operad $\Mm$ and a pluripotential weak equivalence $\Mm\xrightarrow[]{\sim}\Pp$.
\end{thmintro}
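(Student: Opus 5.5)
We follow the Markl–Shnider–Stasheff strategy: build $\Mm$ arity by arity and, inside each arity, by induction on a bigrading. The role of ``minimal'' is played by the decomposability of $\del\delb$ on generators. The basic input from bicomplex theory is the decomposition of any bicomplex over $\kk$ into indecomposables: squares, zigzags and dots. A map of bicomplexes is a pluripotential weak equivalence iff it induces isomorphisms on $\BC$ and $\A$. Equivalently, it is an isomorphism on all zigzag multiplicities, with squares carrying no cohomology. We take this characterization as known from the bicomplex preliminaries.

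\textbf{Step 1: arity 1 and the free extensions.} We set $\Mm(1)=\kk$ and $\Mm(0)=0$. Suppose $\Mm_{<n}\to\Pp$ has been constructed, is a pluripotential weak equivalence in arities $<n$, and is free on $\SS$-bimodules of generators $E(k)$, $k<n$, with $\del\delb(E)\subset$ decomposables. Let $\Mm_{<n}(n)$ denote the arity-$n$ part of the free operad on these generators. It consists entirely of decomposables, since $\Pp(1)=\kk$ is reduced. We must add an $\SS_n$-bicomplex of generators $E(n)$ to fix the map $\Mm_{<n}(n)\to\Pp(n)$. Characteristic zero lets us average over $\SS_n$, so every construction below can be made equivariant, working in $\SS_n$-representations throughout.

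\textbf{Step 2: killing the defects in arity $n$.} Consider $f\colon D:=\Mm_{<n}(n)\to\Pp(n)$. In ordinary homotopy theory one adds generators for the cokernel (mapping to representatives) and then for the kernel (with differential hitting cycles). Here we need an analogue of ``pluripotential cone'' extensions: free bicomplex extensions $D\oplus V$, where $V$ is built from zigzag-shaped pieces attached to $D$. We proceed in three moves.
\begin{enumerate}[label=(\roman*)]
\item We add generators $x$ with $\del x=\delb x=0$, mapping to representatives of a complement of the image of $\BC(f)$. We also add generators representing missing Aeppli classes: elements $y$ with $\del\delb y=0$, together with their $\del y,\delb y$ as new generators as needed, i.e.\ zigzag-shaped generators.
\item We kill the kernel on Bott–Chern classes. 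For a $\del$- and $\delb$-closed $z\in D$ with $f(z)=\del\delb p$, we add a generator $w$ with $\del\delb w=z$, and the added $\del w,\delb w$ become further generators whose differentials are forced. The crucial point is that $\del\delb w=z$ is decomposable, which is exactly the minimality condition. To make this work without introducing a non-decomposable $\del\delb$, we must arrange that the new generators' $\del$ and $\delb$ images themselves never need to be hit by $\del\delb$ of a generator. This is why the generators are adjoined as L- and reverse-L-shaped pieces (the $\ele$, $\antiele$ shapes) rather than full squares.
\item We kill the kernel on Aeppli classes similarly, adding $u,v$ with $\del u+\delb v$ equal to a given $\del\delb$-closed decomposable.
\end{enumerate}

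\textbf{Step 3: iteration and the main obstacle.} Each move may create new classes in $\BC$ or $\A$ of the enlarged free operad in arity $n$, because of products with the new generators in arity $n$ only through the unit. Since $\Pp(1)=\kk$, a new generator in arity $n$ appears linearly in arity $n$, so $\Mm(n)=D\oplus E(n)$. The problem is therefore purely a linear-algebra one about bicomplex maps $D\oplus E(n)\to\Pp(n)$. We induct on total degree, or more carefully on the $(p,q)$ bidegree using a filtration, and take the colimit, which is a weak equivalence since $\BC$ and $\A$ commute with filtered colimits.

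The main obstacle is proving that the three moves together suffice, i.e.\ that the linear extension can be made a pluripotential weak equivalence while keeping $\del\delb E\subset D$. I would handle this by decomposing the cone of $f$ into indecomposables and showing that every non-square summand can be cancelled by adjoining a zigzag of generators whose unique ``$\del\delb$-edge'' lands in $D$. Cofibrancy holds because we have built a cell (free, filtered) extension. Equivariance is recovered by averaging.
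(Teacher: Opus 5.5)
Your reduction to linear algebra in arity $n$ is correct. Since $\Pp(1)=\kk$, one needs generators $E$ with $\del\delb(E)\subseteq D=\Mm_{<n}(n)$ and an extension $D\oplus E\to\Pp(n)$ that is a pluripotential weak equivalence. But that is the entire content of the theorem, and the proposal does not establish it: your last paragraph only restates it as what you ``would'' show by cancelling summands of the cone.

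The mechanism you do describe (separate kernel and cokernel moves, iterated, with a colimit at the end) is also incompatible with minimality. In arity $n$, minimality means $\del\delb(\Mm(n))\subseteq\Mm^{(\geq 2)}(n)=D$. So a $\del$- and $\delb$-closed element with a nonzero component on the new generators represents a nonzero Bott--Chern class in every later minimal extension. If it maps to zero in $\BC(\Pp(n))$, this defect can never be repaired. You therefore cannot create kernel classes at one stage and kill them at the next, and the colimit argument is not available.

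Here is an example. Suppose $\Pp(n)$ has a summand $y\mapsto\del y$ with $\delb y=0$ that $D$ does not hit. Perform both halves of your move (i) independently: a closed generator $X\mapsto\del y$ for the missing Bott--Chern class, and $Y\mapsto y$ with a new generator $U=\del Y\mapsto\del y$ for the missing Aeppli class. This produces the class $[X-U]$, which lies in the kernel and can never be killed.

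The missing idea is the paper's Lemma~\ref{lemm:principalextcone}, which handles kernel and cokernel simultaneously through the mapping cone. One adjoins $E=\BC(C(\alpha(n)))$ and uses a section $(d,-d_1,-d_2,f)$ of the Bott--Chern cocycles of the cone to set $\del e=d_1(e)$, $\delb e=d_2(e)$ (so $\del\delb e=d(e)$) and $\alpha'(e)=f(e)$. Every new generator has its differentials in the old $\Mm(n)$. The lemma shows two things:
\begin{enumerate}
\item the new cone satisfies the $\del\delb$-property;
\item repeating the construction on a map whose cone already has the $\del\delb$-property gives a weak equivalence in arity $n$. This uses the cone of $\Mm(n)\hookrightarrow\Mm'(n)$, the long exact sequence of Lemma~\ref{lemm:long-exact-seq}, and the five lemma.
\end{enumerate}
Thus exactly two principal extensions per arity suffice, and this termination is what makes minimality automatic. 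For a second-stage generator $e$, $\del\delb e=\del(\delb e)$ with $\delb e\in\Mm_{n-1}(n)\oplus E'$. Then $\del$ sends this into $\Mm_{n-1}(n)$, because first-stage generators have their differentials there. A third stage, or a colimit over stages, would lose this guarantee.

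Your closing idea of working with the cone points in the right direction. What is absent is the precise choice of generators, the two-step termination, and its verification, and these are the substance of the proof.
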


The second way is through Koszul duality. We use the \emph{inflation functor}, defined in \cite{MaSo}, from cochain complexes to bicomplexes
\(\Inf:\Ch\to \bico\)
as part of a Quillen adjunction, with right adjoint given by 
a well-known sheaf-theoretic construction in complex geometry \cite{De97}, \cite{Sch07}, \cite{Piovani}.
This functor allows us to recycle many of the computations of the dg case. For an operad \(\Pp\) concentraded in bidegreee \((0,0)\) with trivial differentials we define \(\Pp_\infty^{pp}\) as the free operad generated by \(\Inf(s\ov \Pp^{\antshrk})\) with two additional differentials. Here, \(s\ov \Pp^{\antshrk}\) denotes the suspension of the Koszul dual cooperad of \(\Pp\) viewed as a dg-operad. We show:
\begin{thmintro}
    Let $\Pp$ be a Koszul operad in bicomplexes. The operad
    \(\Pp_\infty^{pp}\) is the pluripotential minimal model of \(\Pp\).
\end{thmintro}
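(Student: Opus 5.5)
We must show two things: that \(\Pp_\infty^{pp}\) is minimal in the pluripotential sense (cofibrant, with \(\del\delb\) decomposable), and that there is a pluripotential weak equivalence \(\Pp_\infty^{pp}\xrightarrow{\sim}\Pp\), i.e.\ a map of operads in bicomplexes inducing isomorphisms on both Bott--Chern and Aeppli cohomology.

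\textbf{Minimality.} This should be essentially formal from the construction. \(\Pp_\infty^{pp}\) is free on \(\Inf(s\ov\Pp^{\antshrk})\). The two differentials \(\del\) and \(\delb\) are defined by transporting the dg Koszul differential through \(\Inf\). Since \(\Inf\) doubles each generator into pieces linked by \(\del\) and \(\delb\), the composite \(\del\delb\) on a generator picks up only the decomposable part coming from the infinitesimal decomposition of the cooperad. Filtering the generators by weight in \(\Pp^{\antshrk}\) gives a triangulated (Sullivan-type) free operad, so it is cofibrant by the same cell-attachment argument used for the Markl--Shnider--Stasheff-type minimal models in the first theorem.

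\textbf{The quasi-isomorphism.} Define \(\Pp_\infty^{pp}\to\Pp\) by sending the weight-one generators \(\Inf(s\ov\Pp^{\antshrk})^{(1)}\), which correspond to the generators \(E\) of the quadratic operad \(\Pp=\Pp(E,R)\), to themselves in \(\Pp\), and all other generators to \(0\). This is compatible with both differentials because of the quadratic relations, exactly as for the classical map \(\Omega\Pp^{\antshrk}\to\Pp\).

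To show it is a pluripotential weak equivalence, I would compare with the dg case. The key input I expect to need is that \(\Inf\) is strong monoidal up to pluripotential equivalence, or at least commutes with free operads up to such equivalence. The natural route is via the adjunction: Bott--Chern and Aeppli cohomology of a bicomplex in the image of \(\Inf\) can be read off from ordinary cohomology of the underlying dg object. Concretely, I would argue that \(\Pp_\infty^{pp}\) is pluripotentially weakly equivalent to \(\Inf(\Omega\Pp^{\antshrk})\) as an operad, with the latter regarded with \(\Inf\) applied arity-wise. The map to \(\Pp\) then factors as \(\Inf(\Omega\Pp^{\antshrk})\to\Inf(\Pp)\simeq\Pp\). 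Since \(\Pp\) is Koszul, \(\Omega\Pp^{\antshrk}\to\Pp\) is a dg quasi-isomorphism. Since \(\Inf\) sends quasi-isomorphisms to pluripotential weak equivalences, which I take from the properties of the Quillen adjunction established earlier, the composite is a pluripotential weak equivalence.

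\textbf{Main obstacle.} The hard step is identifying \(\Pp_\infty^{pp}\) with \(\Inf\) of the cobar construction, since \(\Inf\) does not obviously commute with the free operad functor: tensor products of inflated complexes are not inflated. To handle this, I would first try a spectral sequence or filtration argument. Filter both sides by the weight grading coming from \(\Pp^{\antshrk}\). On the associated graded the differentials become trivial on decomposables. The comparison then reduces to the statement that free operads on bicomplexes preserve pluripotential weak equivalences in characteristic zero. That in turn follows because \(\SS\)-coinvariants are exact and tensor products of bicomplexes preserve pluripotential equivalences, which can be checked via the decomposition of bicomplexes into squares and zigzags. A completeness or boundedness check on the weight filtration, available since \(\Pp(0)=0\) and \(\Pp(1)=\kk\), lets the comparison pass back from the associated graded to the filtered objects.
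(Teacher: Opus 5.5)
\textbf{Verdict.} This is a genuinely different route from the paper's, and it can be made to work. However, the key step is not right as you state it.

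\textbf{The comparison map.} Since $\Inf$ is only colax monoidal, $\Inf(\Omega\Pp^{\antshrk})$ does not inherit an operad structure. Colax maps transport cooperad structures, which is why $\Inf(\Pp^{\antshrk})$ is a cooperad. Nor is there a natural map $\Pp_\infty^{pp}\to\Inf(\Omega\Pp^{\antshrk})$ through which to factor the augmentation $\alpha\colon\Pp_\infty^{pp}\to\Pp$. What you do have is an arity-wise map of $\SS$-modules in bicomplexes $\psi\colon\Inf(\Omega\Pp^{\antshrk})\to\Pp_\infty^{pp}$, obtained by iterating the colax structure. Checking that $\psi$ commutes with $\del$ and $\delb$ is the same $\iota$/$\mathfrak{s}$ computation (Propositions~\ref{prop:iotacoass} and~\ref{prop:fraksiota}) that the paper carries out for the Koszul complex. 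Then $\alpha\psi=\Inf(\Omega\Pp^{\antshrk}\to\Pp)$, and you conclude by two-out-of-three.

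\textbf{Showing $\psi$ is a weak equivalence.} Filter by the number of vertices, not by the $\Pp^{\antshrk}$-weight. The cobar differential preserves total syzygy weight, so that filtration has trivial associated graded. On the associated graded, the input you need is that the colax comparisons $\Inf(V_1\otimes\cdots\otimes V_k)\to\Inf(V_1)\otimes\cdots\otimes\Inf(V_k)$ are pluripotential weak equivalences (Proposition~\ref{prop:SimInfColax}, from \cite{MaSo}). You also need exactness of coinvariants in characteristic zero. The statement that free operads preserve weak equivalences concerns $\TT(f)$ for a map $f$ of generators, and is not what is needed here.

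Passing back from the associated graded is not a direct spectral-sequence argument for $\BC$ and $\A$. Either reduce to $H_\del$ and $H_\delb$ by boundedness (Remark~\ref{rema:boundedbicos}), or apply Lemma~\ref{lemm:long-exact-seq} to the finite filtration of the cone.

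\textbf{Cofibrancy.} The weight filtration alone does not give principal extensions, because $\del_1,\delb_1$ link generators of the same weight and arity. In each arity you must attach first the Bott--Chern part of the staircases (where $\del_1=\delb_1=0$), and then the Aeppli part, as the paper does with $V_{2n},V_{2n+1}$. Minimality itself holds because each $\ele_k$ satisfies $\del\delb=0$, so $\del_1\delb_1=0$ on generators.

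\textbf{The paper's route.} The paper never compares with $\Inf(\Omega\Pp^{\antshrk})$. It first reduces, by boundedness, to $H_\del$ and $H_\delb$. It then notes that the bidegree $(k,0)$ corner of each staircase $\ele_k$ spans $H_\del(\ele_k)$. These corners generate a sub-operad, closed under $\del$, which is a copy of the dg cobar construction $\Pp_\infty$. The inclusion $\Pp_\infty\hookrightarrow\Pp_\infty^{pp}$ is an $H_\del$-isomorphism by a vertex-filtration argument. That argument uses only the dg fact that free operads preserve quasi-isomorphisms, which is essentially the lemma you invoke, but in its dg form. Finally, $\Pp_\infty\to\Pp$ is a quasi-isomorphism by Koszulness, and the $(0,k)$ corners handle $\delb$ in the same way.

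\textbf{What each buys.} The paper's route is more elementary and needs nothing about the colax structure of $\Inf$. Your corrected route needs the deeper input from \cite{MaSo}, but it produces more: an explicit arity-wise weak equivalence $\Inf(\Omega\Pp^{\antshrk})\xrightarrow{\sim}\Pp_\infty^{pp}$. This parallels the paper's proof that dg Koszulness implies pluripotential Koszulness.
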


Algebras over the minimal model of the associative operad are called \emph{pluripotential \(A_\infty\)-algebras} (\(\ppinfa\)-algebras for short). As expected, these are algebraic structures in which associativity holds up to higher homotopies in the pluripotential sense: there is a family of \(2n-3\) operations of arity \(n\geq 2\) and, in particular, 
there is a single binary operation $\mu_2$ of bidegree zero satisfying
\[
\mu_2(\mu_2\otimes\mathrm{id})-\mu_2(\mathrm{id}\otimes\mu_2)=[\del,[\delb,\mu_3]].
\]
Here, \(\mu_3\) is a ternary operation of bidegree \((-1,-1)\). Note that this equation states that $\mu_3$ is a homotopy measuring the failure of $\mu_2$ to be associative. Indeed,
a pluripotential homotopy between morphisms of bicomplexes $f$ and $g$ is given by a map $h$ satisfying 
\[
[\del,[\delb,h]]=f-g,
\]
where $\del$ and $\delb$ denote the two differentials of the bicomplexes. 

In the classical setting, homotopy transfer is carried into a minimal chain complex, characterized by having trivial differential.
Here, minimality is achieved
by bicomplexes such that the composition of the two differentials is trivial: $\del\delb=0$.
We prove:

\begin{thmintro}\label{theointro:HTT}
    Let $(A,\del,\delb)$ be a bidifferential bigraded algebra.
\begin{enumerate}
    \item There is a contraction into a minimal bicomplex
\[
\begin{tikzcd}[ampersand replacement = \&]
(A,\del,\delb) \arrow[r, rightarrow, shift left, "f"] \arrow[r, shift right, leftarrow, "  g" swap] \arrow[loop left, "h"] \& (H,\del,\delb)
\end{tikzcd};\, \text{ with } \del\delb=0\text{ on }H.
\]
\item There is a $\ppinfa$-structure on $(H,\del,\delb)$ and a pluripotential $\infty$-weak equivalence $  g:H\rightsquigarrow A$.
\end{enumerate}
\end{thmintro}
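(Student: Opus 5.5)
Part (1) is proved by working one bidegree at a time, using the structure of bicomplexes over a field. By the decomposition of bounded (or, more generally, suitably finite) bicomplexes into indecomposables (squares, zigzags), every bicomplex $A$ splits as $A\cong H\oplus C$. Here $C$ is a direct sum of squares, i.e.\ pieces of the form $\langle x,\del x,\delb x,\del\delb x\rangle$, and $H$ is a direct sum of zigzags, on which $\del\delb=0$.

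If we want to avoid boundedness hypotheses, we can build the splitting directly by choosing complements bidegree-wise:
\begin{itemize}
\item choose $S\subset A$ with $\del\delb\colon S\xrightarrow{\cong}\Img\,\del\delb$;
\item set $C=S\oplus\del S\oplus\delb S\oplus\del\delb S$ and check that this sum is direct;
\item choose a complement $H$ of $C$ that is stable under $\del$ and $\delb$, for instance by taking $H\supseteq$ a complement of $\Img\,\del\delb$ inside $\ker\del\cap\ker\delb$ and completing compatibly.
\end{itemize}
Then $f$ and $g$ are the projection onto $H$ and the inclusion of $H$. The homotopy $h$ of bidegree $(-1,-1)$ is defined by $h(\del\delb s)=s$ for $s\in S$ and $h=0$ on the other summands. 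A direct check on a square shows $[\del,[\delb,h]]=\id-gf$. We then impose the side conditions $h^2=0$, $hg=0$, $fh=0$, either by construction or by the usual correction trick.

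For part (2) I would use operadic homotopy transfer. By Theorem 2 the pluripotential minimal model is $\ppinfa=\Pp_\infty^{pp}$ for $\Pp=\mathcal{A}ss$, namely the free operad on $\Inf(s\ov{\mathcal{A}ss}^{\antshrk})$ with its two differentials. An $\ppinfa$-structure on $H$ is therefore a morphism $\ppinfa\to\Endop_H$ of operads in bicomplexes. Equivalently, it is a pair of twisting-type conditions on a family of operations: for each $n\ge 2$, $2n-3$ operations of the appropriate bidegrees, which by the shape of $\Inf$ form a zigzag. Following the classical Kontsevich–Soibelman tree formula, I would define these operations as sums over planar trees. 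The vertices carry the transferred operations: first $\mu_2$ on $A$, and more generally components coming from the given $\Pp$-structure, pushed through $\Inf$. The leaves carry $g$, the root carries $f$, and the internal edges carry the homotopy. A single $h$ is not enough here: because the generators sit in an inflated zigzag, one needs the "half-homotopies" $\del h$, $h\del$, $\delb h$, $h\delb$, and $h$ itself, according to the bidegree of the generator being constructed.

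The cleanest way to organise this is through the inflation adjunction. The transfer formulas in the dg case are written in terms of a contraction and the operad structure maps. Applying $\Inf$ to the universal dg transfer, i.e.\ to the map $\Omega\mathrm{B}$-type resolution $\to$ endomorphisms of a contraction, gives the candidate formulas. The relation $[\del,[\delb,h]]=\id-gf$ then plays the role of $dh+hd=\id-gf$.

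We also need the pluripotential $\infty$-morphism $g\colon H\rightsquigarrow A$. Its components $g_n$ are given by the same trees, with $h$ placed at the root instead of $f$. It is a pluripotential weak equivalence because its linear part $g_1=g$ is one. Indeed, contractions induce isomorphisms in both $\BC$ and $\A$, since the square summand $C$ is acyclic for both Bott–Chern and Aeppli cohomology.

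The main obstacle is verifying that the tree formulas satisfy both differential equations of $\Pp_\infty^{pp}$, i.e.\ compatibility with both $\del$ and $\delb$ of the internal edges of the inflated generators, rather than just the single total differential. My first attempt would be a universal argument rather than a direct calculation. I would show that a contraction of bicomplexes induces a contraction between the relevant convolution (pre-Lie) algebras $\Hom(\Inf(\ov{\mathcal{A}ss}^{\antshrk}),\Endop_A)$ and $\Hom(\cdots,\Endop_H)$, and then apply a homological perturbation lemma adapted to the pluripotential homotopy. Failing that, I would verify the equations directly by induction on arity, reducing each to the relation $[\del,[\delb,h]]=\id-gf$ together with the side conditions.
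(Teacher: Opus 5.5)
Your part (1) is the paper's argument (Proposition~\ref{prop:MinimalContraction}): $H=A_{\mathrm{zig}}$, $f,g$ the projection and inclusion, and $h$ inverting $\del\delb$ on the squares. Your sign convention $\id-gf$ versus the paper's $gf-\id$ is just $h\mapsto -h$. If you build the splitting by hand, the step ``choose a $\del,\delb$-stable complement'' needs a reason. For example, squares are free, hence injective, modules over the exterior algebra on $\del,\delb$; then $\del\delb=0$ on $H$ is automatic since $\Img\,\del\delb\subseteq C$. (The side conditions hold by construction, but nothing uses them.)

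The gap is in part (2), which carries the entire content of the theorem (Lemma~\ref{lemm:HTT}). You leave it at ``write tree formulas and check them''. First, the edge labels you suggest are not the right ones: the composites $\del h$, $h\del$, $\delb h$, $h\delb$ cannot be used separately. The internal edges carry $h_{11}=h$ together with the commutators $h_{10}=[\delb,h]$ and $h_{01}=-[\del,h]$. These are honest $\del$- resp.\ $\delb$-homotopies between $gf$ and $\id$, because $[\del,[\delb,h]]=-[\delb,[\del,h]]$. The arity-$n$ operation of bidegree $(p,q)$ is then a signed sum over all planar binary trees with $n$ leaves, and over all ways of labelling the $n-2$ internal edges:
\begin{itemize}
\item when $p+q=2-n$: by $|p|$ copies of $h_{10}$ and $|q|$ copies of $h_{01}$;
\item when $p+q=1-n$: by one $h_{11}$, $|p|-1$ copies of $h_{10}$ and $|q|-1$ copies of $h_{01}$.
\end{itemize}
This gives exactly the $2n-3$ operations in the right bidegrees. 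One then sets $\nu_n=f\mup_n g^{\otimes n}$ and $g_n=\sum\pm h_{\alpha\beta}\mup_n g^{\otimes n}$.

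Second, the verification needs a concrete inductive statement, which the paper isolates as Lemma~\ref{lemm:pkernelsprimari}. It says that $[\del,\mup_n]$ (resp.\ $[\delb,\mup_n]$) is a signed sum of the composites $\mup_k(\id^{\otimes r}\otimes gf\mup_\ell\otimes\id^{\otimes k-r})$, with the bidegree labels redistributed by $\iota_{2-k,2-\ell}\circ\mathfrak{s}$ (resp.\ $\iota_{2-k,2-\ell}\circ\ov{\mathfrak{s}}$). It is proved by induction on $n$ from three ingredients:
\begin{itemize}
\item associativity of $\mu$;
\item the relation $[\del,\bm h]=(gf-\id)(\mathfrak{s}_1\otimes\id)$ and its $\delb$-analogue;
\item coassociativity of the $\iota_{k,\ell}$ and their compatibility with $\mathfrak{s},\ov{\mathfrak{s}}$ (Propositions~\ref{prop:iotacoass} and~\ref{prop:fraksiota}).
\end{itemize}
These make the terms with $-\id$ in place of $gf$ cancel, after which the relations for $\nu_n$ and $g_n$ follow quickly (Appendix~\ref{app:induc}).

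Your fallback ``direct induction on arity'' is this route, but without the formulas and the inductive claim it is not yet an argument. Your preferred routes cannot replace it:
\begin{itemize}
\item There is no available pluripotential perturbation lemma. The classical one perturbs a single differential against $dh+hd=\id-gf$. Applied to $d=\del+\delb$ (for which $[\delb,h]$ alone is already a homotopy), it only produces an ordinary $A_\infty$-structure, not the bigraded family satisfying both the $\del$- and the $\delb$-relations. A version for a $(-1,-1)$ homotopy under two simultaneous perturbations is exactly what would have to be proved.
\item ``Applying $\Inf$ to the universal dg transfer'' is not meaningful here. The contraction $(f,g,h)$ is not the inflation of a dg contraction, and $\Inf$ is only colax monoidal, so it transports cooperads, not operad morphisms into $\Endop$. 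In the paper, the staircase bicomplexes and the maps $\iota_{k,\ell}$, $\mathfrak{s}$, $\ov{\mathfrak{s}}$ serve only as bookkeeping for bidegrees and signs in the induction.
\end{itemize}
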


Note in the classical setting, minimal complexes are 
isomorphic to the cohomology. In the pluripotential case, $H$ is a bicomplex with possibly non-trivial differentials, and the minimality ensures that this is isomorphic to 
$H_{\Bb\Cc}(H)\oplus \tilde{H}_{\Aa}(H)$, where $\tilde{H}_{\Aa}(H)$ is the cokernel of the map $H_{\Bb\Cc}(H)\to H_{\Aa}(H)$ from Bott-Chern to Aeppli cohomology. Morally, this result states that, under a choice of sections, any bicomplex admits its 'pluripotential cohomology' as a deformation retract, and one can transfer the multiplicative structure along this retract.

In arity three, Theorem~\ref{theointro:HTT} recovers the 
ABC-Massey products of \cite{AnTo15}. In the case of complex manifolds, this gives new biholomorphic invariants. In fact, we prove a commutative version of the above theorem (Theorem~\ref{theo:HTTcom}) which gives $\ppinfc$-algebra structures encoding the pluripotential homotopy type of the complex manifold. More generally, we prove:
\begin{thmintro}
Let $\Pp$ be a Koszul operad. There is an equivalence of homotopy categories
    \[\mathrm{Ho}(\Palg)\cong\mathrm{Ho}(\pinfalm).\]
\end{thmintro}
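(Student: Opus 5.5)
The equivalence will come from a pair of functors between $\Palg$ and $\pinfalm$, the category of $\Pp_\infty^{pp}$-algebras with $\infty$-morphisms. I will show that each functor preserves pluripotential weak equivalences and that the two composites are naturally weakly equivalent to the identity.

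In one direction, the quasi-free resolution $\Pp_\infty^{pp}\xrightarrow{\sim}\Pp$ given by the second theorem above lets me pull back: every $\Pp$-algebra becomes a $\Pp_\infty^{pp}$-algebra. A strict morphism is in particular an $\infty$-morphism, so this is a functor $\iota:\Palg\to\pinfalm$. It preserves weak equivalences, since these are detected on the underlying bicomplexes.

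In the other direction, I use the pluripotential bar--cobar adjunction relative to the twisting morphism $\kappa:\Inf(\Pp^{\antshrk})\to\Pp$. An $\infty$-morphism $A\rightsquigarrow B$ is the same thing as a dg-comodule map $\br_\kappa A\to\br_\kappa B$. Here $\br_\kappa A=\Pp^{\antshrk}\circ A$, with the inflated cooperad and the twisted differentials $\del,\delb$. I then set $R(A):=\cbr_\kappa\br_\kappa A$, the rectification, which is a $\Pp$-algebra. This construction is functorial in $\infty$-morphisms.

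The key steps, in order, are the following.
\begin{enumerate}
\item Construct the universal $\infty$-morphism $A\rightsquigarrow \cbr_\kappa\br_\kappa A$ as the unit of the adjunction.
\item Prove that this unit is a pluripotential weak equivalence, meaning an isomorphism on both $\BC$ and $\A$.
\item Prove that an $\infty$-morphism whose first component is a pluripotential weak equivalence admits a homotopy inverse. For this I will use the Homotopy Transfer Theorem~\ref{theointro:HTT} in its operadic form. Any $\Pp_\infty^{pp}$-algebra is $\infty$-isomorphic to a minimal one, on a bicomplex with $\del\delb=0$. Between minimal bicomplexes, a pluripotential weak equivalence is an isomorphism, so an $\infty$-morphism with invertible linear part is invertible, by the usual inductive inversion on arity.
\end{enumerate}
Combining these, the homotopy category of $\pinfalm$, obtained by inverting $\infty$-quasi-isomorphisms, is equivalent to its quotient by $\infty$-homotopy. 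Both $\iota$ and $R$ descend to this level, and the composites $R\iota\Rightarrow\id$ and $\iota R\Rightarrow\id$ are zigzags of weak equivalences, given by the counit $\cbr_\kappa\br_\kappa A\to A$ and the unit from step 1.

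The main obstacle is step 2, the acyclicity. Classically one filters $\cbr_\kappa\br_\kappa A$ by arity or syntactic weight and reduces to the acyclicity of the Koszul complex $\Pp^{\antshrk}\circ_\kappa\Pp$. Here I must check that the spectral sequence argument detects pluripotential weak equivalences, not just total cohomology. I would first use that $\Inf$ is compatible with the weight grading, so the Koszul complex in bicomplexes is $\Inf$ applied to the dg one, up to tensoring with $\Pp$, which sits in bidegree $(0,0)$. By the properties of $\Inf$ from \cite{MaSo}, $\Inf$ carries dg acyclicity to pluripotential acyclicity. Then I would run a bounded, weight-by-weight filtration argument. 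It uses the fact that a map of bicomplexes inducing pluripotential weak equivalences on the associated graded pieces of a complete, exhaustive filtration is itself a pluripotential weak equivalence. I expect this is available from the model structure on bicomplexes set up earlier in the paper. If it is not, I would argue arity by arity, using $\Pp(0)=0$ and $\Pp(1)=\kk$ for the weight gradings involved, so that each weight piece is finite.
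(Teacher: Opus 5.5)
Your two functors, the unit $A\rightsquigarrow\cbr_\kappa\br_\iota A$ and the counit $\cbr_\kappa\br_\kappa A\to A$ all match the paper. One correction: for a $\Pp_\infty^{pp}$-algebra you must use $\br_\iota$, with $\iota\colon\Pp^{\antshrk}_{pp}\to\Pp_\infty^{pp}$, since $\br_\kappa$ is only defined on $\Pp$-algebras, where $\br_\kappa=\br_\iota\, i$.

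\textbf{The gap is step~3}, on which your concluding paragraph rests. The claim that every $\Pp_\infty^{pp}$-algebra is $\infty$-isomorphic to a minimal one is false. An $\infty$-isomorphism has invertible linear part, so transfer can at best give an $\infty$-quasi-isomorphism $H\rightsquigarrow A$. Getting an $\infty$-isomorphism would require a decomposition $A\cong_\infty H\oplus K$ with $K$ contractible, which is nowhere proved. Moreover, the paper's transfer theorem is only established for (commutative) bidifferential bigraded algebras, not for $\Pp_\infty^{pp}$-algebras. It also only yields $g\colon H\rightsquigarrow A$, never an $\infty$-morphism $A\rightsquigarrow H$, so you cannot compose to build homotopy inverses; and no notion of $\infty$-homotopy is defined.

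The detour is also unnecessary. $\mathrm{Ho}$ is localization at weak equivalences, so it suffices that both functors preserve weak equivalences and that the unit and counit are weak equivalences. For $\cbr_\kappa\br_\iota$ this follows from step~2 alone. The unit is natural in $\infty$-morphisms, so restricting the naturality square to first components and applying two-out-of-three shows that $\cbr_\kappa\br_\iota(f)$ is a pluripotential weak equivalence if and only if $f_1$ is (Proposition~\ref{prop:CbrkappaBriotaWeakeq}).

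In step~2 you identify the right worry but not the ingredient that resolves it. The associated graded of the weight filtration is $(\Pp\circ_\kappa\Pp^{\antshrk}_{pp})(A)$ for an arbitrary bicomplex $A$, which is not in the image of $\Inf$. So the fact that $\Inf$ turns dg acyclicity into pluripotential acyclicity only controls the Koszul complex of $\SS$-modules itself. Even there, $\Pp\circ_\kappa\Pp^{\antshrk}_{pp}$ is not $\Inf$ of the dg Koszul complex. It only receives the colax comparison map, which must be checked to commute with the twisted differentials and to be a weak equivalence; the paper does this before the theorem.

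You still need that applying the Schur functor to $A$ preserves this weak equivalence. Since $\BC$ satisfies no naive Künneth formula, this is a genuinely pluripotential input. The paper uses the ABC-Künneth isomorphism of Proposition~\ref{prop:Kunneth}, which comes from the equivariant splitting into squares and zig-zags, together with semisimplicity of $\kk[\SS_n]$. This gives $\BC=\A=0$ on the graded pieces of positive weight. The passage from graded pieces to the whole object is then an induction on the weight, using the long exact sequence of Lemma~\ref{lemm:long-exact-seq}, the five lemma and exhaustiveness, rather than a spectral sequence. The same argument proves that the counit is a weak equivalence (Proposition~\ref{prop:CbrkappaBrkappaWeakeq}), which your outline uses but never proves.
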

Here, $\Palg$ denotes the category of $\Pp$-algebras while 
$\pinfalm$ denotes the category of $\Pp^{pp}_\infty$-algebras whose morphisms are $\infty$-morphisms, arising from pluripotential Koszul duality theory. The localization $\Ho$ is with respect to pluripotential weak equivalences.  

When applied to the complexified de Rham algebra of differential forms of a complex manifold, the results of this paper establish a robust homotopical setting that translates analytical information of complex structures into algebraic data. This offers a new framework to study the so-called strong and weak formality and, more generally, pluripotential homotopy types of complex manifolds. This is an active area of research, central in the recent works mentioned at the beginning of this introduction, as well as in \cite{SfeTo22}, \cite{PSZ25} and \cite{Ru26}, to name a few.

\medskip 
We briefly explain the structure of this paper.
To make the paper accessible to geometers, we begin the first section by giving explicit definitions for the case of \(\ppinfa\)‑algebras and sketch the connections to geometry, avoiding the harder operadic machinery. In the subsequent sections we justify that these definitions are the natural ones from a homotopical point of view and develop the surrounding theory. Namely, the paper is organized as follows: in Section~\ref{sec:preliminars} we define
pluripotential \(\ppinfa\)-algebras and prove the Homotopy Transfer Theorem in the associative and commutative cases (Theorem~\ref{theo:HTT} and Theorem~\ref{theo:HTTcom}). We also give an explicit relation between transferred structures and triple ABC-Massey products. In Section~\ref{sec:BasicOperadThry} we review some constructions for operads and cooperads in the category of bicomplexes. In Section~\ref{sec:MinimalModSul} we show existence and uniqueness of minimal models of operads in bicomplexes with respect to pluripotential weak equivalences (Theorem~\ref{theo:MinimalModelsSullivan}). In Section~\ref{sec:koszulduality} we develop Koszul duality for operads in bicomplexes in bidegree \((0,0)\) with trivial differentials and prove Theorem~\ref{theo:MinimalModelsKoszul} on minimal models. Lastly, in Section
\ref{sec:HomThPalg} we define the bar and cobar constructions for operads in bicomplexes with respect to a twisting morphism and prove Theorem~\ref{theo:EqHoCats} on the equivalence of homotopy categories.
\medskip

\subsection*{Notation and conventions}
Througout this work, $\kk$ denotes a field of characteristic zero. For a pure
(bi)degree element \(v\in V\) of a bigraded object we use the notation \(|v|\) for the 
total degree.

\subsection*{Acknowledgments}
I would like to thank my advisors Joana Cirici and Jonas Stelzig for their guidance and support. I am grateful to Muriel Livernet and Bruno Vallette for their dedication and discussions when I visited them in Paris. Thanks also to Ricardo Campos and Geoffroy Horel for useful comments.

\section{Pluripotential \texorpdfstring{$A_\infty$}{A-infinity}-algebras and homotopy transfer theorem}\label{sec:preliminars}

In this section we introduce the category of pluripotential $A_\infty$-algebras and prove the Pluripotential Homotopy Transfer Theorem for bidifferential bigraded algebras, giving explicit formulas. At the end of the section, we relate Pluripotential \texorpdfstring{$A_\infty$}{A-infinity}-structures with ABC-Massey products defined for complex manifolds \cite{AnTo15}.

\subsection{Bicomplexes and pluripotential weak equivalences}\label{subsec:preliminarsbico}
We review the main properties of the category
 $\bicok$ of bicomplexes over $\kk$ (also called double complexes in the literature). All results claimed without proof in this section, can be found in \cite{Ste25}. Objects in this category are given by bigraded vector spaces 
\[A=\bigoplus_{p,q\in\ZZ} A^{p,q}\]
together with two linear maps $\del$ and $\delb$
of bidegrees $(1,0)$ and $(0,1)$, respectively, and
satisfying the relations
\[\del^2=0,\quad\delb^2=0\quad\text{and}\quad\del\delb+\delb\del=0.\]
Morphisms in this category are given by bidegree-preserving linear maps compatible with both $\del$ and $\delb$.

\begin{rema}
Although we use the notation $\del$ and $\delb$ for the differentials of a bicomplex, borrowed from the notation of complex manifolds, our bicomplexes are defined over an arbitrary field, so we do not assume any real structure.
\end{rema}

The category $\bicok$ is a symmetric monoidal category, with the tensor product given by 
\[(A\otimes B)^{p,q}=\bigoplus_{r,s\in\ZZ} A^{r,s}\otimes B^{p-r,q-s}\]
with differentials given by the Leibniz rule. The unit is given by the field $\kk$ concentrated in bidegree $(0,0)$.

The internal Hom is defined by 
\[\underline{\mathrm{Hom}}(A,B)^{p,q}=
\prod_{r,s\in\ZZ}\Hom_\kk(A^{r,s},B^{r+p,s+q}),\]
with differentials 
\[\del f:=[\del,f]\quad\text{ and }\quad\delb f:=[\delb,f],
\]
where $[-,-]$ is the graded commutator
\[[f,g]:=f\circ g-(-1)^{|f|\cdot |g|}g\circ f.\]

The internal Hom and tensor product form an adjunction
\[
(- \otimes A) : \mathrm{\bicok} \;\rightleftarrows\; \mathrm{\bicok} : \underline{\mathrm{Hom}}(A,-).
\]

The category \(\bicok\) is then a closed symmetric monoidal category. 

Associated to a bicomplex $(A,\del,\delb)$ there are various functorial cohomologies:
aside from the $\del$- and $\delb$-cohomologies
\[
H^{*,*}_\del(A):={\frac{\Ker(\del)}{\Img(\del)}}\quad\text{ and }\quad H^{*,*}_\delb(A):=\frac{\Ker(\delb)}{\Img(\delb)},
\]
we also have the Bott-Chern $\BC$ and Aeppli $\A$ cohomologies, given respectively by
\[H_{\Bb\Cc}^{*,*}(A):={\frac{\Ker(\delb)\cap \Ker(\del)}{\Img(\delb\del) }}\quad\text{ and }\quad
H_{\Aa}^{*,*}(A):={\frac{\Ker(\delb\del)}{\Img(\delb)+\Img(\del)}}.\]
Note that the identity induces natural transformations
\[\begin{tikzcd}[ampersand replacement=\&]
	\& {\BC} \\
	{H_{\del}} \& {H_{dR}} \& {H_{\delb}} \\
	\& {\A}
	\arrow[from=1-2, to=2-1]
	\arrow[from=1-2, to=2-2]
	\arrow[from=1-2, to=2-3]
	\arrow[from=2-1, to=3-2]
	\arrow[from=2-2, to=3-2]
	\arrow[from=2-3, to=3-2]
\end{tikzcd}\]
where $H_{dR}$ denotes the total cohomology. Moreover, the row and column filtrations on a bicomplex \(A\) induce two spectral sequences relating the total cohomology with \(\del\)- and \(\delb\)- cohomologies, respectively:
\[
H_\del(A)\Rightarrow H_{dR}(A)\Leftarrow H_{\delb}(A).
\]

A bicomplex \(A\) is said to \emph{satisfy the \(\del\delb\)-property} if the map of bigraded vector spaces
\[
\BC(A) \to \A(A)
\]
is injective (and hence this map, and all the maps in the above diagram, are isomorphisms).

Let $\widetilde{H}_{\Bb\Cc}$ and $\widetilde{H}_{\Aa}$ be the functors which send a bicomplex $A$ to the kernel and cokernel of $\BC(A)\to \A(A)$, respectively. Explicitly, this gives:
\[\widetilde{H}_{\Bb\Cc}=\frac{\Img\del\cap\Ker\delb + \Img\delb \cap\Ker\del}{\Img\del\delb},\qquad\widetilde{H}_{\Aa}=\frac{\Ker\del\delb}{\Img\del+\Img\delb+\Ker\del\cap\Ker\delb}.\]
The \emph{ABC-cohomology} of a bicomplex $(A,\del,\delb)$ is the bicomplex defined by
\[\ABC(A):= \BC(A) \oplus \widetilde{H}_{\Aa}(A)\]
together with the two differentials $\del$ and $\delb$ induced from those of $A$. Note that this bicomplex is \emph{pluripotentially minimal}, in the sense that $\del\delb\equiv 0$.

We will use the following result:
\begin{lemm}\label{lemm:long-exact-seq}
    Any short exact sequence of bicomplexes
    \[0\to A\to B\to C\to 0\]
    induces a long exact sequence
    \begin{align*}
        \dots \to H_{\Bb_{p,q}}^{p+q-2}(B)\to H_{\Bb_{p,q}}^{p+q-2}(C)\to \A^{p-1,q-1}(A)\to\A^{p-1,q-1}(B)\to \A^{p-1,q-1}(C)\to\\
        \to\BC^{p,q}(A)\to \BC^{p,q}(B)\to \BC^{p,q}(C)\to H_{\Bb_{p,q}}^{p+q+1}(A)\to H_{\Bb_{p,q}}^{p+q+1}(B)\to\dots
    \end{align*}
    where \(H_{\Bb_{p,q}}^{k}\) denotes the cohomology of the Bigolin complex in degree \(k\).
\end{lemm}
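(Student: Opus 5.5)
The plan is to realize Bott--Chern and Aeppli cohomology as cohomology groups of a single cochain complex functorially attached to a bicomplex, namely the Bigolin complex $\Bb_{p,q}(A)$, and then apply the ordinary long exact sequence of a short exact sequence of cochain complexes.

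\textbf{Definition of $\Bb_{p,q}(A)$.} In total degree $k$ it is the direct sum of those components $A^{r,s}$ with $r+s=k$ lying in a prescribed region depending on $(p,q)$. For $k\leq p+q-2$ we keep components with $r<p$ and $s<q$. For $k\geq p+q$ we keep components with $r\geq p$ and $s\geq q$, but modified so that the sum is nonzero only along the corresponding "staircase". The differential is the total differential $\del+\delb$ followed by projection onto the kept components, except between degrees $p+q-2$ and $p+q$, where it is $\del\delb\colon A^{p-1,q-1}\to A^{p,q}$. This is the standard Schweitzer/Bigolin complex.

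\textbf{Key steps.}
\begin{enumerate}
\item Check that $\Bb_{p,q}$ is a functor $\bico\to\Ch$. This holds because morphisms of bicomplexes commute with $\del$, $\delb$ and preserve bidegree, hence with the projections and with $\del\delb$.
\item Check that $\Bb_{p,q}$ is exact: it sends short exact sequences of bicomplexes to short exact sequences of complexes. This is immediate, since in each degree $\Bb_{p,q}(A)$ is a direct sum of bigraded pieces $A^{r,s}$, and exactness of bicomplexes is checked bidegreewise.
\item Identify the cohomology. Here $H^{p+q}(\Bb_{p,q}(A))\cong \BC^{p,q}(A)$, since the kernel of the outgoing map is $\ker\del\cap\ker\delb$ in $A^{p,q}$ and the incoming image is $\Img\del\delb$. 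Likewise $H^{p+q-1}(\Bb_{p,q}(A))\cong\A^{p-1,q-1}(A)$. Depending on the convention used to place the $\del\delb$-map, this group may instead sit in degree $p+q-2$ after reindexing; I would match the indexing to the statement. The Aeppli identification needs a short computation: the kernel of $\del\delb$ on $A^{p-1,q-1}$, modulo the image of $\del\oplus\delb$ from $A^{p-2,q-1}\oplus A^{p-1,q-2}$.
\item Write down the long exact sequence. The snake-lemma long exact sequence of $0\to\Bb_{p,q}(A)\to\Bb_{p,q}(B)\to\Bb_{p,q}(C)\to0$, restricted around degrees $p+q-2,\dots,p+q+1$ and rewritten with these identifications, gives exactly the displayed sequence. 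The remaining groups are left as $H^k_{\Bb_{p,q}}$.
\end{enumerate}

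\textbf{Main obstacle.} The main care point is the bookkeeping in steps 1 and 3. I must fix the exact indexing convention so that Aeppli cohomology in bidegree $(p-1,q-1)$ lands in the degree immediately preceding Bott--Chern $(p,q)$, in the position the statement shows. I must also verify that the truncated total-differential pieces square to zero, including on the two segments adjacent to the $\del\delb$-map. Composing $\del\delb$ with the projected total differential gives zero, because $\del\delb$ lands in $\ker\del\cap\ker\delb$ and the incoming pieces cancel via $\del^2=\delb^2=0$ and $\del\delb=-\delb\del$. Once these identities are checked, the rest is formal homological algebra.
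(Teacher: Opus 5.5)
The paper gives no proof of this lemma; it is one of the facts quoted from \cite{Ste25}. Your strategy is the standard argument: build an exact functor $\Bb_{p,q}\colon\bico\to\Ch$ in which $\A^{p-1,q-1}$ and $\BC^{p,q}$ appear as consecutive cohomology groups, then take the snake-lemma sequence. The appearance of the Bigolin groups in the statement points to exactly this route. Several of your checks are correct:
\begin{itemize}
\item exactness, since each degree is a direct sum of bidegree pieces;
\item $d^2=0$ across the $\del\delb$-junction;
\item the two cohomology identifications, including the Aeppli quotient by $\del\oplus\delb$ from $A^{p-2,q-1}\oplus A^{p-1,q-2}$.
\end{itemize}
For the lower part you only said $d^2=0$ must be checked. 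It holds because the complement of $\{r<p,\ s<q\}$ is stable under $\del$ and $\delb$. Writing $\pi$ for the projection, this gives $\pi d(1-\pi)=0$, hence $\pi d\pi d=\pi dd=0$.

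The grading, however, is wrong as you wrote it. You place $A^{r,s}$ (with $r<p$, $s<q$) in degree $r+s$. Then $A^{p-1,q-1}$ sits in degree $p+q-2$ and $A^{p,q}$ in degree $p+q$, so $\del\delb$ is not a degree-one differential. Degree $p+q-1$ is empty, and your claim $H^{p+q-1}\cong\A^{p-1,q-1}$ is false. The statement puts $H^{p+q-2}_{\Bb_{p,q}}$ just before $\A^{p-1,q-1}$ and $H^{p+q+1}_{\Bb_{p,q}}$ just after $\BC^{p,q}$. To match it, set
\[
\Bb_{p,q}(A)^k=\bigoplus_{\substack{r+s=k-1\\ r<p,\ s<q}}A^{r,s}\quad (k\le p+q-1),\qquad
\Bb_{p,q}(A)^k=\bigoplus_{\substack{r+s=k\\ r\ge p,\ s\ge q}}A^{r,s}\quad (k\ge p+q).
\]
Now $\del\delb\colon A^{p-1,q-1}\to A^{p,q}$ goes from degree $p+q-1$ to degree $p+q$, and you get $H^{p+q-1}=\A^{p-1,q-1}$ and $H^{p+q}=\BC^{p,q}$. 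Shifting the upper part down instead would move every Bigolin index in the sequence by one.

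Also drop the ``staircase'' modification of the upper part and keep every $A^{r,s}$ with $r\ge p$, $s\ge q$. That region is already stable under $\del$ and $\delb$. Moreover, you need both $A^{p+1,q}$ and $A^{p,q+1}$ in degree $p+q+1$ for the kernel of the outgoing map on $A^{p,q}$ to be $\Ker\del\cap\Ker\delb$. With these corrections, the long exact sequence of $0\to\Bb_{p,q}(A)\to\Bb_{p,q}(B)\to\Bb_{p,q}(C)\to0$ is exactly the displayed one.
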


Any bicomplex $A$ admits a (noncanonical) decomposition 
\[A=A_{\mathrm{zig}}\oplus A_{\mathrm{sq}}\]
where $A_{\mathrm{sq}}$ is a direct sum of squares, of the form
\[\begin{tikzcd}
	\kk & \kk \\
	\kk & \kk,
	\arrow["\delb", from=2-1, to=1-1]
	\arrow["\del", from=1-1, to=1-2]
	\arrow["\del"', from=2-1, to=2-2]
	\arrow["\delb"', from=2-2, to=1-2]
\end{tikzcd}\]
and $A_{\mathrm{zig}}$ is a bicomplex satisfying $\del\delb=0$. Therefore, \(A_{\mathrm{zig}}\) is a direct  sum of bicomplexes of the form 

\[\begin{tikzcd}
\kk
\end{tikzcd},\qquad\quad
\begin{tikzcd}
	\kk \\
	\kk
	\arrow["\delb", from=2-1, to=1-1]
\end{tikzcd},\qquad\quad
\begin{tikzcd}
	\kk & \kk
	\arrow["\del", from=1-1, to=1-2]
\end{tikzcd},\qquad\quad
\begin{tikzcd}
	\kk \\
	\kk & \kk
	\arrow["\del", from=2-1, to=2-2]
	\arrow["\delb", from=2-1, to=1-1]
\end{tikzcd},\qquad\quad
\begin{tikzcd}
	\kk \\
	\kk & \kk \\
	& \kk
	\arrow["\del", from=2-1, to=2-2]
	\arrow["\delb", from=2-1, to=1-1]
	\arrow["\delb", from=3-2, to=2-2]
\end{tikzcd}\quad...
\]
In squares and zig-zags, $\del$ and $\delb$ are isomorphisms. This result has long been known as a folklore statement. Complete proofs can be found in \cite{Ste21} and \cite{KhoQi20}.

With such a decompostion, we have $A_{\mathrm{zig}}\cong \ABC(A)$.
In particular, the summand $A_{\mathrm{sq}}$ is pluripotentially acyclic. If the bicomplex satisfies the \(\del\delb\)-lemma property, the bicomplex \(A_{\mathrm{zig}}\) has trivial differentials, and hence the bicomplex \(A\) splits as a direct sum of squares and a bigraded vector space.

The category of bicomplexes admits a model structure whose weak equivalences are defined in terms of Bott–Chern and Aeppli cohomologies. The rest of this section reviews this model structure and the relevant definitions.

\begin{defi}
A morphism of bicomplexes $f\colon A\to B$ is said to be a \emph{pluripotential weak equivalence}
if $\A(f)$ and $\BC(f)$ are isomorphisms.
\end{defi}

\begin{rema}\label{Asqacyclic}
Being a pluripotential weak equivalence $f\colon A\to B$ is equivalent to inducing an isomorphism 
$\ABC(f)$
at the level of ABC-cohomology. In \cite{Ste25}, pluripotential weak equivalences are also called \emph{pluripotential quasi-isomorphisms} or \emph{bigraded quasi-isomorphisms}.
\end{rema}

\begin{rema}\label{rema:boundedbicos}
    A bicomplex \(A\) is called \emph{bounded} if \(A^{p,q}=0\) for all but finitely many \((p,q)\in\ZZ\). If \(f:A\to B\) is a map between bounded bicomplexes, then \(f\) is a pluripotential weak equivalence if and only if it induces an isomorphism in cohomology with respect to \(\del\) and with respect to \(\delb\). 
\end{rema}

The category $\bicok$ admits a cofibrantly generated model structure,
where:
\begin{itemize}
    \item fibrations are surjective morphisms,
    \item weak equivalences are pluripotential weak equivalences,
    \item cofibrations are injective morphisms.
\end{itemize}

We will refer to this as the \textit{pluripotential model structure}. It is easy to see that every object is fibrant and cofibrant. The tensor product and internal hom make \(\bicok\) into a symmetric monoidal model
category. The notion of homotopy associated with this model structure is the following.

\begin{defi}
Let $f,g:A\to B$ be two maps of bicomplexes. A \emph{homotopy from $f$ to $g$} is a linear map $h:A\to B$ of bidegree $(-1,-1)$ 
satisfying 
\[[\del,[\delb,h]]=f-g.\]
If such a homotopy exists, we write \(f\simeq g\).
\end{defi}

Since we are working over a field, the class of homotopy equivalences induced from the above definition coincides with the class of pluripotential weak equivalences.

\begin{prop}\label{prop:HomotopicMapsAreEqualInHomology}
    Any two homotopic maps $f,g:A\to B$ of bicomplexes induce the same map in cohomology:
    \[[f]_{\Bb\Cc}=[g]_{\Bb\Cc}:\BC(A)\to \BC(B)\quad\text{ and }\quad[f]_{\Aa}=[g]_{\Aa}:\A(A)\to \A(B).\]
\end{prop}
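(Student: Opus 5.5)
The plan is to expand the double commutator explicitly and check, class by class, that the difference $f-g$ lands in the subspace we quotient by. Since $h$ has bidegree $(-1,-1)$, its total degree is even. Hence $[\delb,h]=\delb h-h\delb$, and this map has total degree $-1$. Therefore
\[
[\del,[\delb,h]]=\del(\delb h-h\delb)+(\delb h-h\delb)\del=\del\delb h-\del h\delb+\delb h\del-h\delb\del .
\]
Using $\delb\del=-\del\delb$, the identity we work with is
\[
f-g=\del\delb h-\del h\delb+\delb h\del+h\del\delb .
\]
Up to signs, only the graded commutator conventions matter here. The same four terms appear whatever the sign conventions, since both $\del$ and $\delb$ anticommute with each other.

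For Bott--Chern cohomology, take $x\in\Ker\del\cap\Ker\delb$. In the expansion, the terms $\del h\delb x$, $\delb h\del x$ and $h\del\delb x$ vanish. We are left with $f(x)-g(x)=\del\delb h(x)\in\Img\,\del\delb$. Since $\Img\,\del\delb=\Img\,\delb\del$, the classes $[f(x)]_{\Bb\Cc}$ and $[g(x)]_{\Bb\Cc}$ coincide. We also note that $f$ and $g$ are maps of bicomplexes, so each induces a well-defined map on $\BC$ and $\A$; comparing the two maps on representatives therefore suffices.

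For Aeppli cohomology, take $x\in\Ker\del\delb$. Then $h\del\delb x=0$. The remaining terms are $\del(\delb h x- h\delb x)$, which lies in $\Img\del$, and $\delb h\del x$, which lies in $\Img\delb$. Hence $f(x)-g(x)\in\Img\del+\Img\delb$, and $[f]_{\Aa}=[g]_{\Aa}$.

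No step is a real obstacle. The only point needing care is getting the signs of the graded commutator right when expanding $[\del,[\delb,h]]$. This does not affect the argument, because each term lies in the required subspace regardless of its sign.
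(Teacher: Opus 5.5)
Your argument is correct and complete. The expansion $[\del,[\delb,h]]=\del\delb h-\del h\delb+\delb h\del-h\delb\del$ agrees with the one the paper itself writes out in the proof of Proposition~\ref{prop:MinimalContraction}. Your term-by-term check is exactly what is needed: $f(x)-g(x)\in\Img\del\delb$ for Bott--Chern cocycles, and $f(x)-g(x)\in\Img\del+\Img\delb$ for $x\in\Ker\del\delb$. The paper gives no proof of this proposition and defers to \cite{Ste25}; your direct verification is the standard argument.
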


We will use the following notion of shift functor $\ele: \bicok\to \bicok$.
    For a bicomplex $A$, let
    \[\ele(A):=\ele\otimes A,\]
    where
    \[\ele:=\begin{tikzcd}[ampersand replacement=\&]
	\kk \\
	\kk \& \kk,
	\arrow["1", from=2-1, to=1-1]
	\arrow["{-1}"', from=2-1, to=2-2]
\end{tikzcd}\]
and the bottom left corner is sitting in bidegree $(-1,-1).$ A homotopy inverse $\antiele: \bicok\to\bicok$ of $\ele$ is given by 
\[\antiele(A):=\antiele\otimes A,\]
where
\[\antiele:=\begin{tikzcd}[ampersand replacement=\&]
	\kk \& \kk \\
	\& \kk, \\
	\arrow["{-1}", from=1-1, to=1-2]
	\arrow["1"', from=2-2, to=1-2]
\end{tikzcd}\]
and the top right corner is sitting in bidegree $(1,1)$.

The following definition introduces the pluripotential analogue of the mapping cone for cochain complexes. It will be used to detect pluripotential weak equivalences.

\begin{defi}
    The \emph{mapping cone}  of a morphism of bicomplexes $f\colon A\to B$ is the bicomplex defined by
    \[C(f):=\ele(A)\oplus B,\]
    with differentials given by
    \begin{align*}
            &\del(a,a',a'',b)=(\del a,- \del a', - \del a''-a, \del b +f(a')),\\
            &\delb(a,a', a'',b)=(\delb a, -\delb a'+a,- \delb a'', \delb b+f(a'')).
    \end{align*}
    \label{cone}
\end{defi}
There is a short exact sequence 
\[B\to C(f)\to \ele A\]
and
 $f$ is a pluripotential weak equivalence if and only if $C(f)$ is acyclic: $\BC(C(f))=0$.

We next state Künneth formulae for Bott-Chern and Aeppli cohomologies. There are natural maps
\[
\BC(A) \otimes \BC(B) \longrightarrow 
\BC(A \otimes B)
\]
and
\[
\BC(A) \otimes \A(B)
\;\oplus\;
\A(A) \otimes \BC(B)
\longrightarrow
\A(A \otimes B).
\]

These maps are neither injective nor surjective in general.
Below, we provide Künneth-type formulas for \(\ABC\) cohomology, although the resulting isomorphisms are not natural.
For canonical descriptions of the kernels and cokernels of the maps above, see Theorem 1.35 in \cite{Ste25}.

\begin{prop}\label{prop:Kunneth}
    Let \(A\) and \(B\) be two bicomplexes. Then, there exists an isomorphism
    \[
    \ABC(A\otimes B)\cong\ABC(\ABC(A)\otimes\ABC(B)).
    \]
\end{prop}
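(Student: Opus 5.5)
We use the noncanonical decomposition recalled above: write $A=A_{\mathrm{zig}}\oplus A_{\mathrm{sq}}$ and $B=B_{\mathrm{zig}}\oplus B_{\mathrm{sq}}$, with $A_{\mathrm{zig}}\cong\ABC(A)$ and $B_{\mathrm{zig}}\cong\ABC(B)$ as bicomplexes. The tensor product is additive in each variable, so
\[
A\otimes B\cong (A_{\mathrm{zig}}\otimes B_{\mathrm{zig}})\oplus(A_{\mathrm{zig}}\otimes B_{\mathrm{sq}})\oplus(A_{\mathrm{sq}}\otimes B_{\mathrm{zig}})\oplus(A_{\mathrm{sq}}\otimes B_{\mathrm{sq}}).
\]
Bott--Chern and Aeppli cohomologies commute with direct sums, and hence so does $\ABC$. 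The statement therefore reduces to one claim: the tensor product of a square with any bicomplex is pluripotentially acyclic. Granting this claim, the last three summands contribute nothing, and we get
\[
\ABC(A\otimes B)\cong\ABC(A_{\mathrm{zig}}\otimes B_{\mathrm{zig}})\cong\ABC(\ABC(A)\otimes\ABC(B)).
\]

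For the key claim, let $S$ be a square and $C$ an arbitrary bicomplex. Decompose $C$ again, so that $C$ is a direct sum of squares and zigzags; it suffices to treat $C$ a single square or a single zigzag, since direct sums are handled as above. We use the following criterion: a bicomplex whose $\del$- and $\delb$-cohomologies both vanish is a direct sum of squares. In its decomposition no zigzag can occur, because every zigzag, including a dot, carries nonzero $\del$- or $\delb$-cohomology. Consequently $\BC$ and $\A$ both vanish. (This argument does not need boundedness, so Remark~\ref{rema:boundedbicos} is not required.)

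So it remains to show that $S\otimes C$ is $\del$-acyclic and $\delb$-acyclic. The square $S$ is $\del$-acyclic: as a $\del$-complex it is a sum of two copies of $\kk\xrightarrow{\mathrm{id}}\kk$. For the differential $\del$ on $S\otimes C$, forget $\delb$. Then $S\otimes C$ is the tensor product of an acyclic complex of vector spaces with another complex, graded along the first index; here the second grading is carried along and each fixed $q$ gives a finite sum. By the algebraic Künneth theorem over a field, $H_\del(S\otimes C)\cong H_\del(S)\otimes H_\del(C)=0$. The same argument gives $H_\delb(S\otimes C)=0$. The criterion above then shows that $S\otimes C$ is a sum of squares, hence pluripotentially acyclic. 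In fact this argument applies to $S\otimes C$ for arbitrary $C$ directly, so we do not even need to decompose $C$.

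The main obstacle is the criterion ``both one-variable cohomologies vanish $\Rightarrow$ sum of squares''. It depends on the fact that the zigzag/square decomposition exists for unbounded bicomplexes, which holds by \cite{Ste21}, \cite{KhoQi20}. It also depends on checking that every zigzag shape has nonzero $H_\del$ or $H_\delb$. This holds because each zigzag has an endpoint whose outgoing or incoming differential is missing; that endpoint yields a nonzero class in $H_\del$ or in $H_\delb$. Finally, note that the isomorphism we obtain depends on the chosen decompositions, which is consistent with the statement's remark that these Künneth isomorphisms are not natural.
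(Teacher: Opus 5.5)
\textbf{There is a genuine gap.} Your four-summand reduction is fine. The problem is the criterion that carries the key claim: it is false without boundedness. The proposition is stated for arbitrary bicomplexes, and it is later applied in Proposition~\ref{prop:CbrkappaBrkappaWeakeq} to $\Pp$-algebras with no boundedness assumption.

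Here is a counterexample. Let $X$ have basis $x_i\in X^{i,-i}$ and $y_i\in X^{i+1,-i}$ for all $i\in\ZZ$, with $\del x_i=y_i$, $\delb x_{i+1}=y_i$ and $\del y_i=\delb y_i=0$.
\begin{itemize}
\item Both $\del$ and $\delb$ map $\mathrm{span}(x_i)$ bijectively onto $\mathrm{span}(y_i)$, so $H_\del(X)=H_\delb(X)=0$.
\item But $\del\delb=0$ on $X$, so $\BC(X)=\mathrm{span}(y_i)\neq 0$.
\end{itemize}
This $X$ is a bi-infinite zigzag with no endpoint. It is indecomposable, since each bidegree is one-dimensional and the arrows link all basis vectors. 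So the finite shapes listed in Section~\ref{subsec:preliminarsbico} do not exhaust the indecomposables once boundedness is dropped, and your ``every zigzag has an endpoint'' step fails exactly here.

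This is also why Remark~\ref{rema:boundedbicos} needs boundedness: $0\to X$ is an $H_\del$- and $H_\delb$-isomorphism but not a pluripotential weak equivalence. Your parenthetical ``this argument does not need boundedness'' is therefore precisely where the argument breaks. For bounded $A,B$ your proof is correct, since then $S\otimes C$ is bounded and only finite zigzags occur.

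\textbf{The repair.} Prove the key claim homotopically, which is what the paper does. For a square $S$, the homotopy $h_S$ from Proposition~\ref{prop:MinimalContraction} satisfies $[\del,[\delb,h_S]]=-\id_S$. By the Koszul sign rule, $\id\otimes\del_C$ and $\id\otimes\delb_C$ graded-commute with every map of the form $k\otimes\id_C$. Hence $[\del,[\delb,h_S\otimes\id_C]]=[\del_S,[\delb_S,h_S]]\otimes\id_C=-\id_{S\otimes C}$. Proposition~\ref{prop:HomotopicMapsAreEqualInHomology} then gives $\BC(S\otimes C)=\A(S\otimes C)=0$. With this replacement, your bookkeeping goes through.

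The paper phrases the same idea globally. It tensors the contraction $A\simeq A_{\mathrm{zig}}$ (and then $B\simeq B_{\mathrm{zig}}$) with an identity, obtaining a pluripotential homotopy equivalence $A\otimes B\simeq A_{\mathrm{zig}}\otimes B_{\mathrm{zig}}$, and concludes by homotopy invariance. It therefore only needs the splitting $A=A_{\mathrm{zig}}\oplus A_{\mathrm{sq}}$, never a one-variable acyclicity criterion or a full classification of indecomposables.
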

\begin{proof}
    If \(A \simeq B\) is a pluripotential homotopy equivalence, then 
    \(A \otimes C \simeq B \otimes C\) as well. One may take a homotopy of the form 
    \(h \otimes \id_{C}\), where \(h\) is a homotopy witnessing \(A \simeq B\). Therefore, by Theorem 1.21 in \cite{Ste25}, for any choice of decomposition into squares and zig-zags, we obtain
    \[
    A\otimes B \simeq A_{{\mathrm{zig}}}\otimes B_{\mathrm{zig}}.
    \]
    Choosing an isomorphism \(\ABC(A)\cong A_{{\mathrm{zig}}}\) and \(\ABC(B)\cong B_{\mathrm{zig}}\) and using Proposition \ref{prop:HomotopicMapsAreEqualInHomology} we obtain an isomorphism
    \[
    \ABC(A\otimes B)\cong\ABC(\ABC(A)\otimes\ABC(B)).
    \qedhere\]
\end{proof}

\begin{rema}
    The proposition above relies on the fact that any bicomplex \(A\) splits (non-uniquely) as a 
    direct sum of squares and zig-zags,
    \[
    A = A_{{\mathrm{zig}}} \oplus A_{\mathrm{sq}}.
    \]
    The hypotheses required for such a decomposition can be weakened to bicomplexes in other 
    abelian categories, for instance, to bicomplexes of characteristic-zero representations of a 
    finite group \(G\), see Remark 5 in  \cite{Ste21}.  
    Consequently, Proposition~\ref{prop:Kunneth} also applies to bicomplexes in 
    \(\kk[\SS_n]\)-modules, a fact we will use later. 
\end{rema}

\subsection{The category of \texorpdfstring{$A^{pp}_\infty$}{ppA-infinity}-algebras}\label{subsec:AinftyAlgs}
We now introduce the category of pluripotential $A_\infty$-algebras. As we will see in Section~\ref{sec:koszulduality}, these arise as algebras over the pluripotential minimal model of the associative operad. 

Originally, \(A_\infty\)-algebras were introduced by Stasheff in \cite{Sta63} in his study of loop spaces and their higher homotopy associativity. We briefly recall the classical definition:
an \emph{$A_\infty$-algebra} consists of a cochain complex $(A, d)$ together with a family of linear maps
\[
\mu_n: A^{\otimes n} \to A
\]
of degree $2-n$ for every \(n\geq 2\), satisfying the following relation:
\[
[d, \mu_n] = \sum_{\substack{k + \ell = n + 1 \\1\leq r\leq k}} (-1)^{r(1 + \ell) + n} \mu_k \left( \id^{\otimes r} \otimes \mu_\ell \otimes \id^{\otimes s} \right)
\]
where \[[d,\mu_n]:=d\mu_n-\sum_i(-1)^n\mu_n(\id^{\otimes i-1}\otimes d\otimes \id^{n-i}).\]

For low arities we have the following relations:
\begin{itemize}[leftmargin=*]
    \item For \(n=2\) we have
    \[d\mu_2=\mu_2(d\otimes\id)+\mu_2(\id\otimes d)\]
    \item For \(n=3\) we have
    \[[d,\mu_3]=\mu_2(\mu_2\otimes\id)-\mu_2(\id\otimes\mu_2).\]
\end{itemize}

An \(A_\infty\)-algebra can be viewed as a chain complex equipped with a binary operation that is associative up to a coherent system of higher homotopies. In this sense, a pluripotential \(A_\infty\)-algebra is analogous, with ordinary homotopy replaced by pluripotential homotopy. Maybe surprisingly, these two definitions are quite different as one can see below.

\begin{defi}\label{defi:ppAinftyA}
A \emph{pluripotential \(A_\infty\)-algebra}, $\ppinfa$-algebra for short, consists of a bicomplex $(A,\del,\delb)$ together $n$-ary operations of bidegree $(p,q)$
\[
\mu_n^{p,q}: A^{\otimes n} \to A
\]
for every  \(n\geq 2\), where the operations are nonzero only in the following cases:
\begin{itemize}
    \item when $p+q=2-n$ and $p,q\leq 0$,
    \item when $p+q=1-n$ and $p,q\leq -1$.
\end{itemize}
These operations must satisfy the following relations:
\begin{equation}
    \tag{$\mathcal{A}^{p,q}_n$} \label{delmu}
    [\del,\mu_n^{p,q}]=q\mu_n^{p+1,q}+\sum_{\substack{k+\ell=n+1\\1\leq i\leq k}}\sum_{\substack{p_1+p_2=p+1\\q_1+q_2=q}}(-1)^{i(\ell+1)+n+\varepsilon}\mu_{k}^{p_1,q_1}(\id^{\otimes i-1}\otimes\mu_\ell^{p_2,q_2}\otimes\id^{\otimes k-i}),
\end{equation}
and
\begin{equation}
    \tag{$\mathcal{\overline{A}}^{p,q}_n$} 
    [\delb,\mu_n^{p,q}]=-p\mu_n^{p,q+1}+\sum_{\substack{k+\ell=n+1\\1\leq i\leq k}}\sum_{\substack{p_1+p_2=p\\q_1+q_2=q+1}}(-1)^{i(\ell+1)+n+\varepsilon}\mu_{k}^{p_1,q_1}(\id^{\otimes i-1}\otimes\mu_\ell^{p_2,q_2}\otimes\id^{\otimes k-i}),
\end{equation}
where $\varepsilon=(p_1+q_1)(\ell+p_2+q_2)+(p+q+n).$
\end{defi}

\begin{rema}
For a fixed $n$, there are $2n-3$ operations: $n-1$ of them have total degree $2-n$, and $n-2$ of them have total degree $1-n$. The relations can be represented as follows:
\begin{center}
\begin{tikzpicture}
    \matrix (m) [matrix of math nodes,
      nodes in empty cells,
      nodes={minimum width=5ex, minimum height=5ex, outer sep=-3pt},
      column sep=1ex, row sep=1ex]{
       \, & \, & \, &  \, & \, & \, & \, & \, & \, & \,\\
       \, & \, &  \, & \, & \, &\, &  \mu_\ell^{p_2,q_2} & \,  & \, & \,  \\
       \left[\del,\mu_n^{p,q}\right] & = \, &  q\mu_n^{p+1,q} & + & \mathlarger{\sum}\pm & \, & \, & \, & \, & \, \\
       \, & \, &  \, & \, & \, &\, &  \mu_{k}^{p_1,q_1} & \, & \, & \, \\
       \, & \, &  \, & \, & \, & \, & \, &  \, & \, & \, \\};

  % edges
  \draw[-] (m-4-7) -- (m-1-5);
  \draw[-] (m-1-6) -- (m-2-7);
  \draw[-] (m-1-7) -- (m-2-7);
  \draw[-] (m-1-8) -- (m-2-7);
  \draw[-] (m-1-9) -- (m-4-7);
  \draw[-] (m-1-10) -- (m-4-7);
  \draw[-] (m-2-7) -- (m-4-7);
  \draw[-] (m-4-7) -- (m-5-7);
  \draw[-] (m-1-2) -- (m-3-3);
  \draw[-] (m-1-3) -- (m-3-3);
  \draw[-] (m-1-4) -- (m-3-3);
  \draw[-] (m-3-3) -- (m-5-3);

  % overbraces inside the picture
  % Left brace above the inputs in columns 2--4 (row 1)
  \draw[decorate, decoration={brace, amplitude=5pt}]
    ($(m-1-2)+(0,3mm)$) -- node[above=4pt] {$n$} ($(m-1-4)+(0,3mm)$);

  % Right brace above the inputs in columns 6--10 (row 1)
  \draw[decorate, decoration={brace, amplitude=5pt}]
    ($(m-1-5)+(0,3mm)$) -- node[above=4pt] {$n$} ($(m-1-10)+(0,3mm)$);
\end{tikzpicture}
\end{center}
Here, the sum runs over all compositions of arity $n$ (as in $A_\infty$-algebras) and bidegree $(p+1,q)$. Notice that if $p+q=2-n$, the first tree in the picture vanishes, and all operations in the summation satisfy $p_i+q_i=2-n$ by degree reasons. On the other hand, if $p+q=1-n$, then in each composition in the summation, exactly one operation satisfies $p_i+q_i=1-n$.
\end{rema}

\begin{rema}
Note that the tuples \((A,\del,\{\mu_\bullet^{*,0}\})\) and \((A,\delb,\{\mu_\bullet^{0,*}\})\), arising from a \(\ppinfa\)-algebra \((A,\del,\delb,\{\mu_\bullet^{*,*}\})\), are \(A_\infty\)-algebras.
\end{rema}
 
\begin{exam}
A \emph{bidifferential bigraded algebra} is a $\ppinfa$-algebra with $\mu_n^{p,q} =0$ for all $n\geq 3$.
\end{exam}

\begin{exam} Let us break down the previous definition in the case of low arities.
\begin{itemize}[leftmargin=*]
    \item For $n=2$, there is a unique bidegree-preserving binary operation
\[
\mu_2 : A^{\otimes 2} \to A
\]
satisfying \[\del\mu_2=\mu_2(\del,\id)+\mu_2(\id,\del)\quad\text{ and }\quad\delb\mu_2=\mu_2(\delb,\id)+\mu_2(\id,\delb).\] This is equivalent to say that
 both $\del$ and $\delb$ are derivations with respect to $\mu_2$. 
 \item For $n=3$, we have three ternary operations
\[
\mu_3^{p,q} : A^{\otimes 3} \longrightarrow A,
\qquad
(p,q)\in\{(-1,0),\,(0,-1),\,(-1,-1)\},
\]
satisfying
\begin{align*}
\bigl[\del,\mu_3^{\mep1,0}\bigr] &= \mu_2(\mu_2\otimes \id)-\mu_2(\id\otimes \mu_2), 
&\quad
\bigl[\delb,\mu_3^{\mep1,0}\bigr] &= 0,\\[4pt]
\bigl[\del,\mu_3^{0,\mep1}\bigr] &= 0, 
&\quad
\bigl[\delb,\mu_3^{0,\mep1}\bigr] &= \mu_2(\mu_2\otimes \id)-\mu_2(\id\otimes \mu_2), \\[4pt]
\bigl[\del,\mu_3^{\mep1,\mep1}\bigr] &= -\,\mu_3^{0,\mep1}, 
&\quad
\bigl[\delb,\mu_3^{\mep1,\mep 1}\bigr] &= \mu_3^{\mep1,0}. 
\end{align*}
In particular, these imply
\[
\bigl[\del,\,\bigl[\delb,\mu_3^{\mep1,\mep1}\bigr]\bigr]
\;=\;
\mu_2(\mu_2\otimes \id)-\mu_2
(\id\otimes \mu_2).
\]
Thus, $\mu_2$ is associative up to a pluripotential homotopy given by $\mu_3^{\mep1,\mep1}$. Note that $\mu^{\mep1,0}_3$ and $\mu^{0,\mep1}_3$ are determined by $\mu^{\mep1,\mep1}_3$, in fact this holds in general, see the next remark for a detailed explanation.
\end{itemize}
\end{exam}
\begin{rema}\label{rema:dolentes-determinen-bones}
    Note that
    \[[\del,\mu_n^{p-1,q}]-[\delb,\mu_n^{p,q-1}]=(p+q)\mu_n^{p,q},\]
    for any \(n\geq 2\) and \(p,q\in\ZZ_{\leq 0}\). This means that, in arity \(n\), the \(n-2\) operations of total degree \(1-n\) determine the \(n-1\) operations of total degree \(2-n\).
\end{rema}

\begin{defi}
Let $(A,\{\mu_\bullet^{*,*}\})$ and $(B,\{\nu_\bullet^{*,*}\})$ be two $\ppinfa$-algebras. A \emph{$\ppinfm$-morphism} \(f:A\rightsquigarrow B\) is a family of linear maps 
\[
f_n^{p,q}: A^{\otimes n} \to B,\quad n\geq 2,
\]
of bidegree $(p,q)$
that are non-zero only in
 the following cases:
\begin{itemize}
    \item when $p+q=1-n$ and $p,q\leq 0$,
    \item when $p+q=-n$ and $p,q\leq -1$
\end{itemize}
such that
\begin{align*}
[\del, f_n^{p,q}] =& qf_n^{p+1,q} -\sum_{i_1+\cdots+i_k=n} \sum_{\substack{p_0+p_{i_1}+\dots+p_{i_k}=p+1\\q_0+q_{i_1}+\dots+q_{i_k}=q}}(-1)^{\theta(i_1,\dots,i_k)+\varepsilon_1}\, \nu^{p_0,q_0}_k\Bigl(f_{i_1}^{p_{i_1},q_{i_1}}\otimes \cdots \otimes f_{i_k}^{p_{i_k},q_{i_k}}\Bigr)\\[1mm]
&-\sum_{\substack{k+\ell=n+1\\ r+s+1=k}} \sum_{\substack{p_1+p_2=p+1\\q_1+q_2=q}}(-1)^{r(1+\ell)+k+\varepsilon_2}\, f_k^{p_1,q_1}\Bigl(\id^{\otimes r}\otimes\mu^{p_2 ,q_2}_\ell\otimes \id^{\otimes s}\Bigr),
\end{align*}
and
\begin{align*}
[\delb, f_n^{p,q}] =& -pf_n^{p,q+1}-\sum_{i_1+\cdots+i_k=n} \sum_{\substack{p_0+p_{i_1}+\dots+p_{i_k}=p\\q_0+q_{i_1}+\dots+q_{i_k}=q+1}}(-1)^{\theta(i_1,\dots,i_k)+\varepsilon_1}\, \nu^{p_0,q_0}_k\Bigl(f_{i_1}^{p_{i_1},q_{i_1}}\otimes \cdots \otimes f_{i_k}^{p_{i_k},q_{i_k}}\Bigr)\\[1mm]
&-\sum_{\substack{k+\ell=n+1\\ r+s+1=k}} \sum_{\substack{p_1+p_2=p\\q_1+q_2=q+1}}(-1)^{r(1+\ell)+k+\varepsilon_2}\, f_k^{p_1,q_1}\Bigl(\id^{\otimes r}\otimes\mu^{p_2 ,q_2}_\ell\otimes \id^{\otimes s}\Bigr).
\end{align*}
Here, 
\[\theta:= \theta(i_1,\dots,i_k):=\sum_{1\leq r<s\leq k}i_r(i_s+1),\]
\[\varepsilon_1 := \sum_{j=1}^k \left( \sum_{r=0}^{j-1}(p_r+q_r) \right)(p_j+q_j+i_j+1)+p+q+n+1,\]
and
\[\varepsilon_2=(p_1+q_1)(p_2+q_2+\ell)+p+q+n+1.\]
\end{defi}
\begin{rema}
    For every \(n\geq 1\) there are \(2n-1\) linear maps, \(n\) of total degree \(1-n\) and \(n-1\) of total degree \(-n\). 
\end{rema}
\begin{exam}
Again, let us break down the definition of $\ppinfm$-morphisms in the case of low arities.
\begin{itemize}[leftmargin=*]
    \item For $n=1$, we have a single degree-preserving map
    \[
    f_1 : A \to B,
    \]
    such that
    \[
    [\del, f_1] = 0 \quad\text{and}\quad [\delb, f_1] = 0.
    \]
    In other words, $f_1$ is a map of bicomplexes.
    \item For $n=2$, we have three linear maps
\[
f_2^{p,q} : A^{\otimes 2} \to B
\]
of bidegrees $(-1,0)$, $(0,-1)$, and $(-1,-1)$ satisfying
\begin{align*}
[\del, f_2^{\mep1,0}] &= -\mu_2^B\bigl(f_1\otimes f_1\bigr) + f_1\bigl(\mu_2^A\bigr),\\[1mm]
[\del, f_2^{0,\mep1}] &= 0,\\[1mm]
[\del, f_2^{\mep1,\mep1}] &= -f_2^{0,\mep1},
\end{align*}
and similarly,
\begin{align*}
[\delb, f_2^{\mep1,0}] &= 0,\\[1mm]
[\delb, f_2^{0,\mep1}] &= -\mu_2^B\bigl(f_1\otimes f_1\bigr) + f_1\bigl(\mu_2^A\bigr),\\[1mm]
[\delb, f_2^{\mep1,\mep1}] &= f_2^{\mep1,0}.
\end{align*}
In particular, we have
\[
[\del,[\delb, f_2^{\mep1,\mep1}]] = -\mu_2^B\bigl(f_1\otimes f_1\bigr) + f_1\bigl(\mu_2^A\bigr).
\]
\end{itemize}
\end{exam}
\begin{rema}
    Note again that
    \[
    [\del,f_n^{p-1,q}]+[\delb,f_n^{p,q-1}]=(p+q)f_n^{p,q},
    \]
    so, in arity \(n\), the \(n-1\) linear maps of total degree \(-n\) determine the \(n\) linear maps of total degree \(1-n\).
\end{rema}

\begin{defi}\label{defi:inftymorph}
A $\ppinfm$-morphism $f : A \rightsquigarrow B$ is called a \emph{\(\ppinfm\)-weak equivalence} if $f_1$ is a pluripotential weak equivalence. We say that $f$ is \emph{strict} if $f_n = 0 \text{ for all } n \ge 2.$ The \emph{identity $\ppinfm$-morphism} is the strict morphism with $f_1 = \id_A$.
\end{defi}

The composition of two $\infty$-morphisms $g : A \rightsquigarrow B$ and $f : B \rightsquigarrow C$, is given by
\[
(f\circ g)_n = \sum_{i_1+\dots+i_k=n}\sum_{\substack{p_0+\dots+p_k=p\\q_0+\dots+q_k=q}}(-1)^{\theta(i_1,\dots, i_k)+\varepsilon}\, f_k^{p_0,q_0}\Bigl(g_{i_1}^{p_1,q_1}\otimes \cdots \otimes g_{i_k}^{p_k,q_k}\Bigr),
\]
where the sign ${\theta(i_1,\dots, i_k)}$ is defined as in the $\ppinfm$-morphism relations and
\[\varepsilon:=\sum_{j=1}^k \left( \sum_{r=0}^{j-1}(p_r+q_r) \right)(p_j+q_j+i_j+1).\]
This composition is associative, since \(\ppinfm\)-morphisms are precisely morphisms between the images of the source and the target under the bar construction \(\br_\iota\), see Section~\ref{subsec:BarCobar}.

\subsection{Homotopy transfer theorem}\label{subsec:HTT}

In this section, we prove that every bicomplex admits a contraction into a minimal bicomplex. Furthermore, we show that any bba induces a pluripotential $A_\infty$-algebra structure on any homotopy equivalent bicomplex, with explicit formulas.

\begin{defi}A \emph{contraction} of a bicomplex
 $(A,\del,\delb)$ into a bicomplex  $(B,\del,\delb)$ is a diagram
\[
\begin{tikzcd}[ampersand replacement = \&]
(A,\del,\delb) \arrow[r, rightarrow, shift left, "f"] \arrow[r, shift right, leftarrow, "  g" swap] \arrow[loop left, "h"] \& (B,\del,\delb)
\end{tikzcd}
\]
where $f$ and $  g$ are bicomplex morphisms such that
 $fg=\id_B$ and
    $h$ is a pluripotential homotopy from the \(gf\) to the identity:
      \[g f-\id_A=[\del,[\delb,h]].\]
\end{defi}

\begin{prop}\label{prop:MinimalContraction}
Every bicomplex  $(A,\del,\delb)$ admits a contraction 
into a minimal bicomplex:
\[
\begin{tikzcd}[ampersand replacement = \&]
(A,\del,\delb) \arrow[r, rightarrow, shift left, " f"] \arrow[r, shift right, leftarrow, " g" swap] \arrow[loop left, "h"] \& (H,\del,\delb)
\end{tikzcd};\, \text{ with } \del\delb=0\text{ on }H.
\]
\end{prop}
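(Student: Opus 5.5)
We use the decomposition $A=A_{\mathrm{zig}}\oplus A_{\mathrm{sq}}$ recalled in Section~\ref{subsec:preliminarsbico}. Set $H:=A_{\mathrm{zig}}$, which satisfies $\del\delb=0$, and let $g\colon H\to A$ be the inclusion and $f\colon A\to H$ the projection along $A_{\mathrm{sq}}$. Both are bicomplex morphisms, since the decomposition is a direct sum of sub-bicomplexes, and $fg=\id_H$. It remains to construct a map $h$ of bidegree $(-1,-1)$ with $gf-\id_A=[\del,[\delb,h]]$. Since $gf-\id_A$ is $-\mathrm{pr}_{A_{\mathrm{sq}}}$, we take $h$ to be zero on $H$ and to map $A_{\mathrm{sq}}$ into itself. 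Then it suffices to find $h_{\mathrm{sq}}$ on $A_{\mathrm{sq}}$ with $[\del,[\delb,h_{\mathrm{sq}}]]=-\id$, and, as $A_{\mathrm{sq}}$ is a direct sum of squares, to do this on a single square.

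For a single square $S$, write the basis as $x$ in bidegree $(p,q)$, $\del x=y$, $\delb x=z$, and $\del z=w$, so that $\delb y=-w$ (the signs are forced by $\del\delb=-\delb\del$). Let $h$ be the map of bidegree $(-1,-1)$ sending $w\mapsto c\,x$ and killing $x,y,z$. We then compute $[\del,[\delb,h]]$ with the graded commutator; $h$ has even total degree $-2$, and $\del,\delb$ are odd. Expanding gives
\[[\del,[\delb,h]]=\del\delb h-\del h\delb-\delb h\del+h\delb\del .\]
Evaluating on each of $x,y,z,w$, every term should be a multiple of the identity on that basis vector; for example, on $x$ only $h\delb\del x=h(\delb y)=-h(w)=-c\,x$ contributes. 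So $h$ is determined by the single scalar $c$, and we fix $c$ (with appropriate sign) so that the total is $-\id_S$. The key point is that in a square $\del\delb$ is an isomorphism from the bottom-left corner to the top-right corner, so the composite $\del\delb$ can be inverted; this inverse is what $h$ realizes.

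The main obstacle is this last verification: all four corners must come out consistently equal to $-1$ times the identity with the same $c$, not just the corner $x$. If the signs fail to match, we would modify $h$ by also allowing terms $y\mapsto$ (nothing, for degree reasons) or adjust the basis signs. Degree reasons force $h$ to be supported only on $w\mapsto x$, so the check reduces to sign bookkeeping in the four evaluations. We expect each of the four terms to contribute exactly on one corner, giving a uniform scalar. Summing over the squares (possibly infinitely many; $h$ is defined componentwise, so there is no convergence issue) yields the contraction.
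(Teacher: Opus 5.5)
\textbf{Same construction as the paper, but the decisive sign check fails as written.} Your setup matches the paper's: $H=A_{\mathrm{zig}}$, $f,g$ the projection and inclusion, and $h$ supported on the top-right corners of the squares. The paper takes $h(\del\delb x)=-x$, which is your $c=-1$.

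The problem is the step you left as ``we expect'': your expansion of the iterated bracket is wrong. The graded commutator $[\delb,h]=\delb h-h\delb$ has odd total degree $-1$, so the outer graded commutator is an anticommutator:
\[
[\del,[\delb,h]]=\del\delb h-\del h\delb+\delb h\del-h\delb\del ,
\]
not $\del\delb h-\del h\delb-\delb h\del+h\delb\del$. With your formula the four evaluations are $-c\,x$, $c\,y$, $-c\,z$, $c\,w$. That is not a scalar multiple of $\id_S$ for any $c\neq 0$. On a single square the only map of bidegree $(-1,-1)$ is $w\mapsto c\,x$, and rescaling basis vectors does not change the operators involved. So your proposed fallbacks are empty, and the argument as written would not close.

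With the correct expansion, each corner picks up exactly one nonzero term:
\begin{itemize}
\item on $x$: $-h\delb\del x=-h(-w)=c\,x$;
\item on $y$: $-\del h\delb y=-\del h(-w)=c\,y$;
\item on $z$: $\delb h\del z=\delb(c\,x)=c\,z$;
\item on $w$: $\del\delb h w=c\,w$.
\end{itemize}
Hence $[\del,[\delb,h]]=c\,\id_S$. Choosing $c=-1$ gives $[\del,[\delb,h]]=-\mathrm{pr}_{A_{\mathrm{sq}}}=gf-\id_A$, which is exactly the paper's four-case verification. Once this sign is fixed and the check is actually carried out, your proof is complete and coincides with the paper's.
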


\begin{proof} 
By Section \ref{subsec:preliminarsbico}, we can decompose (non-uniquely) every bicomplex $A$ as \[A= A_{\mathrm{sq}}\oplus A_{\mathrm{zig}}.\]
Let 
\(H:=A_{\mathrm{zig}}\simeq A.\)
We define $ f:A\twoheadrightarrow H$ and $ g:H \hookrightarrow A$ via the decomposition. Next, we define the homotopy
 \[h:A_{\mathrm{sq}}\oplus A_{\mathrm{zig}}\to A_{\mathrm{sq}}\oplus A_{\mathrm{zig}}\] as follows. Given a pair $(a,b)\in  A_{\mathrm{sq}}\oplus A_{\mathrm{zig}}$,
we let $h(a,b)=(a',0)$ if $a=-\del\delb a'$ and zero otherwise. Note that since we are considering a splitting into squares and zig-zags, $\del$ and $\delb$ are isomorphisms, so $a'$ is uniquely determined. This homotopy satisfies \[g f-\id_A=[\del,[\delb,h]].\]

First, observe that for any element $ (a, b) \in A_{\text{sq}} \oplus A_{\text{zig}} $, we have:
\[
(g f-\id_A)(a, b) = -(a, 0).
\]
Next, note that \[[\del,[\delb,h]](A_{\mathrm{zig}})=0,\] by definition.
Now, consider the case when $a\in A_{\mathrm{sq}}$, we will compute
\[
[\del, [\delb, h]]a = (\del \delb h - \del h \delb + \delb h \del - h \delb \del)a.
\]
Assume $a$ is a degree-homogeneous element in a square, as depicted below:

% https://q.uiver.app/?q=WzAsNCxbMSwwLCJcXGRlbFxcZGVsYlxcYWxwaGEiXSxbMSwxLCJcXGRlbFxcYWxwaGEiXSxbMCwxLCJcXGFscGhhIl0sWzAsMCwiXFxkZWxiIFxcYWxwaGEiXSxbMiwxLCJcXGRlbCIsMl0sWzEsMCwiXFxkZWxiIiwyXSxbMiwzLCJcXGRlbGIiXSxbMywwLCJcXGRlbCJdXQ==
\[\begin{tikzcd}
	{\delb \alpha} & \del\delb\alpha \\
	\alpha & \del\alpha
	\arrow["\del"', from=2-1, to=2-2]
	\arrow["\delb"', from=2-2, to=1-2]
	\arrow["\delb", from=2-1, to=1-1]
	\arrow["\del", from=1-1, to=1-2]
\end{tikzcd}\]
There are four possible cases for $a$ based on its position in the square:
\begin{enumerate}
    \item If $a$ is the top right corner of a square, so that $a=\del\delb\alpha$, then the only non zero term is \[\del\delb h a= -\del\delb \alpha= -a.\]
    \item If $a$ is the bottom right corner of a square, so that $a=\del\alpha$, then the only non zero term is \[-\del h\delb a=-\del h \delb\del \alpha=-\del(\alpha)=-a.\]
    \item If $a$ is the top left corner of a square, so that $a=\delb \alpha$, then \[\delb h\del a=\delb h\del\delb \alpha=-\delb\alpha=-a\] is the only non-zero term.
    \item If $a$ is the bottom left corner of a square, then
    \[-h\delb\del a=-\alpha=-a\] is the only non-zero term. 
\end{enumerate}

In all cases, we obtain $[\del, [\delb, h]]a = -a$. Therefore, we conclude
\[
[\del, [\delb, h]](a, b) = -(a, 0),
\]
which completes the proof.
\end{proof}

Let $(A,\del,\delb,\mu)$ be a bidifferential bigraded algebra. For the rest of the section, fix a contraction (not necessarily minimal)
    \[
    \begin{tikzcd}[ampersand replacement = \&]
    (A,\del,\delb) \arrow[r, rightarrow, shift left, "f"] \arrow[r, shift right, leftarrow, "g" swap] \arrow[loop left, "h_{11}"] \& (B,\del,\delb)
    \label{transferdata}
    \tag{$\Dd$}
    \end{tikzcd}
    \]
and define $h_{10}:=[\delb, h_{11}]$ and $h_{01}:=-[\del, h_{11}]$. Note that these maps are chain homotopies with respect to $\del$ and $\delb$, respectively: 
\[\id_A-gf=[\del,h_{10}]\quad\text{ and }\quad\id_A-gf=[\delb,h_{10}].\]
One can define a binary operation $\nu_2:B\otimes B\to B$ by the formula \[\nu_2:=f\mu(g,g).\]
This operation is only associative up to pluripotential homotopy. Indeed, 
the ternary operation $\nu_3^{\text{-}1,\text{-}1}:B^{\otimes 3}\to B$, defined by the formula 
\[\nu_3^{\text{-}1,\text{-}1}:=f\mu(h_{11}\mu(g,g),g)-f\mu(g,h_{11}\mu(g,g)),\]
satisfies the relation
\[\nu_2(\nu_2,\id)-\nu_2(\id,\nu_2)=[\del,[\delb,\nu_3^{\text{-}1,\text{-}1}]].\]

\begin{rema} This operation satisfies
$\delb \nu_3^{\text{-}1,\text{-}1}-\nu_3^{\text{-}1,\text{-}1}\delb=\nu_3^{\text{-}1,0}$ and $\del \nu_3^{\text{-}1,\text{-}1}-\nu_3^{\text{-}1,\text{-}1}\del=-\nu_3^{0,\text{-}1}$, where 
\[
\nu_3^{\text{-}1,0}=f\mu(h_{10}\mu( g, g), g)-f\mu( g,h_{10}\mu( g, g)),
\] 
and
\[
\nu_3^{0,\text{-}1}=f\mu(h_{01}\mu( g, g), g)-f\mu( g,h_{01}\mu( g, g)).
\]
\end{rema}

In general, one can define the following abstract maps associated to a contraction, called $\p$\emph{-kernels}, which will be used to transfer the algebraic structure of a bba through a minimal contraction.

The associated \emph{$\p$-kernels} are the linear maps
\[\{\mup_n^{p,q}:A^{\otimes n}\to A\}_{n\geq 2}\]
of bidegree $(p,q)$
where
\begin{itemize}
    \item $p+q=2-n$ and $p,q\leq 0$, or
    \item $p+q=1-n$ and $p,q\leq -1$.
\end{itemize} 
These are defined recursively by
\begin{equation*}
\mup_n^{p,q}:=\sum_{\substack{k+\ell=n\\k,\ell\geq 1}}\sum_{\substack{p_1+p_2-\alpha_1-\alpha_2=p\\q_1+q_2-\beta_1-\beta_2=q}}(-1)^{k(\ell+1)+\varepsilon}\mu(h_{\alpha_1\beta_1}\mup_k^{p_1,q_1}\otimes h_{\alpha_2\beta_2}\mup_\ell^{p_2,q_2}),
    \label{mup}
\end{equation*}
where $\varepsilon=(\alpha_1+\beta_1)(d_1+k)+(\alpha_1+\beta_1+d_1)(\alpha_{2}+\beta_{2}+1)+(\alpha_1+\beta_1+d_1+\alpha_2+\beta_2)(d_2+\ell)$ and \(d_x=p_x+q_x\) for \(x=1,2\). We formally write $h_{11}\mup_1=\id$, $h_{01}\mup_1=0$ and $h_{10}\mup_1=0$.

\begin{rema}\label{rema:TransferTrees}
These maps can be defined as linear combinations of planar binary trees with $n$ leaves, where each internal edge is labeled by $h_{10}$, $h_{01}$ or $h_{11}$. Due to degree constraints, only one leaf decorated with $h_{11}$ is permitted per tree. Explicitly, let $\text{PBT}_n$ the set of all planar binary trees with $n$ leaves. Then, we have 
    \[\mup_n^{p,q}=\sum_{t\in \text{PBT}_n}\pm\mup_t^{p,q},\]
where
\begin{itemize}
    \item if $p+q=2-n$, $\mup_t^{p,q}$ consists of the sum (with signs) of $\frac{(n-2)!}{|p|!|q|!}$ copies of $t$ such that each copy is a different way of decorating the internal edges with $|p|$ homotopies $h_{10}$ and $|q|$ homotopies $h_{01}$,
    \item  if $p+q=1-n$, $\mup_t^{p,q}$ consists of the sum (with signs) of $\frac{(n-2)!}{(|p|-1)!(|q|-1)!}$ copies of $t$ such that each copy is a different way of decorating the internal edges with $|p|-1$ homotopies $h_{10}$, $|q|-1$ homotopies $h_{01}$ and one homotopy $h_{11}$.
\end{itemize}
\end{rema}

\begin{exam} Below, the operation \(\mup_4^{-2,-1}\) is depicted as a sum of planar binary trees with \(n\) leaves:

\vspace{10pt}

\begin{center}
\begin{tikzpicture}

% ROW 1

\node at (-1,1) {\(+\)};
% Tree 1
\node at (0.5,0) {} [grow = up, level distance = 0.5cm, sibling distance = 1cm, thick]
    child{
        child {}
        child[edge from parent/.style={draw=blue}]{
            child[edge from parent/.style={draw=black}]{}
            child[edge from parent/.style={draw=red}]{
                child[edge from parent/.style={draw=black}]{}
                child[edge from parent/.style={draw=black}]{}
            }
        }
    };
\node at (1.5,1) {\(-\)};

% Tree 2
\node at (3,0) {} [grow = up, level distance = 0.5cm, sibling distance = 1cm, thick]
    child{
        child {}
        child[edge from parent/.style={draw=blue}]{
            child[edge from parent/.style={draw=red}]{                
                child[edge from parent/.style={draw=black}]{}
                child[edge from parent/.style={draw=black}]{}
            }
            child[edge from parent/.style={draw=black}]{}
        }
    };
\node at (4,1) {\(+\)};

% Tree 3
\node at (5.5,0) {} [
    grow = up,
    level distance = 0.5cm,
    sibling distance = 1cm,
    thick
]
child{
    child[xshift=0.05cm,edge from parent/.style={draw=red}] {                
        child[edge from parent/.style={draw=black}]{}
        child[edge from parent/.style={draw=black}]{}
    }
    child[xshift=-0.05cm,edge from parent/.style={draw=blue}] {
        child[edge from parent/.style={draw=black}]{}
        child[edge from parent/.style={draw=black}]{}
    }
};
\node at (7,1) {\(+\)};

% Tree 4
\node at (8,0) {} [grow = up, level distance = 0.5cm, sibling distance = 1cm, thick]
    child{
        child[edge from parent/.style={draw=blue}]{
            child[edge from parent/.style={draw=black}]{}
            child[edge from parent/.style={draw=red}]{
                child[edge from parent/.style={draw=black}]{}
                child[edge from parent/.style={draw=black}]{}
            }
        }
        child[edge from parent/.style={draw=black}]{}
    };
\node at (9.5,1) {\(-\)};

% Tree 5
\node at (10.5,0) {} [grow = up, level distance = 0.5cm, sibling distance = 1cm, thick]
    child{
        child[edge from parent/.style={draw=blue}]{
            child[edge from parent/.style={draw=red}]{
                child[edge from parent/.style={draw=black}]{}
                child[edge from parent/.style={draw=black}]{}
            }
            child[edge from parent/.style={draw=black}]{}
        }
        child{}
    };

% ROW 2

\node at (-1,-2) {\(-\)};

% Tree 6
\node at (0.5,-3) {} [grow = up, level distance = 0.5cm, sibling distance = 1cm, thick]
    child{
        child {}
        child[edge from parent/.style={draw=red}]{
            child[edge from parent/.style={draw=black}]{}
            child[edge from parent/.style={draw=blue}]{
                child[edge from parent/.style={draw=black}]{}
                child[edge from parent/.style={draw=black}]{}
            }
        }
    };
\node at (1.5,-2) {\(+\)};

% Tree 7
\node at (3,-3) {} [grow = up, level distance = 0.5cm, sibling distance = 1cm, thick]
    child{
        child {}
        child[edge from parent/.style={draw=red}]{
            child[edge from parent/.style={draw=blue}]{                
                child[edge from parent/.style={draw=black}]{}
                child[edge from parent/.style={draw=black}]{}
            }
            child[edge from parent/.style={draw=black}]{}
        }
    };
\node at (4,-2) {\(-\)};

% Tree 8
\node at (5.5,-3) {} [
    grow = up,
    level distance = 0.5cm,
    sibling distance = 1cm,
    thick
]
child{
    child[xshift=0.05cm,edge from parent/.style={draw=blue}] {                
        child[edge from parent/.style={draw=black}]{}
        child[edge from parent/.style={draw=black}]{}
    }
    child[xshift=-0.05cm,edge from parent/.style={draw=red}] {
        child[edge from parent/.style={draw=black}]{}
        child[edge from parent/.style={draw=black}]{}
    }
};
\node at (7,-2) {\(-\)};

% Tree 9
\node at (8,-3) {} [grow = up, level distance = 0.5cm, sibling distance = 1cm, thick]
    child{
        child[edge from parent/.style={draw=red}]{
            child[edge from parent/.style={draw=black}]{}
            child[edge from parent/.style={draw=blue}]{
                child[edge from parent/.style={draw=black}]{}
                child[edge from parent/.style={draw=black}]{}
            }
        }
        child[edge from parent/.style={draw=black}]{}
    };
\node at (9.5,-2) {\(+\)};

% Tree 10
\node at (10.5,-3) {} [grow = up, level distance = 0.5cm, sibling distance = 1cm, thick]
    child{
        child[edge from parent/.style={draw=red}]{
            child[edge from parent/.style={draw=blue}]{
                child[edge from parent/.style={draw=black}]{}
                child[edge from parent/.style={draw=black}]{}
            }
            child[edge from parent/.style={draw=black}]{}
        }
        child{}
    };

\end{tikzpicture}
\end{center}

Here, the edges that are colored in red are labeled by the homotopy \(h_{10}\) and the edges that are colored in blue are labeled by the homotopy \(h_{11}\).

\end{exam}

\begin{lemm}\label{lemm:HTT}
Let $(A,\del,\delb,\mu)$ be a bidifferential bigraded algebra. 
Given a contraction     
\[
    \begin{tikzcd}[ampersand replacement = \&]
    (A,\del,\delb) \arrow[r, rightarrow, shift left, "f"] \arrow[r, shift right, leftarrow, "g" swap] \arrow[loop left, "h_{11}"] \& (B,\del,\delb),
    \end{tikzcd}
\]
define, for all $n\geq 2$,
\[\nu_n^{p,q}:=f\mup_n^{p,q} g^{\otimes n}\quad\text{ and }\quad
    g_n^{p,q}:=\sum_{\substack{i-\alpha=p\\j-\beta=q}}(-1)^{(\alpha+\beta)(i+j+n)} h_{\alpha\beta} \mup_n^{i,j}g^{\otimes n}.
\]
Then, $(B,\del,\delb,\{\nu_\bullet^{*,*}\})$ is a $\ppinfa$-algebra, and $g=(g,\{g_\bullet^{*,*}\})$
is a $\ppinfm$-weak equivalence $g:B\rightsquigarrow A$.
\end{lemm}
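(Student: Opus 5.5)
The plan is a direct verification, organised around an identity for the $\p$-kernels that is proved by induction on arity. Everything is then obtained by pre-composing with $g^{\otimes n}$ and post-composing with $f$ or with the homotopies $h_{\alpha\beta}$.

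\textbf{Step 1: homotopy identities.} From $gf-\id_A=[\del,[\delb,h_{11}]]$ and the definitions $h_{10}=[\delb,h_{11}]$, $h_{01}=-[\del,h_{11}]$ I obtain
\[
[\del,h_{11}]=-h_{01},\quad [\delb,h_{11}]=h_{10},\quad [\del,h_{10}]=[\delb,h_{01}]=\pm(\id_A-gf),\quad [\delb,h_{10}]=[\del,h_{01}]=0,
\]
where the last two vanish because $\del^2=\delb^2=0$. These say that applying $\del$ or $\delb$ to a decorated internal edge either lowers an $h_{11}$ label to $h_{01}$ or $h_{10}$, or replaces an $h_{10}$ or $h_{01}$ label by $\id_A-gf$, or kills it.

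\textbf{Step 2: the kernel identity.} I would prove, by induction on $n$ using the recursive definition of $\mup_n^{p,q}$, a formula for $[\del,\mup_n^{p,q}]$ (and symmetrically for $[\delb,\mup_n^{p,q}]$). Since $\del$ is a derivation of $\mu$, in the tree description of Remark~\ref{rema:TransferTrees} the commutator hits each internal edge exactly once. By Step 1 the terms sort as follows.
\begin{itemize}
\item[(a)] An $h_{11}$-edge becomes an $h_{01}$-edge. Collecting these terms gives a multiple of $\mup_n^{p+1,q}$, and the coefficient comes from counting decorations. A tree in $\mup_n^{p+1,q}$ has $|q|$ edges labelled $h_{01}$, and each of them could have come from the unique $h_{11}$-edge of a tree in $\mup_n^{p,q}$. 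This should produce exactly the factor $q$ in $(\mathcal{A}^{p,q}_n)$, with the sign absorbed into $\varepsilon$.
\item[(b)] An $h_{10}$-edge becomes $\id_A$. Cutting there leaves $\mu$ applied to $\mu$, and these terms cancel in pairs by associativity of $\mu$ (grouped by the three-leaf subtree around the cut edge), as in the classical proof.
\item[(c)] An $h_{10}$-edge becomes $-gf$. Cutting there splits the tree into two smaller kernels, giving $\mup_k^{p_1,q_1}(\id^{\otimes i-1}\otimes gf\,\mup_\ell^{p_2,q_2}\otimes\id^{\otimes k-i})$.
\end{itemize}
Pre-composing with $g^{\otimes n}$ and applying $f$, the terms of type (c) become exactly $\nu_k(\id\otimes\nu_\ell\otimes\id)$, because $fg=\id_B$. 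This yields $(\mathcal{A}^{p,q}_n)$ for the $\nu_n$. The $\delb$ relation follows in the same way, with $h_{01}$ playing the role of $h_{10}$ and the coefficient $-p$ arising from the same count.

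\textbf{Step 3: the morphism $g$.} I apply $h_{\alpha\beta}$ to the kernel identity of Step 2 instead of $f$ and commute $\del$ past $h_{\alpha\beta}$ using Step 1. Terms with $h_{11}$ lowered give the $qg_n^{p+1,q}$ summand. Terms where $h_{10}$ becomes $\id_A-gf$ produce two contributions. The $\id_A$ part gives $\mup_n g^{\otimes n}$, which organises into $\mu(g_{i_1}\otimes g_{i_2})$; this is the $\nu$-side term with $k=2$, since the target $A$ is a bba. The $gf$ part gives $g_k(\id\otimes\nu_\ell\otimes\id)$. Together these are the $\ppinfm$-morphism relations. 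Finally, $g_1=g$ is a pluripotential weak equivalence: $fg=\id_B$ and $gf\simeq\id_A$, so by Proposition~\ref{prop:HomotopicMapsAreEqualInHomology} both $\BC(g)$ and $\A(g)$ are isomorphisms.

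\textbf{Main obstacle.} I expect the main difficulty to be the sign bookkeeping through $\varepsilon$, together with checking that the decoration count in Step 2(a) gives precisely the coefficient $q$ (respectively $-p$). I would first verify both in arities $3$ and $4$ using the explicit trees above. If the signs become unmanageable, the fallback is to read the formulas as a twisting morphism $\br_\iota$-type map and deduce the relations from the bar construction of Section~\ref{subsec:BarCobar}.
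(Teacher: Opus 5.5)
Your plan has the same skeleton as the paper's appendix proof. There is an identity for $[\del,\mup_n]$ proved by induction (Lemma~\ref{lemm:pkernelsprimari}), followed by post-composition with $f$ for the $\ppinfa$-relations and post-composition with the homotopy for the $\ppinfm$-relations. The $-\id$ part of the homotopy relation cancels by associativity of $\mu$, and the $gf$ part produces the nested terms.

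The difference is in the bookkeeping. You work componentwise on decorated trees and treat the lowering $h_{11}\mapsto h_{01}$ as a separate case, whose coefficient you obtain by counting decorations. The paper instead packages all components at once:
\begin{itemize}
\item the three homotopies become one bidegree-preserving map $\bm h\colon \ele\otimes A\to A$ with $[\del,\bm h]=(gf-\id)(\mathfrak{s}_1\otimes\id)$; your Step~1 is exactly the componentwise content of Remark~\ref{rema:H};
\item the kernels become maps $\mup_n\colon\ele_{2-n}\otimes A^{\otimes n}\to A$;
\item bidegrees are distributed along a tree by the coassociative bicomplex maps $\iota_{k,\ell}$.
\end{itemize}
The proof then reads like Markl's classical induction, and your case (a) never appears on its own. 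The coefficient $q$ (resp.\ $-p$) is just the internal differential $\del(p,q)_n=q(p+1,q)_n$ of the staircase passing through the chain maps $\iota$, and all signs are those of $\iota$. The only extra inputs are Proposition~\ref{prop:fraksiota} and a relation between the $\Delta^h_{k,\ell}$ used in the associativity cancellation.

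Your version makes the origin of each term more visible. However, it puts all the difficulty into sign checks that the staircase formalism makes automatic, and one of them is misdescribed. The term $q\,\mu_n^{p+1,q}$ in $(\mathcal{A}^{p,q}_n)$ carries no $\varepsilon$, so the sign cannot be absorbed there. You genuinely have to show that, with the sign of the recursive definition of $\mup$, the $|q|$ decorated trees of $\mup_n^{p,q}$ lowering to a given decorated tree of $\mup_n^{p+1,q}$ all contribute with the same sign. Only then do they give exactly $q=-|q|$.

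A smaller point: identifying the type (c) terms with $\nu_k(\id\otimes\nu_\ell\otimes\id)$ uses only that $f,g$ are chain maps and that the cut edge carries $gf$. The identity $fg=\id_B$ is needed only to see that $g_1$ is a pluripotential weak equivalence.
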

\begin{proof}
    In the proof of the analogous result for the classical case of $A_\infty$-algebras, Markl \cite{Mar06} states: “A straightforward but awfully technical induction shows that...” However, the corresponding awfully technical induction is not explicitly presented in the paper. As one might expect, the proof of Lemma~\ref{lemm:HTT} is even worse. But since I’m just a PhD student, I had no choice but to write out the proof, which can be found in Appendix~\ref{app:induc}.
\end{proof}

\begin{rema}\label{rema:formules-HTT-using-g}
    Note that, by the definition of \(\mup_n^{p,q}\), we can rewrite \(\nu_n^{p,q}\) and \(g_n^{p,q}\) as
    \[
    \nu_n^{p,q}=\sum_{k+\ell=n}\sum_{\substack{p_1+p_2=p\\q_1+q_2=q}}(-1)^{k(\ell+1)+d_1(d_2+\ell+1)}f\mu(g_k^{p_1,q_1}\otimes g_\ell^{p_2,q_2}),
    \]
    and
    \[
    g_n^{p,q}=\sum_{k+\ell=n}\sum_{\substack{p_1+p_2-\alpha=p\\q_1+q_2-\beta=q}}(-1)^{\varepsilon} h_{\alpha\beta}\mu(g_k^{p_1,q_1}\otimes g_\ell^{p_2,q_2}),
    \]
    where \(\varepsilon=k(\ell+1)+d_1(d_2+\ell+1)+(\alpha+\beta)(d_1+d_2+k+\ell)\) and \(d_x=p_x+q_x\) for \(x=1,2\). This expression will be used to prove the commutative version of the above lemma.
\end{rema}

We now state the main result of this section.
\begin{theo}[Homotopy Transfer Theorem]\label{theo:HTT}
    Let $(A,\del,\delb,\mu)$ be a bidifferential bigraded algebra.
\begin{enumerate}
    \item There is a contraction into a minimal bicomplex
\[
\begin{tikzcd}[ampersand replacement = \&]
(A,\del,\delb) \arrow[r, rightarrow, shift left, "f"] \arrow[r, shift right, leftarrow, " g" swap] \arrow[loop left, "h"] \& (H,\del,\delb)
\end{tikzcd};\, \text{ with } \del\delb=0\text{ on }H.
\]
\item There is a $\ppinfa$-structure on $(H,\del,\delb)$ and a $\ppinfm$-weak equivalence $ g:H\rightsquigarrow A$.
\end{enumerate}
\end{theo}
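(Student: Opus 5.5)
The theorem follows by combining two earlier results: Proposition~\ref{prop:MinimalContraction} supplies the minimal contraction, and Lemma~\ref{lemm:HTT} supplies the transferred structure along it.

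For part (1), decompose $A$ non-canonically as $A=A_{\mathrm{sq}}\oplus A_{\mathrm{zig}}$, as recalled in Section~\ref{subsec:preliminarsbico}, and set $H:=A_{\mathrm{zig}}$. Take $f$ to be the projection, $g$ the inclusion, and $h$ the homotopy of bidegree $(-1,-1)$ that inverts $-\del\delb$ on squares and vanishes on zig-zags. The check that $fg=\id_H$ and $gf-\id_A=[\del,[\delb,h]]$ is exactly the proof of Proposition~\ref{prop:MinimalContraction}. Minimality, $\del\delb=0$ on $H$, holds by construction, because zig-zags are the summands on which $\del\delb$ vanishes.

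For part (2), feed this contraction into Lemma~\ref{lemm:HTT} with $B=H$ and $h_{11}=h$. This lemma holds for any contraction, not necessarily a minimal one. It then gives two things:
\begin{itemize}
\item a $\ppinfa$-structure $\nu_n^{p,q}=f\mup_n^{p,q}g^{\otimes n}$ on $(H,\del,\delb)$;
\item a $\ppinfm$-morphism $(g,\{g_n^{p,q}\})\colon H\rightsquigarrow A$ whose first component is $g_1=g$.
\end{itemize}

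It remains to see that this $\infty$-morphism is a $\ppinfm$-weak equivalence, i.e.\ that $g$ is a pluripotential weak equivalence. Since $fg=\id_H$ and $gf\simeq\id_A$ via $h$, Proposition~\ref{prop:HomotopicMapsAreEqualInHomology} gives $\BC(g)\BC(f)=\id$ and $\A(g)\A(f)=\id$. The identities $fg=\id$ give the reverse compositions. Hence $\BC(g)$ and $\A(g)$ are isomorphisms, which is what Definition~\ref{defi:inftymorph} requires.

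The real content, and the main obstacle, lies in Lemma~\ref{lemm:HTT}. There one must verify the relations $(\mathcal{A}^{p,q}_n)$ and $(\overline{\mathcal{A}}^{p,q}_n)$ for $\nu$, together with the $\infty$-morphism relations for $g$. The approach is induction on arity $n$, using the recursive tree description of the $\p$-kernels. At each internal edge one applies $[\del,h_{11}]=-h_{01}$, $[\delb,h_{11}]=h_{10}$ and $[\del,h_{10}]=[\delb,h_{01}]=\id-gf$. Each $gf$ term splits a tree into a composite of lower $\nu$'s and $g$'s, and the remaining terms cancel in pairs or assemble into the correction terms $q\mu_n^{p+1,q}$ and $-p\mu_n^{p,q+1}$, whose multiplicities come from counting edge decorations as in Remark~\ref{rema:TransferTrees}. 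Granting that lemma, as proved in Appendix~\ref{app:induc}, the theorem itself needs only the assembly above.
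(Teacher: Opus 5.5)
Your proposal is correct and follows the paper's proof: the paper also obtains the theorem directly by combining Proposition~\ref{prop:MinimalContraction} with Lemma~\ref{lemm:HTT}. Your extra check that $g$ is a pluripotential weak equivalence, via Proposition~\ref{prop:HomotopicMapsAreEqualInHomology} and $fg=\id_H$, is already contained in the conclusion of Lemma~\ref{lemm:HTT}, but spelling it out is harmless.
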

\begin{proof}
    This is now a direct consequence of Proposition~\ref{prop:MinimalContraction} together with Lemma~\ref{lemm:HTT}.
\end{proof}

\begin{rema}
    Since a bba is, in particular, a bicomplex, we can consider its Bott-Chern and Aeppli cohomologies. The product induces an associative algebra structure on the Bott-Chern cohomology and equips the Aeppli cohomology with the structure of a module over the Bott-Chern cohomology. 
    
    The bicomplex \(H\) of the above theorem is isomorphic to the ABC-cohomology \[\ABC(A)=\BC(A)\oplus\Tilde{\A}(A),\] see Section~\ref{subsec:preliminarsbico}. Consequently, the induced product does not make the ABC-cohomology into a bba. However, Theorem~\ref{theo:HTT} provides a way to endow this cohomology with an algebra structure that is not strictly associative, but associative up to homotopy.

    If the bba \(A\) satisfies the \(\del\delb\)-property, then the bicomplex \(H\) has trivial differentials. Hence, by Remark~\ref{rema:dolentes-determinen-bones}, we have \(\mu_n^{p,q}=0\) whenever \(p+q=2-n\). Moreover, in this case the bicomplex \(H\) is isomorphic to \(\BC(A)\), and the product \(\mu_2\) of Theorem~\ref{theo:HTT} gives precisely the induced product on cohomology.
\end{rema}

\subsection{Commutative algebras}

We next define the analogues of commutative algebras in the setting of \(\ppinfa\)-algebras, i.e. \(\ppinfc\)-algebras, and we show that if we start with a commutative bidifferential bigraded algebra, then the formulas for the transferred \(\ppinfa\)-structure of Lemma~\ref{lemm:HTT} give a \(\ppinfc\)-structure.

To define \(\ppinfc\)-structures, we first recall the definition of shuffle permutations: A \emph{\((i,j)\)-shuffle} is an element \(\sigma\) of the symmetric group \(\SS_{i+j}\) such that
\[\sigma(1)<\sigma(2)<\dots<\sigma(i)\quad\text{ and }\quad \sigma(i+1)<\sigma(i+2)<\dots<\sigma(i+j)\] hold.

Denote by \(\mathrm{Sh}_{i,j}\) the set of \((i,j)\)-shuffles. Define the element \(\tau_{i,j}\in \kk[\SS_{i+j}]\) by
\[\tau_{i,j}=\sum_{\sigma\in\mathrm{Sh}_{i,j}}\mathrm{sign}(\sigma)\sigma.\]

\begin{defi}
    A \emph{pluripotential \(C_\infty\)-algebra}, \(C^{pp}_\infty\)-algebra for short, is a \(A_\infty^{pp}\)-algebra such that 
    \[\mu_{i+j}^{p,q}\circ \tau_{i,j}=0\]
    for every \(i,j\geq 1\) and every \(p,q\in\ZZ_{\leq 0}\).
\end{defi}

Here, \(\tau_{i,j}\) acts on \(A^{\otimes i+j}\) by permuting the tensor factors. A \(\ppinfc\)-algebra is then a \(\ppinfa\)-algebra satisfying commutativity relations. Likewise, a \(\ppinfm\)-morphism of \(\ppinfc\)-algebras is required to satisfy the same commutativity relations.

\begin{defi}
    Let $(A,\{\mu_\bullet^{*,*}\})$ and $(B,\{\nu_\bullet^{*,*}\})$ be two $C_\infty^{pp}$-algebras. A \emph{$\ppinfm$-morphism of \(\ppinfc\)-algebras} \(f:A\rightsquigarrow B\) is a \(\ppinfm\)-morphism of \(\ppinfa\)-algebras such that \[f_{i+j}^{p,q}\circ \tau_{i,j}=0\]
    for every \(i,j\geq 1\) and every \(p,q\in\ZZ_{\leq 0}\).
\end{defi}

We next show that when the initial algebra is commutative, then the transferred structure of Lemma~\ref{lemm:HTT} is, in fact, a \(\ppinfc\)-structure.
\begin{lemm}\label{lemm:HTTCom}
    Let $(A,\del,\delb,\mu)$ be a commutative bidifferential bigraded algebra. 
Given a contraction     
\[
    \begin{tikzcd}[ampersand replacement = \&]
    (A,\del,\delb) \arrow[r, rightarrow, shift left, "f"] \arrow[r, shift right, leftarrow, "g" swap] \arrow[loop left, "h"] \& (B,\del,\delb),
    \end{tikzcd}
\]
the formulas defined in Lemma~\ref{lemm:HTT}
\[\nu_n^{p,q}:=f\mup_n^{p,q} g^{\otimes n}\quad\text{ and }\quad
    g_n^{p,q}:=\sum_{\substack{i-\alpha=p\\j-\beta=q}}(-1)^{(\alpha+\beta)(i+j+n)} h_{\alpha\beta} \mup_n^{i,j}g^{\otimes n},
\]
for all $n\geq 2$, define a $\ppinfc$-structure on \(B\) and a \(\ppinfm\)-weak equivalence of \(\ppinfc\)-algebras $g\colon B\rightsquigarrow A$.
\end{lemm}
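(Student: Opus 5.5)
By Lemma~\ref{lemm:HTT}, $(B,\del,\delb,\{\nu_\bullet^{*,*}\})$ is already a $\ppinfa$-algebra and $g\colon B\rightsquigarrow A$ is a $\ppinfm$-weak equivalence. So it only remains to check the shuffle relations
\[
\nu_{i+j}^{p,q}\circ\tau_{i,j}=0\quad\text{and}\quad g_{i+j}^{p,q}\circ\tau_{i,j}=0
\]
for all $i,j\geq 1$ and all $p,q\leq 0$. Following the classical argument for $C_\infty$-transfer (Kadeishvili, Cheng--Getzler), I will prove both vanishings simultaneously by induction on $n=i+j$. The induction will use the recursive formulas of Remark~\ref{rema:formules-HTT-using-g}: both $\nu_n$ and $g_n$ are expressed through terms of the form $\mu(g_k\otimes g_\ell)$, postcomposed with $f$ or with a homotopy $h_{\alpha\beta}$. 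Since $f$ and the $h_{\alpha\beta}$ are linear and act only on the output, it suffices to prove that for each fixed bidegree the sum
\[
\Phi_n^{p,q}:=\sum_{k+\ell=n}\sum_{p_1+p_2=p,\ q_1+q_2=q}\pm\,\mu\bigl(g_k^{p_1,q_1}\otimes g_\ell^{p_2,q_2}\bigr)
\]
vanishes on $\tau_{i,j}$. This sum uses exactly the signs appearing in the formula for $\nu_n^{p,q}$. Write $g_1=g$. The base case $n=2$ reduces to the graded commutativity of $\mu$ composed with $g\otimes g$.

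For the inductive step, I will use the standard combinatorial identity for the shuffle element $\tau_{i,j}$ restricted along a splitting $n=k+\ell$. Under the decomposition $\SS_k\times\SS_\ell\subset\SS_n$ one has
\[
\tau_{i,j}=\sum_{\substack{i_1+i_2=i\\ j_1+j_2=j}}\pm\,(\tau_{i_1,j_1}\otimes\tau_{i_2,j_2})\cdot\sigma,
\]
where $\sigma$ runs over the $(k,\ell)$-unshuffles that distribute the first $i$ inputs and the last $j$ inputs between the two tensor factors. This is the shuffle-coproduct compatibility; equivalently, it says that the shuffle product is a morphism for deconcatenation. Plugging this into $\Phi_n\circ\tau_{i,j}$, every term splits into three types:
\begin{enumerate}
\item terms where $g_k\circ\tau_{i_1,j_1}$ with $i_1,j_1\geq1$ and $k<n$; these vanish by the induction hypothesis;
\item symmetric terms where $g_\ell\circ\tau_{i_2,j_2}$ with $i_2,j_2\geq 1$; these vanish for the same reason;
\item terms with $(i_1,j_1,i_2,j_2)=(i,0,0,j)$ or $(0,j,i,0)$, in which the left factor eats exactly one block and the right factor eats the other.
\end{enumerate}
The surviving pair of type~(3) is
\[
\mu(g_i\otimes g_j)\pm\mu(g_j\otimes g_i)\circ(\text{block swap}),
\]
and it cancels by the graded commutativity of $\mu$, $\mu\circ\tau_{12}=\mu$ with Koszul signs.

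The main obstacle is the sign bookkeeping in type~(3). The Koszul sign from commuting $g_i^{p_1,q_1}$ past $g_j^{p_2,q_2}$ involves their total degrees $d_1+i-1$ and $d_2+j-1$ (in suspended conventions). This sign has to be matched against four things: the sign $(-1)^{k(\ell+1)+d_1(d_2+\ell+1)}$ in Remark~\ref{rema:formules-HTT-using-g}, the signature of the block-swap shuffle, $(-1)^{ij}$, and the signs coming from $h_{\alpha\beta}$ in the $g_n$ formula. My first approach is to pass to the suspended/bar picture, where $\nu$ and $g$ become degree-homogeneous coderivation and coalgebra-map components. There the condition $\circ\,\tau_{i,j}=0$ becomes the statement that these components vanish on nontrivial shuffle products, and the signs are automatically the Koszul ones. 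If the signs still fail to match in that picture, I will fall back to checking the identity directly for $n=3$ and reading off the needed sign relation. For $g_n$ the extra factor $(-1)^{(\alpha+\beta)(\cdots)}$ is independent of the splitting $k+\ell$, so the same cancellation applies there verbatim.
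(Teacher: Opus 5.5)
Your proposal follows the paper's proof essentially step for step. The paper likewise reduces to the shuffle relations via Lemma~\ref{lemm:HTT} and rewrites $\nu_n$ and $g_n$ through the terms $\mu(g_k\otimes g_\ell)$ of Remark~\ref{rema:formules-HTT-using-g}. It then inducts on $n$ using the deconcatenation--shuffle compatibility, quoted as Lemma~13 of \cite{GeXue08}: mixed terms are killed by the induction hypothesis and the $\tau_{1,1}$-part by the commutativity of $\mu$. The paper also leaves the sign bookkeeping implicit, so carrying out your suspended/bar-picture check would make the argument more complete than the written one.
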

\begin{proof}
The proof is parallel to the proof of Theorem 12 of \cite{GeXue08} for \(C_\infty\)-algebras in the dg setting. We reproduce it here for the sake of completeness. By Lemma \ref{lemm:HTT}, \(\{\nu_\bullet^{*,*}\}\) defines a \(\ppinfa\)-algebra structure on \(B\) and \(g=(g,\{g_\bullet^{*,*}\})\) defines a \(\ppinfm\)-morphism. It just remains to prove the symmetric conditions
\[
\nu_{i+j}^{p,q}\circ \tau_{i,j}=0,
\quad\text{and}\quad
g_{i+j}^{p,q}\circ \tau_{i,j}=0.
\]
First of all, recall that, by Remark~\ref{rema:formules-HTT-using-g}, we can rewrite \(\nu_n^{p,q}\) and \(g_n^{p,q}\) as
\[
\nu_n^{p,q}=\sum_{k+\ell=n}\sum_{\substack{p_1+p_2=p\\q_1+q_2=q}}(-1)^{k(\ell+1)+d_1(d_2+\ell+1)}f\mu(g_k^{p_1,q_1}\otimes g_\ell^{p_2,q_2}),
\]
and
\[
g_n^{p,q}=\sum_{k+\ell=n}\sum_{\substack{p_1+p_2-\alpha=p\\q_1+q_2-\beta=q}}(-1)^{k(\ell+1)+d_1(d_2+\ell+1)+(\alpha+\beta)(d_1+d_2+k+\ell)} h_{\alpha\beta}\mu(g_k^{p_1,q_1}\otimes g_\ell^{p_2,q_2}),
\]
where \(d_x=p_x+q_x\) for \(x=1,2\).

Let \(\nabla_2:T(B)\to T^2(TB)\) be the map given by
\[
a_1\otimes\dots\otimes a_{n}\mapsto\sum_{k+\ell=n}(a_1\otimes\dots\otimes a_k)\otimes (a_{k+1}\otimes\dots\otimes a_{k+\ell}).
\]
Define the shuffle product
\[
\shuffle:T(B)\otimes T(B)\to T(B)
\]
by 
\[
(a_1, \dots, a_i) \otimes (a_{i+1},\dots, a_{i+j})\mapsto \tau_{i,j}(a_1,\dots,a_i, a_{i+1}, \dots, a_{i+j}).
\]
By Lemma 13 in \cite{GeXue08}, we have
\[
\nabla_2(TB \shuffle TB)
\subset
\tau_{1,1}(T^2(TB))
\;\oplus\;
TB\;\otimes\; (TB \shuffle TB)\;\oplus\; (TB \shuffle TB)\;\otimes\; TB.
\]

We will show by induction on \(n\geq 2\) that \(\nu_n^{p,q}\) and \(g_n^{p,q}\) vanish on \(TB\shuffle TB\), which implies \(\nu_{i+j}^{p,q}\tau_{i,j}=0\) and \(g_{i+j}^{p,q}\tau_{i,j}=0\) for any \(i,j\geq 1\) and any \(p,q\in\ZZ_{\leq 0}\).

On the one hand, by induction hypothesis, the map  
\[
\sum_{k+\ell=n}\sum_{\substack{p_1+p_2=i\\q_1+q_2=j}}g_k^{p_1,q_1}\otimes g_\ell^{p_2,q_2}
\]
annihilates
\[
TB\;\otimes\; (TB \shuffle TB)\;\oplus\; (TB \shuffle TB)\;\otimes\; TB.
\]
On the other hand,
\[
\sum_{k+\ell=n}\sum_{\substack{p_1+p_2=i\\q_1+q_2=j}}g_k^{p_1,q_1}\otimes g_\ell^{p_2,q_2}
\]
takes the subspace \(\tau_{1,1}T^2(TB)\) to \(\tau_{1,1}(T^2(A))\), but this is annihilated by \(\mu\), and we conclude that \(\nu_n^{p,q}(TB\shuffle TB)\) and \(g_n^{p,q}(TB\shuffle TB)\) vanish. 
\end{proof}

As a consequence, we obtain the following commutative version of Theorem~\ref{theo:HTT}.
\begin{theo}\label{theo:HTTcom}
    Let $(A,\del,\delb,\mu)$ be a commutative bidifferential bigraded algebra. There exists a minimal bicomplex \(H\) endowed with a $\ppinfc$-structure and a \(\ppinfm\)-weak equivalence of \(\ppinfc\)-algebras $g\colon H\rightsquigarrow A$.
\end{theo}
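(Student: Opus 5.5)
This is a direct combination of Proposition~\ref{prop:MinimalContraction} and Lemma~\ref{lemm:HTTCom}, exactly parallel to how Theorem~\ref{theo:HTT} follows from Proposition~\ref{prop:MinimalContraction} and Lemma~\ref{lemm:HTT}.

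The first step is to forget the multiplication and regard $(A,\del,\delb)$ as a plain bicomplex. Proposition~\ref{prop:MinimalContraction} then gives a contraction
\[
\begin{tikzcd}[ampersand replacement = \&]
(A,\del,\delb) \arrow[r, rightarrow, shift left, "f"] \arrow[r, shift right, leftarrow, "g" swap] \arrow[loop left, "h"] \& (H,\del,\delb)
\end{tikzcd}
\]
with $H=A_{\mathrm{zig}}$ for some decomposition $A=A_{\mathrm{sq}}\oplus A_{\mathrm{zig}}$. By construction $\del\delb=0$ on $H$, so $H$ is minimal. Commutativity of $\mu$ plays no role here, since the contraction is purely linear data.

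The second step applies Lemma~\ref{lemm:HTTCom} to this contraction, with $h_{11}:=h$ and $h_{10},h_{01}$ defined from it as before. The lemma makes the transferred operations $\nu_n^{p,q}=f\mup_n^{p,q}g^{\otimes n}$ into a $\ppinfc$-structure on $H$. It also makes $g=(g,\{g_\bullet^{*,*}\})$ into a $\ppinfm$-morphism of $\ppinfc$-algebras $H\rightsquigarrow A$.

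It remains to check that $g$ is a weak equivalence, i.e.\ that its linear part $g_1=g$ is a pluripotential weak equivalence (Definition~\ref{defi:inftymorph}). The contraction gives $fg=\id_H$ and $gf\simeq\id_A$ via $h$. By Proposition~\ref{prop:HomotopicMapsAreEqualInHomology}, both $\BC(g)$ and $\A(g)$ are therefore isomorphisms with inverses $\BC(f)$ and $\A(f)$. Alternatively, $g$ is the inclusion of $A_{\mathrm{zig}}$, whose complement $A_{\mathrm{sq}}$ is pluripotentially acyclic.

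No step here is a real obstacle: all the work is in Lemma~\ref{lemm:HTT} and Lemma~\ref{lemm:HTTCom}. The only point to keep track of is that Lemma~\ref{lemm:HTTCom} applies to an arbitrary contraction, not necessarily a minimal one, so the minimal contraction above is admissible input.
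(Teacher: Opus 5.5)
Your proposal is correct and matches the paper, which obtains this theorem as an immediate consequence of Proposition~\ref{prop:MinimalContraction} and Lemma~\ref{lemm:HTTCom}, exactly as Theorem~\ref{theo:HTT} is obtained. Your extra check that $g_1=g$ is a pluripotential weak equivalence, using $fg=\id_H$, $gf\simeq\id_A$ and Proposition~\ref{prop:HomotopicMapsAreEqualInHomology}, is valid and spells out a step that the lemma only asserts.
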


\subsection{Massey products in complex geometry}
In their study of the topology of complex manifolds, 
 Angella and Tomasini  \cite{AnTo15} introduced triple Massey-type products for Bott-Chern and Aeppli cohomologies (ABC-Massey products). These Massey products turn out to be obstructions to the notions of formality related to Bott-Chern and Aeppli cohomologies introduced in \cite{MiSte24}. In this section, we review these concepts, present important examples and describe an explicit relation between these products and the \(\ppinfa\)-algebra structure arising from the homotopy transfer theorem.

\medskip

Let \((A,\del,\delb,\cdot)\) be a bidifferential bigraded algebra. Then, $\BC(A)$ has an algebra structure induced by the product of the algebra and $\A(A)$ has the structure of a $\BC$-module.

We next recall the Aeppli-Bott-Chern-Massey (ABC-Massey) products defined in \cite{AnTo15}.

\begin{defi}Let $[x], [y], [z]\in  \BC (A)$ be three Bott-Chern cohomology classes such that
 $[x]\cdot[y] = [y]\cdot[z] = 0$. The \textit{triple ABC-Massey product $\langle [x],[y],[z] \rangle$} is defined as the following set of Aeppli classes:
\[\langle [x],[y],[z] \rangle := \{[x\cdot v - u\cdot z]\,|\,\del\delb u = x\cdot y,\, \del\delb v = y\cdot z\} \subset \A(A).\]
The triple ABC-Massey product is said to be \textit{trivial} if $0\in \langle [x],[y],[z] \rangle$.
\end{defi}

Let us remark that in \cite{AnTo15} a different sign convention is used. We are sticking to the sign convention in \cite{MiSte24} instead. Also, in these references, the triple product is defined as an element in the coset space 
\[
\frac{\A(A)}{{[x]\cdot \A(A)+[z]\cdot \A(A)}}
\]
rather than as a set of Aeppli classes. Both viewpoints are equivalent. In particular, a triple ABC-Massey product is trivial if and only if it is zero in the above coset space.

When applied to the complexified de Rham algebra of a complex manifold, ABC-Massey products yield new holomorphic invariants with a homotopy–theoretic flavor. Moreover, in \cite{MiSte24}, these Massey-type operations were shown to obstruct the notions of \emph{strong} and \emph{weak formality} associated with pluripotential weak equivalences.

Interestingly, ABC-Massey products need not vanish in many situations where ordinary 
Massey products do.  
The standard proof of formality for compact Kähler manifolds of \cite{DGMS75}
relies on the $\del\delb$-property, but
Sferruzza and Tomassini \cite{SfeTo22} constructed an example of a compact manifold satisfying the \(\del\delb\)-property while 
admitting a nontrivial ABC-Massey product (see 
Example~\ref{exam:SferruzzaTomassini}).  
Subsequently, \cite{PSZ24} showed that any closed Riemann surface of genus at least two 
carries a nontrivial ABC-Massey product.  
In particular, even compact Kähler manifolds may exhibit such nontrivial ABC-Massey products.
The examples in \cite{PSZ24} do not have restricted holonomy, leaving open the question of 
whether Calabi-Yau or hyperkähler manifolds are strongly formal.  This was answered negatively for the Calabi-Yau case in the recent work \cite{MMSte25}.  

There are further contexts in which ABC-Massey products have been employed to obstruct 
the existence of certain metrics, for example in \cite{SfeTo23}, \cite{SfeTo24}, and \cite{SfeTo25}.  
Additional results on triple ABC-Massey products may be found in \cite{TaTo17}, \cite{CeTo25}, and 
\cite{Ru25}.

In \cite{MiSte24}, Milivojević and Stelzig introduce higher Aeppli-Bott-Chern Massey products, extending the notion of triple ABC-products, and prove that are invariant under pluripotential weak equivalences. These higher ABC-Massey products take inputs and land in non-standard cohomology groups. Indeed, the possible targets of these products involve cohomology groups of the Bigolin complex (\cite{Bi69}, \cite{De97}, \cite{Sch07}), and their input and landing spaces depend on auxiliary choices. Moreover, they are not very accessible or computable in practice.

In contrast to these partially defined products, the homotopy transfer theorem for bidifferential bigraded algebras provides a robust homotopical framework for understanding higher products in cohomology.  
More precisely, a minimal bicomplex (for instance, the \(H_{\Aa\Bb\Cc}\)-cohomology of a bba) can 
be endowed with a \(\ppinfa\)-algebra structure that encodes the full homotopy type of the original algebra.  
In practice, this yields genuine operations, all acting on a single (typically small) space, which collectively capture all the relevant homotopical information. Moreover, \cite{CiGaSo25} presents a package (\cite{GaVi24}) for computing such arity-3 products (and which can be extended to higher arities), demonstrating that these operations are indeed computable.

We now explain the relation between triple ABC-Massey products and a transferred structure. Using Proposition~\ref{prop:MinimalContraction}, given a contraction into a minimal bicomplex $(H,\del,\delb)$, we may define a new contraction
\[
\begin{tikzcd}[ampersand replacement = \&]
(A,\del,\delb) \arrow[r, rightarrow, shift left, " f"] \arrow[r, shift right, leftarrow, " g" swap] \arrow[loop left, "h"] \& (\ABC(A),\del,\delb),
\end{tikzcd}\label{diag:HabcContraction}\tag{C}
\]
where $ f:A\twoheadrightarrow H\xrightarrow{\cong}\ABC(A)$ and $ g:\ABC(A)\xrightarrow{\cong}H \hookrightarrow A$.

Applying the pluripotential homotopy transfer theorem
(Theorem~\ref{theo:HTT}) to this contraction yields an \(A_\infty^{pp}\)-structure on 
\(H_{\Aa\Bb\Cc}(A)\) together with a \(\ppinfm\)-weak equivalence
\[
H_{\Aa\Bb\Cc}(A) \rightsquigarrow A .
\]
By Theorem~\ref{theo:EqHoCats}, the transferred structure on \(H_{\Aa\Bb\Cc}(A)\) encodes the pluripotential homotopy type of \(A\).

From now on, we will write \(\mu_3:=\mu_3^{\mep1,\mep1},\) for the transferred arity 3 operation of bidegree \((-1,-1).\)

\begin{prop}\label{prop:ABC-transferred}
 Given 
 $[x], [y], [z]\in  \BC(A)$ such that
 $[x]\cdot[y]=[y]\cdot[z]=0$ and a contraction $( f, g,h)$ as \ref{diag:HabcContraction}, we have 
 \[-[ g\mu_3([x],[y],[z])]_\Aa\in \langle [x], [y], [z]\rangle,\]
 where $[-]_\Aa$ denotes taking the Aeppli cohomology class.
 \end{prop}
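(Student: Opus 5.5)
The plan is to compute $\mu_3$ on $[x],[y],[z]$ directly from the explicit transfer formula and then show that the resulting Aeppli class has the shape $[x'v-u'z']$ for suitable primitives $u',v'$. By Lemma~\ref{lemm:HTT} and the formula following the fixed contraction \eqref{transferdata}, the transferred operation of bidegree $(-1,-1)$ is
\[
\mu_3=\nu_3^{\mep1,\mep1}=f\mu(h\mu(g,g),g)-f\mu(g,h\mu(g,g)),
\]
with $h=h_{11}$, possibly up to Koszul signs from evaluating on elements. I will write $x'=g[x]$, $y'=g[y]$, $z'=g[z]$ (regarding $[x],[y],[z]$ as elements of $\BC(A)\subset\ABC(A)$). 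These are $\del$- and $\delb$-closed elements of $A$. Since $fg=\id$ and $f$ is a pluripotential weak equivalence, their Bott-Chern classes are $[x],[y],[z]$. Because the ABC-Massey set only depends on the classes, I may compute it with the representatives $x',y',z'$.

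\emph{Step 1: $h$ produces the primitives.} The element $x'y'$ is $\del$- and $\delb$-closed, and its Bott-Chern class is $[x][y]=0$.
\begin{itemize}
\item Since $f$ commutes with both differentials, $f(x'y')\in H$ is closed, and its Bott-Chern class is $0$.
\item On the minimal bicomplex $H$ we have $\del\delb=0$, so $\BC(H)=\Ker\del\cap\Ker\delb$, and therefore $f(x'y')=0$.
\item Expanding $[\del,[\delb,h]](x'y')=gf(x'y')-x'y'$ and using closedness of $x'y'$, only the term $\del\delb h(x'y')$ survives. This gives $\del\delb h(x'y')=-x'y'$.
\end{itemize}
Hence $u:=-h(x'y')$ and $v:=-h(y'z')$ satisfy $\del\delb u=x'y'$ and $\del\delb v=y'z'$, possibly up to a global sign coming from the convention for $[\del,[\delb,h]]$ on elements.

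\emph{Step 2: $gf$ acts trivially in Aeppli cohomology on $\Ker\del\delb$.} Put $w:=\mu(h(x'y'),z')-\mu(x',h(y'z'))$, so that $\mu_3([x],[y],[z])=f(w)$. By Step 1, $\del\delb w=\pm(x'y'z'-x'y'z')=0$, using the Leibniz rule and the closedness of $x',z'$. For any $w\in\Ker\del\delb$, expanding $[\del,[\delb,h]]w=\del\delb hw-\del h\delb w+\delb h\del w-h\delb\del w$ shows the first three terms lie in $\Img\del+\Img\delb$ and the last vanishes. Thus $[gf(w)]_\Aa=[w]_\Aa$, and so
\[
[g\mu_3([x],[y],[z])]_\Aa=[w]_\Aa=[x'v-uz']_\Aa
\]
up to the sign bookkeeping. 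This is an element of $\pm\langle [x],[y],[z]\rangle$.

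\emph{Step 3: the sign.} The main obstacle is pinning down the overall sign precisely, so as to get $-[g\mu_3]_\Aa$ rather than $+[g\mu_3]_\Aa$. Two sources contribute:
\begin{itemize}
\item the Koszul sign in evaluating $\mu(h\mu(g,g),g)$ on $[x]\otimes[y]\otimes[z]$, where $h$ has odd total degree $-2$... (in fact even, but $\delb h$ and $h$ do not commute, so signs enter through $[\delb,h]$);
\item the sign in $\del\delb h(x'y')$ versus $\delb\del h(x'y')$, since $\del\delb=-\delb\del$.
\end{itemize}
I would fix the convention $[\del,[\delb,h]]=\del\delb h-\cdots$ exactly as in the proof of Proposition~\ref{prop:MinimalContraction}, track the sign of each of the two trees, and check on the case where all three classes have even degree. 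I would then verify that the resulting global sign yields $-[g\mu_3]_\Aa=[x'v-uz']_\Aa$, with $u,v$ adjusted by a sign if needed; such an adjustment is allowed because the Massey set is defined by all solutions $u,v$.
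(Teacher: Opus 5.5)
Your Steps 1 and 2 follow the paper's argument, and they are more complete than the paper's version in two places:

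1. The paper simply ``chooses'' $u$ with $\del\delb u=xy$ and $h(xy)=-u$. You show this is forced: $f(x'y')=0$, because $x'y'\in\Img\del\delb$ and $\del\delb=0$ on $H$.
2. The paper passes from $\mu_3([x],[y],[z])=f(xv-uz)$ to an equality of Aeppli classes without comment. Your observation that $gf-\id$ maps $\Ker\del\delb$ into $\Img\del+\Img\delb$ is exactly what justifies that step.

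The gap is Step 3, and it cannot be closed the way you propose. You may not replace $u$ or $v$ by its negative, since $\del\delb(-u)=-x'y'\neq x'y'$. The Massey set is an affine coset $[x'v_0-u_0z']_\Aa+[x]\cdot\A(A)+\A(A)\cdot[z]$ of a linear subspace. Such a coset is stable under negation only if it contains $0$, so an overall sign can never be absorbed into the indeterminacy.

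There is also no sign ambiguity left to track. The map $h$ has even total degree $-2$ and $g$ has degree $0$, so evaluating $f\mu(h\mu(g,g),g)-f\mu(g,h\mu(g,g))$ on $[x]\otimes[y]\otimes[z]$ produces no Koszul signs. The paper's convention $gf-\id=[\del,[\delb,h]]=\del\delb h-\del h\delb+\delb h\del-h\delb\del$ gives $\del\delb h(x'y')=-x'y'$ exactly. Carrying this out, you get $[g\mu_3([x],[y],[z])]_\Aa=[x'v-uz']_\Aa$ with a \emph{plus} sign. This is also what the last line of the paper's own proof says.

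With the conventions fixed in the paper (this contraction convention and the Massey set $\{[xv-uz]\}$), the displayed $-[g\mu_3([x],[y],[z])]_\Aa$ lies in $\langle[x],[y],[z]\rangle$ only when the product is trivial. The minus sign corresponds to the opposite convention $\id-gf=[\del,[\delb,h]]$, that is, to replacing $h$ by $-h$. So drop the sign-adjustment idea. Your argument correctly proves the plus-sign version, and the stated sign is a convention mismatch that further bookkeeping cannot produce.
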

 \begin{proof}
 Note first that there is an inclusion $\BC(A)\subseteq \ABC(A)$, so we may view such classes in ABC-cohomology. Write $x= g([x])$, $y= g([y])$ and $z= g([z])$. Since
$[x]\cdot[y]=[y]\cdot[z]=0$, we may choose an element $u$ such that \[\del\delb u=x\cdot y\text{ and }h(x\cdot y)=-u.\]
In the same way, we can choose $v$ such that \[\del\delb v =y\cdot z\text{ and }h(y\cdot z)=-v.\]
This gives the following
\[\mu_3([x],[y],[z])= f(h(x\cdot y)\cdot z-x\cdot h(y\cdot z))= f(-u \cdot z+x\cdot v).\]
Thus, we obtain \[[ g\mu_3([x],[y],[z])]_\Aa=[x\cdot v-u \cdot z]_\Aa\in \langle [x], [y], [z]\rangle.\qedhere\]
 \end{proof}

\begin{coro}
Let 
 $[x], [y], [z]\in  \BC(A)$ such that
 $[x]\cdot[y]=[y]\cdot[z]=0$. If 
 $\mu_3([x],[y],[z])=0$ then the triple ABC-Massey product $\langle [x], [y], [z]\rangle$
 is trivial.
\end{coro}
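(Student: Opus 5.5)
The corollary follows from Proposition~\ref{prop:ABC-transferred} by a single substitution. Fix a contraction $(f,g,h)$ as in \eqref{diag:HabcContraction}, and let $\mu_3=\mu_3^{\mep1,\mep1}$ be the arity-three operation of bidegree $(-1,-1)$ transferred along it via Lemma~\ref{lemm:HTT}.

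By Proposition~\ref{prop:ABC-transferred}, for Bott--Chern classes with $[x]\cdot[y]=[y]\cdot[z]=0$ we have
\[
-[g\mu_3([x],[y],[z])]_\Aa\in\langle [x],[y],[z]\rangle .
\]
Now assume $\mu_3([x],[y],[z])=0$. Since $g$ is linear, $g\mu_3([x],[y],[z])=g(0)=0$, and its Aeppli class is therefore $0$. Hence $0\in\langle [x],[y],[z]\rangle$, which is exactly the definition of the triple ABC-Massey product being trivial.

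The only point needing care is that the hypothesis refers to the $\mu_3$ produced by this same contraction, not to some arbitrary $\ppinfa$-structure. That is precisely the setting of Proposition~\ref{prop:ABC-transferred}, so the argument is complete as it stands.
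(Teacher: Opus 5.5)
Your proof is correct and follows the paper's own route: the corollary is stated as an immediate consequence of Proposition~\ref{prop:ABC-transferred}, because $\mu_3([x],[y],[z])=0$ forces the element $-[g\mu_3([x],[y],[z])]_\Aa$ of $\langle [x],[y],[z]\rangle$ to be $0$. Your caveat that $\mu_3$ must be the operation transferred along the chosen contraction into $\ABC(A)$ is the right one, and the sign in front of that class plays no role here since the class is zero.
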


\begin{rema}
    This reflects the situation observed in the ordinary case. Indeed, the triple products of $A_\infty$-structures arising from homotopy transfer theory always detect Massey products. That is, \[
    \pm \mu_3(a,b,c) \in \langle a,b,c \rangle.
    \]
    Consequently, if $\mu_3(a,b,c) = 0$, the corresponding Massey product must also vanish.
\end{rema}

In \cite{BuiMoFerMu20}, an explicit relationship is established between \(A_\infty\)-structures and ordinary higher Massey products.
We expect that an analogous phenomenon should hold for pluripotential \(A_\infty\)-algebras and higher 
ABC-Massey products. 

The following example exhibits the phenomenon described in Proposition~\ref{prop:ABC-transferred}. 
The computations presented here are extracted from \cite{CiGaSo25} and use the Sage package \cite{GaVi24}.

\begin{exam}\label{exam:SferruzzaTomassini}
    The following procedure, explained in \cite{SfeTo22}, gives a compact $\del\delb$-manifold by constructing an orbifold of global quotient type out of the Iwasawa nilmanifold. 

Consider the action $\sigma\colon \CC^3\lra \CC^3$ 
given by $ \sigma(z_1,z_2,z_3)=(iz_1,iz_2,-z_3)$.
This gives a well-defined action on the Iwasawa manifold. The resulting quotient 
$M=\mathbb{I}_3/\langle\sigma\rangle$ is a compact orbifold with 16 isolated singular points. 
By Theorem 7.1 of \cite{SfeTo22}, the orbifold $M$ admits a resolution into a smooth $\del\delb$-compact manifold.
The sub-algebra $\Aa_\sigma(M)$ of $\Aa(\II_3)$ of differential forms which are left invariant for the action of \(\HH(\CC)\) and $\sigma$-invariant  is generated by the elements
\[a\ov a, b\ov b, c\ov c, a\ov b, b\ov a, c\ov a\ov b, ab\ov c,  abc, \ov a\ov b\ov c.\]
The only non-trivial differentials being \[\del(c\ov c)=-ab\ov c, \,\del(c\ov a\ov b)=-ab\ov a\ov b\]
and their complex conjugates. 
This algebra satisfies the $\del\delb$-condition and so in particular, all cohomologies (including Dolbeault, Bott-Chern, Aeppli and ABC) coincide. In Figure \ref{pic:cohos-ST} it is indicated a basis for the cohomology in each bidegree.

\begin{figure}[ht]
\centering
\begin{tabular}{|c|c|c|c|}
\hline
$[\overline{abc}]$ & & & $[abc\overline{abc}]$ \\ \hline
 & & \begin{tabular}[c]{@{}c@{}}$[ac\overline{ac}]$, $[ac\overline{bc}]$,\\ $[bc\overline{ac}]$, $[bc\overline{bc}]$\end{tabular} & \\ 
 \hline
 & \begin{tabular}[c]{@{}c@{}}$[a\overline{a}]$, $[a\overline{b}]$,\\ $[b\overline{a}]$, $[b\overline{b}]$\end{tabular} & & \\ \hline
$1$ & & & $[abc]$ \\ \hline
\end{tabular}
\caption{A basis for the cohomology of \(M\).}
\label{pic:cohos-ST}
\end{figure}

In \cite{SfeTo22}, a non-trivial ABC-Massey product is computed on \(M\):
\[\langle [a\ov a],[b\ov b],[b\ov b]\rangle_{ABC}\in \frac{\A^{2,2}(M)}{[a\ov a]\cdot\A^{1,1}(M)+[b\ov b]\cdot\A^{1,1}(M)},\]
which is represented by the nonzero Aeppli cohomology class \([bc\ov b\ov c] \in H_A^{2,2}(M).\)

We define contraction
\[
\begin{tikzcd}[ampersand replacement = \&]
(\Aa_\sigma,\del,\delb) \arrow[r, rightarrow, shift left, " f"] \arrow[r, shift right, leftarrow, " g" swap] \arrow[loop left, "h"] \& (\ABC(\Aa_\sigma),\del,\delb)
\end{tikzcd}
\]
as follows:
the map $ g$ is induced by the identity on all elements. The map $ f$ is induced by the identity, except for the following elements, which belong to a square and for which $ f$ is zero:
\[\xymatrix{
c\ov{a}\ov{b}\ar[r]&ab\ov{a}\ov{b}\\
c\ov{c}\ar[u]\ar[r]&ab\ov{c}\ar[u]
}\]
The homotopy $h$ is given by 
\[h(ab \ov{a}\ov{b})=c\ov{c}\] 
and zero otherwise.

Applying the homotopy transfer theorem we obtain that the $\mu_3$-product is non-trivial only when all 3 inputs have bidegree $(1,1)$. We obtain
\[\mu_3([a\ov a],[b\ov b],[b\ov b])=[b\ov bc\ov c]\]
which gives precisely the ABC-Massey product computed in \cite{SfeTo22}.
\end{exam}

\section{Basics in operad theory}\label{sec:BasicOperadThry}

 In this section, we recall the definitions of operads and operadic algebras, as well as the dual notions of cooperads and coalgebras, along with some important results, adapted to the pluripotential setting when needed. Most of the material presented here is adapted from \cite{GeJo94}, \cite{LoVa12}, and \cite{Fre04}.  In our case, we will work over the closed symmetric monoidal model category of bicomplexes $\bicok$ over a field $\kk$ of characteristic zero, although most of the constructions are valid over any symmetric monoidal category. 

\subsection{\texorpdfstring{$\SS\,$}{S-modules}-modules}
For multiple constructions in operad theory,
it is convenient to work with the category $\smod$ of $\SS\,$-modules or, more generally, with $\SS\,\mep\mathrm{Ch}$, the category of differential graded $\SS$-modules. In our context, we will consider bidifferential bigraded $\SS\,$-modules. We next review their basic properties and present the Schur functor, a key ingredient to define operadic algebras.

\begin{defi}
    A \textit{bidifferential bigraded $\SS\,$-module} (\(\bbsmod\)-module for short) $M$ is given by a sequence of bicomplexes $\{M(n)\}_{n\in\ZZ_{\geq 0}}$ together with a right $\SS_n$ action, where $\SS_n$ denotes the $n$th symmetric group. 
\end{defi}

If $\mu\in M(n)^{p,q}$, we say that $\mu$ is of \emph{arity} $n$ and bidegree $(p,q)$. 

\begin{defi}
A \emph{morphism of }bb-$\SS\,$\emph{-modules} $f:M\to N$ is a sequence of equivariant bicomplex morphisms $f(n):M(n)\to N(n)$.
\end{defi}

We denote the category of bb-$\SS\,$-modules by $\bicosmod$. 
\begin{rema}
Note we can view every \(\bbsmod\)-module $M$ as a family of bicomplexes $\{M(n)\}_{n\in\ZZ_{\geq 0}}$ where each $M(n)^{p,q}$ is a right $\kk[\SS_n]$-module for all $(p,q)$.
\end{rema}

Let $M$ and $N$ be two \(\bbsmod\)-modules. Define their \emph{tensor product} as 
\[
(M\otimes N)(n):=\bigoplus_{p+q=n}\mathrm{Ind}_{\SS_p\times \SS_q}^{\SS_n}M(p)\otimes N(q).
\]
Here $\mathrm{Ind}_{\SS_p\times \SS_q}^{\SS_n}$ denotes the induced $\SS_n$-representation. That is, 
\[\mathrm{Ind}_{\SS_p\times\, \SS_q}^{\SS_n}M(p)\otimes N(q)=\left(M(p)\otimes N(q)\right)\tens{\SS_p\times \SS_q}\kk[\SS_n].\]

The \emph{composition product} is given by
\[M\circ N = \bigoplus_{n\geq 0}
M(n) \tens{\SS_n} N^{\otimes n}.\]

Explicitly, the $n$-ary component of the composition is:
\[
(M \circ N)(n) = \bigoplus_{k \geq 0} M(k) \tens{\SS_k} 
\left( \bigoplus_{i_1 + \cdots + i_k=n} 
\mathrm{Ind}_{\SS_{i_1} \times \cdots \times \SS_{i_k}}^{\SS_n} 
\left( N(i_1) \otimes \cdots \otimes N(i_k) \right) \right)
\]

where the action of $\SS_k$ is given by permuting the factors in the tensor product \[N(i_1) \otimes \dots \otimes N(i_k)\] via permutation of the indices $(i_1, \dots, i_k)$.
Therefore, an element of $(M \circ N)(n)$ is a linear combination of tuples
\[
(\mu; \mu_1, \dots, \mu_k) \sigma
\]
where $\mu\in M(k)$, $\mu_j\in N(i_j)$, $\sigma\in\SS_n$ and $i_1+\dots+i_k=n$. Such tuples are subject to the following relations:
\begin{equation}\label{eq:equivout}
    (\mu\tau;\mu_1,\dots,\mu_k)=(\mu;\mu_{\tau^{-1}(1)},\dots,\mu_{\tau^{-1}(k)})\tau_{i_1,\dots,i_k},
\end{equation}
    where $\tau_{i_1,\dots,i_k}$ is the block permutation of $\{1, \dots, n\}$ that divides it into $k$ blocks of sizes $i_1, \dots, i_k$, and permutes these blocks according to $\tau$ and
    \begin{equation}\label{eq:equivinn}
        (\mu;\mu_1\tau_1,\dots,\mu_k\tau_k)=(\mu;\mu_{\tau^{-1}(1)},\dots,\mu_{\tau^{-1}(k)})\tau_1\sqcup\dots\sqcup \tau_k,
    \end{equation}
    where each $\tau_j \in \SS_{i_j}$, and $\tau_1 \sqcup \cdots \sqcup \tau_k \in \SS_n$ is the permutation that acts within each block of size $i_j$ by $\tau_j$.

To a bb-$\SS\,$-module $M$ we may associate an endofunctor in $\bicok$ called the \emph{Schur functor}
\[M:\bicok\to\bicok\]
given by 
\[M(A)=\bigoplus_{n\geq 0}M(n)\tens{\SS_n}A^{\otimes n}.\]

This functor satisfies  
  \[(M\otimes N)(-)\cong M(-) \otimes N(-)\text{ and }
 (M \circ N)(-) \cong M(N(-)) 
  \]
(see for instance Section 5.1 of \cite{LoVa12}, or Section 1.2 in \cite{GeJo94}). Moreover, we have:

\begin{prop}\label{prop:tensorcomposition} The triple 
$(\bicosmod,\otimes,\id)$ is a symmetric monoidal category with $\id=(\kk,0,\dots)$. 
The triple 
$(\bicosmod,\circ,\mathrm{I})$ is a monoidal category, with $\mathrm{I}=(0,\kk,0,\dots)$. 
\end{prop}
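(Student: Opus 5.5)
The plan is to reduce everything to the underlying symmetric monoidal structure of $\bicok$ and to the classical arguments for graded $\SS$-modules (Section 5.1 of \cite{LoVa12}), checking along the way that every structure map is a morphism of bicomplexes. For $(\bicosmod,\otimes,\id)$, I will first note that the arity-wise formula
\[
(M\otimes N)(n)=\bigoplus_{p+q=n}\bigl(M(p)\otimes N(q)\bigr)\tens{\SS_p\times\SS_q}\kk[\SS_n]
\]
is a bicomplex. Here $\kk[\SS_n]$ sits in bidegree $(0,0)$ with zero differentials. The $\SS_p\times\SS_q$-action commutes with $\del$ and $\delb$, so the coinvariants inherit both differentials, and the Leibniz signs only involve total degree. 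Next I will write down the associator, unitors and symmetry. The associator comes from transitivity of induction, $\mathrm{Ind}_{\SS_p\times\SS_q\times\SS_r}^{\SS_n}\cong\mathrm{Ind}^{\SS_n}_{\SS_{p+q}\times\SS_r}\mathrm{Ind}^{\SS_{p+q}}_{\SS_p\times\SS_q}$, together with the associator of $\bicok$. The symmetry is the Koszul-signed swap $M(p)\otimes N(q)\to N(q)\otimes M(p)$, composed with the block transposition in $\SS_n$ exchanging the two blocks. These are built from bidegree-preserving maps commuting with $\del$ and $\delb$, so they are morphisms of bb-$\SS$-modules. The pentagon, triangle and hexagon axioms then hold because they hold in $\bicok$ and for induced representations, exactly as in the graded case.

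For $(\bicosmod,\circ,\mathrm{I})$, I will use the expanded form of $(M\circ N)(n)$ given just before the statement. I will define the associativity isomorphism $(M\circ N)\circ P\cong M\circ(N\circ P)$ on representatives $((\mu;\nu_1,\dots,\nu_k)\sigma;\ \pi_1,\dots)$ by regrouping the $\pi$'s according to the arities of the $\nu_j$. This regrouping involves the Koszul sign of moving the $\pi$'s past the $\nu$'s, for the total degree, and the induced block permutations. I will then check that it respects the relations \eqref{eq:equivout} and \eqref{eq:equivinn}, so that it descends to coinvariants.

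A cleaner route, which I would try first, is to argue via $M\circ N=\bigoplus_k M(k)\tens{\SS_k}N^{\otimes k}$, with $N^{\otimes k}$ the $k$-fold tensor power in the symmetric monoidal category $(\bicosmod,\otimes,\id)$ from the first part. Since $-\otimes-$ commutes with direct sums and with coinvariants under finite groups (we are in characteristic zero, and coinvariants are a colimit preserved by $\otimes$), we get a distributivity isomorphism
\[
(M\circ N)^{\otimes k}\cong\bigoplus M(i_1)\otimes\cdots\otimes M(i_k)\otimes_{\SS_{i_1}\times\cdots\times\SS_{i_k}}N^{\otimes(i_1+\dots+i_k)}.
\]
This yields the associator formally, and the pentagon follows from the coherence of the symmetric monoidal structure. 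The unit isomorphisms $\mathrm{I}\circ M\cong M\cong M\circ\mathrm{I}$ are immediate: $\mathrm{I}\circ M=\kk\otimes M$ in arity one, and $M(n)\tens{\SS_n}\mathrm{I}^{\otimes n}\cong M(n)\tens{\SS_n}\kk[\SS_n]\cong M(n)$.

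The main obstacle I expect is the associativity of $\circ$, specifically making the distributivity isomorphism above precise and compatible with the $\SS_k$-actions and signs. I would handle this by noting that all the signs are Koszul signs for the total degree, so the dg proof in \cite{LoVa12} applies verbatim. The only extra check is that the isomorphisms commute with $\del$ and $\delb$ separately rather than just with $\del+\delb$. This holds because each map is natural in the bicomplex structure and preserves bidegree. As an optional sanity check, compatibility with the Schur functor isomorphism $(M\circ N)(-)\cong M(N(-))$ confirms that the associator is the expected one.
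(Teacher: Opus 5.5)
Your proposal is correct and follows essentially the same route as the paper, which gives no separate argument and instead cites the classical construction in Section 5.1 of \cite{LoVa12} and \cite{GeJo94}, remarking that these constructions work over any symmetric monoidal category such as $\bicok$. Your additional check, that every structure map is assembled from bicomplex morphisms and so commutes with $\del$ and $\delb$ separately, is exactly what justifies this transfer; note only that the property you actually use is that $\otimes$ on $\bicok$ preserves colimits in each variable (because $\bicok$ is closed), and characteristic zero plays no role there.
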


\subsection{Operads and operadic algebras} In this section, we recall the definitions of operads and operadic algebras along with some useful results, adapted to the pluripotential setting when needed.

\begin{defi}
    An \emph{operad} is a monoid in the monoidal category $(\bicosmod,\circ,\mathrm{I})$, that is, a $\bbsmod$-module $\Pp$ together with morphisms
    \[\gamma:\Pp\circ\Pp\to\Pp\quad\text{ and }\quad\eta:\mathrm{I}\to\Pp\]
    called the \emph{composition} and \emph{unit} maps, respectively, satisfying associativity
\[
\begin{tikzcd}
(\Pp \circ \Pp) \circ \Pp \arrow[r, "\cong"] \arrow[d, "\gamma\circ\id"'] & 
\Pp \circ (\Pp \circ \Pp) \arrow[r, "\id \circ \gamma"] & 
\Pp \circ \Pp \arrow[d, "\gamma"] \\
\Pp \circ \Pp \arrow[rr, "\gamma"] & & \Pp
\end{tikzcd}
\]

and unitality

\[
\begin{tikzcd}
\mathrm{I} \circ \Pp \arrow[dr, "="'] \arrow[r, "\eta \circ \id"] & 
\Pp \circ \Pp \arrow[d, "\gamma"] & 
\Pp \circ \mathrm{I} \arrow[l, "\id \circ \eta"'] \arrow[dl, "="] \\
& \Pp &
\end{tikzcd}
\]
conditions.
\end{defi}

In particular, the operad structure is determined by maps
\[\Pp(k)\otimes\Pp(i_1)\otimes\dots\otimes \Pp(i_k)\to\Pp(i_1+\dots+i_k)\]
subject to associativity, unitality and equivariance conditions.  Moreover, the differentials of $\Pp$, $\del$ and $\delb$, are \emph{derivations} for $\gamma$
    \begin{gather*}
        \del\big(\gamma(\mu;\mu_1,\dots,\mu_k)\big):=\gamma\big(\del(\mu);\mu_1,\dots,\mu_k\big)\\
        +\sum_{i=1}^k(-1)^{\epsilon_i}\gamma\big(\mu;\mu_1,\dots,\del(\mu_i),\dots,\mu_k\big),
    \end{gather*}
where $\epsilon_i=|\mu|+|\mu_1|+\dots+|\mu_{i-1}|$ and $|-|$ stands for the total degree. The same identity holds for $\delb$.

Let $\Pp$ be an operad and let $\mu \in \Pp(m)$ an element in arity \(m\) and $\nu \in \Pp(n)$ and element in arity \(n\). We will sometimes use the \emph{partial composition}
\[
(\mu,\nu)\;\longmapsto\;\mu \circ_i \nu \in \Pp(m-1+n)
\]
defined for $1 \le i \le m$ by:
\[
-\!\circ_i\!-\;:\;\Pp(m)\otimes \Pp(n)\;\longrightarrow\;\Pp(m-1+n),
\qquad
\mu \circ_i \nu \;:=\; \gamma\bigl(\mu;\; \underbrace{\id,\dots,\id}_{i-1},
\nu,
\underbrace{\id,\dots,\id}_{m-i}\bigr).
\]

\begin{defi}
A \emph{morphism of operads} $f:\Pp\to\Qq$ is
a morphism of bb-$\SS\,$-modules which is compatible with the composition and unit maps. 
\end{defi}

We denote the category of operads in \(\bicok\) by $\ops$. 

\begin{exam}
    Let $A$ be a bicomplex. Consider the internal $\hom$ defined in Section~\ref{subsec:preliminarsbico}
    \[\underline{\mathrm{Hom}}(A,B)^{p,q}=
    \prod\Hom_\kk(A^{i,j},B^{i+p,j+q}),\]
    with differentials 
    \[\del f:=[\del,f]\quad\text{ and }\quad\delb f:=[\delb,f].
    \]

    Define the following bb-$\SS\,$-module \[\Endop(n):=\underline{\Hom}(A^{\otimes n}, A)\]
    where the action of $\SS_n$ is induced by the left $\SS_n$ action on $A^{\otimes n}$. We can endow it with an operadic structure given by composition of endomorphisms
    \[
    (f; f_1, \dots , f_k) := f(f_1 \otimes\dots\otimes f_k).
    \]
    The resulting operad is called the \emph{endomorphism} operad.
\end{exam}

If $\Pp$ is an operad, then Proposition~\ref{prop:tensorcomposition} shows that the Schur functor 
\[\Pp:\bicok\to\bicok\]
defines a monad.

\begin{defi}
    A $\Pp$\emph{-algebra} is an algebra over the monad $\Pp$, that is, an object $A$ in $\bicok$ together with maps of bicomplexes $\gamma_A\colon \Pp(A) \to A$ and $\eta_A\colon A\to\Pp(A)$ such that the following diagrams commute:
\[
\begin{tikzcd}[ampersand replacement=\&]
	{\Pp\circ\Pp(A)} \& {\Pp(A)} \\
	{\Pp(A)} \& A
	\arrow["{\gamma(A)}", from=1-1, to=1-2]
	\arrow["{\Pp(\gamma_A)}"', from=1-1, to=2-1]
	\arrow["\gamma_A", from=1-2, to=2-2]
	\arrow["\gamma_A"', from=2-1, to=2-2]
\end{tikzcd}\qquad
\begin{tikzcd}[ampersand replacement=\&]
	A \& {\Pp(A)} \& {} \\
	\& A
	\arrow["{\eta_A}", from=1-1, to=1-2]
	\arrow[shift right, no head, from=1-1, to=2-2]
	\arrow[no head, from=1-1, to=2-2]
	\arrow["\gamma_A", from=1-2, to=2-2]
\end{tikzcd}
\]
\end{defi}

The $\Pp$-algebra structure is determined by maps
\[
\gamma_n:\Pp(n)\tens{\SS_n} A^{\otimes n}\to A.
\]

One should think of $\Pp(n)$ as the space that parametrizes all the possible ways of combining $n$ elements of the $\Pp$-algebra.

Using the fact that there is a natural isomorphism
\[
\Hom_{\SS_n}(\Pp(n),\Hom(A^{\otimes n},A))\cong \Hom (\Pp(n)\tens{\SS_n} A^{\otimes n},A),
\]
one can alternatively define an algebra over an operad $\Pp$ as a morphism of operads 
\[
\Pp\to \Endop(A).
\]
The image of an element $\mu\in\Pp(n)$ is an operation $\mu_n:A^{\otimes n}\to A$. We denote the category of $\Pp$-algebras by $\Palg$.

The following definition will be used to twist differentials on a \(\Pp\)-algebra.
\begin{defi}
    A \emph{derivation} of a \(\Pp\)-algebra \(A\) is a linear map \(d:A\to A\) such that the following diagram commutes
   \[\begin{tikzcd}[ampersand replacement=\&]
    	{\Pp(A)} \& A \\
    	{\Pp(A)} \& A
    	\arrow["\gamma_A", from=1-1, to=1-2]
    	\arrow["{\id\circ'd}"', from=1-1, to=2-1]
    	\arrow["d", from=1-2, to=2-2]
    	\arrow["\gamma_A", from=2-1, to=2-2]
    \end{tikzcd}\]
    where \(\id\circ'd\) denotes the map given by \(\sum_{i=1}^n\id^{i}\otimes d\otimes\id^{n-i}\) in \(\Pp(n)\tens{\SS_n} A^{\otimes n}\).
\end{defi}
If \(A\) is a \(\Pp\)-algebra, the bigraded vector space of derivations of \(A\),
\(\mathrm{Der}(A)\),
is a bidifferential bigraded Lie algebra, with bracket the graded commutator and differentials the graded commutators with the differentials of \(A\).

\begin{defi}
    A \emph{pair of differentials} of a \(\Pp\)-algebra consists in two derivations \((\tau,\ov\tau)\) of degree \((1,0)\) and \((0,1)\), respectively, such that 
    \[(\del + \tau)^2 = 0,\quad(\delb + \ov\tau)^2 = 0\quad\text{ and }\quad(\del + \tau)(\delb + \ov\tau)+(\delb + \ov\tau)(\del + \tau) = 0.\]
\end{defi}

If \((\tau,\ov\tau)\) is a pair of differentials, then the bicomplex
\((A,\del+\tau,\delb+\ov\tau)\) is a \(\Pp\)-algebra with the same structure map \(\gamma: \Pp(A) \to A\) as \(A\).

Since an operad \(\Pp\) is a monad, we have the definition of a free \(\Pp\)-algebra.
\begin{defi}
   Let \(A\) be a bicomplex. The \emph{free \(\Pp\)-algebra} over \(A\) is the algebra given by \(\Pp(A)\) itself, and the algebra structures \(\gamma:\Pp\circ\Pp(A)\to\Pp(A)\) and \(\eta:\Pp(A)\to\Pp\circ\Pp(A)\) are given by the monad structure of \(\Pp\).   
\end{defi}

For any \(\Pp\)-algebra \(A\) and any linear map \(f : V \to A\), there is a
unique \(\Pp\)-algebra extension \(\tilde f : \Pp(V) \to A\) of \(f\):
\[
\begin{tikzcd}
V \arrow[r, "\eta"] \arrow[dr, "f"'] & \Pp(V) \arrow[d, "\tilde f"] \\
& A.
\end{tikzcd}
\]
Note that a free algebra is unique up to a unique isomorphism.

The following proposition is very useful for constructing derivations on free algebras; see for instance Section 6.3.6 in \cite{LoVa12}, \cite{GeJo94}, among others.

\begin{prop}
Any derivation \(d\) on a free \(\Pp\)-algebra \(\Pp(A)\) is uniquely determined by its restriction to the generators, that is, by a linear map \(A \to \Pp(A)\).

More explicitly, given a map \(\varphi : A \to \Pp(A)\), the unique
derivation \(d_{\varphi}\) of the free \(\Pp\)-algebra \(\Pp(A)\) extending
\(\varphi\) is defined by
\[
d_{\varphi} = (\gamma_{(1)} \circ \id_{A}) \,
(\id_{\Pp} \circ' \varphi).
\]
\end{prop}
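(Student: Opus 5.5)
We have to prove two things about the formula \(d_\varphi=(\gamma_{(1)}\circ\id_A)(\id_{\Pp}\circ'\varphi)\): it is a derivation extending \(\varphi\), and it is the only one. The plan is to transport the classical argument (Section 6.3.6 of \cite{LoVa12}) to bicomplexes. We fix signs by the Koszul rule in total degree, and we work with a homogeneous \(\varphi\) of bidegree \((a,b)\); the general case follows by linearity. Write an element of \(\Pp(A)\) as a class \((\mu; a_1,\dots,a_n)\) with \(\mu\in\Pp(n)\) and \(a_i\in A\).

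\textbf{Uniqueness.} Let \(d\) be any derivation of the free \(\Pp\)-algebra \(\Pp(A)\) whose restriction to \(A\), embedded by \(\eta\), equals \(\varphi\). Every element \((\mu;a_1,\dots,a_n)\) is the value of the structure map \(\gamma_{\Pp(A)}\) on \(\mu\otimes \eta(a_1)\otimes\dots\otimes\eta(a_n)\), and \(\gamma_{\Pp(A)}\) is here the monad multiplication. The derivation square gives
\[d(\mu;a_1,\dots,a_n)=\sum_{i}(-1)^{(a+b)(|\mu|+|a_1|+\dots+|a_{i-1}|)}\,\gamma_{\Pp(A)}\bigl(\mu;\eta(a_1),\dots,\varphi(a_i),\dots,\eta(a_n)\bigr).\]
The right-hand side depends only on \(\varphi\). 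Hence \(d\) is determined by \(\varphi\), and comparing with the definition of \(\gamma_{(1)}\) shows that this expression is exactly \(d_\varphi\).

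\textbf{Existence.} Take the displayed formula as the definition of \(d_\varphi\). We first check that it is well defined on the coinvariants \(\Pp(n)\otimes_{\SS_n}A^{\otimes n}\). The sum over \(i\) is \(\SS_n\)-equivariant, and since \(\varphi\) is inserted according to the Koszul rule the signs are compatible with the permutation action. Composing with the infinitesimal composition \(\gamma_{(1)}\) then respects the relations \eqref{eq:equivout} and \eqref{eq:equivinn}. On generators, \(d_\varphi(\id;a)=\gamma(\id;\varphi(a))=\varphi(a)\) by unitality, so \(d_\varphi\) extends \(\varphi\).

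Next we check that \(d_\varphi\) is a derivation for \(\gamma_{\Pp(A)}\). Evaluate both composites of the derivation square on an element \((\nu;X_1,\dots,X_k)\in\Pp(\Pp(A))\), where each \(X_j=(\mu_j;\underline a_j)\). Going through \(\gamma_{\Pp(A)}\) first produces \((\gamma(\nu;\mu_1,\dots,\mu_k);\underline a_1,\dots,\underline a_k)\), and \(d_\varphi\) then replaces each input \(a\) in turn by \(\varphi(a)\), substituted via \(\gamma\). Going through \(\id\circ' d_\varphi\) first applies \(d_\varphi\) to a single \(X_j\) and then composes. By associativity of \(\gamma\) the two results agree term by term. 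The signs match because both are governed by the Koszul rule in total degree, with \(\varphi\) moved past the same elements in either order.

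The main obstacle is this sign and equivariance bookkeeping. We would handle it by verifying the identity in partial-composition form \(\circ_i\) and invoking the equivariance relations. No cohomological input from the pluripotential structure is needed: \(d_\varphi\) is only a bigraded linear map, and whether it is compatible with \(\del\) and \(\delb\) is a separate condition that the statement does not claim.
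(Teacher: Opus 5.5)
Your proof is correct and is the standard argument. Uniqueness comes from writing $(\mu;a_1,\dots,a_n)=\gamma_{\Pp(A)}(\mu;\eta(a_1),\dots,\eta(a_n))$ and applying the derivation square; existence comes from unitality and associativity of $\gamma$, with Koszul signs in total degree. The paper gives no proof of its own here and simply refers to this classical argument (Section 6.3.6 of \cite{LoVa12} and \cite{GeJo94}), so your proposal follows essentially the same route.
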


An algebra is said to be \emph{quasi-free} if when we forget the differentials it is a free algebra.

There is a similar story in the non-symmetric setting. In this case, one considers sequences $M = \{M(n)\}_{n \geq 0}$ of bicomplexes without the $\SS_n$-action. The associated functor is defined by
\[
M(A) = \bigoplus_{n \geq 0} M(n) \otimes A^{\otimes n}.
\]

All the constructions presented above admit an obvious non-symmetric analoge.

\subsection{The free operad}

The \emph{free operad} over a \(\bbsmod\)-module \(M\) is an operad \(\TT(M)\) with a \(\bbsmod\)-module morphism \(\eta(M):M\to\TT(M)\) such that given an operad $\Pp$, any map $f:M\to \Pp$ of $\bbsmod$-modules, extends uniquely into an operad morphism $\Tilde{f}:\TT(M)\to \Pp$:
\[\begin{tikzcd}[ampersand replacement=\&]
	M \& {\TT(M)} \\
	\& \Pp
	\arrow["{\eta(M)}", from=1-1, to=1-2]
	\arrow["f"', from=1-1, to=2-2]
	\arrow["{\Tilde{f}}", from=1-2, to=2-2]
\end{tikzcd}\]

The functor \(\TT:\bicosmod\to\ops\) is the left adjoint to the forgetful functor \(U:\ops\to\bicosmod\). Arity-wise it can be computed as
\[
\TT(M)(n)
  := \mathop{\mathrm{colim}}_{T \in \mathrm{T}(n)} M(T),
\]
where \(\mathrm{T}(n)\) denotes the set of rooted trees with \(n\) leaves. For a tree \(T\), the object \(M(T)\) is the \emph{treewise tensor product} of the 
\(\SS\)-module \(M\) along the vertices of \(T\):
\[
M(T) = \bigotimes_{v \in V(T)} M(\mathrm{in}(v)),
\]
where \(\mathrm{in}(v)\) denotes the number of inputs of the vertex \(v\).  
For details, see Section~5.6.1 of \cite{LoVa12} or \cite{GeJo94}.
\begin{rema}
    When \(M(0)=0\), 
    \(
    \TT(M)(n)
    = \bigoplus_{T \in \mathrm{T}(n)} M(T).
    \)
\end{rema}  
It is helpful to think of an element of \(\TT(M)(n)\) as a sum of rooted trees in which each vertex \(v\) is decorated by an element of \(M(\mathrm{in}(v))\) and each leaf is decorated by an element of \(\{1,\dots,n\}\). The operad structure of the $\SS\,$-module $\TT(M)$ is given by grafting of trees. 

The following property will be very useful.
\begin{prop}
    Let $M$ be a $\mathrm{bb}\text{-}\SS\,$-module. Any derivation $d_\phi:\TT(M)\to\TT(M)$ of the free operad $\TT(M)$ is completely characterized by the image on the generators: $\phi:M\to\TT(M)$.
\end{prop}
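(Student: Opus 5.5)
Since the free operad $\TT(M)$ is, arity-wise, the direct sum of the treewise tensor products $M(T)$, I will define the derivation $d_\phi$ tree by tree and then check that it is unique. I would prove existence and uniqueness separately.

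\emph{Uniqueness.} Suppose $d$ and $d'$ are two derivations of $\TT(M)$ (of the same bidegree, and satisfying the Leibniz rule for $\gamma$ as in the definition of an operad) that agree on $\eta(M)(M)$. Their difference $D=d-d'$ is again a derivation and vanishes on generators. Every element of $\TT(M)$ is a sum of decorated trees, and a decorated tree is an iterated composition $\gamma(\mu;\mu_1,\dots,\mu_k)$ of generators, or of the unit $\id\in\mathrm{I}\subset\TT(M)$. By induction on the number of vertices, the Leibniz rule expresses $D$ of such a composite as a signed sum of terms in which $D$ is applied to a single factor with fewer vertices. Each such term is zero by the induction hypothesis. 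For the base cases, $D$ vanishes on generators by assumption, and on the unit because $D(\id)=D(\id\circ_1\id)=2D(\id)$. Hence $D=0$.

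\emph{Existence.} Given $\phi\colon M\to\TT(M)$ of bidegree $(1,0)$ (or $(0,1)$, or any fixed bidegree), I define $d_\phi$ on a decorated tree $T$ with vertices $v_1,\dots,v_r$ (ordered by a fixed planar traversal) as the signed sum over $j$ of the trees obtained by replacing the decoration $\mu_{v_j}$ with $\phi(\mu_{v_j})$ and grafting the resulting tree in place of the vertex. The sign is the Koszul sign $(-1)^{|\phi|(|\mu_{v_1}|+\dots+|\mu_{v_{j-1}}|)}$. Equivalently, this is the composite
\[
\TT(M)\xrightarrow{\ \id\circ'\phi\ }\TT(M;\TT(M))\xrightarrow{\ \gamma\ }\TT(M),
\]
mirroring the formula $d_\varphi=(\gamma_{(1)}\circ\id)(\id\circ'\varphi)$ for free algebras stated above. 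I then check two properties.
\begin{enumerate}
\item $d_\phi$ is well defined on $\bigoplus_T M(T)$, i.e.\ it is compatible with the $\SS_n$-coinvariance relations \eqref{eq:equivout} and \eqref{eq:equivinn}. This holds because $\phi$ is $\SS$-equivariant and grafting is equivariant.
\item $d_\phi$ satisfies the Leibniz rule for grafting. The vertex set of $\gamma(T;T_1,\dots,T_k)$ is the disjoint union of the vertex sets of $T,T_1,\dots,T_k$, so the sum over vertices splits accordingly. The Koszul signs are exactly the $\epsilon_i=|\mu|+|\mu_1|+\dots+|\mu_{i-1}|$ of the derivation formula, by associativity of grafting.
\end{enumerate}
Finally, $d_\phi\circ\eta(M)=\phi$ holds by construction.

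The main obstacle is purely bookkeeping: verifying that the Koszul signs are consistent under reordering of vertices. Two points need care here. First, the sign must be independent of the chosen planar representative, which follows from the symmetric monoidal structure of $\bicok$, using the total degree. Second, in the bigraded setting both differentials act through total degree, so the same argument applies verbatim to $\del$- and $\delb$-type derivations.
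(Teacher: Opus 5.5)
Your proposal is correct and follows the standard argument the paper relies on: the paper states this without proof as a known fact about free operads, and then describes $d_\phi$ exactly as you construct it, with $\phi$ applied at one vertex at a time and the results summed with Koszul signs. Your uniqueness step, induction on the number of vertices via the Leibniz rule, is the expected complement; it implicitly uses that derivations are maps of bb-$\SS$-modules, hence commute with the right $\SS_n$-action on leaf labels, so it is worth stating that equivariance explicitly.
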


When representing an element of $\TT(M)$ by a labeled tree, its image under $d_\phi$ is the sum of labeled trees, where $\phi$ has been applied once and only once to every vertex. Such operads are called \emph{quasi-free} operads. It means that the underlying bigraded operad, forgetting the differentials, is free.

\begin{rema}\label{rema:weight-grading}
   Let \(M\) be a \(\bbsmod\)-module, the free operad on \(M\), \(\TT(M)\), has a weight-grading. 
    By definition, the \emph{weight} \(w(\mu)\) of an operation \(\mu \in \TT(M)\)
    is defined as follows:
    \[
    w(\mathrm{id}) = 0,\qquad
    w(\mu) = 1 \ \text{when}\ \mu \in M,
    \]
    and
    \[
    w(\nu;\,\nu_1,\ldots,\nu_n)
    = w(\nu) + w(\nu_1) + \cdots + w(\nu_n).
    \]
We denote by \(\TT(M)^{(r)}\) the \(\bbsmod\)-module of operations of weight \(r\).
\end{rema}

A completely analogous construction exists for the free non-symmetric operad using planar trees instead, see \cite[Sections 5.9.5 and 5.9.6]{LoVa12} for details. 

\subsection{Cooperads and coalgebras over cooperads}\label{subsec:CoopsCoalgs}

We next review the dual notions of operads and algebras, that is, cooperads and coalgebras.

\begin{defi}
    A \emph{cooperad} is a comonoid in the monoidal category $(\bicosmod, \circ, \mathrm{I})$. Explicitly, this consists of a  $\bbsmod$-module $\Cc$ equipped with morphisms
    \[
    \Delta : \Cc \to \Cc \circ \Cc\quad\text{ and }\quad \varepsilon : \Cc \to \mathrm{I},
    \]
    satisfying coassociativity and counit axioms 
\[\begin{tikzcd}[ampersand replacement=\&]
	\Cc \&\& {\Cc\circ\Cc} \&\&\& \Cc \\
	{\Cc\circ\Cc} \& {(\Cc\circ\Cc)\circ\Cc} \& {\Cc\circ(\Cc\circ\Cc)} \&\& {\mathrm{I}\circ\Cc} \& {\Cc\circ\Cc} \& {\Cc\circ\mathrm{I}}
	\arrow["\Delta", from=1-1, to=1-3]
	\arrow["\Delta"', from=1-1, to=2-1]
	\arrow["{\id\circ\Delta}", from=1-3, to=2-3]
	\arrow["{=}"', from=1-6, to=2-5]
	\arrow["\Delta", from=1-6, to=2-6]
	\arrow["{=}", from=1-6, to=2-7]
	\arrow["{\Delta\circ\id}", from=2-1, to=2-2]
	\arrow["\cong", from=2-2, to=2-3]
	\arrow["{\varepsilon\circ\id}"', from=2-6, to=2-5]
	\arrow["{\id\circ\varepsilon}", from=2-6, to=2-7]
\end{tikzcd}\]
\end{defi}
We denote the category of cooperads by $\coops$. We will sometimes adopt the following notation
\[\Delta(\mu) = \sum(\nu;\, \nu_{1}, \ldots, \nu_{k})\sigma.\]
And sometimes use the shorter notation
\[
\Delta(\nu)
=
\sum 
\nu_{(1)} \circ \nu_{(2)},
\]
where 
\[
\nu_{(2)} = 
(\nu_{1} ,\dots, \nu_{k})\sigma
\in \Cc^{\otimes k}(n).
\]

The differentials \(\del_\Cc\) and \(\delb_\Cc\) are coderivations, under this notation, this means
\[
\Delta\del_{\Cc}(c)
=
\sum(\del_{\Cc}(c)\,;\, c_{1},\ldots,c_{k})\sigma
+
\sum\sum_{i=1}^{k}
(-1)^{\theta_{i}}\,
(c\,;\, c_{1},\ldots,\del_{\Cc}(c_{i}),\ldots,c_{k})\sigma,
\]
where \(\theta_{i} = |c| + |c_{1}| + \cdots + |c_{i-1}|\).

Given a cooperad \((\mathcal{C},\Delta,\eta)\), one may isolate from the full
decomposition map \(\Delta : \mathcal{C} \to \mathcal{C}\circ \mathcal{C}\) the part that
corresponds to decomposing into two parts. Let \(M\) and \(N\) be two \(\bbsmod\)-modules, we denote by 
\(M \circ_{(1)} N
\)
the \(\bbsmod\)-module defined in arity \(n\) by
\[
(M \circ_{(1)} N)(n)
\;=\;
\bigoplus_{k+\ell = n+1}
M(k) \otimes_{\SS_k}
\left(
\bigoplus_{1 \le j \le k}
\mathrm{Ind}_{\SS_{1}^{\times (j-1)}\times \SS_\ell\times \SS_1^{\times (k-j)}}^{\SS_n} \mathrm{I}^{\otimes j-1}\otimes N(\ell)\otimes \mathrm{I}^{k-j}
\right).
\]
This yields the
\emph{infinitesimal decomposition map} of \(\mathcal{C}\), defined as the
composition
\[
\Delta_{(1)} \colon
\Cc \xrightarrow{\;\Delta\;}
\Cc \circ \Cc 
\;\longrightarrow\;
\Cc \circ_{(1)} \Cc.
\]

The infinitesimal decomposition may be written in the form
\[
\Delta_{(1)}(\nu)
=
\sum_{i}
(\nu_{i}' \circ_{e_{i}} \nu_{i}'')
\,\tau_{i},
\]
for 
\(\nu_{i}' \in \Cc(k'_{i})\),
\(\nu_{i}'' \in \Cc(k''_{i})\),
and \(\tau_{i} \in \SS_{n}\),
where
\[
\nu_{i}' \circ_{e_{i}} \nu_{i}'' 
=
\nu_{i}' \circ 
( \id^{e_i-1} \otimes 
\nu_{i}'' \otimes 
\id^{k_i'-e_i}).
\]

If $\Cc$ is a cooperad, then the Schur functor is a comonad, see \cite{GeJo94}. 
\begin{defi}
    Let \(\Cc\) be an operad. A \emph{$\Cc$-coalgebra} is a coalgebra over the comonad $\Cc$. That is, an object $C$ in $\bicok$ together with maps of bicomplexes $\Delta_C:C\to\Cc(C)$ and $\varepsilon:\Cc(C)\to C$ satisfying the coassociativity and counit constraints:
    \[\begin{tikzcd}[ampersand replacement=\&]
	C \& {\Cc(C)} \& C \& {\Cc(C)} \\
	{\Cc(C)} \& {\Cc\circ\Cc(C)} \&\& C
	\arrow["\Delta_C", from=1-1, to=1-2]
	\arrow["\Delta_C"', from=1-1, to=2-1]
	\arrow["{\Cc(\Delta_C)}", from=1-2, to=2-2]
	\arrow["\Delta_C", from=1-3, to=1-4]
	\arrow[no head, from=1-3, to=2-4]
	\arrow[shift left, no head, from=1-3, to=2-4]
	\arrow["\varepsilon", from=1-4, to=2-4]
	\arrow["{\Delta_{\Cc}(C)}", from=2-1, to=2-2]
\end{tikzcd}\]
\end{defi}

The following definition will be used to twist codifferentials on a \(\Cc\)-coalgebra.

\begin{defi}
    A \emph{coderivation} of a \(\Cc\)-coalgebra is a linear map \(d\colon C\to C\) such that the following diagram commutes
   \[
    \begin{tikzcd}[ampersand replacement=\&]
    	C \& {\Cc(C)} \\
    	C \& {\Cc(C)}
    	\arrow["\Delta_C", from=1-1, to=1-2]
    	\arrow["d"', from=1-1, to=2-1]
    	\arrow["{\id\circ' d}", from=1-2, to=2-2]
    	\arrow["\Delta_C", from=2-1, to=2-2]
    \end{tikzcd}
    \]
\end{defi}
The bigraded vector space of coderivations on a \(\Cc\)-coalgebra \(C\), \(\mathrm{Coder}(C)\), is a bidifferential bigraded Lie algebra with bracket the graded commutator and differentials the graded commutators with the differentials of \(C\).

\begin{defi}
    A \emph{pair of codifferentials} of a \(\Cc\)-coalgebra consists in two coderivations \((\tau,\ov\tau)\) of degree \((1,0)\) and \((0,1)\), respectively, such that 
    \[(\del + \tau)^2 = 0,\quad(\delb + \ov\tau)^2 = 0\quad\text{ and }\quad(\del + \tau)(\delb + \ov\tau)+(\delb + \ov\tau)(\del + \tau) = 0.\]
\end{defi}

If \((C,\del,\delb)\) is a \(\Cc\)-coalgebra and \((\tau,\ov\tau)\) is a pair of codifferentials, then the bicomplex
\((C,\del+\tau,\delb+\ov\tau)\) is a \(\Cc\)-coalgebra with the same structure map
\(\Delta_C\colon C \to \Cc(C)\) as \(C\).

Since a cooperad \(\Cc\) is a comonad, there is the notion of cofree coalgebra. The \emph{cofree \(\Cc\)-coalgebra} over a bicomplex \(V\) is the algebra given by \(\Cc(V)\) itself, and the coalgebra structures \(\Delta_C\colon\Cc(V)\to\Cc\circ\Cc(V)\) and \(\eta_C\colon\Cc\circ\Cc(V)\to\Cc(V)\) are given by the comonad structure of \(\Cc\).

For any \(\Cc\)-coalgebra \(C\) and any linear map \(f \colon C \to V\), there is a
unique \(\Cc\)-coalgebra extension \(\tilde f \colon C \to \Cc(V)\) of \(f\):
\[\begin{tikzcd}[ampersand replacement=\&]
	\& {\Cc(V)} \\
	C \& V
	\arrow["\varepsilon", from=1-2, to=2-2]
	\arrow["{\tilde f}", from=2-1, to=1-2]
	\arrow["f"', from=2-1, to=2-2]
\end{tikzcd}\]
Note that a cofree coalgebra is unique up to a unique isomorphism.

The following will be very useful to construct coderivations of cofree coalgebras.
\begin{prop}
    Any coderivation \(d\) on a cofree \(\Cc\)-coalgebra \(\Cc(V)\) is completely characterized by its projection onto the space of cogenerators \(V\).
Explicitly, given a map \(\varphi \colon \Cc(V) \to V\), the unique coderivation
\(d_\varphi\) on the cofree \(\Cc\)-coalgebra \(\Cc(V)\) that extends \(\varphi\)
is given by
\[
d_\varphi
  \;=\;
  \bigl(\id_\Cc \circ (\id_V \,;\, \varphi)\bigr)
  \bigl(\Delta_{(1)} \circ \id_V\bigr)
\]
where \(\Delta_{(1)} : \Cc \to \Cc \circ_{(1)} \Cc\) is the infinitesimal
decomposition map of the cooperad. 
The map
\[
\id_\Cc \circ (\id_V \,;\,\varphi) :
(\Cc \circ_{(1)} \Cc)(V) \longrightarrow \Cc(V)
\]
acts by applying \(\id_V\) to the \(V\)-tensor factor and applying
\(\varphi\) to the \(\Cc(V)\)-tensor factor.
\end{prop}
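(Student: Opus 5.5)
The plan is to prove existence and uniqueness separately, as in the classical case for cofree coalgebras (Section 6.3.7 of \cite{LoVa12}); the only new point is that everything now lives in $\bicok$. For uniqueness, let $d$ be a coderivation of $\Cc(V)$ and put $\varphi:=\varepsilon\circ d\colon\Cc(V)\to V$, where $\varepsilon$ is the projection onto cogenerators (through the counit $\Cc\to\mathrm{I}$). Two coderivations with the same projection differ by a coderivation $D$ with $\varepsilon D=0$. I will show that such a $D$ vanishes. Compose the coderivation identity $\Delta_{\Cc(V)}D=(\id\circ' D)\Delta_{\Cc(V)}$ with the projection $\Cc\circ\Cc(V)\to\Cc(V)$ given by $\varepsilon$ on the inner layer. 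Because of the counit axiom, this recovers $D$ on the left. On the right it gives $\id_\Cc$ applied to the tuple of the $\varepsilon D(x_i)$, and that is zero. For this step I need to check that the identification of $\id\circ' D$ with the infinitesimal decomposition uses only the monoidal structure. I also need to check that the Koszul signs coming from the total degree $|D|$ cancel. Both hold because $\bicok$ is symmetric monoidal, the symmetry carrying the total-degree sign.

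For existence, I define $d_\varphi$ by the stated formula. First, it is a linear map of the right bidegree, since $\Delta_{(1)}$ preserves bidegree and $\id_\Cc\circ(\id_V;\varphi)$ shifts it by that of $\varphi$. Second, $\varepsilon d_\varphi=\varphi$: under $\varepsilon$, only the summand of $\Delta_{(1)}$ of the form $\id\circ_1\nu$ survives, and the counit axiom for $\Delta$ turns that summand into $\varphi$. Third, $d_\varphi$ is a coderivation. This amounts to the identity
\[
\Delta_{\Cc(V)}\, d_\varphi=(\id\circ' d_\varphi)\,\Delta_{\Cc(V)},
\]
which I deduce from coassociativity of $\Delta$, restricted to the components where exactly one marked vertex carries $\varphi$. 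The decompositions of $\nu$ into a two-level tree with one distinguished vertex match on both sides: either the distinguished vertex lies in the lower level, or it lies in the upper level. This is exactly the classical argument, with $\SS_n$-equivariance handled via the relations \eqref{eq:equivout} and \eqref{eq:equivinn}.

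The remaining subtlety, and the part I would check most carefully, is the bidifferential structure. "Coderivation" here means a linear map of bigraded objects, not necessarily commuting with $\del,\delb$, so the existence argument above works at the level of bigraded $\SS$-modules once I forget the differentials. The compatibilities of $\Delta$ with $\del_\Cc,\delb_\Cc$ are not needed for the statement. I would still verify that, when $\varphi$ is replaced by $\varphi+\varepsilon\del$, the induced coderivation is $d_\varphi+\del_{\Cc(V)}$. This shows that the internal differentials of $\Cc(V)$ are themselves the coderivations $d_{\varepsilon\del}$ and $d_{\varepsilon\delb}$, consistent with the use of pairs of codifferentials later. I expect the main obstacle to be the sign bookkeeping in the coassociativity step when $\varphi$ has odd total degree. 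I would handle it by working with total degree only and invoking the symmetric monoidal coherence of $\bicok$, rather than tracking bidegrees.
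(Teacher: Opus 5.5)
Your proof is correct. The paper gives no proof of this proposition: it imports the classical result from \cite{LoVa12}. Since the statement only involves the underlying bigraded $\SS$-modules, the classical argument carries over verbatim to $\bicok$, and your proposal is exactly that argument.

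Your uniqueness computation in fact gives the formula outright. Apply $\Cc(\varepsilon)$ to $\Delta_{\Cc(V)}D=(\id\circ' D)\Delta_{\Cc(V)}$ for an \emph{arbitrary} coderivation $D$. Putting $\varepsilon$ on all inner slots but one, summed over the free slot, is exactly the projection $\Cc\circ\Cc\to\Cc\circ_{(1)}\Cc$ that defines $\Delta_{(1)}$. You therefore get
\[
D=\bigl(\id_\Cc\circ(\id_V\,;\,\varepsilon D)\bigr)\bigl(\Delta_{(1)}\circ\id_V\bigr),
\]
and existence reduces to the coassociativity check. That check needs no ``lower/upper level'' case split. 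On both sides the $\varphi$-vertex is the only non-trivial top vertex of a three-level tree. The identity is $(\Delta\circ\id)\Delta=(\id\circ\Delta)\Delta$ followed by $\varepsilon$ on all other top vertices, and the counit axiom then restores the untouched middle vertices.

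You should drop one auxiliary claim, because it is false in exactly the case the paper uses. The internal differential $\del_{\Cc(V)}=\del_\Cc\circ\id_V+\id_\Cc\circ'\del_V$ is not $d_{\varepsilon\del}$ when $\del_\Cc\neq 0$.

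\begin{itemize}
\item Since $\varepsilon\colon\Cc\to\mathrm{I}$ commutes with the differentials, $\varepsilon\del_\Cc=0$. Your formula therefore gives $d_{\varepsilon\del_{\Cc(V)}}=\id_\Cc\circ'\del_V$ only.
\item The remaining piece $\del_\Cc\circ\id_V$ is not a coderivation in the paper's strict sense. Because $\Delta$ commutes with $\del$,
\[
\Delta_{\Cc(V)}(\del_\Cc\circ\id_V)=\bigl(\del_\Cc\circ\id_{\Cc(V)}+\id_\Cc\circ'(\del_\Cc\circ\id_V)\bigr)\Delta_{\Cc(V)},
\]
which has an extra term acting on the outer layer. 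Your own uniqueness argument confirms this: its projection to $V$ is zero, so if it were a coderivation it would have to vanish.
\item For $\Cc=\Pp^{\antshrk}_{pp}=\Inf(\Pp^{\antshrk})$ the staircase differentials are nonzero. For example, $\del(-1,-1)_{-1}=-(0,-1)_{-1}$ in $As^{\antshrk}_{pp}(2)=\ele_{-1}$.
\end{itemize}

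The consistent reading of the later sections is this. The maps $\del_1,\delb_1$ are the internal differentials, for which $\Delta_{\Cc(V)}$ is a map of bicomplexes. Only the perturbations $d^c_{\varphi^{10}},d^c_{\varphi^{01}}$ are coderivations of the form $d_\varphi$, matching the definition of a pair of codifferentials.

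As you already note, the differentials play no role in the statement, so this does not affect your proof.
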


The \emph{cofree cooperad} over a \(\bbsmod\)-module \(M\) is a cooperad \(\TT^c(M)\) such that 
for any \(\bbsmod\)-module morphism \(\varphi : \Cc \to M\) sending \(\id\) to \(0\), 
there exists a unique cooperad morphism 
\(\widetilde{\varphi} : \Cc \to \TT^{c}(M)\) making the following diagram commute:
\[
\begin{tikzcd}
C \arrow[dr, "\varphi"'] \arrow[r, "\widetilde{\varphi}"] & 
\TT^{c}(M) \arrow[d] \\
& M
\end{tikzcd}
\]
The following property will be very useful to construct coderivations
\begin{prop}
    Let $M$ be a $\mathrm{bb}\text{-}\SS\,$-module. Any coderivation $d_\phi\colon \TT(M)\to\TT(M)$ of the free operad $\TT(M)$ is completely characterized by the image to the cogenerators $\phi\colon \TT(M)\to M$.
    \end{prop}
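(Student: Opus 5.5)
The statement should be read for the cofree cooperad \(\TT^c(M)\), which is conilpotent and whose underlying \(\bbsmod\)-module is again \(\bigoplus_{T}M(T)\) (sum over rooted trees, or a colimit when \(M(0)\neq 0\)), now with the decomposition map given by cutting trees. My plan has two parts. First, the assignment \(d\mapsto \mathrm{pr}_M\circ d\) from coderivations to linear maps \(\TT^c(M)\to M\) is injective. Second, every linear map \(\phi\colon\TT^c(M)\to M\) arises this way, via an explicit formula dual to the one for derivations of \(\TT(M)\) given above. Since the differentials \(\del,\delb\) of \(M\) enter only through the Koszul sign rule, I will work with the underlying bigraded cooperad and a homogeneous coderivation of some bidegree \((a,b)\). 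The signs are governed by the total degree \(a+b\), exactly as in the dg case.

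\textbf{Uniqueness.} I would use the weight grading of Remark~\ref{rema:weight-grading}, transported to \(\TT^c(M)\). Let \(d\) be a coderivation with \(\mathrm{pr}_M\circ d=0\); I will show by induction on the weight \(r\) that \(d\) vanishes on \(\TT^c(M)^{(r)}\). On weight \(0\) (the identity), \(d=0\) by counitality and degree reasons. On weight \(1\), \(d\) lands in weight \(1\) components only after projection, so \(d\) vanishes on weight \(1\) since \(\mathrm{pr}_M\circ d=0\). For \(r\geq 2\) I use the cogeneration property of the cofree cooperad. An element \(x\) with \(\mathrm{pr}_M(x)=0\) is zero as soon as its reduced decomposition \(\bar\Delta(x)\), meaning \(\Delta\) minus the two trivial terms, vanishes; equivalently, the reduced infinitesimal decomposition \(\Delta_{(1)}\) is injective off the cogenerators. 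The coderivation identity expresses \(\bar\Delta(d x)\) as a sum of terms \(d\) applied to single factors of \(\bar\Delta(x)\). Those factors have strictly smaller weight, so by the induction hypothesis they are killed, giving \(\bar\Delta(dx)=0\), and hence \(dx=0\).

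\textbf{Existence.} Given \(\phi\), I define \(d_\phi\) on a decorated tree \(T\) as the signed sum, over all subtrees \(S\subseteq T\) formed by a connected set of vertices, of the tree \(T/S\) in which \(S\) is collapsed to a single vertex decorated by \(\phi(S)\). This is the same as
\[
d_\phi=\bigl(\id\circ(\id;\phi)\bigr)\circ\bigl(\text{the part of }\Delta\text{ with one nontrivial lower factor}\bigr)
\]
composed with the appropriate projection, the cooperad analogue of the formula for cofree coalgebras stated above. Conilpotency makes every sum finite. By construction \(\mathrm{pr}_M\circ d_\phi=\phi\), because collapsing the whole tree is the only summand landing in weight one. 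It then remains to check the coderivation identity \(\Delta d_\phi=(d_\phi\circ\id+\id\circ' d_\phi)\Delta\). Both sides are sums over a cut of \(T\) into a lower tree and upper trees together with a connected subtree \(S\) to collapse. I would match them term by term:
\begin{itemize}
\item on the left, \(S\) is collapsed before cutting;
\item on the right, \(S\) lies entirely in the lower piece or entirely in one upper piece.
\end{itemize}
A cut of \(T/S\) always lifts uniquely to a cut of \(T\) with \(S\) contained in one piece, since the collapsed vertex cannot be split. This gives a bijection between the two index sets.

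\textbf{Main obstacle.} The combinatorics of the bijection are standard. The step I expect to be the main obstacle is verifying that the Koszul signs agree under this bijection and that the result is compatible with the \(\SS_n\)-equivariance relations \eqref{eq:equivout} and \eqref{eq:equivinn}. My first approach is to use that these signs depend only on total degrees. That lets me reduce to the known dg statement (e.g.\ \cite{LoVa12}, Section~6.3) applied to the totalization, with \(\phi\) of total degree \(a+b\). I would then observe that bidegrees are additive along every summand, so the bigrading is preserved automatically.
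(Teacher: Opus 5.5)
Your reading of the statement as being about the cofree conilpotent cooperad $\TT^c(M)$ is correct, and your plan is the standard argument. The paper gives no proof of its own; it simply transports the dg statement of \cite{LoVa12}. Your closing remark is in fact a complete proof on its own:
\begin{itemize}
\item a bihomogeneous coderivation of bidegree $(a,b)$ is a graded coderivation of total degree $a+b$ on the totalization;
\item there, $d\mapsto \mathrm{pr}_M\circ d$ is a bijection;
\item its inverse is built from bidegree-preserving maps and $\phi$, so it is bihomogeneous of bidegree $(a,b)$.
\end{itemize}
Your weight induction and tree-collapsing formula are then a hands-on reproof of that dg fact. Two steps in the hands-on version need repair.

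\textbf{Existence when $\phi(\id)\neq 0$.} Your formula collapses only nonempty connected subtrees, so it gives $d_\phi(\id)=0$. Hence the claim $\mathrm{pr}_M\circ d_\phi=\phi$ fails whenever $\phi(\id)\neq 0$, which can happen as soon as $M(1)$ has elements in the bidegree of $\phi$. The coderivation identity forces extra terms. Take a corolla $v$ and set $m=\phi(\id)$. Then $\Delta(dv)=(d\circ\id+\id\circ' d)\Delta(v)$ contains $(m;v)$ and $\pm(v;\id,\dots,m,\dots,\id)$. So $d_\phi(T)$ must also contain, with signs, the trees obtained by inserting $m$ on each edge of $T$: the root, the leaves and the internal edges.
\begin{itemize}
\item With these terms added, your bijection still works. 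An insertion on an edge crossed by the cut corresponds to the two cuts of the enlarged tree, just below and just above $m$.
\item Alternatively, state the result only for $\phi$ with $\phi(\id)=0$, which is automatic when $M(1)=0$.
\end{itemize}
Your displayed one-line formula is also not equivalent to the collapsing description. One decomposition followed by $\phi$ on one factor collapses only whole upper subtrees; isolating an arbitrary connected $S$ needs a second decomposition. Since you never use that formula, this is cosmetic.

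\textbf{Weight $0$ in the uniqueness part.} Counitality only gives $\varepsilon\circ d=0$, and degree reasons do not kill $d(\id)$ in general. The right argument is that $\Delta(d(\id))=d(\id)\circ\id+\id\circ d(\id)$. So $d(\id)$ is primitive, hence lies in $M(1)$, hence equals $\mathrm{pr}_M(d(\id))=0$. The same primitivity argument handles weight one, and after that your induction via $\bar\Delta$ is fine.
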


\section{Minimal models of operads}\label{sec:MinimalModSul}

We prove the existence and uniqueness of minimal models for reduced operads in \(\bicok\) with respect to pluripotential weak equivalences. The models we give are defined inductively on arity, in the spirit of Sullivan’s minimal models and following the approach of Markl–Shnider–Stasheff \cite{MaShniSta02}.

\subsection{Basic pluripotential operad homotopy theory} 
We introduce principal extensions of operads in bicomplexes and show that these satisfy a lifting property with respect to pluripotential weak equivalences. We mainly adapt results from \cite{MaShniSta02}, \cite{Roig22} and \cite{CiRoig19} on basic homotopy theory of operads, to the pluripotential case. In particular, we introduce a functorial path for operads in bicomplexes and study the associated notion of homotopy equivalence. 
 
 We will work with \emph{reduced} operads, i.e. operads \(\Pp\) satisfying \(\Pp(0)=0\) and \(\Pp(1)=\kk\). Reduced operads are \emph{augmented}, that is, there exists an operad morphism \(\varepsilon:\Pp\to\mathrm{I}\).

\begin{defi}
    Let $n\geq 2$ be an integer. Let $(\Pp,\del_\Pp,\delb_\Pp)$ be a quasi-free operad, $\Pp=\TT(M)$ where $M$ is a $\bbsmod$-mod with $M(0)=M(1)=0$. An \emph{arity n principal extension} of $\Pp$ is the quasi-free operad \[\Pp\sqcup_d\TT(E):=\TT(M\oplus E),\]
    where $E$ is a bigraded $\SS\,$-module concentrated in arity $n$ and $d=(\del+\delb):E\to\ker((\del_\Pp+\delb_\Pp)(n))$ a map of degree $(1,0)+(0,1)$. The differentials on $\Pp\sqcup_d\TT(E)$ are determined by the differentials of $\Pp$, $d$ and the Leibniz rule.
\end{defi}

\begin{lemm}\label{lemm:univpropprincext}
    Let $\Pp\sqcup_d\TT(E)$ be an arity $n$ principal extension of a quasi-free operad $\Pp=\TT(M)$ and let $\varphi:\Pp\to \Qq$ be a morphism of operads. 
    A morphism $\varphi'\colon \Pp\sqcup_d\TT(E)\to Q$ extending $\varphi$ is uniquely determined by a morphism of $\SS\,$-modules $f\colon E\to Q(n)$ satisfying $(\del_\Qq+\delb_\Qq) f=\varphi d$.
\end{lemm}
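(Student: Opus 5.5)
The plan is to use the universal property of the free operad, exactly as in the classical Markl--Shnider--Stasheff argument, and then check compatibility with the two differentials separately. Forgetting differentials, $\Pp\sqcup_d\TT(E)=\TT(M\oplus E)$ is the free operad on the bigraded $\SS$-module $M\oplus E$. A morphism of graded operads $\TT(M\oplus E)\to\Qq$ is therefore the same as a pair of equivariant bidegree-preserving maps $M\to\Qq$ and $E\to\Qq$. Here $E$ is concentrated in arity $n$, so the second map is just an $\SS_n$-equivariant $f\colon E\to\Qq(n)$. Requiring $\varphi'$ to extend $\varphi$ fixes the restriction to $M$ as $\varphi|_M$. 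This gives uniqueness at once: any extension is determined by $f:=\varphi'|_E$. Conversely, any such $f$ defines a unique graded operad morphism $\varphi'$ extending $\varphi$.

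Next I characterise when this graded morphism commutes with $\del$ and $\delb$. Let $\partial$ stand for either differential on $\TT(M\oplus E)$. Both $\partial_\Qq\varphi'$ and $\varphi'\partial$ are derivations of $\TT(M\oplus E)$ along the morphism $\varphi'$; that is, they satisfy the Leibniz rule with respect to $\gamma$ and the sign $(-1)^{\epsilon_i}$, with $\varphi'$ inserted into the other slots. By the derivation property of free operads (the proposition that a derivation of $\TT(M)$ is characterised by its values on generators, in its relative version along $\varphi'$), two such derivations agree if and only if they agree on the generators $M\oplus E$. On $M$ they agree because $\varphi$ is already a morphism of operads in bicomplexes and the differential of $\Pp\sqcup_d\TT(E)$ restricts on $\Pp$ to that of $\Pp$. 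On $E$, the differential is by definition $d$, with image in $\Pp(n)$. So the condition becomes $\partial_\Qq f=\varphi\,\partial d$ for each of $\partial=\del,\delb$, where $d=\del+\delb$ is split into its components of bidegree $(1,0)$ and $(0,1)$.

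Finally I would repackage the two conditions as the single one in the statement. Since $\Qq(n)$ is bigraded and $f$ preserves bidegree, $\del_\Qq f$ and $\delb_\Qq f$ live in distinct bidegree components; the same holds for $\varphi\del d$ and $\varphi\delb d$. Hence $(\del_\Qq+\delb_\Qq)f=\varphi d$ holds if and only if the two component equations hold.

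The main obstacle is the middle step: justifying that a derivation along a morphism is determined by its value on generators, with the bicomplex Koszul signs. I would handle it by induction on tree weight (Remark~\ref{rema:weight-grading}). Both sides are computed on a decorated tree by applying $\partial$ at one vertex at a time, and they agree at each vertex by the generator case. Equivariance of $f$ is needed for $\varphi'$ to be $\SS$-equivariant, and I assume it is implicit in "morphism of $\SS$-modules".
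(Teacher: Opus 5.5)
Your proposal is correct and takes the same route as the paper. The universal property of the free operad $\TT(M\oplus E)$ fixes $\varphi'$ by $\varphi|_M$ and $f:=\varphi'|_E$, and compatibility with the differentials on $E$ then gives $(\del_\Qq+\delb_\Qq)f=\varphi d$. Your version is more complete: the paper's one-line proof only checks that $f$ satisfies this equation, whereas you also prove the converse (the two $\varphi'$-derivations agree on generators) and split the single equation into its $\del$- and $\delb$-components by bidegree, and both facts are needed when the lemma is applied in Lemma~\ref{liftingprop}.
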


\begin{proof}
    By the universal property of free operads, any such $\varphi'$ is determined by its restriction $\varphi'_{|M\oplus E}$, and $\varphi'_{|M}=\varphi$, set $f:=\varphi'_{|E}$. We have $(\del_\Qq+\delb_\Qq)f=(\del_\Qq+\delb_\Qq)\varphi'_{|E}=\varphi'\circ d_{|E}=\varphi d.$  
\end{proof}

The following is a pluripotential version of Lemma 3.139 in \cite{MaShniSta02}. We adapt the proof given in \cite{CiRoig19}.
In our current setting, 
weak equivalences and surjections are given by arity-wise pluripotential weak equivalences and surjections respectively. 

\begin{lemm}
    Let $\iota\colon\Pp\to \Pp\sqcup_d\TT(E)$ an arity $n$ principal extension and
    \[\begin{tikzcd}
	\Pp & \Qq \\
	\Pp\sqcup_d\TT(E) & \Rr
	\arrow["\iota"', from=1-1, to=2-1]
	\arrow["\psi", from=2-1, to=2-2]
	\arrow["{\rotatebox{90}{$\sim$}}"', "\omega", two heads, from=1-2, to=2-2]
	\arrow["\varphi", from=1-1, to=1-2]
	\arrow["{\psi'}", dashed, from=2-1, to=1-2]
\end{tikzcd}\]
a commutative diagram of operad morphisms, where $\omega$ is a surjective pluripotential weak equivalence. Then, there exists an operad morphism $\psi'$ making both triangles commute.
\label{liftingprop}
\end{lemm}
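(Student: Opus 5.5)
By Lemma~\ref{lemm:univpropprincext}, constructing the lift $\psi'$ amounts to choosing an $\SS_n$-equivariant map $f\colon E\to \Qq(n)$ of bidegree $(0,0)$ with three properties:
\begin{itemize}
\item $\del_\Qq f=\varphi\,\del|_E$ and $\delb_\Qq f=\varphi\,\delb|_E$, where we split $d=\del+\delb$ into its bidegree components;
\item $\omega f=\psi|_E$, which makes the lower triangle commute;
\item the upper triangle then commutes automatically, since $\psi'|_\Pp=\varphi$.
\end{itemize}
Since characteristic zero makes $\kk[\SS_n]$ semisimple, it suffices to construct $f$ on a bigraded basis of a set of $\SS_n$-generators of $E$ and then average over $\SS_n$. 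Averaging preserves all the linear conditions, because $\varphi$, $\omega$, $\psi$ and the differentials are all equivariant. So fix a homogeneous generator $e\in E^{p,q}$ and put
\[
x:=\varphi(\del e)\in \Qq(n)^{p+1,q},\qquad \bar x:=\varphi(\delb e)\in\Qq(n)^{p,q+1},\qquad r:=\psi(e)\in\Rr(n)^{p,q}.
\]
We need $y\in\Qq(n)^{p,q}$ with $\del y=x$, $\delb y=\bar x$ and $\omega y=r$.

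First I will record the consistency conditions these data satisfy. The relations $\del^2=\delb^2=0$ and $\del\delb+\delb\del=0$ on the extension give
\[
\del x=0,\qquad \delb\bar x=0,\qquad \delb x+\del\bar x=0.
\]
Since $\psi$ extends $\omega\varphi$, we also have $\del r=\omega x$ and $\delb r=\omega\bar x$.

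Next I reduce to a problem in the kernel. Because $\omega$ is surjective, pick any $y_0$ with $\omega y_0=r$ and replace $(x,\bar x)$ by $(x-\del y_0,\ \bar x-\delb y_0)$. The new pair satisfies the same three relations and now lies in $K:=\ker\omega(n)$. Surjectivity of $\omega$ and the long exact sequence of Lemma~\ref{lemm:long-exact-seq}, applied to $0\to K\to\Qq(n)\to\Rr(n)\to0$, show that $K$ is pluripotentially acyclic. Equivalently, in a square/zig-zag decomposition $K$ consists only of squares, so its $\del$-, $\delb$-, Bott--Chern and Aeppli cohomologies all vanish. It remains to solve the following problem entirely inside $K$: given $x,\bar x\in K$ with $\del x=0$, $\delb\bar x=0$ and $\delb x=-\del\bar x$, find $z\in K$ with $\del z=x$ and $\delb z=\bar x$. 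Then $y=y_0+z$ works.

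This is the main step, and I will solve it on a direct sum of squares. The square decomposition can be chosen $\SS_n$-equivariantly, as in the remark on representations following Proposition~\ref{prop:Kunneth}. Since $H_\del(K)=0$, write $x=\del z_1$. Then $w:=\bar x-\delb z_1$ satisfies
\[
\del w=\del\bar x+\delb\del z_1=\del\bar x+\delb x=0
\]
and $\delb w=0$. Because $\BC(K)=0$, we get $w=\del\delb u$. Set $z:=z_1-\del u$. Then $\del z=x$ and
\[
\delb z=\delb z_1-\delb\del u=\delb z_1+\del\delb u=\bar x.
\]
The only care needed is with signs and with doing this equivariantly. I will first choose values on basis generators and then symmetrize; this is legitimate because the three equations are linear and equivariant, and the averaged map still solves them. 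Finally, I define $f(e)=y$ and extend $\SS_n$-equivariantly, and Lemma~\ref{lemm:univpropprincext} gives $\psi'$ with both triangles commuting.
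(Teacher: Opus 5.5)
Your proof is correct, but it takes a different route from the paper. The paper encodes all three requirements on $f(e)$ at once as the Bott--Chern cocycle $\lambda(e)=(\varphi\del\delb e,-\varphi\del e,-\varphi\delb e,\psi(e))$ of the cone $C(\omega(n))$. It then lifts this cocycle along $Z(C(\id_{\Qq}(n)))\to Z(C(\omega(n)))$. Surjectivity of that map follows from $\BC(C(\omega(n)))=0$ (the cone criterion for weak equivalences) together with surjectivity of $\omega$, and the last component of the lift is $f(e)$.

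You instead subtract a preimage $y_0$ of $\psi(e)$ and solve the residual problem inside $K=\ker\omega(n)$ by two explicit exactness steps. For a surjection the cone and a shift of the kernel carry the same information, so the two arguments package one idea in dual ways. Yours is more hands-on and avoids cones; the paper's reuses the cone criterion directly and never needs to analyse $K$.

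Two small points.
\begin{enumerate}
\item The long exact sequence gives $\BC(K)=0$ at once, whereas $\A(K)=0$ would also involve the Bigolin terms. But $\BC(K)=0$ is all you use, since it already implies $H_\del(K)=0$. Indeed, if $\del x=0$ then $\delb x=\del\delb u$, so $x+\del u$ is $\del$- and $\delb$-closed, hence equal to $\del\delb v$, and $x=\del(\delb v-u)$.
\item The paper leaves equivariance implicit, and your handling of it needs a small fix. Choose values on a full vector-space basis of $E$ to get a linear $f_0$, then set $f(e)=\frac{1}{n!}\sum_{\sigma\in\SS_n}f_0(e\cdot\sigma^{-1})\cdot\sigma$. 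Prescribing values only on $\SS_n$-generators and ``extending equivariantly'' need not be well defined when $E$ is not free. With averaging, the equivariant square decomposition you mention is unnecessary.
\end{enumerate}
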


\begin{proof}
    Consider the diagram of bigraded $\kk$-vector spaces:
   \[
    \begin{tikzcd}
    & {Z(C(\id_\Qq(n)))} \\
    E & {Z(C(\omega(n)))}
    \arrow["Id^{\oplus 3}\oplus\omega(n)", two heads, from=1-2, to=2-2]
    \arrow["\lambda", from=2-1, to=2-2]
    \arrow["\mu", dashed, from=2-1, to=1-2]
    \end{tikzcd}
    \]
    where \(Z(\_)\) denotes the space of Bott-Chern cocycles, \(C(\_)\) denotes the cone of a map of bicomplexes defined in Section~\ref{subsec:preliminarsbico}, and $\lambda = (\varphi\circ\del\delb, -\varphi\circ \del,-\varphi\circ \delb, \psi_{|E})$. We first show this diagram is well defined. Recall that
    \[
    C(\id_\Qq(n)) = \Qq(n)[-1,-1] \oplus \Qq(n)[-1,0] \oplus \Qq(n)[0,-1] \oplus \Qq(n)
    \]
    with differentials given by
    \begin{equation*}
    \begin{aligned}
    \del(a,  a',  a'', b) &= (\del a, - \del a', - \del a'' - a, \del b + a') \\
    \delb(a,  a',  a'', b) &= (\delb a, - \delb a' + a, - \delb a'', \delb b + a'').
    \end{aligned}
    \end{equation*}
    
    In the same way,
    \[
    C(\omega(n)) = \Qq(n)[-1,-1] \oplus \Qq(n)[-1,0] \oplus \Qq(n)[0,-1] \oplus \Rr(n)
    \]
    with differentials given by
    \begin{equation*}
    \begin{aligned}
    \del(a,  a',  a'', b) &= (\del a, - \del a', - \del a'' - a, \del b + \omega(a')), \\
    \delb(a, a',  a'', b) &= (\delb a,  -\delb a' + a, - \delb a'', \delb b + \omega(a'')).
    \end{aligned}
    \end{equation*}

    Let $(a,a',a'',b)\in Z(C(\id_\Qq(n)))$, then $(\del+\delb)(a) = 0$, $\delb(a') = -\del(a'') = a$, $-(\del+\delb)b = a'+ a''$, and
    \[
    (\id^{\oplus 3}\oplus \omega(n))(a,a',a'',b) = (a,a',a'',\omega(b))\in Z(C(\omega(n)))
    \]
    because $-(\del+\delb)\omega(b) = -\omega((\del+\delb)b) = \omega(a' + a'')$. It is also clear that $\Img(\lambda)\subseteq Z(C(\omega(n)))$ by Lemma~\ref{lemm:univpropprincext}.
    
    We now construct a map \(\mu:E\to Z(C(\id_\Qq(n)))\). Since $\omega$ is a  
    pluripotential weak equivalence, we have that \(\BC(C(\omega(n)))=0\) (see Section \ref{subsec:preliminarsbico}). Let $(a, a', a'', c) \in Z(C(\omega(n))),$ then there is an element $(e, e', e'', f)$ such that $\del\delb(e, e', e'', f) = (a, a', a'', c)$, note that
    \[
    \del\delb(e, e', e'', f) = (\del\delb e, \del\delb e' - \del e, \del\delb e'' - \delb e, \del\delb f + \omega(-\delb e' + \del e'' + e)).
    \]
    
    Since $\omega(n)$ is surjective, we may choose $f' \in Q(n)$ such that $\omega(f') = f$. Consider the element $(a, a', a'', \del\delb f' - \delb e' + \del e'' + e) \in Z(\id_\Qq(n)).$ It satisfies 
    \[
    (\id^{\oplus 3}\oplus\omega)(a,a',a'',\del\delb f' - \delb e' + \del e'' + e) = (a, a', a'', c).
    \]
    This proves $\id^{\oplus 3} \oplus \omega(n)$ is surjective. Thus, we can construct $\mu = (\alpha, \alpha', \alpha'', \beta)(e)$ by choosing a preimage of $\lambda(e)$ through $(\id^{\oplus 3} \oplus \omega(n))$. In particular, we obtain a map \(\beta:E\to\Qq(n)\) such that $-(\del+\delb)\beta(e) = (\alpha'' + \alpha')(e) = -\varphi(\del+\delb)(e)$. According to Lemma~\ref{lemm:univpropprincext}, we may obtain $\psi'$ extending $\varphi$ from $\beta: E \to \Qq(n)$. Moreover, $\omega\beta = \psi|_E$.   
\end{proof}

\begin{defi}
We will say that an operad is \emph{cofibrant} if is the colimit of a sequence of principal extensions starting from the initial operad \(\mathrm{I}\). 
\end{defi}

The following lifting lemma justifies the above definition. Its proof follows by induction using 
 Lemma~\ref{liftingprop}.

\begin{prop}\label{liftingprop2}
    Let $\Oo$ be a cofibrant operad. For every diagram of operads
\[\begin{tikzcd}
    	& \Pp \\
    	\Oo & \Qq
    	\arrow["{\rotatebox{90}{$\sim$}}"',"\omega", two heads, from=1-2, to=2-2]
    	\arrow["{\varphi'}", dashed, from=2-1, to=1-2]
    	\arrow["\varphi", from=2-1, to=2-2]
\end{tikzcd}\]
in which $\omega$ is a surjective pluripotential weak equivalence, there exists a morphism of operads $\varphi'$ making the diagram commute.
\label{cofibrantlifting}
\end{prop}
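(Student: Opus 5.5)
By definition of cofibrancy, $\Oo$ is the colimit of a sequence of principal extensions
\[
\mathrm{I}=\Oo_0\xrightarrow{\iota_1}\Oo_1\xrightarrow{\iota_2}\Oo_2\to\cdots,\qquad \Oo=\mathrm{colim}_k\,\Oo_k,
\]
where each $\Oo_{k}=\Oo_{k-1}\sqcup_{d_k}\TT(E_k)$ is an arity $n_k$ principal extension. Let $j_k\colon\Oo_k\to\Oo$ denote the canonical maps. The plan is to build compatible lifts $\varphi'_k\colon\Oo_k\to\Pp$ with $\omega\varphi'_k=\varphi j_k$ and $\varphi'_k\iota_k=\varphi'_{k-1}$ by induction on $k$. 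Then $\varphi'$ is defined by the universal property of the colimit.

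For the base case, $\Oo_0=\mathrm{I}$ is initial in $\ops$. The unique operad morphism $\varphi'_0\colon\mathrm{I}\to\Pp$ (the unit) satisfies $\omega\varphi'_0=\varphi j_0$, since both sides are the unit map $\mathrm{I}\to\Qq$.

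For the inductive step, assume $\varphi'_{k-1}$ is constructed. Consider the square
\[\begin{tikzcd}
	\Oo_{k-1} & \Pp \\
	\Oo_{k} & \Qq
	\arrow["\iota_k"', from=1-1, to=2-1]
	\arrow["\varphi j_k", from=2-1, to=2-2]
	\arrow["\omega", two heads, from=1-2, to=2-2]
	\arrow["\varphi'_{k-1}", from=1-1, to=1-2]
\end{tikzcd}\]
It commutes because $\omega\varphi'_{k-1}=\varphi j_{k-1}=\varphi j_k\iota_k$. Since $\iota_k$ is a principal extension and $\omega$ is a surjective pluripotential weak equivalence, Lemma~\ref{liftingprop} gives $\varphi'_k\colon\Oo_k\to\Pp$ with $\varphi'_k\iota_k=\varphi'_{k-1}$ and $\omega\varphi'_k=\varphi j_k$.

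The family $(\varphi'_k)$ is a cocone over the sequence, so it induces a unique $\varphi'\colon\Oo\to\Pp$ with $\varphi' j_k=\varphi'_k$. Then $\omega\varphi' j_k=\varphi j_k$ for all $k$, and uniqueness in the universal property of the colimit gives $\omega\varphi'=\varphi$.

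The only point needing care is that the colimit in the definition of cofibrancy is a genuine colimit in $\ops$. This should be unproblematic: it is a sequential colimit of free operads $\TT(M_k)$ along inclusions of generators, so it is computed as $\TT(\mathrm{colim}\,M_k)$, and the morphisms $j_k$ are the obvious inclusions.

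A second point to check is that each stage $\Oo_{k-1}$ is quasi-free on generators with $M(0)=M(1)=0$, which the definition of principal extension requires. This holds by construction, since every $n_k\geq 2$.
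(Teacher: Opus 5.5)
Your proposal is correct and follows the paper's approach. The paper's proof consists only of the remark that the result follows by induction using Lemma~\ref{liftingprop}. Your argument carries out that induction explicitly: you lift stage by stage through the sequence of principal extensions and then use the universal property of the colimit.
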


\begin{rema}
    Let $\Pp$ be an operad and $K$ a commutative bb-algebra. 
    Then $\Pp\otimes K=\{\Pp(n)\otimes K\}_{n\geq 0}$ has a natural operad structure given by $(p\otimes a)\circ_i(q\otimes b)=(-1)^{|a||q|}(p\circ_i q)\otimes (ab)$.
\end{rema}

Consider the cbba $R=\kk[t,\del t,\delb t,\del\delb t]=\Lambda(t,\del t,\delb t,\del\delb t),$ where $|t|=(0,0)$.
We have the unit $\iota\colon\kk\to R$ and evaluations $\delta^0$ and $\delta^1$ at $t=0$ and $t=1$, respectively.
These morphisms satisfy $\delta^0\circ\iota=\delta^1\circ\iota=1.$
By \cite[Lemma 2.13]{Ste25}, $\iota$ is a pluripotential weak equivalence of cbba's. 

\begin{defi}
    We define a \emph{functorial path} in the category of operads as the functor \[-\otimes R:\ops\to \ops,\] given on objects by $\Pp[t,\del t,\delb t,\del\delb t]=\Pp\otimes R$ and on morphisms by $\varphi\otimes\operatorname{id}_R$ together with the natural transformations
    \[\begin{tikzcd}
    	\Pp & {\Pp\otimes R} & \Pp
    	\arrow["\iota", from=1-1, to=1-2]
    	\arrow["{\delta^1}"', shift right=1, from=1-2, to=1-3]
    	\arrow["{\delta^0}", shift left=1, from=1-2, to=1-3]
    \end{tikzcd}\]
     such that $\delta^k\circ\iota=\id,$ where $\delta^k=\id\otimes \delta^k\colon\Pp\otimes R\to \Pp\otimes \kk=\Pp$ and $\iota=\id\otimes\iota\colon\Pp=\Pp\otimes\kk\to\Pp\otimes R$. 
\end{defi}

Since $\iota\colon\kk\to R$ is a pluripotential weak equivalence, we have $\iota(n)\colon\Pp(n)=\Pp(n)\otimes \kk\to \Pp(n)\otimes R$ is also a pluripotential weak equivalence of bicomplexes and hence, $\iota\colon\Pp\to\Pp\otimes R$ is a pluripotential weak equivalence of operads. 
Note also that the path preserves weak equivalences. Also, the map $(\delta^0,\delta^1)\colon\Pp\otimes R\to\Pp$ is surjective. The maps $\delta^0$ and $\delta^1$ are surjective pluripotential weak equivalences.

\begin{defi}
    Let $\varphi,\psi\colon\Pp\to\Qq$ be two morphisms of operads. A \emph{homotopy from} $\varphi$ to $\psi$ is given by a morphism of operads $H\colon\Pp\to\Qq\otimes R$ such that $\delta^0\circ H=\varphi$ and $\delta^1\circ H=\psi$. We use the notation $H\colon\varphi\simeq \psi$.
\end{defi}

We say that $\varphi$ and $\psi$ are \emph{homotopic} if there exists a homotopy $H\colon\varphi\simeq\psi$.

The homotopy relation is symmetric and reflexive.
Moreover, it is transitive whenever the source is a cofibrant operad. The proof of this last fact follows from standard homotopy theory arguments (see for instance \cite{CiRoig19}). 
Given operads $\Oo$ and $\Pp$, with $\Oo$ cofibrant, denote by  $[\Oo,\Pp]$ the set of homotopy classes of morphisms of operads $\Oo\to \Pp$.
The lifting property of Proposition \ref{liftingprop2} gives:
\begin{prop}
    Let $\Oo$ be a cofibrant operad. Any pluripotential weak equivalence $\omega\colon\Pp\to \Qq$  of operads induces a bijection $\omega_*\colon[\Oo,\Pp]\to[\Oo,\Qq]$.
    \label{LiftingUniqueUptoHomCof}
\end{prop}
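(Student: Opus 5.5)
The plan is to prove surjectivity of \(\omega_*\) by lifting and injectivity by a relative lifting argument along the path object \(-\otimes R\). Throughout I use that \(\Oo\) is a colimit of principal extensions starting from \(\mathrm{I}\), that Proposition~\ref{liftingprop2} gives lifts against surjective pluripotential weak equivalences, and that Lemma~\ref{liftingprop} gives relative lifts along a single principal extension. The immediate difficulty is that \(\omega\) itself need not be surjective.

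\textbf{Reduction to the surjective case.} I would factor \(\omega\) through a path-type construction, as in Brown's factorization lemma:
\[
\Pp\longrightarrow \Pp\times_{\Qq}(\Qq\otimes R)\longrightarrow \Qq .
\]
The first map is \(x\mapsto (x,\iota\omega(x))\), which is a section of the surjective weak equivalence given by projection to \(\Pp\). The second map is \((x,H)\mapsto \delta^1(H)\). It is surjective because \(\delta^1\) is surjective with section \(\iota\). To see it is a weak equivalence, note that its composite with the first map is \(\omega\); since the first map is a weak equivalence, two-out-of-three finishes. This gives \(\omega = p\circ s\), where \(s\) is a section of a surjective weak equivalence \(r\) and \(p\) is a surjective weak equivalence.

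Since \(r\circ s=\id\), the map \(s_*\) is a bijection once \(r_*\) is. So it suffices to treat a surjective pluripotential weak equivalence \(\omega\colon \Pp\to\Qq\). I also need to check that homotopy is compatible with composition, which holds because \(-\otimes R\) is functorial. This factorization step is the one I expect to be the main obstacle: it requires that pullbacks of operads along surjections exist arity-wise, and that the pullback of a surjective pluripotential weak equivalence is again one. The latter should follow from the long exact sequence of Lemma~\ref{lemm:long-exact-seq} applied to the kernel sequences, since the kernels agree.

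\textbf{Surjectivity.} Given \(\varphi\colon \Oo\to\Qq\), Proposition~\ref{liftingprop2} produces \(\varphi'\) with \(\omega\varphi'=\varphi\). So every class is hit.

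\textbf{Injectivity.} Suppose \(\varphi_0,\varphi_1\colon \Oo\to\Pp\) and \(H\colon \Oo\to\Qq\otimes R\) is a homotopy from \(\omega\varphi_0\) to \(\omega\varphi_1\). Consider the square
\[
\begin{tikzcd}
\Oo\sqcup\Oo \arrow[r,"{(\varphi_0,\varphi_1)}"]\arrow[d] & \Pp \\
\Oo \arrow[r,"H"] & \Qq\otimes R
\end{tikzcd}
\]
with the evident comparison map
\[
\Pp\otimes R\to \Pp\times_{\Qq\times\Qq}(\Qq\otimes R)\times_{\Qq\times\Qq}\Pp ,
\]
where the target is the matching object. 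I would show this comparison map is a surjective pluripotential weak equivalence. Surjectivity is checked on the tensor factor \(R\). The weak equivalence follows by comparing kernels, again with Lemma~\ref{lemm:long-exact-seq} and Proposition~\ref{prop:Kunneth}, because \(R\) is pluripotentially equivalent to \(\kk\).

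Then I build the lift \(\Oo\to\Pp\otimes R\) inductively over the principal extensions of \(\Oo\), applying Lemma~\ref{liftingprop} at each stage. The resulting lift is a homotopy \(\varphi_0\simeq\varphi_1\).

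If the doubled-source argument becomes awkward, because \(\Oo\sqcup\Oo\to\Oo\) is not itself a principal extension, I will instead perform the inductive construction directly on generators. For each generator \(e\in E\) I solve for the values of \(t,\del t,\delb t,\del\delb t\) using the acyclicity of the relevant cone, exactly as in the proof of Lemma~\ref{liftingprop}.
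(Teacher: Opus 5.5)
Your argument is correct and is the standard argument behind the paper's statement. The paper gives no proof beyond saying the result follows from the lifting property of Proposition~\ref{liftingprop2}. That proposition only lifts against \emph{surjective} weak equivalences, so your Brown-factorization reduction to surjective $\omega$ supplies exactly the step the paper leaves implicit.

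A few local points need repair.

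\textbf{Surjectivity of $p$.} Your stated reason does not work. Using the section $\iota$ of $\delta^1$, you would need some $x$ with $\omega(x)=\delta^0\iota(y)=y$, which presupposes that $\omega$ is surjective. Use instead the joint surjectivity of $(\delta^0,\delta^1)$: the element $(0,y\otimes t)$ lies in the pullback and satisfies $p(0,y\otimes t)=y$.

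\textbf{The injectivity square.} The square you display is not a commutative lifting square, and no relative lifting is needed. The triple $(\varphi_0,H,\varphi_1)$ is already an operad map $\Oo\to M:=\Pp\times_{\Qq}(\Qq\otimes R)\times_{\Qq}\Pp$, since limits of operads are computed arity-wise. You then apply Proposition~\ref{liftingprop2} directly to this map and the comparison map $c\colon \Pp\otimes R\to M$. Your worry about $\Oo\sqcup\Oo\to\Oo$ therefore disappears.

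\textbf{Surjectivity of $c$.} This step uses that $\omega$ is surjective.
\begin{enumerate}
\item Lift $H$ along $\omega\otimes\id_R$ to some $K'$.
\item Correct by $a_0\otimes(1-t)+a_1\otimes t$, where $a_0=x_0-\delta^0K'$ and $a_1=x_1-\delta^1K'$ both lie in $\ker\omega$.
\end{enumerate}

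\textbf{$c$ is a weak equivalence.} Here $\ker c=\ker\omega\otimes J$, with $J=\ker\big((\delta^0,\delta^1)\colon R\to\kk\oplus\kk\big)$. This kernel is acyclic because $\ker\omega$ is acyclic (as $\omega$ is a surjective weak equivalence), together with Proposition~\ref{prop:Kunneth}. It is not a consequence of $R\simeq\kk$: $J$ itself is not acyclic. Alternatively, use two-out-of-three. The composite of $c$ with the projection $M\to\Pp$ is $\delta^0$. That projection is a composite of pullbacks of the surjective weak equivalences $\omega$ and $\delta^0$, so it is a weak equivalence.
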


\subsection{Minimal models}\label{subsec:MinialModelsSulivan}
We next define minimal operads as cofibrant operads satisfying an additional decomposability condition on their differentials, and prove their existence and uniqueness. 

Recall that the free operad is weight graded and that $\TT(M)^{(r)}$ denotes the $\SS\,$-module of operations of weight $r$, see Remark~\ref{rema:weight-grading}.

\begin{defi}\label{defi:minimal}
    A \emph{minimal} operad is a cofibrant operad \(\Mm\) such that the differentials are minimal, in the sense that \(\del\delb(\Mm)\subseteq \Mm^{(\geq 2)}\).
\end{defi}
The following lemma is a key result for building minimal models using principal extensions, we will use the cone of a map of bicomplexes defined in Section~\ref{subsec:preliminarsbico}.
\begin{lemm}\label{lemm:principalextcone}
    Let \(\alpha:\Mm\to\Pp\) be a morphism of operads where
    \begin{itemize}
        \item \(\Mm\) is a free operad, 
        \item \(\alpha:\Mm\to\Pp\) is a pluripotential weak equivalence in arities \(< n\). 
    \end{itemize}
    Then, there is an arity \(n\) principal extension \(\Mm'=\Mm\sqcup_{d+\ov d}\TT(E)\) 
    and a map of operads \(\alpha'\colon\Mm'\to\Pp\) such that it is a pluripotential weak equivalence for arities \(<n\) and
    \begin{enumerate}
        \item \label{principalextcone1} the bicomplex \(C(\alpha'(n))\) satisfies the \(\del\delb\)-property, 
        \item \label{principalextcone2}if \(C(\alpha(n))\) already satisfies the \(\del\delb\)-property, then \(\alpha'(n):\Mm'(n)\to\Pp(n)\) is a pluripotential weak equivalence for arities \(\leq n\).
    \end{enumerate}
\end{lemm}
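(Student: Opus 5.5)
Since $\Mm$ is free on generators in arities $\geq 2$ (we are in the reduced setting), adding generators in arity $n$ does not change $\Mm$ in arities $<n$. So any arity $n$ principal extension $\Mm'$ with $\alpha'$ extending $\alpha$ is still a pluripotential weak equivalence in arities $<n$. Everything therefore happens in arity $n$, where we have to kill the pluripotential cohomology of the cone $C(\alpha(n))$ with $\SS_n$-equivariant generators.

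Decompose the cone (as a bicomplex of $\kk[\SS_n]$-modules, which is allowed by the remark following Proposition~\ref{prop:Kunneth}) into squares and zig-zags. The zig-zag part is isomorphic to $\ABC(C(\alpha(n)))$. Recall that $C(\alpha(n))=\ele\Mm(n)\oplus\Pp(n)$ with the twisted differentials of Definition~\ref{cone}. By Lemma~\ref{lemm:univpropprincext}, attaching a generator $e\in E$ with prescribed $d(e)=(\del+\delb)e\in\Mm(n)$ and value $\alpha'(e)\in\Pp(n)$ amounts to adjoining to the cone a copy of $\ele e\oplus\ldots$ whose boundaries are $(\del+\delb)$-compatible with the pair $(d e,\alpha' e)$. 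Equivalently, it amounts to choosing elements of the cone whose cohomology class is to be killed.

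The concrete plan is as follows. Choose a $\SS_n$-stable complement $V$ to the square part inside the zig-zag part of $C(\alpha(n))$, so $V\cong\ABC(C(\alpha(n)))$. For each homogeneous $z\in V$, with $z=(a,a',a'',b)$, introduce a generator $e_z$ in the appropriate (shifted) bidegree. Set $d(e_z)$ to be the $\Mm(n)$-components $(\del+\delb)$-compatible with $a',a''$, and $\alpha'(e_z)=b$. The cocycle conditions derived in the proof of Lemma~\ref{liftingprop}, namely $\delb a'=-\del a''=a$ and $-(\del+\delb)b=a'+a''$, are exactly what guarantee $(\del+\delb)d(e_z)=0$ and $(\del+\delb)\alpha'(e_z)=\alpha d(e_z)$. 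Hence Lemma~\ref{lemm:univpropprincext} gives $\alpha'$.

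Next compare cones. There is a short exact sequence
\[0\to C(\alpha(n))\to C(\alpha'(n))\to \ele E\to 0,\]
since $\Mm'(n)=\Mm(n)\oplus E$ in arity $n$; decomposables involving $E$ only appear in higher arity. By the construction, $C(\alpha'(n))$ is obtained from $C(\alpha(n))$ by gluing $\ele E$ so that every zig-zag in $V$ is completed to squares, or to a zig-zag of length zero. Each element of $\ele E$ is a length-three zig-zag, and it is attached along the ends of the zig-zags it kills. I expect the main obstacle to be precisely this bookkeeping: longer zig-zags cannot be killed in one step, only shortened, and the cone of the new map picks up new cohomology from the corners of $\ele E$. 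The claim to verify is that after one round every remaining zig-zag is a dot, i.e. $\del$ and $\delb$ vanish on the zig-zag part of $C(\alpha'(n))$. That is the $\del\delb$-property, which gives part (1). I would check this shape by shape on the list of zig-zags, using the long exact sequence of Lemma~\ref{lemm:long-exact-seq} to control $\BC$ and $\A$ of the new cone.

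For part (2), assume $C(\alpha(n))$ already has the $\del\delb$-property, so its zig-zag part consists of dots, i.e. pure cohomology classes $z$ with $\del z=\delb z=0$. Attaching $\ele e_z$ along $z$ turns each dot together with the three corners of $\ele e_z$ into a square. Then $C(\alpha'(n))$ is a sum of squares, so $\BC(C(\alpha'(n)))=0$ and $\alpha'(n)$ is a pluripotential weak equivalence. Together with the unchanged lower arities, this proves the claim in arities $\leq n$.
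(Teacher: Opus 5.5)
Your argument for part (2) is fine. There every zig-zag of $C(\alpha(n))$ is a dot, so your construction coincides with the paper's $E=\BC(C(\alpha(n)))$. The observation that a dot together with the three corners of $\ele e$ forms a square is also more direct than the paper's route through $C(j)\to C(\alpha(n))$ and the five lemma.

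Part (1), however, has a genuine gap, starting with the choice of generators.

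\textbf{The generators.} You attach a generator $e_z$ for every homogeneous $z$ in the zig-zag part $V\cong\ABC(C(\alpha(n)))$, and read off $\del e_z$, $\delb e_z$ and $\alpha'(e_z)$ from the components of $z$. You justify this by the identities $\delb a'=-\del a''=a$, $\del b=-\alpha(a')$, $\delb b=-\alpha(a'')$. But these identities are exactly the statement $\del z=\delb z=0$. Take a non-closed element of $V$, for example the source $x$ of a zig-zag $x\xrightarrow{\del}y$, which represents a class in $\widetilde{H}_{\Aa}$. For such $x$ the identities fail, so the prescribed differential on $e_x$ is not a bicomplex differential, or $\alpha'$ is not compatible with it. Hence Lemma~\ref{lemm:univpropprincext} does not apply. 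The generating module must be $E=\BC(C(\alpha(n)))$, that is, only the closed elements of $V$ (dots and targets of arrows). One then reads off $\del e=d_1$, $\delb e=d_2$, $\alpha'(e)=f$ from a section $s=(d,-d_1,-d_2,f)$ of $Z(C(\alpha(n)))\to\BC(C(\alpha(n)))$. Your own remark that long zig-zags can only be shortened already conflicts with attaching generators for all of $\ABC$.

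\textbf{The core claim is not proved.} The actual content of (1) is that after this single round the zig-zag part of $C(\alpha'(n))$ consists only of dots. You defer this to a shape-by-shape check, and you even expect new cohomology from the corners of $\ele E$, which is exactly what must be ruled out. The long exact sequence of Lemma~\ref{lemm:long-exact-seq} alone will not settle it: the $\del\delb$-property is injectivity of $\BC\to\A$, not a statement about the size of these groups.

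The missing computation is short. Take $s$ to be the inclusion of the closed part of $V$.
\begin{itemize}
\item In the new cone $\del\delb(e,0,0,0)=s(e)$. So for $y=s(e)$ and $U_y=(e,0,0,0)$, the elements $U_y,\del U_y,\delb U_y,y$ span a square.
\item A non-closed $x\in V$ with $\del x=y_1$, $\delb x=y_2$ becomes closed once replaced by $x-\delb U_{y_1}+\del U_{y_2}$.
\item This change of basis is unitriangular, so $C(\alpha'(n))$ splits as squares plus dots, which is the $\del\delb$-property.
\end{itemize}
The paper reaches the same conclusion, phrased as $\widetilde{H}_{\Bb\Cc}(C(\alpha'(n)))=0$, by a direct element chase that uses $\del\delb(e,0,0,0)=s(e)$ to make every old Bott--Chern class $\del\delb$-exact.
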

\begin{proof}
    Consider the bigraded $\SS\,$-mod $E:=\BC(C(\alpha(n))),$ where \[C(\alpha(n))=\Mm(n)[-1,-1]\oplus\Mm(n)[-1,0]\oplus\Mm(n)[0,-1]\oplus \Pp(n)\] denotes the cone of the morphism of bicomplexes $\alpha(n)$. Consider a section $s:=(d,-d_1,-d_2,f)$ of the projection 
    \[Z(C(\alpha(n)))\to \BC(C(\alpha(n))).\]
    Note that $s(e)\in Z(C(\alpha(n)))$ implies 
    \[
    (\del d,\del d_1, \del d_2-d, \del f-\alpha d_1)=0\Rightarrow
            \begin{cases}
                \del d=0\\
                \del d_1=0\\
                \del d_2=d\\
                \del f=\alpha d_1\\
            \end{cases}
    \]
    and
    \[
    (\delb d, \delb d_1+d,\delb d_2,\delb f-\alpha d_2)=0\Rightarrow
            \begin{cases}
                \delb d=0\\
                \delb d_1 = -d\\
                \delb d_2=0\\
                \delb f =\alpha d_2\\
            \end{cases}
    \]

    This shows that $(d_1+d_2)$ defines a differential $\del_{M'}+\delb_{M'}$ on $\Mm'=\Mm\sqcup_{d_1+ d_2}\TT(E)$ and $f$ extends $\alpha$ by Lemma~\ref{lemm:univpropprincext}, i.e., there exists $\alpha':\Mm'\to \Pp$ such that $\alpha'_{|\Mm}=\alpha$ and $\alpha'{|E}=f$.

    We now prove (\ref{principalextcone1}), that is we prove that $C(\alpha'(n))$ satisfies the $\del\delb$-property, which is equivalent to
    \begin{equation}
       \id_*:\BC(C(\alpha'(n)))\to \A(C(\alpha'(n)))
        \label{LemmaTecnicExistence}
    \end{equation}
    being an isomorphism, see Section~\ref{subsec:preliminarsbico}. 
    
    It is enough to see that $\widetilde{H}_{\Bb\Cc}(C(\alpha'(n)))=0$, where we recall that
    \[\widetilde{H}_{\Bb\Cc}:=\frac{\Img \del\cap \ker\delb + \Img \delb\cap \ker\del}{\Img\del\delb}.\]
    We have
    \[C(\alpha'(n))\cong \ele\otimes \Mm'(n)\oplus \Pp(n)=\ele\otimes (\Mm(n)\oplus E)\oplus \Pp(n),\]
    then, every element in $C(\alpha'(n))$ can be expressed as $(a+e,a'+e',a''+e'',c)$.
    Let $x=[(a+e,a'+e',a''+e'',c)]\in \BC(C(\alpha'(n))),$ the cycle condition gives   
    \[\begin{cases}
        \del (a+e)=\delb (a+e)=0,\\
        -\del (a'+e') = 0,\\
        \delb(a'+e')=a+e,\\
        -\del(a''+e'')=a+e,\\
        \delb(a''+e'')=0,\\
        \del c = -\alpha'(a'+e'),\\
        \delb c = -\alpha'(a''+e'').
    \end{cases}\] Which implies $e=0$, since $\del(a'+e')\in \Mm(n)$. Graphically, this means
    \[\begin{tikzcd}
    	{\bullet^{a''+e''}} & {\bullet^{a}} && {\bullet^{\delb c}} \\
    	& {\bullet^{a'+e'}} && {\bullet^{c}} & {\bullet^{\del c}}
    	\arrow[shift left, no head, from=2-2, to=1-2]
    	\arrow[no head, from=1-1, to=1-2]
    	\arrow[shift left, no head, from=2-4, to=1-4]
    	\arrow[no head, from=2-4, to=2-5]
    	\arrow["\alpha'", curve={height=-18pt}, from=1-1, to=1-4]
    	\arrow["\alpha'", curve={height=18pt}, from=2-2, to=2-5].
    \end{tikzcd}\]
    We want to show that if a representative of a Bott-Chern class of $C(\alpha'(n))$ is in $\Img\del+\Img\del$ then it is in $\Img\del\delb$ and thus, its class is trivial.
    Suppose there is an element $(z,z',z'',c')\in C(\alpha'(n))$ such that \[\del(z,z',z'',c')=(a,a'+e',a''+e'',c),\] then
    \[\begin{cases}
        \del z=a,\\
        -\del z' = a'+e',\\
        -\del z'' - z =a''+e'',\\
        \del c' +\alpha'(z') = c,\\
    \end{cases}\]
    
    which implies $e'=0$ since $\del z'\in \Mm(n)$ and $\del\delb z'=\delb a' = a$. Now, since $e''\in E=\BC(C(\alpha(n)))$, we have, by definition, that
    \[e''=[s(e'')]=[(-\del\delb a'',\del a''+a,\delb a'',-\delb c-\alpha(a''))].\] 
    Note that
    \[(-\delb z'-a'',0,0,c')\in C(\alpha(n))\]
    and
    \begin{equation*}
        \begin{gathered}
            \del\delb(-a''-\delb z',0,0,c')
            =(-\del\delb a'',\del a''+\del\delb z',\delb a'',\del\delb c'-\alpha(a'')-\alpha(\delb z'))=\\
            =(-\del\delb a'',\del a''+a,\delb  a'',-\delb c-\alpha(a''))=s(e'').
        \end{gathered}
    \end{equation*}
    This implies $e''=0$. Therefore, $x=[(a,a',a'',c)]\in \BC(C(\alpha(n)))=E$ and hence $\del\delb(x,0,0,0)=(a,a',a'',c)$ in $C(\alpha'(n))$, indicating that $x$ is trivial. A similar argument shows that if there is an element $(z,z',z'',c')\in C(\alpha'(n))$ such that \(\delb(z,z',z'',c')=(a,a'+e',a''+e'',c),\) then we also have that \(x=[(a,a'+e',a''+e'',c)]\) is trivial.  
    Thus, we can conclude $\widetilde{H}_{\Bb\Cc}(C(\alpha'(n)))=0$. 

    We now prove (\ref{principalextcone2}). Suppose
   $C(\alpha(n))$ satisfies the \(\del\delb\)-property. We want to show \(\alpha'(n)\) is a pluripotential weak equivalence for $m=0,\dots,n.$ Since $\alpha'(m)=\alpha(m)$ for $m=0,\dots,n-1$, it is a pluripotential weak equivalence by induction hypothesis. Let us prove that \[\alpha'(n)=(\alpha \oplus f):\Mm'(n)=\Mm(n)\oplus E\to \Pp(n)\] is a pluripotential weak equivalence.

    Consider the map \[j:\Mm(n)\hookrightarrow \Mm'(n)=\Mm(n)\oplus E,\]
    and the morphism \[(\id\oplus \alpha'):C(j)=\ele\otimes \Mm(n)\oplus (\Mm(n)\oplus E)\to C(\alpha(n))=\ele\otimes \Mm(n)\oplus \Pp(n).\]
    We will prove that $\BC(\id\oplus\alpha')$ and $\A(\id\oplus\alpha')$ are isomorphisms. 
    
    Let $[(x,x',x'',y+e)]\in \BC(C(j))$, then 
    \[\begin{cases}
        \delb x' =x,\\
        -\del x''=x,\\
        j(x')=x'=-\del(y+e),\\
        j(x'')=x''=-\delb(y+e).
    \end{cases}\]
    Therefore, 
    \begin{equation*}
        \begin{gathered}
            [(x,x',x'',y+e)]=[(\del\delb (y+e),-\del(y+e),-\del(y+e),y+e)]=\\=[(\del\delb e,-\del e,-\del e,e)]+[(\del\delb y,-\del y,-\delb y,y)]=\\=[(\del\delb e,\del e,-\del e,e)]+[\del\delb(y,0,0,0)].
        \end{gathered}
    \end{equation*}
    This implies that every element in $\BC(C(j))$ has a representative $[(\delb\del e,\del e,-\delb e,e)]$, where $e\in E$. If $(\id\oplus \alpha')(\del\delb e,-\del e,-\delb e,e)=(\del\delb e,-\del e,-\delb e,\alpha'e)=s(e)=\del\delb(y,y',y'',c)$, then $e=0$. This proves $\BC(\id\oplus\alpha')$ is injective.
    
    Let $e\in \BC(C(\alpha(n)))$, we can choose a representative $e=[(x,x',x'',c)]$, which implies $\del \delb(e)=x$, $\del(e)=-x'$, $\delb(e)=-x''$ and $\alpha'(e)=c$. Then \[[(x,x',x'',e)]\mapsto[(x,x',x'',c)].\]
    This proves $\BC(\id\oplus\alpha')$ is surjective. 
    
    Now, since the map \(C(\alpha(n))\) satisfies the \(\del\delb\)-property, $\id_*\colon\A(C(\alpha(n)))\to \BC(C(\alpha(n)))$ is an isomorphism. We have the following commutative diagram:
\[\begin{tikzcd}[ampersand replacement=\&]
	{\BC(C(j))} \& {\A(C(j))} \\
	{\BC(C(\alpha(n)))} \& {\A(C(\alpha(n)))}
	\arrow["{\id_*}", from=1-1, to=1-2]
	\arrow["\cong"', from=1-1, to=2-1]
	\arrow[from=1-2, to=2-2]
	\arrow["\cong"',"{\id_*}", from=2-1, to=2-2]
\end{tikzcd}\]

    which implies $\id_*:\BC(C(j))\to\A(C(j))$ is injective and, hence, an isomorphism, see \ref{subsec:preliminarsbico}. This gives us the isomorphism \(\A(\id\oplus\alpha')\).

    By Lemma~\ref{lemm:long-exact-seq}, there is a commutative diagram, where the rows are exact:
    \[
    \begin{adjustbox}{max width=\textwidth,center}
    \begin{tikzcd}[ampersand replacement=\&,column sep=tiny]
    	{\A^{p-1,q-1}(\Mm(n))} \& {\A^{p-1,q-1}(\Mm'(n))} \& {\A^{p-1,q-1}(C(j))} \& {\BC^{pq}(\Mm(n))} \& {\BC^{pq}(\Mm'(n))} \& {\BC^{pq}(C(j))} \\
    	{\A^{p-1,q-1}(\Mm(n))} \& {\A^{p-1,q-1}(\Pp(n))} \& {\A^{p-1,q-1}(C(\alpha(n)))} \& {\BC^{pq}(\Mm(n))} \& {\BC^{pq}(\Pp(n))} \& {\BC^{pq}(C(\alpha(n)))}
    	\arrow[from=1-1, to=1-2]
    	\arrow[from=1-2, to=1-3]
    	\arrow[from=1-3, to=1-4]
    	\arrow[from=1-4, to=1-5]
    	\arrow[from=1-5, to=1-6]
    	\arrow[from=2-5, to=2-6]
    	\arrow[from=2-4, to=2-5]
    	\arrow[from=2-3, to=2-4]
    	\arrow[from=2-2, to=2-3]
    	\arrow["\alpha'", from=1-2, to=2-2]
    	\arrow["(\id\oplus\alpha')", "\cong"', from=1-3, to=2-3]
    	\arrow["\cong"', from=1-4, to=2-4]
    	\arrow["\alpha'", from=1-5, to=2-5]
    	\arrow["(\id\oplus\alpha')", "\cong"', from=1-6, to=2-6]
    	\arrow["\cong"', from=1-1, to=2-1]
    	\arrow[from=2-1, to=2-2]
    \end{tikzcd}
    \end{adjustbox}
    \]   
    by the Five Lemma, $\A(\alpha'(n))$  and $\BC(\alpha'(n))$ are isomophisms, which shows that 
    $\alpha'(n)$ is a pluripotential weak equivalence. 
\end{proof}

\begin{theo}[Existence of minimal models]\label{theo:MinimalModelsSullivan}
    Let $\Pp$ be an operad such that $\Pp(0)=0$ and $\Pp(1)=\kk$. 
    There exists a minimal operad $\Mm$ and a pluripotential weak equivalence $\Mm\xrightarrow[]{\sim}\Pp$.
\end{theo}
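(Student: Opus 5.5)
We build $\Mm$ as a colimit of principal extensions, inductively on arity, following Markl--Shnider--Stasheff. Start with $\Mm_1:=\mathrm{I}$ and $\alpha_1\colon\mathrm{I}\to\Pp$ the unit. Since $\Pp(0)=0$ and $\Pp(1)=\kk$, $\alpha_1$ is an isomorphism in arities $\leq 1$, hence a pluripotential weak equivalence there. Assume we have a cofibrant operad $\Mm_{n-1}=\TT(M_{n-1})$ with $M_{n-1}$ concentrated in arities $2,\dots,n-1$, minimal in the sense of Definition~\ref{defi:minimal}, together with $\alpha_{n-1}\colon\Mm_{n-1}\to\Pp$ that is a pluripotential weak equivalence in arities $<n$. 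We want to extend it in arity $n$.

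\textbf{Arity $n$ step.} Apply Lemma~\ref{lemm:principalextcone} twice. The first application gives an arity $n$ principal extension $\Mm'=\Mm_{n-1}\sqcup\TT(E_1)$ and a map $\alpha'$ with $C(\alpha'(n))$ satisfying the $\del\delb$-property. Nothing in arities $<n$ changes, since generators added in arity $n$ only appear in trees of arity $\geq n$. Applying the lemma again to $\alpha'$, part~(\ref{principalextcone2}) gives $\Mm_n=\Mm'\sqcup\TT(E_2)$ with $\alpha_n$ a pluripotential weak equivalence in arities $\leq n$. Set $\Mm:=\operatorname{colim}_n\Mm_n$ and $\alpha:=\operatorname{colim}\alpha_n$. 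Because $\Mm(m)=\Mm_m(m)$ for every $m$ (later extensions do not affect lower arities), $\alpha$ is an arity-wise pluripotential weak equivalence. By construction $\Mm$ is a colimit of principal extensions starting at $\mathrm{I}$, so it is cofibrant.

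\textbf{Main obstacle: minimality.} We need $\del\delb(E)\subseteq\Mm^{(\geq2)}$ for the new generators $E$, where $\Mm^{(\geq 2)}$ denotes operations of weight $\geq 2$ (Remark~\ref{rema:weight-grading}). The differential on a generator $e$ is read off from the section $s=(d,-d_1,-d_2,f)$, so $\del\delb e=d$, where $d$ is the $\Mm(n)[-1,-1]$ component of a Bott--Chern cocycle of the cone.

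The first thing to try is to choose the section carefully. Split $\Mm(n)=\Mm^{(1)}(n)\oplus\Mm^{(\geq2)}(n)$; the weight-one part is just the previously added arity-$n$ generators. We want to show that, modulo Bott--Chern coboundaries of the cone, every class has a representative whose $d$ component is decomposable. Choose a splitting of the weight-one part into squares and zig-zags. The component of $d$ lying in $\Img\del\delb$ of the indecomposables can then be killed by subtracting $\del\delb$ of an element of the form $(y,0,0,\cdot)$ in the cone. This uses that $\del\delb$ maps indecomposables to decomposables by the inductive minimality, so only the linear part of the differential matters.

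The genuinely delicate point is the case where the indecomposable part of $d$ is not $\del\delb$-exact. I would rule this out as follows. The $\del\delb$-property of $C(\alpha'(n))$ from the first step, together with $d$ being $\del$- and $\delb$-closed with $\del d_2=d=-\delb d_1$, should force the weight-one component into $\Img\del\delb$. This is consistent with the proof of Lemma~\ref{lemm:principalextcone}, where the classes of $E_2$ all have representatives of the form $s(e)=(\del\delb e,\dots)$.

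If this argument fails in general, the fallback is to modify the extension. Remove the redundant square pairs of generators, which are pluripotentially acyclic, from $\Mm$ after each step. Use the fact that a quasi-free operad splits as a minimal one tensored with a contractible one, using the squares/zig-zags decomposition and the lifting property of Proposition~\ref{liftingprop2}. This gives a minimal operad weakly equivalent to $\Mm$.
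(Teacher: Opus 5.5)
Your overall architecture matches the paper's: Lemma~\ref{lemm:principalextcone} applied twice in each arity, followed by a colimit. However, the step you yourself single out as the main obstacle, minimality, is never proved.

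\begin{itemize}
\item The appeal to the $\del\delb$-property of $C(\alpha'(n))$ is only asserted (``should force''), with no argument.
\item The fallback relies on an unproven splitting of quasi-free operads into a minimal part and a contractible part. For operads this would in any case be a coproduct, not a tensor product.
\item Your ``first thing to try'' cannot do anything. For every $y\in\Mm'(n)$ the element $\del\delb y$ is already decomposable, so subtracting $\del\delb(y,0,0,\cdot)$ never changes the weight-one component of $d$.
\end{itemize}
As written, the proposal does not establish that $\Mm$ is minimal.

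The missing idea is a simple arity count, which shows the weight-one component of $d$ is always zero, so your ``delicate case'' never occurs.

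\begin{itemize}
\item Because $\Pp(0)=0$ and $\Pp(1)=\kk$, all generators of $\Mm_{n-1}$ lie in arities $2,\dots,n-1$. Hence every arity-$n$ tree in $\Mm_{n-1}$ has at least two vertices, i.e.\ $\Mm_{n-1}(n)\subseteq\Mm^{(\geq 2)}$.
\item In the first application of the lemma, the components $d,d_1,d_2$ of the section take values in $\Mm_{n-1}(n)$. So $\del$ and $\delb$ themselves, not just $\del\delb$, send $E_1$ into decomposables.
\item Since $\Mm_{n-1}(n)$ is a sub-bicomplex and $\Mm'(n)=\Mm_{n-1}(n)\oplus E_1$, this gives $\del(\Mm'(n))\subseteq\Mm_{n-1}(n)$.
\item In the second application, for $e\in E_2$ you already noted $\del\delb e=\del(d_2e)$ with $d_2e\in\Mm'(n)$. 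Therefore $\del\delb e\in\Mm_{n-1}(n)\subseteq\Mm^{(\geq 2)}$.
\end{itemize}

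This is exactly how the paper concludes. No careful choice of section and no squares/zig-zags splitting of the indecomposables are needed. The $\del\delb$-property of the cone plays no role in minimality; it is used only through part (2) of Lemma~\ref{lemm:principalextcone}, to obtain the weak equivalence.
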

\begin{proof}
We will define, inductively on arity $m\geq 2$, 
    a morphism of operads 
    \[
    \alpha_{m}:\Mm_{m}\to\Pp
    \] in such a way that:
    \begin{enumerate}
        \item[(i)] $\Mm_{m}$ is a minimal operad generated in arities $<m$,
        \item[(ii)] $\alpha_{m}\colon\Mm_{m}\to \Pp$ is a pluripotential weak equivalence in arities $<m$.
    \end{enumerate}

  For our base case, we begin by adding all zig-zags in arity 2:
  let $E$ be the $\bbsmod$-module $E=\ABC(\Pp(2))$ and $s_2:\ABC(\Pp(2))\to \Pp(2)$ a section of a projection $\Pp(2)\cong \ABC(\Pp(2))\oplus \Pp(2)_{sq}\twoheadrightarrow \ABC(\Pp(2))$. We set
    \[\Mm_2=\TT(E),\quad \del_{2_{|E}}=\del_{\ABC}\quad\delb_{2_{|E}}=\delb_{\ABC}\quad\alpha_2:\Mm_2\to\Pp,\;
    \alpha_{2_{|E}}=s_2.\]
    Note that $\del_{2_{|E}}\delb_{2_{|E}}=0$. The map $\alpha_2$ is a morphism of operads and is a pluripotential weak equivalence in arities $\leq 2$.
   
    Assume we have defined 
    $\alpha_{m}\colon\Mm_{m}\to\Pp$ for all $m<n$. Let us now build $\alpha_n:\Mm_n\to \Pp$. We apply Lemma~\ref{lemm:principalextcone} to construct an arity \(n\) principal extension \(\Mm'=\Mm_{n-1}\sqcup_{d'+\ov{d'}}\TT(E')\) and obtain a map \(\alpha':\Mm'\to \Pp\) such that \(C(\alpha'(n))\) satisfies the \(\del\delb\)-property.

    Then, $\Mm'$ is minimal since it is cofibrant and $d_{\Mm'}(\Mm')=(\del_{\Mm'}+\delb_{\Mm'})(\Mm')\subseteq \Mm'^{(\geq 2)}$. Indeed,
    \begin{itemize}
        \item $d_{\Mm'|\Mm_{n-1}}=d_{\Mm_{n-1}}$, but $\del_{\Mm_{n-1}},\delb_{\Mm_{n-1}}$ are minimal by induction hypothesis,
        \item $d_{\Mm'|E'}:E'\xrightarrow[]{d'+\ov d'}\Mm_{n-1}(n)\subseteq \Mm'^{(\geq 2)}.$
    \end{itemize}

    We now apply again Lemma~\ref{lemm:principalextcone} to the map \(\alpha'\colon\Mm'\to\Pp\), which satisfies that \(C(\alpha'(n))\) has the \(\del\delb\)-property. This produces a map \(\alpha_n\colon\Mm_n=\Mm'\sqcup_{d_n+\ov d_n}\TT(E)\to \Pp\) which is a pluripotential weak equivalence in arities \(\leq n\). 

    Again, $\Mm_n$ is minimal since it is cofibrant and $\del_{\Mm_n}\delb_{\Mm_n}(\Mm_n)\subseteq \Mm_n^{(\geq 2)}$. Indeed,
    \begin{itemize}
        \item $(\del_{\Mm_n}\delb_{\Mm_n})_{|\Mm'}=\del_{\Mm'}\delb_{\Mm'}$, but $\del_{\Mm'},\delb_{\Mm'}$ are already minimal and
        \item $(\del_{\Mm_n}\delb_{\Mm_n})_{|E}:E\xrightarrow[]{\ov d_n}\Mm_{n-1}(n)\oplus E'\xrightarrow[]{\del\oplus d'}\Mm_{n-1}(n)\subseteq \Mm_{n}
        ^{(\geq 2)}.$ 
    \end{itemize}
    This completes the proof.
\end{proof}

\begin{prop}
A morphims $\phi\colon\Mm\to \Nn$ of operads between two minimal operads $\Mm=(\TT(M),\del,\delb)$ and $\Nn=(\TT(N),\del,\delb)$ is an isomorphism if and only if it is a pluripotential weak equivalence.
\label{weakEqIsIsoForMinimal}
\end{prop}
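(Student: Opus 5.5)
The forward direction is immediate, since an isomorphism of operads induces isomorphisms on \(\BC\) and \(\A\) arity-wise. For the converse, suppose \(\phi\) is a pluripotential weak equivalence. I will mimic the classical argument for minimal Sullivan models and pass to indecomposables. Let \(\Mm^{(\geq 2)}\) be the ideal of decomposable operations; since the differentials are derivations, it is closed under \(\del\) and \(\delb\). Set \(Q(\Mm):=\Mm/\Mm^{(\geq 2)}\cong M\), with induced differentials \(\del_Q,\delb_Q\), and define \(Q(\Nn)\cong N\) in the same way. The map \(\phi\) induces a morphism of bicomplexes of \(\SS\)-modules \(Q(\phi)\colon M\to N\). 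Minimality says exactly that \(\del_Q\delb_Q=0\). Thus \(Q(\Mm)\) and \(Q(\Nn)\) are pluripotentially minimal and contain no squares.

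The first key step is to show that \(Q(\phi)\) is a pluripotential weak equivalence. I will argue by induction on arity, using the weight filtration. Because \(M(0)=M(1)=0\) in the reduced setting, in arity \(n\) the decomposables \(\Mm^{(\geq 2)}(n)\) are built, via the free operad, only from generators of arity \(<n\). Assume by induction that \(Q(\phi)\) is an isomorphism in arities \(<n\). Then \(\phi\) restricted to the decomposables in arity \(n\) is a map of free-operad pieces generated by an isomorphism. Since it respects the weight filtration, it is an isomorphism on associated gradeds and hence an isomorphism \(\Mm^{(\geq2)}(n)\cong\Nn^{(\geq 2)}(n)\). The short exact sequences \(0\to\Mm^{(\geq2)}(n)\to\Mm(n)\to M(n)\to 0\) and the analogue for \(\Nn\) are compatible with \(\phi\). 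Applying Lemma~\ref{lemm:long-exact-seq} together with the five lemma, as in the proof of Lemma~\ref{lemm:principalextcone}, shows that \(\BC\) and \(\A\) of \(Q(\phi)(n)\) are isomorphisms. This uses the Bigolin cohomology terms, which agree for the decomposables since those are isomorphic.

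The second key step is that a pluripotential weak equivalence between pluripotentially minimal bicomplexes is an isomorphism. A bicomplex with \(\del\delb=0\) is a sum of zig-zags, so it is isomorphic to its own \(\ABC\)-cohomology. More concretely, a map of zig-zag bicomplexes inducing an isomorphism on \(\BC\) and \(\A\) is bijective. This can be checked by a graded Nakayama-type argument or via Remark~\ref{Asqacyclic}. The decomposition in \cite{Ste21} is functorial up to isomorphism on \(\ABC\), and every element of a minimal bicomplex is detected by \(\BC\) (the \(\ker\del\cap\ker\delb\) part) or by \(\A\). Note that the decomposition holds equivariantly, as observed in the remark after Proposition~\ref{prop:Kunneth}. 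Hence \(Q(\phi)(n)\) is an isomorphism, which closes the induction. Finally, a morphism of free operads that induces an isomorphism on generators is an isomorphism, by the weight-filtration argument again. Therefore \(\phi\) is an isomorphism.

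The main obstacle is the second step. An injectivity failure could hide in the middle of a zig-zag, where an element is neither \(\BC\)-closed nor \(\A\)-nontrivial in an obvious way. To handle this I will first try to characterize the length-\(\ge 2\) zig-zags by their images in \(\BC\) and \(\A\) (endpoints in \(\A\), interior corners in \(\BC\)). I will then show that a map inducing isomorphisms on both must match zig-zag shapes. A second, more technical worry is whether the weight filtration is exhaustive and bounded in each arity. This holds because the operads are reduced, so only finitely many trees occur in each arity.
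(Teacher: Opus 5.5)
Your proposal is correct and is essentially the paper's proof: induction on arity through the short exact sequences $\Mm^{(\geq 2)}(n)\to\Mm(n)\to M(n)$, Lemma~\ref{lemm:long-exact-seq} with the five lemma, the fact that a pluripotential weak equivalence between bicomplexes with $\del\delb=0$ is an isomorphism, and freeness (your weight-filtration argument only makes explicit what the paper leaves implicit). One small correction: for the $\BC$ term, the five lemma also needs injectivity of the Bigolin map induced by $\phi(n)$ on $\Mm(n)$ itself, not only on the decomposables; this holds, and is used tacitly by the paper, because pluripotential weak equivalences induce isomorphisms on Bigolin cohomology.

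The step you single out as the main obstacle, which the paper simply asserts, has a short proof. For $f\colon A\to B$ between bicomplexes with $\del\delb=0$, one has $\Img\del+\Img\delb\subseteq\Ker\del\cap\Ker\delb=\BC(A)$ and $\A(A)=A/(\Img\del+\Img\delb)$, and likewise for $B$. So $f(x)=0$ forces $\del x,\delb x\in\Ker\BC(f)=0$, and then $x\in\Ker\BC(f)=0$. For surjectivity, any $y\in B$ can be written as $f(x)+\del b_1+\delb b_2$ by $\A$-surjectivity, and $\del b_1,\delb b_2$ lie in the image of $\BC(f)$.
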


\begin{proof}
Let us first introduce some notation.
Let $M=\{M(n)\}_{n\geq 1}$ be a $\bbsmod$-module and let $\Mm=\TT(M)$.  We will
denote $M_{<k}$ the $\SS\,$-module defined by
$$M_{< k}(n):=
\begin{cases}
M(n),\text{ if }k< n,\\
0,\text{ otherwise,}
\end{cases}$$
and $M_{\leq k}$ the $\SS\,$-module defined by
$$M_{\leq k}(n):=
\begin{cases}
M(n),\text{ if }k\leq n,\\
0,\text{ otherwise.}
\end{cases}$$
Finally, $\Mm_{\leq k}:=\TT(M_{\leq k})$.

Now, one implication is clear since an isomorphism is already a weak equivalence. Let us prove the other implication: We will prove, by induction, that the restriction $\phi_n:\Mm_{\leq n}\to \Nn_{\leq n}$, is an isomorphism for each $n$. This will imply that $\phi:\TT(M)\to \TT(N)$ is an isomorphism too. 

Since $\Mm=(\TT(M),\del,\delb)$ and $\Nn(\TT(N),\del,\delb)$ are minimal, we have that $\del\delb(\Mm(2))=0$ and $\del\delb(\Nn(2))=0$. Since a pluripotential weak equivalence between minimal bicomplexes is an isomorphism, $\phi_2:\Mm(2)\to \Nn(2)$ is an isomorphism. Now, suppose we have already proved that $\phi_n$ is an isomorphism. We have two short exact sequences
\[0\to \Mm_{\leq n}(n+1)\to \Mm(n+1)\to M(n+1)\to 0,\]
and
\[0\to \Nn_{\leq n}(n+1)\to \Nn(n+1)\to N(n+1)\to 0.\]

As a result, again by Lemma~\ref{lemm:long-exact-seq}, we obtain the following two commutative diagrams, where the rows are exact:
\[
\begin{adjustbox}{max width=\textwidth,center}
\begin{tikzcd}[ampersand replacement=\&,column sep=tiny]
    	{\A(\Mm_{\leq n}(n+1))} \& {\A(\Mm(n+1))} \& {\A(M(n+1))} \& {\BC(\Mm_{\leq n}(n+1))} \& {\BC(\Mm(n+1))}\\
    	{\A(\Nn_{\leq n}(n+1))} \& {\A(\Nn(n+1))} \& {\A(N(n+1))} \& {\BC(\Nn_{\leq n}(n+1))} \& {\BC(\Nn(n+1))}
    	\arrow[from=1-1, to=1-2]
    	\arrow[from=1-2, to=1-3]
    	\arrow[from=1-3, to=1-4]
    	\arrow[from=1-4, to=1-5]
    	\arrow[from=2-4, to=2-5]
    	\arrow[from=2-3, to=2-4]
    	\arrow[from=2-2, to=2-3]
    	\arrow["\cong"', from=1-2, to=2-2]
    	\arrow[from=1-3, to=2-3]
    	\arrow["\cong"', from=1-4, to=2-4]
    	\arrow["\cong"', from=1-5, to=2-5]
    	\arrow["\cong"', from=1-1, to=2-1]
    	\arrow[from=2-1, to=2-2]
    \end{tikzcd}
    \end{adjustbox}
    \]   
and
\[\begin{adjustbox}{max width=\textwidth,center}\begin{tikzcd}[ampersand replacement=\&,column sep=tiny]
    	{\BC(\Mm_{\leq n}(n+1))} \& {\BC(\Mm(n+1))} \& {\BC(M(n+1))} \& {H_{\Bb_{p,q}}(\Mm_{\leq n}(n+1))} \& {H_{\Bb_{p,q}}(\Mm(n+1))}\\
    	{\BC(\Nn_{\leq n}(n+1))} \& {\BC(\Nn(n+1))} \& {\BC(N(n+1))} \& {H_{\Bb_{p,q}}(\Nn_{\leq n}(n+1))} \& {H_{\Bb_{p,q}}(\Nn(n+1))}
    	\arrow[from=1-1, to=1-2]
    	\arrow[from=1-2, to=1-3]
    	\arrow[from=1-3, to=1-4]
    	\arrow[from=1-4, to=1-5]
    	\arrow[from=2-4, to=2-5]
    	\arrow[from=2-3, to=2-4]
    	\arrow[from=2-2, to=2-3]
    	\arrow["\cong"', from=1-2, to=2-2]
    	\arrow[from=1-3, to=2-3]
    	\arrow["\cong"', from=1-4, to=2-4]
    	\arrow["\cong"', from=1-5, to=2-5]
    	\arrow["\cong"', from=1-1, to=2-1]
    	\arrow[from=2-1, to=2-2]
    \end{tikzcd}\end{adjustbox}\]  
by the Five Lemma, it follows that $\phi(n+1)\colon M(n+1)\to N(n+1)$ is a pluripotenital weak equivalence.
The minimality of $\Mm$ implies that the differentials $\del$ and $\delb$ on \[M(n+1)\cong\Mm(n+1)/\Mm_{\leq n}(n+1)\] verify $\del\delb=0$, and the same holds true for $\Nn$. Again, this implies $\phi(n+1)\colon M(n+1)\to N(n+1)$ is an isomorphism. We have proved that $\phi_{n+1}$ induces an isomorphism on the generators, hence, the map \[\phi_{n+1}:\TT(M(\leq n+1))\to \TT(N(\leq n+1))\] is also an isomorphism.
\end{proof}

\begin{defi}
    A \textit{minimal model} of an operad $\Pp$ in bicomplexes is given by a minimal operad $\Mm$ together with a pluripotential weak equivalence $\Mm\xrightarrow{\sim} \Pp$.
\end{defi}

\begin{theo}[Uniqueness of minimal models]
Minimal models are unique up to isomorphism. 
\end{theo}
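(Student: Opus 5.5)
Let \(\alpha\colon\Mm\xrightarrow{\sim}\Pp\) and \(\beta\colon\Nn\xrightarrow{\sim}\Pp\) be two minimal models of \(\Pp\). I would first produce a comparison map \(\phi\colon\Mm\to\Nn\) with \(\beta\phi\) homotopic to \(\alpha\). Then Proposition~\ref{weakEqIsIsoForMinimal} upgrades \(\phi\) to an isomorphism once we know it is a pluripotential weak equivalence.

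To build \(\phi\), the natural tool is the lifting property of cofibrant operads. Proposition~\ref{liftingprop2} only lifts against \emph{surjective} weak equivalences, and \(\beta\) need not be surjective, so I would invoke Proposition~\ref{LiftingUniqueUptoHomCof} instead. Since \(\Mm\) is cofibrant, \(\beta_*\colon[\Mm,\Nn]\to[\Mm,\Pp]\) is a bijection. Hence there is \(\phi\colon\Mm\to\Nn\), unique up to homotopy, with \(\beta\circ\phi\simeq\alpha\).

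Next I would check that \(\phi\) is a pluripotential weak equivalence. Let \(H\colon\Mm\to\Pp\otimes R\) be a homotopy from \(\beta\phi\) to \(\alpha\). Then \(\delta^0H=\beta\phi\) and \(\delta^1H=\alpha\). Since \(\delta^1\) and \(\alpha\) are weak equivalences, two-out-of-three arity-wise for bicomplexes (isomorphisms in \(\BC\) and \(\A\)) shows that \(H\) is a weak equivalence. Then \(\beta\phi=\delta^0H\) is a weak equivalence, and since \(\beta\) is one, so is \(\phi\). Equivalently, homotopic maps induce equal maps in \(\BC\) and \(\A\) arity-wise, by the same evaluation argument, so \(\ABC(\beta\phi)=\ABC(\alpha)\) is an isomorphism.

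Finally, Proposition~\ref{weakEqIsIsoForMinimal} applies, since both \(\Mm\) and \(\Nn\) are minimal, and gives that \(\phi\) is an isomorphism of operads \(\Mm\cong\Nn\). Moreover \(\beta\phi\simeq\alpha\), so the two models are isomorphic compatibly with the maps to \(\Pp\) up to homotopy.

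The main obstacle is justifying Proposition~\ref{LiftingUniqueUptoHomCof} for a non-surjective \(\beta\). The standard route factors \(\beta\) through a path-object mapping cylinder \(\Nn\to\Nn\times_{\Pp}(\Pp\otimes R)\to\Pp\), whose second map is a surjective weak equivalence built from \(\delta^1\). One then lifts with Proposition~\ref{liftingprop2} and projects back using \(\delta^0\). Since that proposition is stated earlier in the paper, I would simply cite it.
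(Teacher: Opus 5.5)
Your proposal is correct and follows the paper's proof: you obtain $\phi$ with $\beta\phi\simeq\alpha$ from Proposition~\ref{LiftingUniqueUptoHomCof}, show that $\phi$ is a pluripotential weak equivalence, and conclude with Proposition~\ref{weakEqIsIsoForMinimal}. The only difference is the middle step. The paper cites Proposition~\ref{prop:HomotopicMapsAreEqualInHomology}, which is stated for bicomplex homotopies, whereas your two-out-of-three argument via $\delta^0,\delta^1\colon\Pp\otimes R\to\Pp$ deals with the operadic path-object homotopy directly, and is if anything the cleaner justification.
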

\begin{proof}
    Let $\alpha:\Mm\xrightarrow[]{\sim}\Pp$ and $\alpha':\Nn\xrightarrow[]{\sim}\Pp$ two minimal models of an operad $\Pp$. By Proposition~\ref{LiftingUniqueUptoHomCof}, there is a map $\phi:\Mm\to\Nn$ unique up to homotopy such that the following diagram commutes up to homotopy
    \[\begin{tikzcd}
    	& \Nn \\
    	\Mm & \Pp.
    	\arrow["\phi", dashed, from=2-1, to=1-2]
    	\arrow["\sim", shift right, from=2-1, to=2-2]
    	\arrow["{\rotatebox{90}{$\sim$}}", from=1-2, to=2-2]
    \end{tikzcd}\]
    We obtain $\alpha'(n)\circ \phi(n)\simeq \alpha(n)$ for every $n$ and, by Lemma~\ref{prop:HomotopicMapsAreEqualInHomology}, $\phi$ is a pluripotential weak equivalence. By Proposition~\ref{weakEqIsIsoForMinimal}, $\phi:\Mm\to\Nn$ is an isomorphism.
\end{proof}

\section{Koszul duality}\label{sec:koszulduality}

In this section, we introduce the notion of a pluripotential twisting morphism and develop pluripotential Koszul duality for quadratic operads in \(\mathrm{Vect}_\kk\). To do so, we make use of the inflation functor defined in \cite{MaSo}, which allows us to use many of the computations in the dg-case.

\subsection{Twisting morphisms and twisted composite products}\label{subsec:TwistingMorph}
We introduce operadic twisting morphisms for operads in bicomplexes.
As in the classical setting, they play a central role in the bar-cobar constructions. We then use twisting morphisms to define twisted composite products, which lead to the notion of Koszul operads. 

Let $\Pp$ and  $\Cc$ be an operad and a cooperad in bicomplexes. 
Consider the $\mathrm{bb}\text{-}\SS\,$-module $\underline{\Hom}(\Cc, \Pp)$ given by
\[
\underline{\Hom}(\Cc,\Pp)(n)^{p,q}:= \underline{\Hom}(\Cc(n), \Pp(n))^{p,q}
\]
with right action of the symmetric group given by:
\[
f^\sigma(x) := f(x^{\sigma^{-1}})^\sigma.
\]

As in the dg setting, the pre-Lie product on $\underline{\Hom}(\Cc, \Pp)$ is defined as
\begin{gather}
    f\star g :=\Cc\xrightarrow{\Delta_{(1)}} \Cc \circ_{(1)} \Cc\xrightarrow{f \circ_{(1)} g}\Pp \circ_{(1)} \Pp
\xrightarrow{\gamma_{(1)}}\Pp.
\label{preLieprod}
\end{gather}
The space of invariant elements of $\underline{\Hom}(\Cc, \Pp)$ under the symmetric action
\[\underline{\Hom}_\SS(\Cc, \Pp)\]
is stable under the pre-Lie product. The two components $\del$ and $\delb$ are derivations with respect to the pre-Lie product $\star$ (see \cite[Proposition 6.4.5]{LoVa12}), and we obtain
\begin{prop}
The product space of $\SS$-equivariant maps
    \[
    \left( \underline{\Hom}_\SS(\Cc, \Pp), \star, \del, \delb \right)
    \]
    is a bidifferential bigraded pre-Lie algebra.
\end{prop}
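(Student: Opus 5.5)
The plan is to reduce every axiom to the classical dg statement of \cite[Section~6.4]{LoVa12}. The key observation is that all Koszul signs in $\bicok$ are governed by the total degree $|v|=p+q$. If we forget one of the two differentials, $(\Pp,\del)$ and $(\Cc,\del)$ become a dg operad and a dg cooperad for the total grading, since $\del$ has total degree $1$. The same holds for $(\Pp,\delb)$ and $(\Cc,\delb)$. The infinitesimal decomposition $\Delta_{(1)}$, the partial composition $\gamma_{(1)}$ and hence the product $\star$ of \eqref{preLieprod} are defined identically in both settings. They also preserve the bigrading, since they are built from bidegree-$(0,0)$ structure maps. So the only genuinely new content is that two such compatible dg structures coexist.

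First, I check that $\underline{\Hom}_\SS(\Cc,\Pp)$ is a sub-bicomplex closed under $\star$. The differentials $[\del,-]$ and $[\delb,-]$ on $\underline{\Hom}(\Cc(n),\Pp(n))$ commute with the $\SS_n$-action $f\mapsto f^\sigma$, because $\del_\Pp,\delb_\Pp,\del_\Cc,\delb_\Cc$ are $\SS_n$-equivariant. Moreover $\del^2=\delb^2=\del\delb+\delb\del=0$ holds on the internal Hom, as recalled in Section~\ref{subsec:preliminarsbico}. Closure of the invariants under $\star$ follows from the equivariance of $\Delta_{(1)}$ and $\gamma_{(1)}$, exactly as in the proof of \cite[Proposition~6.4.3]{LoVa12}.

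Next comes the pre-Lie identity
\[
(f\star g)\star h-f\star(g\star h)=(-1)^{|g||h|}\bigl((f\star h)\star g-f\star(h\star g)\bigr).
\]
This identity does not involve differentials at all. It uses only the coassociativity of $\Delta$, the associativity of $\gamma$ and Koszul signs in total degree. I therefore take it verbatim from the graded case \cite[Proposition~6.4.3]{LoVa12}, applied to the underlying totally graded $\SS$-modules. The bigrading plays no role here beyond being preserved.

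Finally, I verify the derivation property, separately for $\del$ and $\delb$. Writing $\del f=\del_\Pp f-(-1)^{|f|}f\del_\Cc$, we use three facts: $\del_\Cc$ is a coderivation, so $\Delta_{(1)}\del_\Cc=(\del_\Cc\circ_{(1)}\id+\id\circ_{(1)}'\del_\Cc)\Delta_{(1)}$; $\del_\Pp$ is a derivation for $\gamma_{(1)}$; and $(f\circ_{(1)}g)$ picks up the Koszul sign $(-1)^{|f|}$ when $\del$ passes over $f$. Together these give $\del(f\star g)=\del f\star g+(-1)^{|f|}f\star\del g$. This is precisely \cite[Proposition~6.4.5]{LoVa12} for the dg pair $((\Pp,\del),(\Cc,\del))$, and likewise for $\delb$.

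I expect the main obstacle to be the sign bookkeeping: one must make sure that applying the dg result once with $\del$ and once with $\delb$ is legitimate. Each of these is a total-degree-$1$ map, while the anticommutation $\del\delb+\delb\del=0$ is only needed on the internal Hom, where it has already been established. I will state this reduction explicitly rather than redo the tree-level sign computations.
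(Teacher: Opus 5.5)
Your proposal is correct and follows essentially the same route as the paper. The paper likewise notes only that the $\SS$-invariants are stable under $\star$, and invokes \cite[Proposition 6.4.5]{LoVa12} separately for $\del$ and $\delb$. Your explicit reduction, forgetting one differential and using Koszul signs in total degree with the anticommutation $\del\delb+\delb\del=0$ already available on the internal Hom, is exactly what justifies that citation.
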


Recall that a twisting morphism in the dg case is an equivariant map \(\varphi:\Cc\to \Pp\) of degree \(-1\) satisfying
\[[d,\varphi]+\varphi\star\varphi=0.\]
With this in mind, we introduce the following bigraded version of a twisting morphism.
\begin{defi}\label{defi:twisting}
A \textit{twisting morphism} $\varphi=(\varsh,\varsv):\Cc\to\Pp$ consists of two maps $\varsh,\varsv\in \underline{\Hom}_\SS(\Cc,\Pp)$ of bidegrees $(1,0)$ and $(0,1)$, respectively, satisfying:
    \[\begin{cases}
        [\del,\varsh]+\varsh\star\varsh=0,\\
        [\delb,\varsh] + \varsh\star\varsv+[\del,\varsv] + \varsv\star\varsh=0,\\
        [\delb,\varsv] + \varsv\star\varsv=0.
    \end{cases}\]
\end{defi}

\begin{rema}\label{rema:totaltwisting}
    Let \(\varphi:\Cc\to\Pp\) be an equivariant map such that it splits into two components of bidegrees \((1,0)\) and \((0,1)\), \(\varphi=\varsh+\varsv\). Then, \(\varphi\) is a twisting morphism in the dg sense with respect to the total differential \(d=\del+\delb\) if, and only if, \((\varsh,\varsv)\) is a twisting morphism in the bigraded sense.
\end{rema}

A \emph{left module} over $\Pp$ is a $\bbsmod$-module $M$ equipped with a left action 
\[\lambda:\Pp\circ M\to M\] which commutes with the differentials.

Let $N$ be a $\bbsmod$-module. The \emph{free left $\Pp$ module} on $N$ is given by
\[(\Pp\circ N, \del_{\Pp\circ N},\delb_{\Pp\circ N}),\]
where the action $\lambda$ is given by the operadic composition. 

Any linear map $\alpha: \Cc\to \Pp$ from a cooperad $(\Cc,\Delta,\epsilon)$ to an operad $(\Pp,\gamma,u)$, defines a map 
\[\Cc\xrightarrow{\Delta}\Cc\circ \Cc\xrightarrow{\alpha\circ\id}\Pp\circ \Cc\]
which, by Proposition 6.3.9 in \cite{LoVa12},  induces a unique derivation on $\Pp\circ \Cc$, explicitly given by
\[d=(\gamma_{(1)}\circ \id_\Cc)(\id_\Pp\circ'[(\alpha\circ\id_{\Cc})\Delta].\]

Moreover, for any $\alpha,\beta\in  \underline{\Hom}_\SS(\Cc,\Pp)$, the following relation is satisfied
\[d_{\alpha\star\beta} = d_{\alpha}\circ d_\beta,\]
see, for instance, \cite[Proposition 6.4.7]{LoVa12}.

Let $\alpha\colon\Cc\to\Pp$ be a twisting morphism. Then, we define the \emph{left twisted composite product}  as
\[\Pp\circ_\alpha\Cc:=(\Pp\circ\Cc,\del_{\Pp\circ\Cc}+d_{\alpha^{10}},\delb_{\Pp\circ\Cc}+d_{\alpha^{01}}).\]
\begin{prop}\label{prop:TwistedComposite}
The differentials of the twisted composite product satisfy the bicomplex equations:
\[
\begin{cases}
     (\del_{\Pp\circ\Cc}+d_{\alpha^{10}})^2=0,\\ (\delb_{\Pp\circ\Cc}+d_{\alpha^{01}})^2=0,\\
     (\del_{\Pp\circ\Cc}+d_{\alpha^{10}})(\delb_{\Pp\circ\Cc}+d_{\alpha^{01}}) + (\delb_{\Pp\circ\Cc}+d_{\alpha^{01}})(\del_{\Pp\circ\Cc}+d_{\alpha^{10}})=0.    
\end{cases}
\]

\end{prop}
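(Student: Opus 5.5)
The plan is to reduce to the classical dg statement using Remark~\ref{rema:totaltwisting} and then separate bidegrees. Write $D:=\del_{\Pp\circ\Cc}+\delb_{\Pp\circ\Cc}$ for the total differential of $\Pp\circ\Cc$ and $\alpha=\alpha^{10}+\alpha^{01}$. Since the assignment $\beta\mapsto d_\beta$ is linear in $\beta$, we have $d_\alpha=d_{\alpha^{10}}+d_{\alpha^{01}}$. By Remark~\ref{rema:totaltwisting}, $\alpha$ is a twisting morphism in the dg sense for $d=\del+\delb$, that is, $[d,\alpha]+\alpha\star\alpha=0$. The classical result (\cite[Lemma 6.4.11]{LoVa12}) then gives $(D+d_\alpha)^2=0$. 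We apply it to the totalized bicomplexes, viewing $\Pp$ and $\Cc$ as dg objects with total differential. The ingredients of that lemma are the identities $d_{\beta\star\gamma}=d_\beta\circ d_\gamma$ and $[D,d_\beta]=d_{[d,\beta]}$, and these hold verbatim in our setting.

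Next I would expand the square and sort it by bidegree:
\[
(D+d_\alpha)^2=\bigl(\del+d_{\alpha^{10}}\bigr)^2+\Bigl[\bigl(\del+d_{\alpha^{10}}\bigr)\bigl(\delb+d_{\alpha^{01}}\bigr)+\bigl(\delb+d_{\alpha^{01}}\bigr)\bigl(\del+d_{\alpha^{10}}\bigr)\Bigr]+\bigl(\delb+d_{\alpha^{01}}\bigr)^2 .
\]
Here $\del,\delb$ denote the differentials of $\Pp\circ\Cc$. The three summands have pure bidegrees $(2,0)$, $(1,1)$ and $(0,2)$, because $d_{\alpha^{10}}$ and $d_{\alpha^{01}}$ have the same bidegrees as $\alpha^{10}$ and $\alpha^{01}$. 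Since $\Pp\circ\Cc$ is bigraded, the sum vanishes exactly when each homogeneous piece vanishes, which yields the three required equations.

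Alternatively, to avoid citing the dg lemma, each piece can be checked directly. For the $(2,0)$ piece,
\[
(\del+d_{\alpha^{10}})^2=\del^2+[\del,d_{\alpha^{10}}]+d_{\alpha^{10}}\circ d_{\alpha^{10}}=d_{[\del,\alpha^{10}]}+d_{\alpha^{10}\star\alpha^{10}}=d_{[\del,\alpha^{10}]+\alpha^{10}\star\alpha^{10}}=0 .
\]
The $(0,2)$ piece is handled the same way. For the $(1,1)$ piece, the $\del\delb+\delb\del$ term vanishes, and what remains is $d$ applied to the middle equation of Definition~\ref{defi:twisting}.

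The main obstacle is the identity $[\del,d_\beta]=d_{[\del,\beta]}$ (and its analogue for $\delb$), with correct Koszul signs in the bigraded setting. One has to verify that $\del_{\Pp\circ\Cc}$ is compatible with the infinitesimal composition, using that $\del$ is a derivation of $\gamma$ and a coderivation of $\Delta$. Signs depend only on total degree, so this should reduce to the dg argument of \cite[Proposition 6.4.7 and Lemma 6.4.11]{LoVa12} applied separately to $\del$ and to $\delb$.
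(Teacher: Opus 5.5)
Your proposal is correct and follows essentially the same route as the paper. You totalize via Remark~\ref{rema:totaltwisting}, use $(D+d_\alpha)^2=d_{[d,\alpha]+\alpha\star\alpha}=0$, and then split this identity into its components of bidegrees $(2,0)$, $(1,1)$ and $(0,2)$. Your alternative component-by-component check is just the same computation carried out separately in each bidegree.
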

\begin{proof}
    By Remark~\ref{rema:totaltwisting}, \(\varphi=\varphi^{10}+\varphi^{01}\) is a dg twisting morphism. We can endow \(\Pp\circ\Cc\) with a total differential of degree 1 \[d+d_\varphi=\del_{\Pp\circ\Cc}+d_{\varphi^{10}}+\delb_{\Pp\circ\Cc}+d_{\varphi^{01}}.\] Thus, we have \((d+d_\varphi)^2=d_{[d,\varphi]+\varphi\star\varphi}\), which is zero because \(\varphi\) is a dg twisting morphism. Since \(d+d_{\varphi}\) splits into the two components
    \[\del_{\Pp\circ\Cc}+d_{\varphi^{10}}\quad\text{and}\quad\delb_{\Pp\circ\Cc}+d_{\varphi^{01}}\]
    of bidegree \((1,0)\) and \((0,1)\), by degree reasons we obtain the desired result.
\end{proof}

\subsection{Stairway to heaven}\label{sec:staircase}
We introduce a family of minimal bicomplexes, which will serve as a minimal generalization of the shift functor, and state some properties. This family is the key ingredient for constructing the inflation functor defined in \cite{MaSo}.

Let us consider the following family of bicomplexes, $\{\ele_n\}_{n\in \ZZ_{\leq 0}}$, where
\[
\ele_n^{p,q} :=
\begin{cases}
    \kk & \text{if } p, q \leq 0 \text{ and } p + q =  n, \\
    \kk & \text{if } p, q \leq -1 \text{ and } p + q =  n - 1, \\
    0 & \text{otherwise}.
\end{cases}
\]

Since $\ele_n^{p,q}$ is either $\kk$ or trivial, in the first case,  ${(p,q)}_n$ will denote the unit in $\kk$, while in the second case it will denote the zero element. The differentials in the bicomplex $\ele_n $ are defined on the generators by
\[
\del (p,q)_n = q \cdot (p+1, q)_n \quad \text{and} \quad \delb (p,q)_n = -p \cdot (p, q+1)_n.
\]

\begin{exam}
The bicomplexes $\ele_0$, $\ele_{-1}$ and $\ele_{-2}$ are illustrated below.
Denote by $\kk_{(p,q)}$ the graded vector space $\kk$ concentrated in bidegree $(p,q)$. Then
    
\[  
    \ele_{0} = \begin{tikzcd}
    \kk_{(0,0)}
    \end{tikzcd} \qquad
    \ele_{-1} = \begin{tikzcd}
    \kk  &   \\
     \kk_{(\mep1,\mep1)} \arrow[u, "\cdot 1"] \arrow[r, "\cdot (-1)"] & \kk 
    \end{tikzcd} \qquad
    \ele_{-2} = \begin{tikzcd}
    \kk  &  &  \\
     \kk_{(\mep2,\mep1)} \arrow[u, "\cdot 2"] \arrow[r, "\cdot (-1)"] & \kk & \\
      & \kk_{(\mep1,\mep2)}\arrow[r, "\cdot (-2)"] \arrow[u, "\cdot 1"] & \kk
    \end{tikzcd}
\]
\end{exam}

\begin{rema}
    Note that the bicomplex \(\ele_{\mep1}\) is precisely the bicomplex \(\ele\) in the definition of the shift functor $\ele: \bicok\to \bicok$ defined in Section~\ref{subsec:preliminarsbico}. Moreover, \(\ele_{-n}\) is pluripotential weakly equivalent to the \(n\)-fold tensor product 
    of \(\ele\), that is,
    \[
    \ele_{-n} \;\simeq\; \ele^{\otimes n}.
    \]
    The key advantage of \(\ele_{-n}\) is that it is a minimal bicomplex (i.e., \(\del\delb=0\)), whereas \(\ele^{\otimes n}\) is not.
\end{rema}

We introduce the following bicomplex maps
\[\iota_{k,\ell}:\ele_{k+\ell}\to \ele_k\otimes\ele_\ell\]
of bidegree $(0,0)$ for every $k,\ell\in\ZZ_{\leq 0}$ given by
\[\iota_{k,\ell}(p,q)_{k+\ell}=\sum_{\substack{\alpha+\delta=p\\\beta+\gamma=q}}(-1)^{(\alpha+\beta)(\delta+\gamma+\ell)}(\alpha,\beta)_k\otimes(\delta,\gamma)_\ell.\]
The following is proved in \cite{MaSo}
\begin{prop}\label{prop:iotacoass}
The maps are \textit{coassociative} in the sense that
\[(\iota_{k_1,k_2}\otimes \id)\iota_{k_1+k_2,k_3}=(\id\otimes\iota_{k_2,k_2})\iota_{k_1,k_2+k_3}\]
for any \(k_1,k_2,k_3\in \ZZ_{\leq 0}\).
\end{prop}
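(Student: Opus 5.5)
The plan is to prove the identity by evaluating both sides on a basis element and comparing term by term. Here the right-hand side is read as \((\id\otimes\iota_{k_2,k_3})\iota_{k_1,k_2+k_3}\), the evident intended statement. Since both sides are \(\kk\)-linear maps \(\ele_{k_1+k_2+k_3}\to\ele_{k_1}\otimes\ele_{k_2}\otimes\ele_{k_3}\), it suffices to check equality on each generator \((p,q)_{K}\), where \(K=k_1+k_2+k_3\). Because every \(\iota\) has bidegree \((0,0)\), the Koszul rule introduces no extra sign when \(\iota_{k_1,k_2}\otimes\id\) or \(\id\otimes\iota_{k_2,k_3}\) is applied to a tensor. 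Expanding the definition twice on each side therefore gives sums of terms
\[
\pm\,(a_1,b_1)_{k_1}\otimes(a_2,b_2)_{k_2}\otimes(a_3,b_3)_{k_3},\qquad a_1+a_2+a_3=p,\quad b_1+b_2+b_3=q .
\]

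First step: check that both sides run over the same index set. On the left, the intermediate element \((\alpha,\beta)_{k_1+k_2}\) has \(\alpha=a_1+a_2\) and \(\beta=b_1+b_2\). On the right, the intermediate element is \((a_2+a_3,b_2+b_3)_{k_2+k_3}\). Write \(s_i=a_i+b_i\). A summand is nonzero exactly when each \(s_i\in\{k_i,k_i-1\}\), with the usual sign constraints (\(a_i,b_i\le 0\), respectively \(a_i,b_i\le -1\)). Since \(s_1+s_2+s_3\in\{K,K-1\}\), at most one factor is of the "lower" type \(s_i=k_i-1\). So any partial sum of two factors again has total degree \(k_i+k_j\) or \(k_i+k_j-1\), and it satisfies the corresponding sign constraints. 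A sum involving a lower factor has both coordinates \(\le -1\), and a sum of two upper factors has both coordinates \(\le 0\). Hence an intermediate generator is zero exactly when some final factor is zero. This is the only subtle point, since a priori two lower factors could combine into an invalid intermediate bidegree; the degree count rules that out. The conclusion is that both sides are sums over the same set of triples, each term with multiplicity one.

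Second step: compare the signs. On the left, the sign is
\[
(s_1+s_2)(s_3+k_3)+s_1(s_2+k_2).
\]
On the right, it is
\[
s_1(s_2+s_3+k_2+k_3)+s_2(s_3+k_3).
\]
Both expand to \(s_1s_2+s_1s_3+s_2s_3+s_1k_2+s_1k_3+s_2k_3\), so they agree. This finishes the argument; the only real obstacle is the bookkeeping on vanishing intermediate terms handled in the first step.
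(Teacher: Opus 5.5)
Your proof is correct, reading the right-hand side as $(\id\otimes\iota_{k_2,k_3})\iota_{k_1,k_2+k_3}$, as you did. The paper gives no argument of its own here; it only says the result is proved in \cite{MaSo}. Your generator-by-generator verification is therefore a self-contained alternative to that citation. You correctly identify the support analysis as the only delicate point: a nonzero generator of $\ele_K$ has total degree $K$ or $K-1$, so at most one outer factor is of lower type, and hence every two-factor partial sum lies in the support of $\ele_{k_i+k_j}$. Both sign exponents do reduce to $s_1s_2+s_1s_3+s_2s_3+s_1k_2+s_1k_3+s_2k_3$.

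One wording fix. The sentence ``an intermediate generator is zero exactly when some final factor is zero'' overstates things. For example, $(-1,-1)_{-2}\neq 0$ splits as $(0,0)_{-1}\otimes(-1,-1)_{-1}$, whose first factor is zero. What you need, and what your argument actually proves, is only one implication: if all three outer factors are nonzero, then the intermediate generator is nonzero. That already shows each triple appears on either side with coefficient $\pm1$ precisely when all three factors lie in the supports.

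As a side benefit, your left-hand exponent $s_1(s_2+k_2)+(s_1+s_2)(s_3+k_3)$ is exactly the $r=2$ case of the closed sign formula the paper states for $\iota_{k_0,\dots,k_r}$. Iterating your support-plus-sign argument therefore also justifies that general formula.
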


We can define a generalized map 
\[\iota_{k_0,\dots,k_r}:\ele_{k_0+\dots+k_r}\to \ele_{k_0}\otimes\dots\otimes \ele_{k_r}\]
for \(k_0,\dots,k_r\in\ZZ_{\leq 0}\) by recursively applying \(\iota_{k,\ell}\) which gives
\[\iota_{k_0,\dots,k_r}(p,q)_n=\sum_{\substack{p_0+\dots+p_r=p\\q_0+\dots+q_r=q}}(-1)^{\varepsilon}(p_0,q_0)_{k_0}\otimes\dots\otimes(p_r,q_r)_{k_r},\]
where 
\(\varepsilon:=\sum_{j=1}^r \left( \sum_{i=0}^{j-1}(p_i+q_i) \right)(p_j+q_j+k_j)\) and \(n=k_0+\dots+k_r\).

It follows from the proof of Proposition 4.1 in \cite{MaSo}, for the case \(r=2\), that:
\begin{prop}
    The maps \(\iota_{k_0,\dots,k_r}\) are pluripotential weak equivalences for every \(k_0,\dots,k_r\in\ZZ_{\leq 0}\).
\end{prop}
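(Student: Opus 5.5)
The plan is to reduce to the two-factor case and induct on $r$. By construction $\iota_{k_0,\dots,k_r}$ is obtained by iterating the binary maps: for instance
\[
\iota_{k_0,\dots,k_r}=(\iota_{k_0,\dots,k_{r-1}}\otimes\id_{\ele_{k_r}})\circ\iota_{k_0+\dots+k_{r-1},\,k_r},
\]
and Proposition~\ref{prop:iotacoass} shows that the bracketing is irrelevant, so the explicit formula for $\iota_{k_0,\dots,k_r}$ agrees with this iterate. Since pluripotential weak equivalences are closed under composition, it then suffices to prove two facts: (a) each binary map $\iota_{k,\ell}\colon\ele_{k+\ell}\to\ele_k\otimes\ele_\ell$ is a pluripotential weak equivalence, which is the case treated in the proof of Proposition~4.1 of \cite{MaSo}; and (b) tensoring a pluripotential weak equivalence with the identity of a bicomplex (here $\ele_{k_r}$) gives a pluripotential weak equivalence.

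For (b), I would use the fact, already exploited in the proof of Proposition~\ref{prop:Kunneth}, that over a field pluripotential weak equivalences coincide with pluripotential homotopy equivalences. If $f\colon A\to B$ has a homotopy inverse $g$ with homotopies $h,h'$, then $f\otimes\id_C$ has homotopy inverse $g\otimes\id_C$ with homotopies $h\otimes\id_C$, $h'\otimes\id_C$. The identity $[\del,[\delb,h\otimes\id_C]]=[\del,[\delb,h]]\otimes\id_C$ holds with the Koszul signs because $\id_C$ commutes with both differentials. The induction on $r$ then closes: $\iota_{k_0,\dots,k_{r-1}}$ is a weak equivalence by hypothesis, so $\iota_{k_0,\dots,k_{r-1}}\otimes\id$ is one by (b), and precomposing with $\iota_{k_0+\dots+k_{r-1},k_r}$ from (a) gives the claim.

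For (a), if I reprove it rather than cite it, I would first note that both source and target are bounded. By Remark~\ref{rema:boundedbicos} it then suffices to check that $\iota_{k,\ell}$ induces isomorphisms on $H_\del$ and $H_\delb$. The row cohomology of $\ele_n$ is easy to read off from the staircase: $H_\del(\ele_n)$ is one-dimensional, spanned by $(n,0)_n$, and symmetrically $H_\delb(\ele_n)$ is spanned by $(0,n)_n$. These classes survive because the coefficients $q$ and $-p$ vanish exactly at the corners, while all other steps are isomorphisms since $\mathrm{char}\,\kk=0$. By the Künneth formula for $\del$-cohomology, $H_\del(\ele_k\otimes\ele_\ell)$ is one-dimensional, spanned by $(k,0)_k\otimes(\ell,0)_\ell$. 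It then remains to check that the coefficient of this term in $\iota_{k,\ell}(k+\ell,0)_{k+\ell}$ is $\pm1$: the only summand with $\beta=\gamma=0$ and $\alpha=k$, $\delta=\ell$ has sign $(-1)^{k(\ell+\ell)}=1$. The other summands land in bidegrees that vanish in $\ele_k$ or $\ele_\ell$, or are $\del$-exact; this follows from the support of $\ele$, which forces $\alpha\geq k$ and $\delta\geq\ell$. The $\delb$ case is symmetric.

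The main obstacle is the sign and support bookkeeping, both in confirming that the iterated formula matches the composite (especially the sign $\varepsilon$) and in checking in (a) that the corner class maps to a generator and not to zero. If direct verification gets messy, the fallback is to cite \cite{MaSo} for $r=2$ and rely only on the compositional argument above.
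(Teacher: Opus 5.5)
Your proof is correct. It also differs from the paper, which gives no argument of its own: it only points to the proof of Proposition~4.1 in \cite{MaSo} for the case $r=2$ and leaves the general case implicit.

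You make the passage to arbitrary $r$ explicit. Since $\iota_{k_0,\dots,k_r}$ is by definition the iterate $(\iota_{k_0,\dots,k_{r-1}}\otimes\id)\circ\iota_{k_0+\dots+k_{r-1},k_r}$, it suffices to know two things: the binary case, and that $-\otimes\id_C$ preserves pluripotential weak equivalences. Your justification of the latter through the homotopy $h\otimes\id_C$ is sound; the Koszul signs work out because $h$ is even and $[\delb,h]$ is odd.

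Your direct check of the binary case is also right, and cleaner than you state it. In $\iota_{k,\ell}(k+\ell,0)_{k+\ell}$ there are no other summands at all. Since $\beta+\gamma=0$ with $\beta,\gamma\leq 0$, we get $\beta=\gamma=0$, and then $(\alpha,0)\in\ele_k$ forces $\alpha=k$.

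This corner computation in fact works for all $r$ at once, so you could drop both the induction and step (b). The argument runs as follows:
\begin{itemize}
\item $\ele_{k_0}\otimes\cdots\otimes\ele_{k_r}$ is bounded.
\item By K\"unneth, its $H_\del$ is one-dimensional, spanned by $(k_0,0)_{k_0}\otimes\cdots\otimes(k_r,0)_{k_r}$.
\item The same support argument shows that $\iota_{k_0,\dots,k_r}(n,0)_n$ is $\pm$ exactly this element. The sign is actually $+1$, since $\varepsilon=\sum_j\bigl(\sum_{i<j}k_i\bigr)\cdot 2k_j$ is even, but only $\pm1$ matters, so your worry about $\varepsilon$ is moot.
\item The $\delb$ case is symmetric, using $(0,n)_n$.
\item Remark~\ref{rema:boundedbicos} then concludes.
\end{itemize}
What your inductive route buys is independence from the explicit multi-index formula. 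It needs only the binary case as input, which is convenient if you would rather cite \cite{MaSo} than recompute.
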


We define two linear maps, that will be used in Section~\ref{subsec:KoszulMinimalModels},
\[
\mathfrak{s}_n:\ele_{n}\to\ele_{n+1}\quad\text{and}\quad\ov{\mathfrak{s}}_n:\ele_{n}\to\ele_{n+1}
\]
of bidegree \((1,0)\) and \((0,1)\), respectively, for \(n\leq -1\) given by 
\[\mathfrak{s}(p,q)_n=(-1)^{p+q+n}(p+1,q)_{n+1},\quad \ov{\mathfrak{s}}(p,q)_n=(-1)^{p+q+n}(p,q+1)_{n+1}.
\]
\begin{rema}\label{rema:fraksdel}
    These maps satisfy the following
\[
\del \mathfrak{s}+\mathfrak{s}\del=0,\quad\delb\ov{\mathfrak{s}}+\ov{\mathfrak{s}}\delb=0\quad\text{and}\quad \del\ov{\mathfrak{s}}+\delb \mathfrak{s}+\ov{\mathfrak{s}}\del+\mathfrak{s}\delb=0.
\]
\end{rema}

The maps \(\iota\), \(\mathfrak{s}\) and \(\ov{\mathfrak{s}}\) satisfy compatibility conditions:
\begin{prop}\label{prop:fraksiota} The following hold
        \begin{enumerate}
        \item \((\mathfrak{s}_{k-1}\otimes\id)\iota_{k-1,\ell}=(-1)^{k}(\id\otimes \mathfrak{s}_{\ell-1})\iota_{k,\ell-1}=\iota_{k,\ell}\mathfrak{s}_{k+\ell-1}\),\label{item:1propiotas}
        \item \((\ov{\mathfrak{s}}_{k-1}\otimes\id)\iota_{k-1,\ell}=(-1)^{k}(\id\otimes \ov{\mathfrak{s}}_{\ell-1})\iota_{k,\ell-1}=\iota_{k,\ell}\ov{\mathfrak{s}}_{k+\ell-1}\).\label{item:2propiotas}
    \end{enumerate}
\end{prop}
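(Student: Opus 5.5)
The plan is to verify all three identities directly on the basis elements $(p,q)_{k+\ell-1}$ of $\ele_{k+\ell-1}$. Every map involved ($\iota$, $\mathfrak{s}$, $\ov{\mathfrak{s}}$, identities) is given by explicit formulas on generators, so the proposition reduces to a comparison of coefficients in the basis $\{(\alpha,\beta)_k\otimes(\delta,\gamma)_\ell\}$ of $\ele_k\otimes\ele_\ell$. Part (2) will follow from part (1) by the same computation with the roles of the two coordinates exchanged.

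For part (1), fix a generator $x=(p,q)_{k+\ell-1}$ and compute the three expressions.
\begin{itemize}
\item First, $\iota_{k,\ell}\mathfrak{s}_{k+\ell-1}(x)$ equals $(-1)^{p+q+k+\ell-1}\iota_{k,\ell}(p+1,q)_{k+\ell}$. This is a sum over pairs $\alpha+\delta=p+1$ and $\beta+\gamma=q$, with sign $(-1)^{(\alpha+\beta)(\delta+\gamma+\ell)}$.
\item Second, $(\mathfrak{s}_{k-1}\otimes\id)\iota_{k-1,\ell}(x)$ is a sum over $\alpha'+\delta=p$ and $\beta+\gamma=q$. Applying $\mathfrak{s}_{k-1}$ to the first factor turns $(\alpha',\beta)_{k-1}$ into $(-1)^{\alpha'+\beta+k-1}(\alpha'+1,\beta)_k$. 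No Koszul sign arises, since $\mathfrak{s}$ acts on the leftmost factor.
\item Third, $(\id\otimes\mathfrak{s}_{\ell-1})\iota_{k,\ell-1}(x)$ picks up the Koszul sign $(-1)^{\alpha+\beta}$ when $\mathfrak{s}$ passes the first factor. It also carries the sign $(-1)^{\delta'+\gamma+\ell-1}$ from $\mathfrak{s}$ itself.
\end{itemize}
After the reindexings $\alpha=\alpha'+1$ (respectively $\delta=\delta'+1$), all three become sums over the same pairs $(\alpha,\beta),(\delta,\gamma)$ with $\alpha+\delta=p+1$ and $\beta+\gamma=q$. It then suffices to check, for each term, that the exponents agree modulo $2$, using the relations $\alpha+\beta\in\{k,k-1\}$ and $\delta+\gamma\in\{\ell,\ell-1\}$ that hold whenever the generators are nonzero. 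The extra $(-1)^{k}$ in front of the middle expression should absorb the difference between the Koszul sign and the $k$-dependent part of the first sign.

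I expect the main obstacle to be twofold. The first part is the sign bookkeeping just described. I would organize it by computing each exponent as a polynomial in $\alpha+\beta$, $\delta+\gamma$, $k$, $\ell$, $p+q$ and reducing with $p+q+1=\alpha+\beta+\delta+\gamma$. The second part is matching the supports of the sums. The convention that $(p,q)_n$ denotes zero outside the allowed region means that after reindexing some terms may appear in one sum but not in another. I must check that such boundary terms vanish on both sides: for instance, a term $(\alpha'+1,\beta)_k$ with $\alpha'+1=1$, or a term with $\alpha=0$ on the left that has no preimage $\alpha'=-1$ in the allowed region of $\ele_{k-1}$. I would handle this by a short case analysis according to whether each factor sits on the upper diagonal ($\alpha+\beta=k$) or the lower diagonal ($\alpha+\beta=k-1$, requiring $\alpha,\beta\le-1$).

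Part (2) repeats the same computation with $\ov{\mathfrak{s}}$ in place of $\mathfrak{s}$, shifting the second coordinate instead of the first. The signs are formally identical, because the exponents only involve the total degrees $\alpha+\beta$ and $\delta+\gamma$.
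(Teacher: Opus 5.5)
Your proposal is correct and takes the same route as the paper. You evaluate all three composites on a generator $(p,q)_{k+\ell-1}$, reindex each into a sum over $p_1+p_2=p+1$, $q_1+q_2=q$, and compare exponents; after substituting $d_1+d_2=p+q+1$ they agree identically, so you will not actually need the diagonal relations $\alpha+\beta\in\{k,k-1\}$. The support check you flag is one the paper leaves implicit in its convention that out-of-range symbols $(p,q)_n$ are zero, and it does go through: every generator of $\ele_k$ is the shift of a generator of $\ele_{k-1}$ (likewise for $\ele_\ell$), so the reindexing is a bijection on nonzero terms, and terms that $\mathfrak{s}$ pushes out of the support vanish in all three expressions.
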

\begin{proof}
        Let us prove (\ref{item:1propiotas}), (\ref{item:2propiotas}) is analogous. On the one hand, we have
    \begin{align*}
        (\mathfrak{s}\otimes\id)\iota_{k-1,\ell}(p,q)_{k+\ell-1}&=\sum_{\substack{p_1+p_2=p\\q_1+q_2=q}}(-1)^{d_1+k-1+d_1(d_2+\ell)}(p_1+1,q_1)_{k}\otimes(p_2,q_2)_{\ell}\\
        & =\sum_{\substack{p_1+p_2=p+1\\q_1+q_2=q}}(-1)^{d_1+k+d_2+\ell+d_1(d_2+\ell)}(p_1,q_1)_{k}\otimes(p_2,q_2)_{\ell},
    \end{align*}
        where \(d_i=p_i+q_i\). On the other hand,
    \begin{align*}
        (\id\otimes \mathfrak{s})\iota_{k,\ell-1}(p,q)_{k+\ell-1}&=\sum_{\substack{p_1+p_2=p\\q_1+q_2=q}}(-1)^{d_2+\ell-1+d_1(d_2+\ell-1)+d_1}(p_1,q_1)_{k}\otimes(p_2+1,q_2)_{\ell}\\
        & =\sum_{\substack{p_1+p_2=p+1\\q_1+q_2=q}}(-1)^{d_1+d_2+\ell+d_1(d_2+\ell)}(p_1,q_1)_{k}\otimes(p_2,q_2)_{\ell}.
    \end{align*}
    Finally,
    \begin{align*}
        \iota_{k,\ell}\mathfrak{s}(p,q)_{k+\ell-1}&=\sum_{\substack{p_1+p_2=p+1\\q_1+q_2=q}}(-1)^{d_1+d_2+k+\ell+d_1(d_2+\ell)}(p_1,q_1)_{k}\otimes(p_2,q_2)_{\ell}.\qedhere
    \end{align*}
\end{proof}

\subsection{The inflation functor}
We next review the definition of the \textit{inflation functor}
constructed in \cite{MaSo}. It is a functor
\[\Inf: \Ch^{\leq 0}_\kk \to \bicok^{\doq}\]
where \(\Ch^{\leq 0}\) is the category of cochain complexes over \(\kk\) concentrated in non-positive degrees and \(\bico^{\doq}\) denotes the category of bicomplexes concentrated in the 3rd quadrant, that is \(B^{p,q}=0\) for \(p> 0\) or \(q> 0\). Given a cochain complex concentrated in negative degrees \((C,d)\)
it is defined as follows:
\[\Inf(C) = \left( \bigoplus_{n \in \mathbb{Z}_{\leq0}} \ele_n \otimes C^n, \del, \delb \right),\]
where we are considering $C^n$ as a bicomplex sitting in degree $(0,0)$ and \(\ele_n\) is the bicomplex defined in Section~\ref{sec:staircase}. The differentials \(\del\) and \(\delb\) are defined by:
\begin{equation}
    \label{eq:infdiff}
    \del((p,q)_n \otimes c) = q(p+1,q)_n \otimes c + (-1)^{p+q+n} (p+1, q)_{n+1} \otimes d c,
\end{equation}
\[\delb((p,q)_n \otimes c) = -p(p,q+1)_n  \otimes c + (-1)^{p+q+n} (p, q+1)_{n+1} \otimes d c.\]
The inflation functor is symmetric colax monoidal, and maps quasi-isomorphisms to pluripotential weak equivalences (see \cite{MaSo}).

We now adapt the inflation functor to \(\SS\,\mep\)modules of cochain complexes and bicomplexes:
\[
\Inf \colon \SS\,\mep\Ch^{\leq 0}_\kk \longrightarrow \SS\,\mep\bicok^{\doq}.
\]

Let 
\(
\rho \colon \SS_n \to \mathrm{Aut}_\kk(C),
\)
be a right \(\SS_n\) action on a cochain complex \(C\). The inflation functor induces a group morphism
\[
\mathrm{Aut}_\kk(C) \to \mathrm{Aut}_\kk(\Inf(C))
\]
given by
\[
\Inf(f)((p,q)_n \otimes c) = (p,q)_n \otimes f(c).
\]
By composition, we obtain a group morphism
\[
\begin{tikzcd}[ampersand replacement=\&]
	{\SS_n} \\
	{\mathrm{Aut}_\kk(C)} \& {\mathrm{Aut}_\kk(\Inf(C))}
	\arrow["\rho"', from=1-1, to=2-1]
	\arrow[dashed, from=1-1, to=2-2]
	\arrow[from=2-1, to=2-2]
\end{tikzcd}
\]
which defines a right action on \(\mathrm{Aut}_\kk(\Inf(C))\).

Let \(M\) be in \(\dgsmod\), then we define
\[\Inf(M)(n):=\Inf(M(n)).\]

The categories \((\dgsmod, \circ, \mathrm{I})\) and  \((\bicosmod^{\doq}, \circ, \mathrm{I})\) are monoidal. We have:
\begin{prop}\label{prop:SimInfColax}
    The functor 
    \[
    \Inf\colon\dgsmod\to \bicosmodn
    \]
    is  colax monoidal. Moreover, for all $M$ and $N$ in $\dgsmod$ the comparison map
    \[ \Inf(M \circ N) \to \Inf(M) \circ \Inf(N)\]
    is a weak equivalence, i.e. an arity-wise pluripotential weak equivalence.
\end{prop}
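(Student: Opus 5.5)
The plan is to build the colax structure from the arity-free colax monoidal structure of $\Inf$ on $\Ch^{\leq 0}_\kk$, proved in \cite{MaSo}. That structure consists of a natural comparison map
\[
\Inf(C\otimes D)\to \Inf(C)\otimes\Inf(D),\qquad (p,q)_{n}\otimes (c\otimes d)\mapsto \pm\,\iota_{k,\ell}(p,q)_{n}\cdot(c\otimes d),
\]
for $c\in C^{k}$, $d\in D^{\ell}$, $n=k+\ell$. The sign is the Koszul sign from moving $c$ past the first tensor factor. We also use the counit identification $\Inf(\kk)=\ele_0\otimes\kk\cong\kk$.

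Iterating this comparison with the maps $\iota_{k_0,\dots,k_r}$ of Section~\ref{sec:staircase} gives comparison maps $\Inf(C_1\otimes\dots\otimes C_r)\to\Inf(C_1)\otimes\dots\otimes\Inf(C_r)$. These maps are well defined and independent of bracketing by the coassociativity of Proposition~\ref{prop:iotacoass}. By the proposition following it, they are pluripotential weak equivalences whenever the $C_i$ have trivial differential. For general $C_i$, I would filter by cohomological degree and apply the spectral sequence/five lemma argument from \cite{MaSo}, or simply quote the fact that the colax map of \cite{MaSo} is a weak equivalence.

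Next I lift this to $\SS$-modules and the composition product. Since $\Inf$ acts only on the $C$ factor through $\ele_n\otimes C^n$, with group actions transported as described above, it commutes with direct sums. It also commutes with induction $\mathrm{Ind}^{\SS_n}_{\SS_{i_1}\times\dots\times\SS_{i_k}}$, which is a direct sum of copies indexed by cosets. The iterated comparison is equivariant for permutations of tensor factors, because $\iota$ is compatible with the symmetry up to the Koszul sign; this is the symmetric part of ``symmetric colax'' in \cite{MaSo}. So on $M(k)\otimes N(i_1)\otimes\dots\otimes N(i_k)$ we get equivariant maps
\[
\Inf\bigl(M(k)\otimes N^{\otimes}\bigr)\to \Inf(M(k))\otimes\Inf(N(i_1))\otimes\dots\otimes\Inf(N(i_k)).
\]
These are compatible with the relations \eqref{eq:equivout} and \eqref{eq:equivinn}, so they descend to coinvariants $\otimes_{\SS_k}$. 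This defines $\Inf(M\circ N)\to\Inf(M)\circ\Inf(N)$. Compatibility with the unit $\mathrm{I}$ and the coassociativity diagram of a colax monoidal functor then follow from the counit and coassociativity of the $\iota$ maps (Proposition~\ref{prop:iotacoass}).

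For the weak-equivalence claim, note that $\circ$ in arity $n$ is a finite direct sum, since everything is concentrated in nonpositive degrees and in finitely many arity patterns for fixed $n$. Its summands are coinvariants of induced representations of tensor products. Over characteristic zero, taking $\SS_k$-coinvariants is exact and is a retract via the averaging idempotent. Hence it preserves pluripotential weak equivalences: ABC-cohomology commutes with the idempotent splitting, and the Künneth remark for $\kk[\SS_n]$-bicomplexes applies. Induction preserves them as well. So it suffices that each tensor comparison is a weak equivalence.

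The main obstacle is exactly this tensor comparison for non-formal $C_i$, together with the sign and equivariance bookkeeping. I would first reduce to the case of trivial differentials: any cochain complex over a field is quasi-isomorphic to its cohomology, via a decomposition into cohomology plus contractible pieces $\kk\to\kk$. $\Inf$ sends quasi-isomorphisms to pluripotential weak equivalences, and tensor products of pluripotential weak equivalences are weak equivalences (Proposition~\ref{prop:Kunneth}). A naturality square then reduces the claim to the formal case, which is settled by the $\iota_{k_0,\dots,k_r}$ proposition. One caveat remains: the decomposition must be chosen $\SS_n$-equivariantly. This is possible in characteristic zero by Maschke, or one checks the equivalence after forgetting the action, which is enough.
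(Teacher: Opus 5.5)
Your proposal is correct and follows the paper's route. The comparison map is built from the iterated maps $\iota_{s_0,\dots,s_k}$, and every property is inherited from the symmetric colax monoidality of $\Inf$ on cochain complexes (Proposition~4.1 of \cite{MaSo}); you additionally spell out what the paper leaves implicit, namely the descent to $\SS_k$-coinvariants and the reduction of the weak-equivalence claim to the formal case. One harmless slip: the arity-$n$ component of $M\circ N$ need not be a finite direct sum when $N(0)\neq 0$, but arbitrary direct sums of pluripotential weak equivalences are still weak equivalences, so the argument goes through.
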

\begin{proof}
   We will construct natural equivariant maps
    \[
    \psi_{M,N}(n):\Inf(M\circ N)(n)\to (\Inf(M)\circ\Inf( N))(n)
    \]
    for any   \(\mathrm{dg}\text{-}\SS\,\)-modules  \(M\) and \( N\) in non-positive degrees,
making the following diagram commute:
\[\begin{tikzcd}[ampersand replacement=\&]
	{\Inf\big((M\circ N)\circ P\big)} \& {\Inf\big(M\circ( N\circ P)\big)} \\
	{\Inf(M\circ N)\circ\Inf( P)} \& {\Inf(M)\circ\Inf( N\circ P)} \\
	{\big(\Inf(M)\circ\Inf( N)\big)\circ\Inf( P)} \& {\Inf(M)\circ\big(\Inf( N)\circ\Inf( P)\big)}
	\arrow["\cong", from=1-1, to=1-2]
	\arrow["{\psi_{M\circ N, P}}"', from=1-1, to=2-1]
	\arrow["{\psi_{M, N\circ P}}", from=1-2, to=2-2]
	\arrow["{\psi_{M, N}\circ\id}"', from=2-1, to=3-1]
	\arrow["{\id\circ\psi_{ N, P}}", from=2-2, to=3-2]
	\arrow["\cong", from=3-1, to=3-2]
\end{tikzcd}\]
and satisfying the unitality conditions
 with respect to $\id:\Inf(\mathrm{I})=\mathrm{I}\to\mathrm{I}$. The horizontal maps in the above square are induced by the symmetric monoidal structures on cochain complexes and bicomplexes, respectively.
    
First, we have
    \[
    \Inf(M\circ N)(n)= \bigoplus_{s\geq 0}\bigoplus_{\substack{i_1+\dots+i_k=n\\{s_0}+s_1+\dots+s_k=s}}\ele_sM(k)^{s_0}\otimes_{\SS_k}\left(\mathrm{Ind}_{\SS_{i_1}\times\dots\times\SS_{i_k}}^{\SS_n}  N(i_1)^{s_1}\otimes\dots\otimes  N(i_k)^{s_k}\right)
    \]
    and 
    \begin{align*}
       &(\Inf(M)\circ\Inf( N))(n)=\\
       &\bigoplus_{s\geq 0}\bigoplus_{\substack{i_1+\dots+i_k=n\\s_0+s_1+\dots+s_k=s}}\ele_{s_0}M(k)^{s_0}\otimes_{\SS_k}\left(\mathrm{Ind}_{\SS_{i_1}\times\dots\times\SS_{i_k}}^{\SS_n} \ele_{s_1} N(i_1)^{s_1}\otimes\dots\otimes \ele_{s_k} N(i_k)^{s_k}\right).
    \end{align*}
    
    To compare these two expressions, we use the maps
    \[
    \iota_{s_0,s_1,\dots,s_k}:\ele_s\to \ele_{s_0}\otimes\ele_{s_1}\otimes\dots\otimes\ele_{s_k}
    \]
    defined in Section~\ref{sec:staircase}, which are given by
    \[
    (p,q)_r\mapsto\sum_{\substack{p_0+\dots+p_k=p\\q_0+\dots+q_k=q}}(-1)^{\sum_{i}(d_0+\dots+d_{i-1})(d_{i}+r_i)}(p_0,q_0)_{s_0}\otimes\dots\otimes(p_k,q_k)_{s_k}
    \]
Elements of \(\Inf\big((M\circ N)\big)(n)\) are linear combinations of formal composites
\[
(p,q)_s\otimes(\nu_{0};\nu_{1},\dots,\nu_{k})\sigma
\]
where \(s=\mid \nu_0\mid + \dots+\mid \nu_k\mid\), \(\sigma\in\SS_n\), \(\nu_{0}\in M(k)\) and \(\nu_{1}\in N(i_1),\dots,\nu_{k}\in  N(i_k)\) with \(i_1+\dots+i_k=n\). With this description we define the comparison map
\[
\psi_{M, N}:\Inf(M\circ N)\to\Inf(M)\circ\Inf( N)
\]
by letting
\[
\psi_{M, N}(n)\big((p,q)_s\otimes(\nu_{0};\nu_{1},\dots,\nu_{k})\sigma\big):=\sum_{\substack{p_0+\dots+p_k=p\\q_0+\dots+q_k=q}}(-1)^{\varepsilon}(\nu_{0}^{p_0,q_0};\nu_{1}^{p_1,q_1},\dots,\nu_{k}^{p_k,q_k})\sigma
\]
where \(\varepsilon=\sum_{i}(d_0+\dots+d_{i-1})(d_{i}+r_i)\) and \(\nu_{r}^{p_r,q_r}:=(p_r,q_r)_{d_r}\otimes \nu_r\) and \(d_r=|\nu_r|\). The map \(\psi(n)\) is obviously equivariant. 

The fact that it is a map of bicomplexes and the required properties of $\psi_{M, N}$ are now directly deduced from the symmetric monoidality of the inflation functor for the case of chain complexes of 
\(\kk\)-modules, Proposition~4.1 in \cite{MaSo}.
Namely, the above diagram commutes and $\psi_{M, N}$ is a weak equivalence.
\end{proof}

\begin{prop}\label{prop:SimInfWeakEq}
    The inflation functor
    \[
    \Inf\colon\dgsmod\to \bicosmod^{\doq}
    \]
    sends quasi-isomorphisms to pluripotential weak equivalences.
\end{prop}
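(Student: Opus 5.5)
The statement is arity-wise, since a morphism \(f\colon M\to N\) of dg-\(\SS\)-modules is a quasi-isomorphism exactly when each \(f(n)\colon M(n)\to N(n)\) is one. Likewise, a morphism of bb-\(\SS\)-modules is a pluripotential weak equivalence when each component is. By definition \(\Inf(f)(n)=\Inf(f(n))\), where the right-hand side is the inflation functor on cochain complexes with the induced \(\SS_n\)-actions. So it suffices to show that, for a quasi-isomorphism \(f\colon C\to D\) of cochain complexes concentrated in non-positive degrees, \(\Inf(f)\) induces isomorphisms on \(\BC\) and \(\A\). The \(\SS_n\)-equivariance plays no role here: being a weak equivalence is a property of the underlying bicomplex map, and \(\Inf(f)\) is equivariant because \(\Inf\) acts on automorphisms factorwise.

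For the non-equivariant statement I would first invoke the result from \cite{MaSo} already quoted in the previous subsection, namely that the inflation functor maps quasi-isomorphisms to pluripotential weak equivalences. If a self-contained argument is wanted, I would proceed as follows. Over a field, \(C\) decomposes as \(C\cong H(C)\oplus \mathrm{Acyc}(C)\), where \(H(C)\) has zero differential and \(\mathrm{Acyc}(C)\) is a sum of two-term complexes \(\kk\xrightarrow{\id}\kk\). A quasi-isomorphism \(f\) is then, up to these splittings, an isomorphism on the \(H\)-parts plus a map between acyclic parts. Since \(\Inf\) is additive, it therefore suffices to show two things: that \(\Inf\) sends isomorphisms to isomorphisms, which is clear, and that \(\Inf(\kk\xrightarrow{\id}\kk)\) is pluripotentially acyclic.

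For the second point, let the two-term complex sit in degrees \(n\) and \(n+1\). Its inflation is \(\ele_n\oplus\ele_{n+1}\) with the connecting maps \(\pm\mathfrak{s},\pm\ov{\mathfrak{s}}\) of Section~\ref{sec:staircase}. I would check directly that this bicomplex is a direct sum of squares. For each generator \((p,q)_n\) with \(p,q\le -1\) and \(p+q=n-1\), the elements \((p,q)_n\), \((p+1,q)_{n}\), \((p,q+1)_n\), \((p+1,q+1)_{n+1}\) (with suitable corrections from the other summand) span a square. Writing this out concretely, and verifying that \(\del\delb\) is an isomorphism from bottom-left corners to top-right corners, shows that \(\BC=\A=0\), i.e.\ that the summand is acyclic.

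The main obstacle is this square decomposition. One has to track the coefficients \(q\) and \(-p\) together with the signs \((-1)^{p+q+n}\) and confirm that nothing degenerates at the boundary corners \(p=0\) or \(q=0\). I would first try the small cases \(n=0,-1\) explicitly, then guess the general basis. Alternatively, I would use Remark~\ref{rema:boundedbicos} in each bounded total degree and show that the \(\del\)- and \(\delb\)-cohomologies of the inflated acyclic complex vanish, via the column filtration.
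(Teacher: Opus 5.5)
Your proposal is correct and matches the paper's proof: reduce arity-wise to the non-equivariant inflation functor on $\mathrm{Ch}^{\leq 0}$, note that equivariance plays no role, and invoke Proposition~3.9 of \cite{MaSo}. Your optional self-contained route (split off contractible two-term complexes and check that their inflations are sums of squares) also works, with two small fixes: $f$ need not be block-diagonal with respect to the splittings, so conclude by $2$-out-of-$3$ applied to $\Inf(\mathrm{pr}_D\circ f\circ \mathrm{incl}_C)$; and the two-term complex must sit in degrees $n\le -1$ and $n+1$, so the test cases are $n=-1,-2$ rather than $n=0$.
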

\begin{proof}
    Let \(\vp\colon M\to N\) be a quasi-isomorphism in \(\dgsmod\), i.e. a sequence of quasi-isomorphisms of cochain complexes \(\vp(n)\colon M(n)\to N(n)\) compatible with the \(\SS_n\) action. Then, \[\Inf(\vp)\colon\Inf(M)\to \Inf(N)\] is a sequence of maps of bicomplexes 
    \[\Inf(\vp)(n)\colon\Inf(M(n))\to \Inf(N(n))\] compatible with the action of \(\SS_n\). By Proposition~3.9 in \cite{MaSo}, the functor \(\Inf\colon \Ch^{\leq 0}\to\bicok^{\doq}\) sends quasi-isomorphisms to pluripotential weak equivalences and we obtain the desired result.
\end{proof}

Of course, the inflation functor can be extended in the same way to sequences 
\(\mathcal{M} = \{\mathcal{M}(n)\}_{n \ge 0}\)
without the \(\mathbb{S}_n\)-action, so it may also be used in the corresponding 
non-symmetric setting.

\subsection{Koszul dual cooperad}\label{subsec:KoszulDualCoop}
We next define the pluripotential Koszul dual cooperad of a quadratic operad \(\Pp\) in $\kk$-vector spaces. Recall that, by definition, an \emph{operadic quadratic data} \((E,R)\) in \(\mathrm{Vect}_\kk\) consists of an \(\SS\)-module \(E\) and a sub-\(\SS\)-module \(R \subseteq \TT(E)^{(2)}\), $E$ then denotes the generating vector space and $R$ the set of relations. A \emph{quadratic operad} associated to the quadratic data \((E,R)\) is the operad \(\Pp(E,R):=\TT(E)/(R)\).

For the remainder of the section, \(\Pp\) will denote a quadratic operad in \(\mathrm{Vect}_\kk\), we will view this operad as an operad in 
$\Ch_\kk$ or in $\bicok$ by placing it in degree 0 or bidegree $(0,0)$, with trivial differentials.  We will denote by \(\Pp^{\antshrk}=\Cc(sE,s^2R)\) the dg Koszul dual cooperad of \(\Pp\) and by $\Delta$ its cooperad structure (see Section 7.2 in \cite{LoVa12}).

Following the classical case, one is led to define
the pluripotential Koszul dual cooperad of $\Pp$ as
 $\Cc(\ele E,\ele_{\mep2} R)$.
We will instead use the inflation functor, so that we can recycle the classical computations of Koszul dual cooperads.

\begin{defi}Define the  \emph{pluripotential Koszul dual cooperad} of \(\Pp\) as
    \[
    \Pp^{\antshrk}_{pp}:=\Inf(\Pp^{\antshrk})
    \] 
    with the cooperad structure given by
    \[
    \Delta:\Inf(\Pp^{\antshrk})\xrightarrow{\Inf(\Delta)}\Inf(\Pp^{\antshrk}\circ\Pp^{\antshrk})\xrightarrow{\psi_{\Pp^{\antshrkind},\Pp^{\antshrkind}}}\Inf(\Pp^{\antshrk})\circ\Inf(\Pp^{\antshrk}),
    \]
    where $\psi$ denotes the colax natural transformation of Proposition~\ref{prop:SimInfColax}.
\end{defi}

\begin{rema}
    Note that \(\Pp^{\antshrk}\) can be viewed as a sub-cooperad of \(\Pp^{\antshrk}_{pp}\), given by the elements of bidegrees \((*,0)\) and, similarly, of bidegrees \((0,*)\).
\end{rema}

\begin{exam}
    The pluripotential Koszul dual of \(As\) (as a non symmetric operad) is given by
    \[
    As^{\antshrk}_{pp}(n)=\ele_{1-n}.
    \]
    Denote by $\mu_n^{p,q}$ an element of arity \(n\) and bidegree \((p,q)\). Then, the cooperad structure is given by
    \[
    \Delta(\mu_n^{p,q})=\sum_{i_1+\dots+i_k=n}\sum_{\substack{p_0+\dots+p_k=p\\q_0+\dots+q_k=q}}(-1)^{\theta+\varepsilon}(\mu_k^{p_0,q_0};\mu_{i_1}^{p_1,q_1},\dots,\mu_{i_k}^{p_k,q_k}),
    \] 
    where $\theta=\sum_{1\leq r<s\leq k}i_r(i_s+1)$ and $\varepsilon=\sum_{j=1}^k(p_0+q_0+\dots+p_{j-1}+q_{j-1})(p_j+q_j+i_j-1)$. Therefore, the infinitesimal decomposition gives
    \[
    \Delta_{(1)}(\mu_n^{p,q})=\sum_{\substack{k+\ell=n+1\\1\leq i\leq k}}\sum_{\substack{p_1+p_2=p\\q_1+q_2=q}}(-1)^{(i-1)(\ell+1)+\varepsilon}\mu_k^{p_1,q_1}\circ_i\mu_\ell^{p_2,q_2}
    ,
    \]
    where \(\varepsilon=(p_1+q_1)(p_2+q_2+\ell+1).\)
For the symmetric version, we have \[
    As^{\antshrk}_{pp}(n)=\ele_{1-n}\otimes\kk[\SS_n],
    \]
    with the \(\SS_n\) action being trivial on \(\ele_{1-n}\).
\end{exam}

\begin{rema}\label{rema:ppKoszulDualWeight}
    Note that, since \(\Pp^{\antshrk}\) is a sub-cooperad of the cofree cooperad, it is naturally weight graded. Then, the pluripotential Koszul dual \(\Pp^{\antshrk}_{pp}\) inherits a weight given by
    $\omega((p,q)_n\otimes \mu)=\omega(\mu)$
    where \(\mu\) is an element of \(\Pp^{\antshrk}\).
\end{rema}
We will next define a twisting morphism 
$\kappa:\Pp^{\antshrk}_{pp}\to\Pp$. 
Recall from Section~\ref{sec:staircase} that $\mathfrak{s}_1:\ele\to \kk$ is the map of bidegree $(1,0)$ projecting $\ele$ to $\kk$. Likewise, $\ov{\mathfrak{s}}_1:\ele \to\kk$ is the corresponding map of bidegree $(0,1)$.
We construct the following two equivariant maps:
\[
\kappa^{10} \colon \Pp^{\antshrk}_{pp}=\Inf(\Cc(sE,s^2R))\twoheadrightarrow\Inf(sE)=\ele\otimes E\xrightarrow{\mathfrak{s}_1\otimes \id} E\hookrightarrow \Pp=\Pp(E,R)
\]
and
\[
\kappa^{01} \colon \Pp^{\antshrk}_{pp}=\Inf(\Cc(sE,s^2R))\twoheadrightarrow\Inf(sE)=\ele\otimes E\xrightarrow{\ov{\mathfrak{s}}_1\otimes \id} E\hookrightarrow \Pp=\Pp(E,R).
\]

\begin{rema}\label{rema:kappa}
    Recall that there is a dg twisting morphism, which by abuse of notation we will still denote by \(\kappa\),
    \[\kappa: \Pp^{\antshrk}=\Cc(sE,s^2R)\twoheadrightarrow sE\rightarrow E\hookrightarrow \Pp=\Pp(E,R).
    \]
    In fact, \(\kappa^{10}=\mathfrak{s}\otimes\kappa\) and \(\kappa^{01}=\ov{\mathfrak{s}}\otimes\kappa\), where \(\mathfrak{s}(p,q)_n=\mathfrak{s}_1\) if \(n=1\) and zero otherwise and similarly for \(\ov{\mathfrak{s}}\).
\end{rema}

\begin{prop}
    The pair $\kappa=(\kappa^{10},\kappa^{01})\colon\Pp^{\antshrk}_{pp}\to\Pp$ is a twisting morphism.
\end{prop}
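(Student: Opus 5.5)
We have to verify the three equations of Definition~\ref{defi:twisting} for $\kappa=(\kappa^{10},\kappa^{01})$. By Remark~\ref{rema:totaltwisting} it is enough to show that $\kappa^{10}+\kappa^{01}$ is a dg twisting morphism for the total differential $d=\del+\delb$, and then split the resulting identity by bidegree. I will, however, check the three components separately, because each reduces directly to the classical identity for the dg twisting morphism $\kappa\colon\Pp^{\antshrk}\to\Pp$ of Remark~\ref{rema:kappa}.

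First, the differential terms. $\Pp$ has trivial differentials, so $[\del,\kappa^{10}]=\pm\kappa^{10}\circ\del_{\Pp^{\antshrk}_{pp}}$, and similarly for the other brackets. The internal differential of $\Pp^{\antshrk}$ is zero, since it has trivial differential as the Koszul dual cooperad of an operad with zero differential. Formula~\eqref{eq:infdiff} therefore reduces to the staircase differentials $\del(p,q)_n=q(p+1,q)_n$ and $\delb(p,q)_n=-p(p,q+1)_n$ tensored with the identity. The maps $\kappa^{10},\kappa^{01}$ are nonzero only on the weight-one part $\ele_{-1}\otimes E$. The only element of $\ele_{-1}$ on which $\mathfrak{s}_1$ or $\ov{\mathfrak{s}}_1$ is nonzero is a corner of bidegree $(-1,0)$ or $(0,-1)$. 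Elements of the form $\del(x)$ or $\delb(x)$ landing there come from $x=(-1,-1)_{-1}\otimes e$, with $\del x=-(0,-1)\otimes e$ and $\delb x=(-1,0)\otimes e$. Hence $[\del,\kappa^{10}]=0$ and $[\delb,\kappa^{01}]=0$, since $\kappa^{10}$ kills $(0,-1)$ and $\kappa^{01}$ kills $(-1,0)$. The only nonzero differential contribution is $[\delb,\kappa^{10}]+[\del,\kappa^{01}]$, evaluated on $(-1,-1)_{-1}\otimes e$, which gives $\pm e\pm e$. With the sign conventions these should cancel, so this contribution vanishes. I will double check this using Remark~\ref{rema:fraksdel}.

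Second, the quadratic terms. Because $\kappa^{ij}$ vanishes outside weight one, $\kappa^{ij}\star\kappa^{kl}$ is nonzero only on weight two, $\Inf(s^2R)=\ele_{-2}\otimes R$. There I will compute $\Delta_{(1)}$ via $\psi$ and $\iota_{-1,-1}$, which gives
\[
\Delta_{(1)}\big((p,q)_{-2}\otimes r\big)=\sum \pm (p_1,q_1)_{-1}(p_2,q_2)_{-1}\otimes \Delta^{\mathrm{dg}}_{(1)}(r).
\]
Applying $\mathfrak{s}_1$ or $\ov{\mathfrak{s}}_1$ to both factors forces each factor into bidegree $(-1,0)$ or $(0,-1)$. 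Therefore $\kappa^{10}\star\kappa^{10}$ is supported on $(-2,0)_{-2}$, where the relevant $\ele_{-2}$ component is zero since $(-2,0)$ does not occur in $\ele_{-2}$. Similarly $\kappa^{01}\star\kappa^{01}=0$. The mixed sum $\kappa^{10}\star\kappa^{01}+\kappa^{01}\star\kappa^{10}$ is supported on $(-1,-1)_{-2}\otimes r$. It equals $\pm$ (coefficient from $\iota$) times $\gamma_{(1)}(\kappa\circ_{(1)}\kappa)\Delta_{(1)}(r)=(\kappa\star\kappa)(r)$, and this vanishes because $\kappa$ is a dg twisting morphism with zero differentials, equivalently because $R$ maps to zero in $\Pp(E,R)$.

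The main obstacle is sign bookkeeping. It appears in two places: the cancellation in the mixed differential term on $\ele_{-1}$, and the claim that both mixed quadratic terms carry the same sign, so that they add to a multiple of $(\kappa\star\kappa)(r)$ rather than, say, cancelling while some other term survives. For the latter, I will handle it via Proposition~\ref{prop:fraksiota}, which moves $\mathfrak{s},\ov{\mathfrak{s}}$ through $\iota$ and so rewrites the relevant composite as $(\mathfrak{s}\ov{\mathfrak{s}}\text{-factor})\otimes(\kappa\star\kappa)$.
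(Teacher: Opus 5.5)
Your overall plan (check the differential terms on weight one, then kill the quadratic terms because $\kappa^{10},\kappa^{01}$ live in weight one) matches the paper's proof. The differential part is fine. Restricting Remark~\ref{rema:fraksdel} to $\ele_{-1}\to\ele_0=\kk$, where the target has zero differentials, gives $\mathfrak{s}_1\delb+\ov{\mathfrak{s}}_1\del=0$, which is exactly the cancellation of $[\delb,\kappa^{10}]+[\del,\kappa^{01}]$ on $(-1,-1)_{-1}\otimes e$.

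The quadratic step, however, rests on a false claim. $(-2,0)$ \emph{does} occur in $\ele_{-2}$, since $\ele_n^{p,q}=\kk$ whenever $p,q\le 0$ and $p+q=n$; it is the top-left corner in the picture of $\ele_{-2}$. The same holds for $(0,-2)$. Concretely, $\iota_{-1,-1}(-2,0)_{-2}=(-1,0)_{-1}\otimes(-1,0)_{-1}\neq 0$, so on $(-2,0)_{-2}\otimes s^2r$ the product $\kappa^{10}\star\kappa^{10}$ equals $\pm(\kappa\star\kappa)(s^2r)$ for the classical $\kappa$. This is just the remark that the bidegree $(*,0)$ part of $\Pp^{\antshrk}_{pp}$ is $\Pp^{\antshrk}$, on which $\kappa^{10}$ restricts to $\kappa$. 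Hence $\kappa^{10}\star\kappa^{10}=0$ and $\kappa^{01}\star\kappa^{01}=0$ cannot follow from bidegree reasons. A quick sanity check: your argument for these two terms never uses $R$, so it would equally ``prove'' the statement with $\Pp(E,R)$ replaced by $\TT(E)$, where it is false.

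The repair is the argument you already give for the mixed terms, applied uniformly to all four products; this is exactly the paper's proof. Each product vanishes outside weight two. On $\ele_{-2}\otimes s^2R$, each one is a scalar times the composite $s^2R\to sE\circ_{(1)}sE\to E\circ_{(1)}E\to\TT(E)^{(2)}\to\Pp(E,R)$. The scalar comes from $\iota_{-1,-1}$, $\mathfrak{s}_1$, $\ov{\mathfrak{s}}_1$ and Koszul signs, and it is the same for all terms of $\Delta_{(1)}(s^2r)$ because all weight-one factors have degree $-1$. That composite lands in $(R)$ and is therefore zero.

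This also removes your sign worry about the mixed terms. $\kappa^{10}\star\kappa^{01}$ and $\kappa^{01}\star\kappa^{10}$ each vanish separately, so no cancellation between them, or with anything else, is needed.
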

\begin{proof}
    A direct computation shows
    \[[\del,\kappa^{10}]=[\delb,\kappa^{01}]=[\del,\kappa^{01}]+[\delb,\kappa^{10}]=0.\]
    On the other hand, $\kappa$ is only non-zero in weight 1. So the pre-Lie products $\kappa^{i,1-i}\star\kappa^{j,1-j}$, with $i,j\in\{0,1\}$
    are zero except maybe in weight 2. But in weight 2 we have
    \[
    \Pp^{\antshrk(2)}_{pp}\to\Pp^{\antshrk(1)}_{pp}\circ_{(1)}\Pp^{\antshrk(1)}_{pp}\xrightarrow{\kappa^{**}\circ_{(1)}\kappa^{**}}(R)\rightarrow\TT(E)^{(2)}/(R)=\Pp
    \]
    which is the zero map.
\end{proof}

Recall that a dg-operad $\Pp$ is said to be \textit{Koszul} if and only if 
its associated Koszul complex is acyclic, so there is a quasi-isomorphism 
\[\Pp\circ_{\kappa}\Pp^{\antshrk}\stackrel{\sim}{\lra}\mathrm{I},\]
where \(\Pp\circ_\kappa\Pp^{\antshrk}\) is the Koszul complex of a quadratic dg operad (see Section 7.4 in \cite{LoVa12}).
In particular, this definition depends on the chosen class of weak equivalences, which in the dg case are just quasi-isomorphisms. In the pluripotential case, we may consider the analogous definition with the corresponding weak equivalences. 
To this end, we construct the twisted composite product \(\Pp\circ_\kappa\Pp_{pp}^{\antshrk}\) associated to the twisting morphism \(\kappa:\Pp_{pp}^{\antshrk}\to\Pp\), see Section~\ref{subsec:TwistingMorph} for details. The following proposition shows that if $\Pp$ is a Koszul operad in the dg sense, concentrated in degree 0, then it is also Koszul in the pluripotential sense.

\begin{prop}Let $\Pp$ be a quadratic operad in $\kk$-vector spaces that is Koszul as a dg-operad. 
Then, there is a pluripotential weak equivalence 
\[ \Pp\circ_{\kappa}\Pp^{\antshrk}_{pp}\stackrel{\sim}{\lra}\mathrm{I}.\]
\end{prop}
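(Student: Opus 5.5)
The plan is to reduce the statement to the dg Koszul property through the inflation functor. The first step is to identify the twisted composite product \(\Pp\circ_\kappa\Pp^{\antshrk}_{pp}\) with the inflation of the dg Koszul complex \(\Pp\circ_\kappa\Pp^{\antshrk}\). Since \(\Pp\) sits in bidegree \((0,0)\) with trivial differentials, we have \(\Inf(\Pp)=\Pp\) (because \(\ele_0=\kk\)). Proposition~\ref{prop:SimInfColax} then supplies a comparison map
\[
\psi_{\Pp,\Pp^{\antshrkind}}\colon \Inf(\Pp\circ\Pp^{\antshrk})\longrightarrow \Inf(\Pp)\circ\Inf(\Pp^{\antshrk})=\Pp\circ\Pp^{\antshrk}_{pp}.
\]
This map is an arity-wise pluripotential weak equivalence of the underlying bicomplexes, for the untwisted differentials. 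All factors from \(\Pp\) have degree \(0\), so \(\psi\) simply places the whole staircase coordinate \((p,q)_s\) on the \(\Pp^{\antshrk}\)-part. In fact it is an isomorphism \(\ele_s\otimes(\Pp\circ\Pp^{\antshrk})^s\cong \Pp\circ(\ele_s\otimes \Pp^{\antshrk})\), up to the sign \((-1)^\varepsilon\), which is trivial here since \(d_0=0\).

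The second step is to check that \(\psi\) intertwines the twisted differentials. On \(\Inf(\Pp\circ_\kappa\Pp^{\antshrk})\) the differentials are given by formula \eqref{eq:infdiff} applied to \(d=d_\kappa\), so the part raising the staircase index is \((-1)^{p+q+n}(p+1,q)_{n+1}\otimes d_\kappa c\), together with its \(\delb\)-analogue. On \(\Pp\circ_\kappa\Pp^{\antshrk}_{pp}\) the differentials are \(\del+d_{\kappa^{10}}\) and \(\delb+d_{\kappa^{01}}\). By Remark~\ref{rema:kappa}, \(\kappa^{10}=\mathfrak{s}\otimes\kappa\) and \(\kappa^{01}=\ov{\mathfrak{s}}\otimes\kappa\). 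Here \(d_{\kappa^{10}}\) removes a weight-one element \(\ele_{-1}\otimes sE\) from the cooperad side via the infinitesimal decomposition, which in \(\Pp^{\antshrk}_{pp}\) is computed through \(\Inf(\Delta)\) followed by \(\iota\). Proposition~\ref{prop:fraksiota} moves \(\mathfrak{s}\) (respectively \(\ov{\mathfrak{s}}\)) across \(\iota_{k,\ell}\), and after this the composite equals the map \((p,q)_n\mapsto \pm(p+1,q)_{n+1}\) tensored with \(d_\kappa\), matching \eqref{eq:infdiff}. This sign and index bookkeeping is the main obstacle. I would handle it by checking the identity on the generating case \(\iota_{n-1,-1}\) composed with \(\id\otimes\mathfrak{s}_{-1}\) and using the coassociativity in Proposition~\ref{prop:iotacoass} for the general tree.

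If the bookkeeping only yields a weak equivalence rather than an isomorphism, a fallback is available. I would filter both sides by the weight grading of Remark~\ref{rema:ppKoszulDualWeight}; \(\psi\) respects this filtration, and on associated graded pieces the twisting terms vanish. There \(\psi\) is a weak equivalence by Proposition~\ref{prop:SimInfColax}. The filtration is bounded in each arity, because \(\Pp\) is reduced and weight is bounded by arity, so induction with the long exact sequence of Lemma~\ref{lemm:long-exact-seq} and the five lemma upgrades this to a weak equivalence.

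The final step combines these pieces. \(\Pp\) is Koszul, so the augmentation \(\Pp\circ_\kappa\Pp^{\antshrk}\to\mathrm{I}\) is a quasi-isomorphism of dg \(\SS\)-modules in non-positive degrees. Proposition~\ref{prop:SimInfWeakEq} turns it into a pluripotential weak equivalence \(\Inf(\Pp\circ_\kappa\Pp^{\antshrk})\to\Inf(\mathrm{I})=\mathrm{I}\). The augmentation of \(\Pp\circ_\kappa\Pp^{\antshrk}_{pp}\) is the counit on the weight-zero part, and it is compatible with \(\psi\). By two-out-of-three for pluripotential weak equivalences, the map \(\Pp\circ_\kappa\Pp^{\antshrk}_{pp}\to\mathrm{I}\) is a pluripotential weak equivalence.
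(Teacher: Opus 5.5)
Your skeleton is the paper's: the comparison map $\psi\colon\Inf(\Pp\circ_\kappa\Pp^{\antshrk})\to\Pp\circ_\kappa\Pp^{\antshrk}_{pp}$ from Proposition~\ref{prop:SimInfColax}, the check that it intertwines the twisting differentials (coassociativity, Proposition~\ref{prop:iotacoass}, plus Proposition~\ref{prop:fraksiota}), and two-out-of-three against $\Inf(\Pp\circ_\kappa\Pp^{\antshrk})\to\mathrm{I}$.

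Your claim that $\psi$ is \emph{in fact an isomorphism} is false. The target $\Pp\circ\Inf(\Pp^{\antshrk})$ carries a separate staircase factor $\ele_{s_j}$ for each of the $k$ inputs $\Pp^{\antshrk}(i_j)^{s_j}$ of an element of $\Pp(k)$, not a single $\ele_s$. On such a summand $\psi$ is $\iota_{s_1,\dots,s_k}\otimes\id$, which is only a pluripotential weak equivalence. For instance, a binary operation composed with two weight-one elements of $\Pp^{\antshrk}$ lies in $\ele_{-2}\otimes(\cdots)$ on the source side. On the target side it lies in $\ele_{-1}\otimes\ele_{-1}\otimes(\cdots)$. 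The map $\iota_{-1,-1}\colon\ele_{-2}\to\ele_{-1}\otimes\ele_{-1}$ goes from a $5$-dimensional to a $9$-dimensional space.

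So your fallback is not optional; it is the proof, and it is exactly the paper's argument. You filter by weight (the paper uses the weight of $\Pp$, which amounts to the same). The twisting part shifts this weight by one, so on the associated graded only the untwisted differentials survive, and Proposition~\ref{prop:SimInfColax} applies there.

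The only difference is how you pass from the graded pieces to the whole. The paper uses boundedness (Remark~\ref{rema:boundedbicos}) and runs separate spectral sequences for $H_\del$ and $H_\delb$. You instead induct on the finite filtration with Lemma~\ref{lemm:long-exact-seq}. That also works, and it is cleanest phrased with cones: $C(\psi|_{F_r})$ is an extension of $C(\mathrm{gr}_r\psi)$ by $C(\psi|_{F_{r-1}})$. Vanishing of $\BC$ and $\A$ then propagates along the long exact sequence, with no need to control the Bigolin terms a literal five-lemma argument would involve.
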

\begin{proof} By Proposition~\ref{prop:SimInfWeakEq}, the inflation functor preserves weak equivalences. 
    Since \(\Pp\) is Koszul as a dg-operad, we have a pluripotential weak equivalence
    \[
    \Inf(\Pp\circ_{\kappa}\Pp^{\antshrk})\xrightarrow{\sim}\mathrm{I}.
    \]

We define a map of bicomplexes \(\varphi:\Inf(\Pp\circ_{\kappa}\Pp^{\antshrk})\rightarrow \Pp\circ_\kappa\Pp^{\antshrk}_{pp}\)
using the
colax structure of the inflation functor given in Proposition~\ref{prop:SimInfColax}
    \[
\varphi:=\psi_{\Pp,\Pp^{\antshrkind}}:\Inf(\Pp\circ_{\kappa}\Pp^{\antshrk})\rightarrow \Pp\circ_\kappa\Inf(\Pp^{\antshrk})=\Pp\circ_\kappa\Pp^{\antshrk}_{pp}
    \]
This is a linear map of bigraded vector spaces, and it suffices to see that it is compatible with the twisting differentials.     
    Note that \(\Inf(\Pp)=\Pp\) since it is concentrated in degree 0.
 
    Since $\varphi$ is constructed using the colax monoidal structure of $\Inf$, it is clear that, when we forget the twisting differentials, we have a map of bicomplexes
    \[
    \Inf(\Pp\circ \Pp^{\antshrk})\rightarrow \Pp\circ\Inf(\Pp^{\antshrk})
    \]
    which is a pluripotential weak equivalence by Proposition~\ref{prop:SimInfColax}.
    Denote by \(d_\kappa\) and \(\ov d_\kappa\) the twisting differentials 
    \[
    d_\kappa, \ov d_\kappa:\Inf(\Pp\circ_\kappa\Pp^{\antshrk})\to\Inf(\Pp\circ_\kappa\Pp^{\antshrk})
    \] 
    of bidegree \((1,0)\) and of bidegree \((0,1)\), respectively, obtained after inflation of the Koszul complex. Explicitly, these are given by
    \[
    d_\kappa=\mathfrak{s}_n\otimes\left(\gamma_{(1)}\circ\id_{\Pp^{\antshrkind}})(\id_{\Pp}\circ'[(\kappa\circ\id_{\Pp^{\antshrkind}})\Delta]\right)\quad\text{and}\quad\ov{d}_\kappa=\ov{\mathfrak{s}}_n\otimes\left(\gamma_{(1)}\circ\id_{\Pp^{\antshrkind}})(\id_{\Pp}\circ'[(\kappa\circ\id_{\Pp^{\antshrkind}})\Delta]\right).
    \]
    By the definition of twisted composite products (Section~\ref{subsec:TwistingMorph}) and Remark~\ref{rema:kappa}, the 
    derivations 
    \[
    d_{\kappa^{10}},d_{\kappa^{01}}:\Pp\circ\Pp_{pp}^{\antshrk}\to \Pp\circ\Pp_{pp}^{\antshrk}
    \]
    are given by 
    \[
    (\gamma_{(1)}\circ\id_{\Pp^{\antshrkind}_{pp}})(\id_{\Pp}\circ'[(\mathfrak{s}\otimes\kappa)\circ\id_{\Pp^{\antshrkind}_{pp}})\Delta]\quad\text{and}\quad(\gamma_{(1)}\circ\id_{\Pp^{\antshrkind}_{pp}})(\id_{\Pp}\circ'[(\ov{\mathfrak{s}}\otimes\kappa)\circ\id_{\Pp^{\antshrkind}_{pp}})\Delta].
    \]
    Then, the map \(\varphi\) is a map of bicomplexes if \(\varphi d_{\kappa}=d_{\kappa^{10}}\varphi\) and \(\varphi\ov d_{\kappa}=d_{\kappa^{01}}\varphi\).

    Let us check the first identity.
    On the one hand, we have
    \begin{align*}
        \varphi d_{\kappa}\big((p,q)_{n}\otimes (\mu;\nu_{i_1},\dots,\nu_{i_k})\big)=\\
        =\sum_{j=1}^k\sum(-1)^{\theta+\sum_{s=1}^{j-1}d_s}(\iota\mathfrak{s}_n(p,q)_{n})\otimes (\mu\circ_{j}\kappa\nu_{i_j}^{(1)};\nu_{i_1},\dots,\nu_{i_j}^{(2)},\dots,\nu_{i_k}),
    \end{align*}
    where \(\nu_{i_j}^{(2)}=(\nu_{{i_j},1},\dots,\nu_{{i_j},\ell})\), \(\theta\) is the sign given by the decomposition map of \(\Pp^{\antshrk}\), \(d_s=|\nu_{i_s}|\) and 
    \[
    \iota=\iota_{d_1,\dots,{d_j}_1,\dots,{d_j}_\ell,\dots,d_k}:\ele_{{d_1},\dots,d_k}\to \ele_{d_1}\otimes\dots\otimes\ele_{d_k},
    \]
    defined in Section~\ref{sec:staircase}.
    On the other hand,     
    \begin{align*}
        d_{\kappa^{10}}\varphi\big((p,q)_{n}\otimes (\mu_n;\nu_{i_1},\dots,\nu_{i_k})\big)=\\
        =\sum_{j=1}^k(-1)^{\theta}\big((\id^{j-1}\otimes(\mathfrak{s}\otimes \id^{ \ell})\iota_{{d_j}_0,\dots,{d_j}_\ell}\otimes\id^{k-j})\iota_{d_1,\dots,d_k}(p,q)_{n}\big)(\mu\circ_{j}\kappa\nu_{i_j}^{(1)};\nu_{i_1},\dots,\nu_{i_j}^{(2)},\dots,\nu_{i_k}),
    \end{align*}
    where \(d_0=|\nu_{i_j}^{(1)}|\) and \(\mathfrak{s}(p,q)_n=\mathfrak{s}_1\) if \(n=1\) and zero otherwise.
    By coassociativy of the maps \(\iota\), Proposition~\ref{prop:iotacoass}, we have 
    \begin{align*}
        (\id^{\otimes j-1}\otimes(\mathfrak{s}\otimes \id^{\otimes \ell})\iota_{{d_j}_0,\dots,{d_j}_\ell}\otimes\id^{\otimes k-j})\iota_{d_1,\dots,d_k}(p,q)_{n}=\\
        =(\id^{\otimes j-1}\otimes\mathfrak{s}\otimes \id^{\otimes k+\ell-j})\iota_{d_1,\dots,{d_j}_0,\dots,{d_j}_\ell,\dots,d_k}(p,q)_{n},
    \end{align*}
    which by the compatibility relations between the maps \(\iota\) and \(\mathfrak{s}\), Proposition~\ref{prop:fraksiota}, gives
    \begin{align*}
       (-1)^{\sum_s^{j-1}d_s}\iota_{d_1,\dots,{d_j}_1,\dots,{d_j}_\ell,\dots,d_k}\mathfrak{s_n}(p,q)_{n}.
    \end{align*}
    This proves that \(\varphi d_{\kappa}=d_{\kappa^{10}}\varphi\).
     A similar computation shows \(\varphi\ov{d}_{\kappa}=d_{\kappa^{01}}\varphi\).

    This gives us a commutative diagram 
    \[\begin{tikzcd}[ampersand replacement=\&]
	{\Pp\circ_\kappa\Inf(\Pp^{\antshrk})} \& \mathrm{I} \\
	{\Inf(\Pp\circ_{\kappa}\Pp^{\antshrk})}
	\arrow[from=1-1, to=1-2]
	\arrow["\varphi",from=2-1, to=1-1]
	\arrow["\sim", from=2-1, to=1-2]
    \end{tikzcd}\]

    Thus, by the two-out-of-three property, it suffices to show that \(\varphi\) is a pluripotential weak equivalence, which we address in the remainder of the proof. 
    
    Since \(\Pp\) is concentrated in bidegree \((0,0)\), \(\Inf(\Pp\circ_\kappa\Pp^{\antshrk})\) and \(\Pp\circ_\kappa\Inf(\Pp^{\antshrk})\) are \(\bbsmod\)-modules in bounded bicomplexes, see Remark~\ref{rema:boundedbicos}. Therefore, to show \(\varphi\) is a pluripotential weak equivalence is enough to show it is a quasi-isomorphism with respect to \(\del\) and \(\delb\). 
    Let us focus on the $\del$-case, so we forget the differential $\delb$ and view $\varphi$ as a map of complexes with respect to the $\del$-component on both sides. The strategy will actually be to split the component $\del$ into weights, obtaining a new bicomplex structure. Indeed, the natural weight-grading of $\Pp$
    induces a weight-grading on \(\Pp\circ\Pp^{\antshrk}\) and the differential $\del$
    of $\Inf(\Pp\circ_\kappa\Pp^{\antshrk})$
    splits into two components of weight 0 and 1, respectively, $\del=\del_0+\del_1$,
    where $\del_1=d_\kappa$. So we get that \(\Inf(\Pp\circ\Pp^{\antshrk})\) is a bicomplex with total differential $\del$.
    A similar argument shows that \(\Pp\circ\Pp^{\antshrk}_{pp}\) with its $\delb$-differential is also a bicomplex. Moreover, $\varphi$ is a map of bicomplexes with respect to the weight-grading.
    By  Proposition~\ref{prop:SimInfColax}, $\varphi$ is a quasi-isomorphism with respect to the weight 0 component $\del_0$ of $\del$.
    Therefore, a standard spectral sequence argument shows it is a quasi-isomorphism with respect to the total differential. This proves that $H_\del(\varphi)$ is an isomorphism.
    A parallel argument shows $H_\delb(\varphi)$ is also an isomorphism.
\end{proof}

\subsection{Minimal models via Koszul duality}\label{subsec:KoszulMinimalModels}
We present a concrete and computationally effective method for constructing pluripotential minimal models for operads in $\mathrm{Vect}_\kk$ that are Koszul as a dg-operad, taking advantage of the dg setting. Fix \(\Pp\) a quadratic operad in \(\mathrm{Vect}_\kk\).

A \emph{coaugmented cooperad} is a cooperad \(\Cc\) equipped with a morphism 
\(\eta : \mathrm{I} \to \Cc\) of cooperads. Denote by \(\ov\Cc\) the coaugmentation coideal, that is \(\ov\Cc:=\Coker(\eta)\).

Let $s$ denote the $\mathbb{S}$-module in graded vector spaces defined by
\(
(0,\, \kk s,\, 0,\, \dots),
\)
where $\kk s$ is concentrated in degree $-1$.

Recall that the cobar construction of a coaugmented dg-cooperad  \(\Cc\) (see, for instance,  Section 6.5 \cite{LoVa12}) is given by
\[\cbr(\Cc)=(\TT(s\ov{\Cc}),d_1+d_2)\]
where \(s\Cc=s\otimes \Cc\), \(d_1\) is the derivation extending \(d_\Cc\) and \(d_2\) is the derivation extending 
\[\delta:s\ov\Cc\xrightarrow{\Delta_s\otimes \Delta_{(1)}} s\otimes s\otimes \ov\Cc\circ_{(1)}\ov\Cc\xrightarrow{\cong}s\ov\Cc\circ_{(1)}s\ov\Cc\]
where \(\Delta_s(s)=-s\otimes s\).

We define a quasi-free operad in bicomplexes
\[
\Pp_{\infty}^{pp}:=\left(\TT(\Inf(s\ov{\Pp^{\antshrk}})),\del_1+\del_2,\delb_1+\delb_2\right)
\]
where \(\del_1\) and \(\delb_1\) are the derivations extending the differentials of \(\Inf(s\ov{\Pp^{\antshrk}})\). 
The derivations $\del_2$ and $\delb_2$ are defined as follows.
Recall from Section~\ref{sec:staircase} the maps of bidegree \((1,0)\)
\[
\mathfrak{s}_n : \ele_n \longrightarrow \ele_{n+1}
\]
defined for \(n \leq 0\). 
Then \(\del_2\) is the derivation extending the following map:
\[
\Inf(s\ov{\Pp^{\antshrk}})\xrightarrow{\mathfrak{s}\otimes\delta}\Inf(s\ov{\Pp^{\antshrk}}\circ_{(1)}s\ov{\Pp^{\antshrk}})\xrightarrow{\psi_{s\Pp^{\antshrkind},s\Pp^{\antshrkind}}} \Inf(s\ov{\Pp^{\antshrk}})\circ_{(1)} \Inf(s\ov{\Pp^{\antshrk}})
\]
where $\mathfrak{s}$ is induced by the maps $\mathfrak{s}_n$, $\delta$ is the generating map for the derivation of the cobar construction given above, and  \(\psi_{s\Pp^{\antshrkind},s\Pp^{\antshrkind}}\) is the colax structure of the inflation functor, see Proposition~\ref{prop:SimInfColax}.

Explicitly, let \(\mu\) be an element in \(\Pp^{\antshrk}\) and denote \(\mu^{p,q}:=(p,q)_{| \mu|+1}\otimes s\mu\in\Inf(s\ov{\Pp^{\antshrk}})\). We have:
\[
(\del_1+\del_2)(\mu^{p,q})=q\mu^{p+1,q}+\sum_i\sum_{\substack{p_1+p_2=p+1\\q_1+q_1=q}}(-1)^{\varepsilon+|\mu_{i}'|}({\mu_i'}^{p_1,q_1}\circ_{e_i}{\mu_{i}''}^{p_2,q_2})\sigma_i
\]
where $\varepsilon=p+q+| \mu|+1+(p_1+q_1)(p_2+q_2+| \mu_{i}''|+1)$. Similarly, we use the maps \(\ov{\mathfrak{s}}\) to construct $\delb_2$ as the derivation extending
\[
\Inf(s\ov{\Pp^{\antshrk}})\xrightarrow{\ov{\mathfrak{s}}\otimes\delta}\Inf(s\ov{\Pp^{\antshrk}}\circ_{(1)}s\ov{\Pp^{\antshrk}})\xrightarrow{\psi_{s\Pp^{\antshrkind},s\Pp^{\antshrkind}}} \Inf(s\ov{\Pp^{\antshrk}})\circ_{(1)} \Inf(s\ov{\Pp^{\antshrk}}).
\]
In this case, we have:
\[
(\delb_1+\delb_2)(\mu^{p,q})=-p\mu^{p,q+1}+\sum_i\sum_{\substack{p_1+p_2=p\\q_1+q_1=q+1}}(-1)^{\varepsilon+|\mu_{i}'|}({\mu_i'}^{p_1,q_1}\circ_{e_i}{\mu_{i}''}^{p_2,q_2})\sigma_i.
\]

\begin{prop}The tuple $(\Pp^{pp}_\infty,\del,\delb)$ satisfies the bicomplex equations
  \[
    \del^2=\delb^2=\del\delb+\delb\del=0
    .\]
\end{prop}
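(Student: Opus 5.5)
Since $\del=\del_1+\del_2$ and $\delb=\delb_1+\delb_2$ are derivations of the free operad $\TT(\Inf(s\ov{\Pp^{\antshrk}}))$, so are $\del^2$, $\delb^2$ and $\del\delb+\delb\del$ (the commutator of two odd derivations is a derivation). It therefore suffices to check the three identities on the generators $\mu^{p,q}=(p,q)_{|\mu|+1}\otimes s\mu$.

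The cleanest route is to mimic Remark~\ref{rema:totaltwisting} and the proof of Proposition~\ref{prop:TwistedComposite}: look at the total derivation $D:=\del+\delb$. Its three bihomogeneous components are exactly $\del^2$, $\del\delb+\delb\del$ and $\delb^2$, of bidegrees $(2,0)$, $(1,1)$ and $(0,2)$. So $D^2=0$ forces all three relations by degree reasons, and it suffices to show $D^2=0$ on generators.

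For this I would use the colax map $\psi$ of Proposition~\ref{prop:SimInfColax} together with Remark~\ref{rema:fraksdel} and Proposition~\ref{prop:fraksiota}. Write $D=D_1+D_2$, where $D_1=\del_1+\delb_1$ is the inflated internal differential and $D_2$ is generated by $\psi\circ((\mathfrak{s}+\ov{\mathfrak{s}})\otimes\delta)$. The identity $D^2=0$ on generators splits by weight into three parts.

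\textbf{Weight one.} We need $D_1^2=0$, which holds because $\Inf(s\ov{\Pp^{\antshrk}})$ is a bicomplex.

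\textbf{Weight two.} We need $D_1D_2+D_2D_1=0$. Since $\Pp$ has trivial differential, $\Pp^{\antshrk}$ has zero internal differential, so $D_1$ only involves the staircase differentials on $\ele_n$. By Proposition~\ref{prop:SimInfColax}, $\psi$ commutes with the differentials. The remaining anticommutation of $\mathfrak{s}+\ov{\mathfrak{s}}$ with the staircase differentials is exactly Remark~\ref{rema:fraksdel}, since $(\del+\delb)(\mathfrak{s}+\ov{\mathfrak{s}})+(\mathfrak{s}+\ov{\mathfrak{s}})(\del+\delb)=0$.

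\textbf{Weight three.} We need $D_2^2=0$. On a generator, $D_2^2(\mu^{p,q})$ is a sum of trees with three vertices. One application of $\mathfrak{s}+\ov{\mathfrak{s}}$ comes before $\iota$ and one after, on one of the tensor factors. By Proposition~\ref{prop:fraksiota}, both can be moved to the outside, at the cost of the sign $(-1)^k$. Coassociativity of $\iota$ (Proposition~\ref{prop:iotacoass}), equivalently the coassociativity built into $\psi$, then identifies the result with
\[
\psi_{3}\bigl((\mathfrak{s}+\ov{\mathfrak{s}})^2\otimes(\delta\text{ applied twice})\bigr)
\]
up to a uniform sign, where $\psi_3$ is the ternary colax comparison. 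Note that $(\mathfrak{s}+\ov{\mathfrak{s}})^2\neq0$ in general. The vanishing comes instead from the dg cobar identity $d_2^2=0$ in $\cbr(\Pp^{\antshrk})$, which is a consequence of the coassociativity of $\Delta_{(1)}$ on $\Pp^{\antshrk}$: the inflated image of the vanishing combination $d_2^2(s\mu)=0$ is still zero. In short, $D_2$ is the inflation of the classical cobar $d_2$ intertwined with $\psi$, so $D_2^2=(\text{inflated prefactor})\cdot\Inf(d_2^2)=0$.

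\textbf{Main obstacle.} The hard part is this weight-three sign bookkeeping: one must check that the signs $(-1)^k$ from Proposition~\ref{prop:fraksiota} combine with the Koszul signs in the definition of $\del_2$ and $\delb_2$ so that the prefactor is uniform over all terms. If the uniformity check gets unwieldy, I would instead verify it directly in the nonsymmetric associative case using the explicit formula for $\Delta_{(1)}$ of $As^{\antshrk}_{pp}$, and argue that the general case has the same sign pattern, since $\psi$ is natural.
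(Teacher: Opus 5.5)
Your proposal is correct and is essentially the paper's argument. The paper expands $\del^2$ on a generator $(p,q)_n\otimes s\mu$. It kills the weight-two cross terms using that the $\iota_{k,\ell}$ are bicomplex maps together with Remark~\ref{rema:fraksdel}, and kills the weight-three terms via Proposition~\ref{prop:fraksiota}. It declares $\delb^2$ and $\del\delb+\delb\del$ analogous, where you package all three identities through $D=\del+\delb$.

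The sign check you flag as the main obstacle closes directly. The factor $(-1)^k$ in Proposition~\ref{prop:fraksiota} (which uses the Koszul convention for $\id\otimes\mathfrak{s}$) is exactly the sign $(-1)^{|s\mu_{(1)}|}$ that the dg cobar derivation puts on second-factor terms. So by Proposition~\ref{prop:iotacoass} the weight-three part is the colax image of $(\mathfrak{s}+\ov{\mathfrak{s}})^2(p,q)_n\otimes d_2^2(s\mu)=0$. No recourse to the associative case is needed, and that fallback would not settle the general case anyway.
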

\begin{proof}
We will prove that \(\del^2 = 0\). The other two identities follow by analogous arguments. 
Recall that the colax structure of the inflation functor involves the maps introduced in Section~\ref{sec:staircase}
\[
\iota_{k_1,\dots,k_n} \colon \ele_{k_1+\dots+k_n} \longrightarrow 
\ele_{k_1} \otimes \dots \otimes \ele_{k_n}.
\]
For the computations below, we write 
\(d_i=|s\mu_i|\).
    \begin{align*}
        \del\del((p,q)_{n}\otimes s\mu)
        &=\sum(-1)^{d_1+1}\left(\iota_{d_1,d_2}\mathfrak{s}\del_\Lpic(p,q)_{n}\right)\otimes s\mu_{(1)}\circ_{i}s\mu_{(2)}\\
        &+\sum(-1)^{d_1+1}\left(\del_\Lpic\iota_{d_1,d_2}\mathfrak{s}(p,q)_{n}\right)\otimes s\mu_{(1)}\circ_{i}s\mu_{(2)}\\
        &+\sum(-1)^{d_2+1}\left((\iota_{d_1,d_2}\mathfrak{s}\otimes\id)\iota_{d_1+d_2+1,d_3}\mathfrak{s}(p,q)_{n}\right)\otimes s\mu_{(1)}\circ_i s\mu_{(2)}\circ_j s\mu_{(3)}\\
        &+\sum(-1)^{d_1+d_2}(\id\otimes\iota_{d_2,d_3}\mathfrak{s})\iota_{d_1,d_2+d_3+1}\mathfrak{s}(p,q)_{n}\otimes s\mu_{(1)}\circ_i s\mu_{(2)}\circ_j s\mu_{(3)}.
    \end{align*}
    The two first summands vanish because \(\iota_{k,\ell}\) are maps of bicomplexes and by Remark~\ref{rema:fraksdel}. The two last summands vanish by the relations satisfied by \(\iota_{k,\ell}\) and \(\mathfrak{s}\), Proposition~\ref{prop:fraksiota}.
\end{proof}

\begin{rema}
    There is a naive cobar construction for a coaugmented cooperad \(\Cc\) in \(\bicok\) using the shift \(\antiele:\bicok\to\bicok\), defined in Section~\ref{subsec:preliminarsbico}, that mimics the cobar construction in the dg case. However, since the shift does not preserve minimality of bicomplexes, we will typically obtain an operad \(\Omega\Cc\) which is not minimal as an operad in \(\bicok\), see Definition~\ref{defi:minimal}. The idea of the construction presented here is shifting before and then apply the inflation functor, so that minimality is ensured. 
\end{rema}

\begin{theo}[Minimal model of a Koszul operad]\label{theo:MinimalModelsKoszul}
The operad
    \(\Pp_\infty^{pp}\) is the pluripotential minimal model of \(\Pp\).
\end{theo}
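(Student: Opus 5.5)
We need to show three things: $\Pp_\infty^{pp}$ is cofibrant, its differentials are minimal in the sense of Definition~\ref{defi:minimal}, and there is a pluripotential weak equivalence $\Pp_\infty^{pp}\xrightarrow{\sim}\Pp$. We implicitly assume $\Pp$ is Koszul, as in the preceding discussion, and reduced, so $\ov{\Pp^{\antshrk}}$ lives in arities $\geq 2$.

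The weak equivalence is the natural map $g\colon\Pp_\infty^{pp}\to\Pp$. It sends a generator $\mu^{p,q}=(p,q)_{|\mu|+1}\otimes s\mu$ to the image of $\mu$ in $E\subset\Pp$ when $\mu$ has weight one and $(p,q)=(0,0)$, and to zero otherwise. This is the operad map induced by the twisting morphism $\kappa$ after the shift. To see that it commutes with $\del$ and $\delb$, evaluate $(\del_1+\del_2)$ on weight-two generators of bidegree $(-1,0)$ and $(0,-1)$. The image is a sum of compositions of two weight-one generators spanning $R$, hence zero in $\Pp$; this mirrors the proof that $\kappa$ is a twisting morphism.

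To prove $g$ is a weak equivalence, mimic the dg proof $\Omega\Pp^{\antshrk}\xrightarrow{\sim}\Pp$ and compare with $\Inf$. I would build a comparison $\Inf(\Omega\Pp^{\antshrk})\to\Pp_\infty^{pp}$, or its reverse, using the colax maps $\psi$ of Proposition~\ref{prop:SimInfColax} iterated along trees. Since $\Omega\Pp^{\antshrk}=\TT(s\ov{\Pp^{\antshrk}})$ and $\TT$ is a sum over trees of tensor products, the treewise maps $\iota_{k_0,\dots,k_r}$ give a map
\[
\Inf(\TT(s\ov{\Pp^{\antshrk}}))\to\TT(\Inf(s\ov{\Pp^{\antshrk}})).
\]
Compatibility with $\del_2,\delb_2$ follows from coassociativity (Proposition~\ref{prop:iotacoass}) and the $\iota$--$\mathfrak{s}$ relations (Proposition~\ref{prop:fraksiota}), exactly as in the Koszul complex computation above. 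Forgetting the cobar parts $\del_2,\delb_2$, this map is arity-wise a sum of the quasi-isomorphisms $\iota$, hence a weak equivalence. Filter both sides by weight of the underlying tree, or by number of vertices. Each arity is bounded (Remark~\ref{rema:boundedbicos}), so it suffices to check $\del$- and $\delb$-cohomology separately. The spectral sequence argument from the previous proof then shows the comparison is a weak equivalence. Then $\Inf(\Omega\Pp^{\antshrk})\to\Inf(\Pp)=\Pp$ is a weak equivalence by Proposition~\ref{prop:SimInfWeakEq} and Koszulness, and two-out-of-three gives that $g$ is one. The main obstacle is making the tree-wise comparison strictly compatible with the cobar differentials; I would handle it by arity induction, where only two-vertex trees contribute new terms.

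Minimality is a degree argument. The internal parts $\del_1,\delb_1$ preserve the generators $\Inf(s\ov{\Pp^{\antshrk}})$, but each $\ele_n$ satisfies $\del\delb=0$, so $\del_1\delb_1=0$ on generators. Every other term of $\del\delb$ involves $\del_2$ or $\delb_2$, and these land in weight $\geq 2$. Hence $\del\delb(\Mm)\subseteq\Mm^{(\geq2)}$.

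For cofibrancy, write $\Pp_\infty^{pp}$ as a colimit of principal extensions by filtering the generators first by arity and then, within an arity, by total degree. The part of the differential in $\del_2,\delb_2$ lands in compositions of strictly lower arity generators. The internal part $\del_1,\delb_1$ strictly raises total degree, and each arity contains only finitely many degrees since $\Pp^{\antshrk}(n)$ is finite-dimensional and concentrated in degrees $\geq 2-n$. So adding generators one total degree at a time, from the top degree downward, gives arity $n$ principal extensions whose attaching maps land in the previous stage.
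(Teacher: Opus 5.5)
Your proof is correct, but the central step, showing that the augmentation $\alpha\colon\Pp_\infty^{pp}\to\Pp$ is a weak equivalence, follows a different route from the paper.

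\textbf{The paper's route.} It never compares with $\Inf(\Omega\Pp^{\antshrk})$. It uses that $H_\del(\ele_n)$ is spanned by $(n,0)_n$. The generators of bidegree $(*,0)$ therefore generate a sub-operad closed under $\del$. On it $\del_1=0$ and $\del_2$ is the cobar differential, so it is a copy of the dg minimal model $\Pp_\infty$. A weight spectral sequence shows this inclusion is an $H_\del$-isomorphism, and composing with the dg quasi-isomorphism $\Pp_\infty\to\Pp$ gives that $\alpha$ is an $H_\del$-isomorphism. The $(0,*)$ copy handles $\delb$, and boundedness finishes the argument.

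\textbf{Your route.} You build one tree-wise comparison $\Inf(\TT(s\ov{\Pp^{\antshrk}}))\to\TT(\Inf(s\ov{\Pp^{\antshrk}}))$ from the symmetric colax structure. You show it is a pluripotential weak equivalence by filtering by number of vertices; the associated graded map is a sum of the $\iota_{k_0,\dots,k_r}$. You then transport Koszulness through Proposition~\ref{prop:SimInfWeakEq}.

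\textbf{Comparison.} The paper's argument is more economical. It needs only the cohomology of $\ele_n$, with no sign bookkeeping on arbitrary trees and no equivariance check on tree coinvariants. Yours treats $\del$ and $\delb$ at once and mirrors the Koszul complex proposition. It also gives more: $\Pp^{pp}_\infty$ is arity-wise weakly equivalent to the inflation of the dg minimal model, compatibly with the augmentations.

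To complete your version, state two checks explicitly. First, the triangle for two-out-of-three commutes. In degree $0$, $\iota$ sends $(0,0)_0$ to $(0,0)_0\otimes\cdots\otimes(0,0)_0$ and forces every vertex to have weight one. In negative degrees, some vertex has weight $\ge 2$ and is sent to zero. Second, compatibility with $\del_2,\delb_2$ is a direct vertex-by-vertex computation using Propositions~\ref{prop:iotacoass} and~\ref{prop:fraksiota}, as in the Koszul complex proposition, so no arity induction is needed.

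For cofibrancy, the paper adds in each arity first the Bott--Chern part (the top rows of the $\ele_n$) and then the Aeppli part (the bottom rows). Your degree-by-degree filtration refines this and works equally well. However, the finite-dimensionality of $\Pp^{\antshrk}(n)$ that you invoke is neither given nor needed. The weight bound $w\le n-1$ already bounds the degrees occurring in each arity.
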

\begin{proof}
We first show that $\Pp_\infty^{pp}$ is a minimal operad, see Section~\ref{subsec:MinialModelsSulivan}.
We need to show that it is cofibrant and 
 $\del\delb$ decomposes, that is:
   \[\del\delb(\Inf(s\ov{\Pp^{\antshrk}}))\subseteq (\Pp_\infty^{pp})^{(\geq 2)}.\] 
Note first that  \(\Pp_\infty^{pp}\) is cofibrant since it is the colimit of a sequence of principal extensions starting from the initial operad
    \[
    \mathrm{I}\to\TT(\mathrm{I}\oplus V_2)\to\dots\to\TT(\Pp_{n-1}\oplus V_n)\to\dots\to \mathrm{colim}_n \Pp_n=\Pp_{\infty}^{pp}
    \]
    where the extensions are given by
    \[V_{2n}=\BC(\Inf(s\ov{\Pp^{\antshrk}}(n)))\text{ and }V_{2n+1}=\A(\Inf(s\ov{\Pp^{\antshrk}}(n))).\]
   By construction, \(\del_2\) and \(\delb_2\) decompose. Note that \(\del_1\) and \(\delb_1\) do not decompose in general but in each arity, \(\Inf(s\ov{\Pp^{\antshrk}})\) is a minimal bicomplex, and so \(\del_1\delb_1=0\). This shows  
that $\del\delb$ decomposes
   and so the operad is minimal.

   By  writing $\Pp=\Pp(E,R)$, define
    \(  \alpha\colon \Pp_\infty^{pp}\to \Pp\)  
    as the map extending 
    \[
    \Inf(s\ov{\Pp^{\antshrk}})=\Inf(s\Cc(s^{-1}E,s^{-2}R))\twoheadrightarrow \Inf(E)=E\hookrightarrow \Pp.\]
It only remains to show that $\alpha$ is a pluripotential weak equivalence. Since all bicomplexes are bounded, it suffices to show it induces isomorphisms in $H_\del$ and $H_\delb$, see Remark~\ref{rema:boundedbicos}.
Note that, by considering $\Pp^{\antshrk}$ in bidegree $(*,0)$, we may view it as a sub-cooperad of $\Pp_{pp}^{\antshrk}$. Moreover, we have $H_\del(\Pp_{pp}^{\antshrk}(n))=\Pp^{\antshrk}(n)$
so this inclusion of cooperads is an $H_\del$-isomorphism. 
This allows us to identify $\Pp_\infty$ as the suboperad of $\Pp_\infty^{pp}$ 
generated by $H_\del(\Inf(s\ov{ \Pp^{\antshrk}}))$. In particular, we have an inclusion $\Pp_\infty\hookrightarrow \Pp_\infty^{pp}$, which we claim is an $H_\del$-isomorphism. Indeed, \(\Pp_\infty^{pp}\) is weight-graded since it is a free operad. The differential $\del$ of \(\Pp_\infty^{pp}\) splits into the two components $\del=\del_1+\del_2$ of weight 0 and 1, respectively. So we get that \(\Pp_\infty^{pp}\) is a bicomplex with total differential $\del$. In a similar way, \(\Pp_\infty\) is a bicomplex with total differential \(\del=\del_2\). Moreover, the inclusion 
$\Pp_\infty\hookrightarrow \Pp_\infty^{pp}$ is a map of bicomplexes with respect to the weight-grading and it is a quasi-isomorphism with respect to the weight 0 component. Therefore, a standard spectral sequence argument shows that \(\Pp_\infty\hookrightarrow \Pp_\infty^{pp}\) is an $H_\del$-isomorphism.

The diagram
\[\begin{tikzcd}[ampersand replacement=\&]
	{\Pp_{\infty}^{pp}} \\
	{\Pp_\infty} \& \Pp
	\arrow["{\alpha}",from=1-1, to=2-2]
	\arrow["{\del}"',"{\sim}", from=2-1, to=1-1]
	\arrow["{\del}"',"{\sim}", from=2-1, to=2-2]
\end{tikzcd}\]
commutes and so $\alpha$ is an $H_\del$-isomorphism.
An analogous argument embedding $\Pp^{\antshrk}$ in bidegree $(0,*)$ gives that $\alpha$ is an $H_\delb$-isomorphism.
\end{proof}

\begin{exam}
    Let \(\Pp=As\) (as a non-symmetric operad), then \(\ppinfa=\TT(\Inf(sAs^{\antshrk}))\) and the differentials give
    \[\del\mu_n^{p,q}=q\mu_n^{p+1,q}+\sum_{\substack{k+\ell=n+1\\1\leq i\leq k}}\sum_{\substack{p_1+p_2=p+1\\q_1+q_2=q}}(-1)^{i(\ell+1)+n+\varepsilon}\mu_k^{p_1,q_1}\circ_i\mu_\ell^{p_2,q_2}\]
    and
    \[\delb\mu_n^{p,q}=-p\mu_n^{p,q+1}+\sum_{\substack{k+\ell=n+1\\1\leq i\leq k}}\sum_{\substack{p_1+p_2=p\\q_1+q_2=q+1}}(-1)^{i(\ell+1)+n+\varepsilon}\mu_k^{p_1,q_1}\circ_i \mu_\ell^{p_2,q_2},\]
    where \(\varepsilon=p+q+n+(p_1+q_1)(p_2+q_2+\ell)\) and we have set \(\mu_n^{p,q} := (p,q)_{2-n} \otimes s\mu_n\) for \(\mu_n \in \mathrm{As}^{\antshrk}(n)\).
    Algebras over this operad are precisely the pluripotential \(A_\infty\)-algebras introduced in Section~\ref{subsec:AinftyAlgs}.
\end{exam}
  
\begin{exam}
If \(\Pp=Lie\), we obtain \(L_\infty^{pp}=\TT(\Inf(sLie^{\antshrk}))\) and the differentials give

\[
\del\ell_n^{p,q}=q\ell_n^{p+1,q}+\sum_{r+s=n+1}\sum_{\sigma \in \mathrm{Sh}_{r,\,s}^{-1}}\sum_{\substack{p_1+p_2=p+1\\q_1+q_2=q}}(-1)^{r(s+1)+\varepsilon}\operatorname{sgn}(\sigma)(\ell_r^{p_1,q_1}\circ_1\ell_s^{p_2,q_2})\sigma
\]
and
\[
\delb\ell_n^{p,q}=-p\ell_n^{p,q+1}+\sum_{r+s=n+1}\sum_{\sigma \in \mathrm{Sh}_{r,\,s}^{-1}}\sum_{\substack{p_1+p_2=p\\q_1+q_2=q+1}}(-1)^{r(s+1)+\varepsilon}\operatorname{sgn}(\sigma)(\ell_r^{p_1,q_1}\circ_1 \ell_s^{p_2,q_2})\sigma,
\]
where \(\varepsilon=p+q+n+(p_1+q_1)(p_2+q_2+s)\). We obtain thus, that a pluripotential \(L_{\infty}\)-algebra structure on a bicomplex \((L,\del,\delb)\) is a family of operations
\[
\ell_{n}^{p,q} : L^{\otimes n} \longrightarrow L, \qquad n = 2,3,\ldots,
\]
of bidegrees \(1-n\leq p+q\leq 2-n\), such that each \(\ell_{n}^{p,q}\) is anti-symmetric and such that
\[
[\del,\ell_{n}^{p,q}]
=q\ell_n^{p+1,q}+
\sum_{\substack{r+s = n+1}}
\;\sum_{\sigma \in \mathrm{Sh}_{r,\,s}^{-1}}\sum_{\substack{p_1+p_2=p+1\\q_1+q_2=q}}
\operatorname{sgn}(\sigma)\,
(-1)^{r(s+1)+\varepsilon}\,
\ell_{r}^{p_1,q_1} \circ 
\bigl( \ell_{s}^{p_2,q_2} \otimes \id^{\otimes (r-1)} \bigr)
\sigma
\]
and
\[
[\delb,\ell_{n}^{p,q}]
=-p\ell_n^{p,q+1}+
\sum_{\substack{r+s = n+1}}
\;\sum_{\sigma \in \mathrm{Sh}_{r,\,s}^{-1}}\sum_{\substack{p_1+p_2=p\\q_1+q_2=q+1}}
\operatorname{sgn}(\sigma)\,
(-1)^{r(s+1)+\varepsilon}\,
\ell_{r}^{p_1,q_1} \circ 
\bigl( \ell_{s}^{p_2,q_2} \otimes \id^{\otimes (r-1)} \bigr)
\sigma
\]
hold.
\end{exam}

\section{Homotopy theory of \texorpdfstring{\(\Pp\)}{P-infinity}-algebras}\label{sec:HomThPalg}
In this section we define the bar and cobar costructions for pluripotential \(\Pp\)-algebras and show that when \(\Pp\) is Koszul, the homotopy category of \(\Pp\)-algebras is equivalent to the homotopy category of \(\Pp_\infty\)-algebras with their \(\infty\)-morphisms. 

\subsection{Bar and Cobar constructions for operadic algebras}\label{subsec:BarCobar}
Let $\Pp$ and $\Cc$ be an operad and a cooperad in bicomplexes.
For the sake of simplicity, we will 
restrict to \emph{connected cooperads}, that is \(\Cc(0)=0\) and \(\Cc(1)=\kk\),
which is sufficient for our interests. This condition may be removed assuming conilpotency on coalgebras (see Section 5.8.4 in \cite{LoVa12}).

Using the notion of twisting morphism
$\varphi=(\varphi^{10},\varphi^{01}):\Cc\to\Pp$ 
introduced in Section~\ref{subsec:TwistingMorph}, we will
associate a 
pair of adjoint functors
\[\Omega_\varphi:\Cc\text{-coalgebras}\rightleftarrows{\Pp\text{-algebras}:\br_\varphi}\]
relating algebras and coalgebras in bicomplexes.
The construction is analogous to the classical dg setting with the obvious difference introduced by the bicomplex structures. We will follow the construction as explained by Getzler and Jones in \cite{GeJo94}.

Let $(A,\del_A,\delb_A,\gamma)$ be a $\Pp$-algebra. Consider the cofree $\Cc$-coalgebra $\Cc(A)$ and the induced square zero coderivations
\[\del_1:=\del_\Cc\circ\id_A +\id_\Cc\circ'\del_A\quad\text{and}\quad\delb_1:=\delb_\Cc\circ\id_A +\id_\Cc\circ'\delb_A.\]

For any linear map $\alpha:\Cc\to \Pp$, consider the unique coderivation $d^c_\alpha$ extending 
\[\Cc(A)\xrightarrow{\alpha\circ\id}\Pp(A)\xrightarrow{\gamma}A\]
as explained in Section~\ref{subsec:CoopsCoalgs}.
It is explicitly given by
\[d^c_\alpha := (\id_\Cc\circ (\id_A;\gamma))([(\id_\Cc \circ_{(1)} \alpha)\Delta_{(1)}]\circ \id_A).\]

Note that, for any two linear maps $\alpha,\beta:\Cc\to\Pp$, the following relation is satisfied
\[d^c_\alpha d^c_\beta=d^c_{\alpha\star\beta},\]
see for instance \cite{GeJo94}.

Recall from Definition~\ref{defi:twisting}, that an operadic twisting morphism $\varphi$ is given by two linear maps $\varsh$ and $\varsv$ which we will use to construct a functor
\[\br_\varphi:\Palg\to\Cc\text{-Coalg}\]
called the \emph{bar construction with respect to $\varphi$}. It is given on objects by
\[\br_{\varphi}(A):= (\Cc(A),\del_1+d^c_{\varsh},\delb_1+d^c_{\varsv}).\]

By definition, $B_\varphi(A)$ is a $\Cc$-coalgebra with two additional coderivations.
The following proposition shows that these are indeed anti-commuting codifferentials.
\begin{prop}\label{barbicomplex}
   The bar construction $B_\varphi(A)$ satisfies the bicomplex equations:
    \[
    \begin{cases}
        (\del_1+d_{\varphi^{10}})^2=0,\\
        (\delb_1+d_{\varphi^{01}})^2=0,\text{ and}\\
        (\del_1+d_{\varphi^{10}})(\delb_1+d_{\varphi^{01}})+(\delb_1+d_{\varphi^{01}})(\del_1+d_{\varphi^{10}})=0.
    \end{cases}
    \]
\end{prop}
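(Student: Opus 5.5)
The natural route is to reduce everything to the total-degree statement, exactly as in the proof of Proposition~\ref{prop:TwistedComposite}. By Remark~\ref{rema:totaltwisting}, $\varphi:=\varphi^{10}+\varphi^{01}$ is a twisting morphism in the dg sense for the total differential $d_\Cc+d_\Pp$ with $d=\del+\delb$, i.e.
\[
[\del+\delb,\varphi]+\varphi\star\varphi=0 .
\]
Consider on $\Cc(A)$ the total coderivation
\[
D:=\del_1+\delb_1+d^c_{\varphi},
\]
where $d^c_\varphi=d^c_{\varphi^{10}}+d^c_{\varphi^{01}}$ by linearity of $\alpha\mapsto d^c_\alpha$. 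I would show that $D^2=0$. Then, splitting $D^2$ according to bidegree, its components of bidegree $(2,0)$, $(0,2)$ and $(1,1)$ are exactly the three expressions in the statement, so each must vanish separately.

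To compute $D^2$, expand it as
\[
D^2=(\del_1+\delb_1)^2+\bigl[(\del_1+\delb_1),d^c_\varphi\bigr]+d^c_\varphi d^c_\varphi .
\]
The first term vanishes because $\del_1,\delb_1$ are the coderivations induced from the bicomplex structures of $\Cc$ and $A$, so they square to zero and anticommute. The last term equals $d^c_{\varphi\star\varphi}$ by the relation $d^c_\alpha d^c_\beta=d^c_{\alpha\star\beta}$ recalled above; note that $\varphi\star\varphi$ has total degree $2$.

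For the middle term, use the standard identity
\[
[d_1,d^c_\alpha]=d^c_{[d,\alpha]},
\]
where $d_1$ is the induced total differential on $\Cc(A)$ and $[d,\alpha]=d_\Pp\alpha-(-1)^{|\alpha|}\alpha d_\Cc$. This follows as in \cite{GeJo94} or \cite[Ch.~11]{LoVa12}: both sides are coderivations, since the graded commutator of coderivations is a coderivation. It therefore suffices to compare their projections onto the cogenerators $A$. That comparison uses that $\Delta_{(1)}$ commutes with $d_\Cc$ (because $d_\Cc$ is a coderivation of the cooperad) and that $\gamma\colon\Pp(A)\to A$ is a morphism of bicomplexes. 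Summing the three contributions gives
\[
D^2=d^c_{[d,\varphi]+\varphi\star\varphi}=0 .
\]

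The main obstacle is the sign bookkeeping in $[d_1,d^c_\alpha]=d^c_{[d,\alpha]}$ when $\alpha$ has odd total degree, and making sure the Koszul signs from $\id_\Cc\circ'\del_A$ match those in the pre-Lie product. I would handle this by working throughout in total degree. There everything is literally the dg statement from \cite{LoVa12}, applied to the underlying dg objects with differential $\del+\delb$, so no new sign analysis is needed. The bigrading only enters at the end, through the splitting by bidegree.
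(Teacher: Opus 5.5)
Your argument is correct and uses the same two ingredients as the paper, namely $[d_1,d^c_\alpha]=d^c_{[d,\alpha]}$ and $d^c_\alpha d^c_\beta=d^c_{\alpha\star\beta}$. The only difference is packaging: you prove $D^2=0$ for the total differential and then split by bidegree (the route the paper takes for Proposition~\ref{prop:TwistedComposite}), whereas the paper's proof here applies the separate $\del$- and $\delb$-versions of the first identity directly to the three twisting equations; both work because $d^c_{\varphi^{10}}$ and $d^c_{\varphi^{01}}$ are bihomogeneous of bidegrees $(1,0)$ and $(0,1)$.
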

\begin{proof}
The proof is parallel to that of Proposition~\ref{prop:TwistedComposite}.
    First, note that, as in the classical case, for any linear map $\alpha:\Cc\to\Pp$
    we have \[\del_1d^c_{\alpha}+d^c_{\alpha}\del_1=d^c_{\del\alpha+\alpha\del}\text{ and }\delb_1d^c_{\alpha}+d^c_{\alpha}\delb_1=d^c_{\delb\alpha+\alpha\delb}.\]
    Since $\varphi$ is a twisting morphism, the following relations are satisfied
    \[[\del,\varphi^{10}]+\varphi^{10}\star\varphi^{10}=0,\quad [\delb,\varphi^{01}]+\varphi^{01}\star\varphi^{01}=0\quad\text{and}\quad [\del,\varphi^{01}] + \varphi^{01}\star\varphi^{10}+
    [\delb,\varphi^{10}]+ \varphi^{10}\star\varphi^{01}=0.\]
    Together with the fact that \(d^c_{\alpha} d^c_{\beta}=d^c_{\alpha\star\beta}\) imply \[(\del_1+d^c_{\varphi^{10}})^2=0\text,\quad(\delb_1+d^c_{\varphi^{01}})^2=0,\] and
    \begin{align*}
     (\del_1+d^c_{\varphi^{10}})(\delb_1+d^c_{\varphi^{01}})+(\delb_1+d^c_{\varphi^{01}})(\del_1+d^c_{\varphi^{10}})=&\\
     =d^c_{[\del,\varphi^{01}]+ \varphi^{01}\star\varphi^{10}}+d^c_{[\delb,\varphi^{10}]+ \varphi^{10}\star\varphi^{01}}=&\,0.
    \end{align*}
\end{proof}

Let $(C,\Delta,\del,\delb)$ be a $\Cc$-coalgebra. Consider the free $\Pp$-algebra $\Pp(C)$ and the induced square zero derivations
\[\del_1:=\del_\Pp\circ\id_C +\id_\Pp\circ'\del_C\quad\text{and}\quad\delb_1:=\delb_\Pp\circ\id_C +\id_\Pp\circ'\delb_C.\]

Let $\alpha:\Cc\to \Pp$ be a linear map and consider the unique derivation $d^a_\phi$ extending the following linear map
\[C\xrightarrow{\Delta}\Cc(C)\xrightarrow{\alpha\circ\id}\Pp(C),\]
it is explicitly given by
\[d^a_\alpha := (\gamma_{(1)}\circ\id_C)(\id_\Pp\circ'[(\alpha\circ\id)\Delta]).\]

Again, for any two maps $\alpha,\beta:\Cc\to\Pp$, the following relation is satisfied
    \[d^a_\alpha d^a_\beta=d^a_{\alpha\star\beta}.\]

Let $\varphi:\Cc\to \Pp$ be an operadic twisting morphism. We define the \emph{cobar construction of $C$ with respect to $\varphi$} as
\[\Omega_{\varphi}(C):= (\Pp(C),-\del_1-d^a_{\varphi^{10}},-\delb_1 -d^a_{\varphi^{01}}).\]
The following  is proven analogously to 
Proposition~\ref{barbicomplex}.
\begin{prop}  The cobar construction $\cbr_\varphi(C)$ satisfies the bicomplex equations.
\end{prop}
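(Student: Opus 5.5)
We prove that the cobar construction $\Omega_\varphi(C)=(\Pp(C),-\del_1-d^a_{\varphi^{10}},-\delb_1-d^a_{\varphi^{01}})$ satisfies the three bicomplex equations by running the argument of Proposition~\ref{barbicomplex} with the algebra-side derivations $d^a$ in place of the coderivations $d^c$. The overall signs are harmless: each equation is quadratic in the pair of operators, so we may work with $\del_1+d^a_{\varphi^{10}}$ and $\delb_1+d^a_{\varphi^{01}}$ directly.

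The proof uses two identities, stated for arbitrary linear maps $\alpha,\beta\colon\Cc\to\Pp$.
\begin{itemize}
\item \emph{Composition identity.} We have $d^a_\alpha d^a_\beta=d^a_{\alpha\star\beta}$, as recalled just before the definition of $\Omega_\varphi$.
\item \emph{Commutator identity.} We have
\[
\del_1 d^a_\alpha+d^a_\alpha\del_1=d^a_{[\del,\alpha]}
\quad\text{and}\quad
\delb_1 d^a_\alpha+d^a_\alpha\delb_1=d^a_{[\delb,\alpha]},
\]
where $[\del,\alpha]=\del_\Pp\alpha-(-1)^{|\alpha|}\alpha\del_\Cc$ is the differential in $\underline{\Hom}_\SS(\Cc,\Pp)$.
\end{itemize}

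To prove the commutator identity, note first that the graded commutator of two derivations of the free $\Pp$-algebra $\Pp(C)$ is again a derivation. By the proposition on derivations of free algebras, a derivation is determined by its restriction to the generators $C$. On generators, $d^a_\alpha$ is $(\alpha\circ\id)\Delta_C$, and $\del_1$ acts by $\del_C$ on $C$ and by $\del_\Pp$ on the $\Pp$-factor. We then use that $\Delta_C$ is a map of bicomplexes, so $\Delta_C\del_C=(\del_\Cc\circ\id+\id\circ'\del_C)\Delta_C$. With this, the restriction of $[\del_1,d^a_\alpha]$ to $C$ equals $((\del_\Pp\alpha-(-1)^{|\alpha|}\alpha\del_\Cc)\circ\id)\Delta_C$. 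The terms in which $\del_C$ hits the $C$-factors cancel between the two orderings. The same argument works for $\delb_1$.

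Expanding the squares and using $\del_1^2=0$, $\delb_1^2=0$ and $\del_1\delb_1+\delb_1\del_1=0$ gives
\[
(\del_1+d^a_{\varphi^{10}})^2=d^a_{[\del,\varphi^{10}]+\varphi^{10}\star\varphi^{10}},\qquad
(\delb_1+d^a_{\varphi^{01}})^2=d^a_{[\delb,\varphi^{01}]+\varphi^{01}\star\varphi^{01}},
\]
and the anticommutator equals
\[
d^a_{[\del,\varphi^{01}]+[\delb,\varphi^{10}]+\varphi^{10}\star\varphi^{01}+\varphi^{01}\star\varphi^{10}}.
\]
All three subscripts vanish by Definition~\ref{defi:twisting}, and $d^a_0=0$.

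The two identities could be avoided altogether. By Remark~\ref{rema:totaltwisting}, $\varphi^{10}+\varphi^{01}$ is a dg twisting morphism for the total differential, so the classical result shows that the total differential on $\Omega_\varphi(C)$ squares to zero. Splitting this square by bidegree into its $(2,0)$, $(1,1)$ and $(0,2)$ components then yields the three equations, exactly as in the proof of Proposition~\ref{prop:TwistedComposite}.

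The main obstacle in the direct route is the sign bookkeeping in the commutator identity: the sign conventions of the cobar construction, and the minus signs in its definition, must be compatible with the Koszul signs for $\circ'$. Because of this, I would present the total-differential reduction as the actual proof and keep the direct computation as a consistency check.
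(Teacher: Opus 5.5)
Your direct route is essentially the paper's own proof. The paper only says the statement is proved analogously to Proposition~\ref{barbicomplex}, i.e.\ by a commutator identity, the identity $d^a_\alpha d^a_\beta=d^a_{\alpha\star\beta}$ recalled before the definition of $\cbr_\varphi$, and the equations of Definition~\ref{defi:twisting}.

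The route you would actually present is different. Totalizing and splitting the square of the total differential by bidegree is how the paper proves Proposition~\ref{prop:TwistedComposite}, not this statement. It imports the classical sign bookkeeping wholesale: totalization is harmless because all Koszul signs in $\bicok$ use total degree, and $\varsh+\varsv$ is a dg twisting morphism by Remark~\ref{rema:totaltwisting}. The price is that you must check the total differential of $\cbr_\varphi(C)$ is the classical cobar differential. That check is exactly where the sign you deferred lives, so it does not go away.

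Your commutator identity is fine, and a common global sign is indeed irrelevant. The relative sign between $\del_1$ and $d^a$ is not, and the composition identity you quote is not correct as stated.
\begin{itemize}
\item The composite $d^a_\alpha d^a_\beta$ is not a derivation: on decomposables there are cross terms in which the two derivations hit different inputs.
\item On a generator $c\in C$, the second derivation acts on the inner vertex after passing $\beta(\mu)$. With the standard Koszul rule this gives $d^a_\alpha d^a_\beta(c)=(-1)^{|\alpha||\beta|}d^a_{\beta\star\alpha}(c)$.
\item What holds for $\alpha,\beta$ of odd total degree is $d^a_\alpha d^a_\beta+d^a_\beta d^a_\alpha=-d^a_{\alpha\star\beta+\beta\star\alpha}$, in particular $(d^a_\alpha)^2=-d^a_{\alpha\star\alpha}$. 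This is the opposite of the bar side, where $d^c_\alpha d^c_\beta$ and $d^c_{\alpha\star\beta}$ have the same projection to $A$.
\end{itemize}

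Redo your expansion with a relative sign $\epsilon=\pm1$. You get $(\del_1+\epsilon d^a_{\varsh})^2=\epsilon\, d^a_{[\del,\varsh]}-d^a_{\varsh\star\varsh}=-(1+\epsilon)\,d^a_{\varsh\star\varsh}$, and analogously for the other two equations.

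So the bicomplex equations hold for $(\del_1-d^a_{\varsh},\,\delb_1-d^a_{\varsv})$. This is the bidegree splitting of the classical cobar differential $d_1-d_2$. They fail for the displayed $(-\del_1-d^a_{\varsh},\,-\delb_1-d^a_{\varsv})$ unless $\varsh\star\varsh$, $\varsv\star\varsv$ and $\varsh\star\varsv+\varsv\star\varsh$ act trivially, as they do for $\kappa$.

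The minus sign is also the one for which algebra maps $\cbr_\varphi C\to A$ correspond to solutions of $[\del,\alpha]+\star_\varphi(\alpha)=0$, as required in Proposition~\ref{prop:BarCobarAdj}. So the displayed $-\del_1$ is presumably a typo.

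Read literally, your two routes therefore verify different operators. The direct one (and the paper's one-line proof) rests on a misquoted identity. The totalization route is sound, but it proves the statement for the corrected differentials $\del_1-d^a_{\varsh}$ and $\delb_1-d^a_{\varsv}$. You should say so explicitly rather than treat the sign as mere bookkeeping.
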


We will now show that the two functors constructed above form an adjunction. Let $\varphi:\Cc\to \Pp$ be an operadic twisting morphism, $(C,\Delta,\del,\delb)$ a $\Cc$-coalgebra and $(A,\gamma,\del,\delb)$ a $\Pp$-algebra. We define two unary operations $\star_\varphi$ and $\ov{\star}_\varphi$ in $\underline{\Hom}(C,A)$ of degree $(1,0)$ and $(0,1)$, respectively:
\[\star_{\varphi}(\alpha): C\xrightarrow{\Delta}\Cc(C)\xrightarrow{\varsh\circ \alpha}\Pp(A)\xrightarrow{\gamma}A\]
and
\[\ov\star_{\varphi}(\alpha): C\xrightarrow{\Delta}\Cc(C)\xrightarrow{\varsv\circ \alpha}\Pp(A)\xrightarrow{\gamma}A.\]

A map $\alpha: C\to A$ of degree $(0,0)$ is a \emph{twisting morphism with respect to }$\varphi$ if it satisfies the following two relations
\[[\del,\alpha]+\star_\varphi(\alpha)=0\quad\text{and}\quad[\delb,\alpha]+\ov\star_\varphi(\alpha)=0.\]

\begin{prop}\label{prop:BarCobarAdj}
    Let $\varphi:\Cc\to \Pp$ be a twisting morphism. For every $\Pp$-algebra and every $\Cc$-coalgebra there exist natural bijections 
    \[\Hom_{\text{$\Pp$-Alg}}(\Omega_\varphi C, A)\cong \Tw_\varphi(C,A)\cong \Hom_{\text{$\Cc$-Coalg}}(C,\br_\varphi A).\]
\end{prop}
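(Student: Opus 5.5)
The plan is to follow the classical dg argument (Getzler–Jones; Loday–Vallette, Proposition 11.3.1), checking at each step that the two differentials $\del$ and $\delb$ are handled separately. The two components $\varsh$ and $\varsv$ of $\varphi$ enter only through the operations $\star_\varphi$ and $\ov\star_\varphi$.

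For the left bijection $\Hom_{\Palg}(\Omega_\varphi C,A)\cong\Tw_\varphi(C,A)$, I use that $\Omega_\varphi C$ is free as a $\Pp$-algebra on $C$ once the differentials are forgotten. A morphism of bigraded $\Pp$-algebras $F\colon\Pp(C)\to A$ is then the same as a bidegree $(0,0)$ linear map $\alpha=F|_C\colon C\to A$. It remains to see when $F$ commutes with both differentials. The maps $\del_AF-F(-\del_1-d^a_{\varsh})$ and $\delb_AF-F(-\delb_1-d^a_{\varsv})$ are derivations along $F$, since they are differences of such; hence they vanish if and only if they vanish on the generators $C$. On $C$, the term $d^a_{\varsh}$ restricts to $(\varsh\circ\id)\Delta$, and applying $F$ gives $\gamma(\varsh\circ\alpha)\Delta=\star_\varphi(\alpha)$. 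So the condition becomes $[\del,\alpha]+\star_\varphi(\alpha)=0$, and similarly $[\delb,\alpha]+\ov\star_\varphi(\alpha)=0$. These are exactly the two twisting conditions, after fixing the sign conventions: the minus signs in the definition of $\Omega_\varphi$ are designed to make these signs match.

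The right bijection $\Tw_\varphi(C,A)\cong\Hom_{\Cc\text{-Coalg}}(C,\br_\varphi A)$ is dual. Since $\Cc$ is connected, $\Cc(A)$ is the cofree $\Cc$-coalgebra, so a bigraded coalgebra map $G\colon C\to\Cc(A)$ is the same as its projection $\alpha=\varepsilon G\colon C\to A$; here $G=(\id\circ\alpha)\Delta_C$. The defects $(\del_1+d^c_{\varsh})G-G\del_C$ and $(\delb_1+d^c_{\varsv})G-G\delb_C$ are coderivations along $G$, so they are determined by their projections to the cogenerators $A$. Projecting, $\del_1$ contributes $\del_A\alpha$ (the $\del_\Cc$ part dies under the counit), $-G\del_C$ contributes $-\alpha\del_C$, and $d^c_{\varsh}$ contributes $\gamma(\varsh\circ\id)G=\gamma(\varsh\circ\alpha)\Delta_C=\star_\varphi(\alpha)$. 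So the condition is again $[\del,\alpha]+\star_\varphi(\alpha)=0$, and the same holds with bars.

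Naturality in $C$ and $A$ is immediate, because all three sets are described by the same underlying linear map $\alpha$. I expect the only real obstacle to be the sign bookkeeping: checking that the Koszul signs in $\star_\varphi$ agree with those produced by $d^a$ and $d^c$, and verifying the claim that defects of derivations and coderivations along a morphism are again determined on generators and cogenerators. Because each differential is treated independently, this reduces verbatim to the dg computation applied once to $(\del,\varsh)$ and once to $(\delb,\varsv)$. The compatibility condition between the two is not needed for the bijections; it only guarantees that $\br_\varphi A$ and $\Omega_\varphi C$ are bicomplexes, which was already established in Proposition~\ref{barbicomplex}.
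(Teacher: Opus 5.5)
Your proposal is correct and is essentially the paper's argument: freeness of $\Omega_\varphi C$ and cofreeness of $\br_\varphi A$ reduce compatibility with $\del$ and with $\delb$, separately, to the two equations $[\del,\alpha]+\star_\varphi(\alpha)=0$ and $[\delb,\alpha]+\ov\star_\varphi(\alpha)=0$ on the single map $\alpha\colon C\to A$. The paper expresses the same reduction as two commuting squares on each side. The one point to adjust is your remark on signs, which the paper also glosses over. With the minus sign on $\del_1$ as literally written in $\Omega_\varphi C$, the generator condition becomes $\del_A\alpha+\alpha\del_C+\star_\varphi(\alpha)=0$, which is not the twisting equation. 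The convention that actually yields $\Tw_\varphi(C,A)$ is $\Omega_\varphi C=(\Pp(C),\del_1-d^a_{\varsh},\delb_1-d^a_{\varsv})$, and this is still a bicomplex because the Koszul rule gives $(d^a_{\varsh})^2=-d^a_{\varsh\star\varsh}$ for odd $\varsh$.
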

\begin{proof}
    Since $\Omega_\varphi$ is a free $\Pp$-algebra, a map $\Tilde{\alpha}:\Omega_\varphi(C)\to A$ is uniquely determined by its restriction to the generators $\alpha:C\to A$. The fact that $\Tilde{\alpha}$ commutes with the differentials is equivalent to the commutativity of the following two diagrams
\[\begin{tikzcd}[ampersand replacement=\&]
	C \&\& A \\
	{C+\Pp(C)} \&\& A
	\arrow["{\alpha}", from=1-1, to=1-3]
	\arrow["{\del_C+(\varsh\circ\id)\Delta}"', from=1-1, to=2-1]
	\arrow["{\del_A}", from=1-3, to=2-3]
	\arrow["{\alpha+\gamma(\id_\Pp\circ\alpha)}"', from=2-1, to=2-3]
\end{tikzcd}
\quad\quad
\begin{tikzcd}[ampersand replacement=\&]
	C \&\& A \\
	{C+\Pp(C)} \&\& A\;,
	\arrow["{\alpha}", from=1-1, to=1-3]
	\arrow["{\delb_C+(\varsv\circ\id)\Delta}"', from=1-1, to=2-1]
	\arrow["{\delb_A}", from=1-3, to=2-3]
	\arrow["{\alpha+\gamma(\id_\Pp\circ\alpha)}"', from=2-1, to=2-3]
\end{tikzcd}
\]
which is equivalent to the two equations \[[\del,\alpha]+\star_\varphi(\alpha)=0\quad\text{and}\quad[\delb,\alpha]+\ov\star_\varphi(\alpha)=0.\]

On the other hand, since $\br_\varphi(A)$ is cofree, a map $\Tilde{\alpha}:C\to\br_\varphi(A)$ is characterized by its restriction to the cogenerators, $\alpha:C\to A$. Then, $\Tilde{\alpha}$ commutes with the differentials if and only if the following two diagrams commute
\[\begin{tikzcd}[ampersand replacement=\&]
	C \&\& {\br_\varphi(A)} \\
	C \&\& A
	\arrow["{(\id_\Cc\circ\alpha)\Delta}", from=1-1, to=1-3]
	\arrow["{\del_C}"', from=1-1, to=2-1]
	\arrow["{\del_A+\gamma(\varsh\circ\id)}", from=1-3, to=2-3]
	\arrow["\alpha", from=2-1, to=2-3]
\end{tikzcd}
\quad
\begin{tikzcd}[ampersand replacement=\&]
	C \&\& {\br_\varphi(A)} \\
	C \&\& A\;,
	\arrow["{(\id_\Cc\circ\alpha)\Delta}", from=1-1, to=1-3]
	\arrow["{\delb_C}"', from=1-1, to=2-1]
	\arrow["{\delb_A+\gamma(\varsv\circ\id)}", from=1-3, to=2-3]
	\arrow["\alpha", from=2-1, to=2-3]
\end{tikzcd}
\]
which gives \[[\del,\alpha]+\star_\varphi(\alpha)=0\quad\text{and}\quad[\delb,\alpha]+\ov\star_\varphi(\alpha)=0.\qedhere\]
\end{proof}

\subsection{Codifferential viewpoint and pluripotential \texorpdfstring{$\infty$}{infinity}-morphisms}\label{subsec:PinftyAlgsCoders}
In analogy with the classical case, we can describe
\(\Pp_\infty^{pp}\)-algebras in terms of codifferentials on a cofree coalgebra, where \(\Pp_\infty^{pp}\) denotes the pluripotential minimal model of a Koszul operad \(\Pp\), see Section~\ref{subsec:KoszulMinimalModels}. In this case, we will obtain two anti-commuting codifferentials.
This is done analogously to the classical case, by defining a convenient twisting morphism \[\iota:\Pp^{\antshrk}_{pp}\longrightarrow \Pp_\infty^{pp}\] and then resourcing to the bar construction. Classically, $\iota$ is just defined by suspension. 
In our case, we will make use of the maps
\[
\mathfrak{s}_n:\ele_{n}\to\ele_{n+1}\quad\text{and}\quad\ov{\mathfrak{s}}_n:\ele_{n}\to\ele_{n+1}
\]
of bidegree \((1,0)\) and \((0,1)\), respectively, defined in Section~\ref{sec:staircase}. Explicitly, we  consider the equivariant maps
\[
\iota^{10}\colon\Pp^{\antshrk}_{pp}\xrightarrow{\mathfrak{s}\otimes s} \Pp_\infty^{pp}
\quad\text{
and}\quad
\iota^{01}\colon\Pp^{\antshrk}_{pp}\xrightarrow{\ov{\mathfrak{s}}\otimes s} \Pp_\infty^{pp}
\]
given by
\[
(i,j)_{n}\otimes \mu\mapsto (-1)^{i+j+n}(i+1,j)_{n+1}\otimes s\mu
\]
and
\[
(i,j)_{n}\otimes \mu\mapsto (-1)^{i+j+n}(i,j+1)_{n+1}\otimes s\mu,
\]
respectively.
\begin{prop}
    The pair \(\iota:=(\iota^{10},\iota^{01})\) defines a twisting morphism. 
\end{prop}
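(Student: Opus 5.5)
We have to check the three relations of Definition~\ref{defi:twisting} for \(\iota=(\iota^{10},\iota^{01})\):
\[
[\del,\iota^{10}]+\iota^{10}\star\iota^{10}=0,\qquad
[\delb,\iota^{01}]+\iota^{01}\star\iota^{01}=0,
\]
\[
[\delb,\iota^{10}]+\iota^{10}\star\iota^{01}+[\del,\iota^{01}]+\iota^{01}\star\iota^{10}=0 .
\]
By Remark~\ref{rema:totaltwisting}, it is equivalent to check that \(\iota^{10}+\iota^{01}\) is a dg twisting morphism for the total differentials \(\del+\delb\). Since all the maps split by bidegree, however, it is cleaner to verify the three equations separately by evaluating both sides on a generator \((i,j)_n\otimes\mu\), with \(\mu\in\ov{\Pp^{\antshrk}}\).

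The first step handles the commutator terms. The differential of \(\Pp^{\antshrk}_{pp}=\Inf(\Pp^{\antshrk})\) is given by \eqref{eq:infdiff}. On \(\Pp_\infty^{pp}\), the differentials are \(\del_1+\del_2\) and \(\delb_1+\delb_2\), with \(\del_1,\delb_1\) the inflated differentials of \(\Inf(s\ov{\Pp^{\antshrk}})\). Because \(\iota^{10}=\mathfrak{s}\otimes s\) lands in the generators, we get
\[
[\del,\iota^{10}]=(\del_1\iota^{10}\pm\iota^{10}\del)+\del_2\iota^{10}.
\]
The first bracket vanishes by Remark~\ref{rema:fraksdel} (\(\del\mathfrak{s}+\mathfrak{s}\del=0\)) together with the fact that \(s\) commutes with the internal differential of \(\Pp^{\antshrk}\) up to the standard sign. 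Here \(\Pp^{\antshrk}\) has zero internal differential, since \(\Pp\) is concentrated in degree \(0\) with trivial differential, so only the staircase parts of \eqref{eq:infdiff} contribute. The mixed relation \(\del\ov{\mathfrak{s}}+\delb\mathfrak{s}+\ov{\mathfrak{s}}\del+\mathfrak{s}\delb=0\) kills the linear part of the mixed equation in the same way.

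What survives in each equation is the decomposable part: \(\del_2\iota^{10}\) must cancel \(\iota^{10}\star\iota^{10}\). By construction, \(\del_2\) extends \(\psi\circ(\mathfrak{s}\otimes\delta)\), where \(\delta=(\Delta_s\otimes\Delta_{(1)})\). Hence \(\del_2\iota^{10}\) applies \(\mathfrak{s}\) twice to \((i,j)_n\), then splits with \(\iota_{d_1,d_2}\), and finally decomposes \(\mu\) infinitesimally. On the other side,
\[
\iota^{10}\star\iota^{10}=\gamma_{(1)}(\iota^{10}\circ_{(1)}\iota^{10})\Delta_{(1)},
\]
where \(\Delta_{(1)}\) on \(\Pp^{\antshrk}_{pp}\) is \(\psi\circ\Inf(\Delta_{(1)})\), that is, first \(\iota_{d_1,d_2}\), then \(\mathfrak{s}\otimes\mathfrak{s}\) on the two factors. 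We then use Proposition~\ref{prop:fraksiota}: \((\mathfrak{s}\otimes\id)\iota_{k-1,\ell}=(-1)^k(\id\otimes\mathfrak{s})\iota_{k,\ell-1}=\iota_{k,\ell}\mathfrak{s}\). Applying it twice moves both \(\mathfrak{s}\)'s to the left of \(\iota\), so the two expressions become the same element \(\iota\,\mathfrak{s}\mathfrak{s}(i,j)_n\otimes(s\mu'\circ_e s\mu'')\sigma\) up to sign. The identical argument with \(\ov{\mathfrak{s}}\), and with one \(\mathfrak{s}\) and one \(\ov{\mathfrak{s}}\) for the mixed equation (using both items of Proposition~\ref{prop:fraksiota}), handles the other two equations.

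The main obstacle is the sign bookkeeping. We must check that the sign \(-1\) from \(\Delta_s(s)=-s\otimes s\), the signs \((-1)^{i+j+n}\) in \(\mathfrak{s}\), the Koszul sign from passing \(\mathfrak{s}\) (of odd total degree) past the first factor, and the factor \((-1)^k\) in Proposition~\ref{prop:fraksiota} combine to give exactly opposite signs, so that the sum cancels rather than doubles. I would fix conventions by first checking the case \(\Pp=As\), arity \(3\), where everything is explicit: \(\iota\) sends \(\mu_n^{p,q}\) to the generator with shifted bidegree, and the three equations reduce to the arity-\(3\) relations listed after Definition~\ref{defi:ppAinftyA}. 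I would then run the general computation following the sign pattern of the proof of \(\del^2=0\) for \(\Pp_\infty^{pp}\).

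As a cross-check on the signs, after forgetting bidegrees and restricting to bidegree \((*,0)\), \(\iota^{10}\) reduces to the classical twisting morphism \(\Pp^{\antshrk}\to\Omega\Pp^{\antshrk}\), which is known to satisfy the Maurer–Cartan equation.
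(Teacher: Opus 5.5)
Your proposal is correct and follows the paper's own argument. The paper's proof is the one-line observation that the linear terms cancel by Remark~\ref{rema:fraksdel}, and that the decomposable terms coming from \(\del_2,\delb_2\) match the pre-Lie squares once \(\mathfrak{s}\) and \(\ov{\mathfrak{s}}\) are moved past \(\iota_{k,\ell}\) using Proposition~\ref{prop:fraksiota}, which is exactly what you describe. The paper likewise leaves the sign bookkeeping implicit, so your planned check in the case \(As\), arity \(3\), is a reasonable way to fix the conventions.
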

\begin{proof} 
This follows from the relations satisfied by \(\mathfrak{s}\), \(\ov{\mathfrak{s}}\), and the differentials, as described in Remark~\ref{rema:fraksdel}, together with the relations involving \(\mathfrak{s}\), \(\ov{\mathfrak{s}}\), and the maps \(\iota_{k,\ell}\colon\ele_{k+\ell}\to\ele_k\otimes\ele_\ell\) given in Proposition~\ref{prop:fraksiota}.
\end{proof}

The bar construction given in Section~\ref{subsec:BarCobar} associated to the twisting morphism \(\iota\) gives a functor
\[
\br_\iota:\pinfal\to\Pp^{\antshrk}_{pp}\mep\mathrm{Coalg}.
\]
As a consequence, any \(\Pp_\infty^{pp}\)-algebra  can be understood as two codifferentials on the \(\Pp^{\antshrk}_{pp}\)-cofree coalgebra  which anti-commute. 
Moreover, this viewpoint naturally gives a definition of $\infty$-morphism in the pluripotential case:

\begin{defi}
    Let \(A\) and \(B\) be two \(\Pp_\infty^{pp}\)-algebras. A \emph{pluripotential \(\infty\)-morphism}, \(\ppinfm\)-morphism for short, \(f:A\rightsquigarrow B\) is a coalgebra morphism between the bar constructions
    \[
    f:\br_\iota(A)\to\br_\iota(B).
    \]
\end{defi}
\begin{rema}
    Any morphism of cofree coalgebras is characterized by its projection to the cogenerators. So a \(\ppinfm\)-morphism of \(\Pp_{\infty}^{pp}\)-algebras \(f:A\rightsquigarrow B\) is equivalently given by a map
    \[
    \bar{f}:\Pp^{\antshrk}_{pp}(A)\to B
    \]
    satisfying
    \[(\delta_1^{B}+\delta_{\iota^{10}}^{B}) f=\ov f(\del_1^{A}+d_{\iota^{10}}^A)\quad\text{and}\quad(\ov\delta_1^{B}+\delta_{\iota^{01}}^{B})f=\ov f(\delb_1^{A}+d_{\iota^{01}}^A),\]
    where \(\del_1+d_{\iota^{10}}\) and \(\delb_1+d_{\iota^{01}}\) are the codifferentials of the bar construction defined in Section~\ref{subsec:BarCobar} and \(\delta_1^{B}+\delta_{\iota^{10}}^{B}\) and \(\ov\delta_1^{B}+\delta_{\iota^{01}}^{B}\) the projection of such codifferentials to the cogenerators.
    \end{rema}
    
    We call a \(\ppinfm\)-morphism a \emph{\(\ppinfm\)-weak equivalence} if the arity 1 component, that is \(f_1:A\to B\), is a pluripotential weak equivalence.
    
\begin{exam}
    Taking \(\Pp=As\) (as a non symmetric operad) we obtain that a \(\ppinfm\)-morphism \(f:A\rightsquigarrow B\) of \(\ppinfa\)-algebras is a family of maps 
    \[
    f_n:\ele_n\otimes A^{\otimes n} \to B,\quad\text{for }n\geq 1 .
    \]
    The map \(f\) has to satisfy
    \[
    (\del_1^{B}+d_{\iota^{10}}^{B})f=f(\del_1^{A}+d_{\iota^{10}}^A)\quad\text{and}\quad(\delb_1^{B}+d_{\iota^{01}}^{B})f=f(\delb_1^{A}+d_{\iota^{01}}^A).
    \]
    Unraveling the equation we obtain the equations of a \(\ppinfm\)-morphism in Definition~\ref{defi:inftymorph} of Section~\ref{subsec:AinftyAlgs}.
\end{exam}

Let \(\pinfalm\) denote the category whose objects are \(\Pp_{\infty}^{pp}\)-algebras and whose morphisms are given by \(\ppinfm\)-morphisms. The following is automatic.

\begin{prop}\label{prop:fullyfatihfulCoder}
    The functor \(\br_\iota\) extends to a fully faithful embedding \(\br_\iota\)
    \[
    \br_\iota:\pinfalm\to\{\text{quasi-free }\Pp_{pp}^{\antshrk}\text{-}\mathrm{Coalg}\}.
    \]
\end{prop}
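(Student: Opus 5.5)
The plan is to make \(\br_\iota\) a functor on \(\pinfalm\) by definition, then prove faithfulness and fullness from the cofree property of \(\Pp^{\antshrk}_{pp}\)-coalgebras. The key input is the codifferential description of \(\Pp_\infty^{pp}\)-structures.

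\emph{On objects.} I send a \(\Pp_\infty^{pp}\)-algebra \(A\) to \(\br_\iota(A)=(\Pp^{\antshrk}_{pp}(A),\del_1+d^c_{\iota^{10}},\delb_1+d^c_{\iota^{01}})\). This is a bicomplex by Proposition~\ref{barbicomplex}, since \(\iota=(\iota^{10},\iota^{01})\) is a twisting morphism. Forgetting the two differentials, it is the cofree coalgebra \(\Pp^{\antshrk}_{pp}(A)\), so it is quasi-free.

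\emph{On morphisms.} A \(\ppinfm\)-morphism \(f\colon A\rightsquigarrow B\) is by definition a morphism of \(\Pp^{\antshrk}_{pp}\)-coalgebras \(\br_\iota(A)\to\br_\iota(B)\) compatible with both codifferentials. I set \(\br_\iota(f):=f\). Composition in \(\pinfalm\) is composition of coalgebra maps, and the identity \(\ppinfm\)-morphism is the strict one with \(f_1=\id\), which corresponds to \(\id_{\br_\iota(A)}\). Hence \(\br_\iota\) is a functor, and it agrees on strict morphisms with the bar construction of Section~\ref{subsec:BarCobar}. Functoriality and full faithfulness are then tautological: \(\Hom_{\pinfalm}(A,B)\) is defined as \(\Hom_{\Pp^{\antshrk}_{pp}\mep\mathrm{Coalg}}(\br_\iota A,\br_\iota B)\).

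\emph{Consistency with the explicit description.} The one point needing care is that this definition agrees with the description by families of components \(\bar f\colon \Pp^{\antshrk}_{pp}(A)\to B\). By cofreeness, a coalgebra map into \(\Pp^{\antshrk}_{pp}(B)\) is uniquely determined by its projection to the cogenerators \(B\). Compatibility with each codifferential is equivalent to compatibility after this projection, because both sides of the equation are coderivations along \(f\) and so are again determined by their corestriction. This gives a bijection between \(\ppinfm\)-morphisms in the component sense and coalgebra maps between the bar constructions, and composition on components corresponds to composition of coalgebra maps.

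\emph{Injectivity on objects.} I would also remark that \(\br_\iota\) is injective on objects up to the data involved. The projection of \(d^c_{\iota^{10}}\) and \(d^c_{\iota^{01}}\) to the cogenerators recovers the structure map \(\Pp_\infty^{pp}(A)\to A\) on the generators \(\Inf(s\ov{\Pp^{\antshrk}})\), since \(\iota\) is an isomorphism onto these generators up to the shifts \(\mathfrak{s},\ov{\mathfrak{s}}\). This identifies \(\Pp_\infty^{pp}\)-algebra structures on \(A\) with pairs of anticommuting square-zero coderivations on \(\Pp^{\antshrk}_{pp}(A)\) extending \(\del_A,\delb_A\).

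The main obstacle is the claim that compatibility with the codifferentials can be checked on cogenerators for a non-strict coalgebra map. This requires the standard fact that \(D_Bf-fD_A\) is an \(f\)-coderivation, and hence zero when its corestriction vanishes. The connectedness of \(\Pp^{\antshrk}_{pp}\) guarantees the needed conilpotency for this.
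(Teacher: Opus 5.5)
Your proposal is correct and follows the paper's own argument. The paper simply calls the statement automatic: pluripotential \(\infty\)-morphisms are \emph{defined} as coalgebra morphisms \(\br_\iota(A)\to\br_\iota(B)\), so full faithfulness is tautological, exactly as you say.

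One small caveat concerns your side remark on objects. The maps \(\iota^{10},\iota^{01}\) are not isomorphisms onto the generators up to shift, because each \(\mathfrak{s}_n\colon\ele_n\to\ele_{n+1}\) is surjective with two-dimensional kernel, spanned by \((0,n)_n\) and \((-1,n)_n\). So \(\Pp^{pp}_\infty\)-structures on \(A\) inject into, but are not identified with, all pairs of anticommuting square-zero coderivations extending \(\del_A,\delb_A\). Your injectivity-on-objects argument only needs the surjectivity, so the proof is unaffected.
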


\subsection{Equivalence of homotopy categories}
Throughout this section, $\Pp$ is a quadratic operad in 
\(\mathrm{Vect}_\kk\) that is Koszul as dg-operad.
 Let $\Pp_\infty^{pp}$ denote its pluripotential minimal model given in Section~\ref{subsec:KoszulMinimalModels}.
Recall that $\pinfalm$ denotes the category of $\Pp_\infty^{pp}$-algebras  with $\ppinfm$-morphisms, and $\Palg$ the category of $\Pp$-algebras (in bicomplexes) with strict morphisms.
The main result of this section is the equivalence of homotopy categories
    \[\mathrm{Ho}(\Palg)\cong\mathrm{Ho}(\pinfalm).\]
Consider the twisting morphism
 \[\kappa:\Pp^{\antshrk}_{pp}\to\Pp\]
defined in Section~\ref{subsec:KoszulMinimalModels} for (co)operads in bicomplexes.
By Proposition~\ref{prop:BarCobarAdj}, this gives an adjunction
\[\begin{tikzcd}[ampersand replacement=\&]
	{\Omega_\kappa:\Pp^{\antshrk}_{pp}\text{-Coalg}} \& {\Pp\text{-Alg}:\br_\kappa}
	\arrow[harpoon, from=1-1, to=1-2]
	\arrow[shift left, harpoon, from=1-2, to=1-1]
\end{tikzcd}\]   
between coalgebras and algebras in bicomplexes.
The counit of this adjunction is given as follows: for $A$ a $\Pp$-algebra in bicomplexes, it is  
the map of algebras extending 
\[\Pp^{\antshrk}_{pp}\circ A\xrightarrow{\varepsilon\circ\id_A}\mathrm{I}\circ A\cong A\]
where $\varepsilon$ denotes the counit of the cooperad $\Pp^{\antshrk}_{pp}$. That is, 
\[
\varepsilon_\kappa: \cbr_\kappa\br_\kappa A =\Pp \circ \Pp^{\antshrk}_{pp} \circ A\xrightarrow{\id_\Pp\circ\varepsilon\circ\id_A}\Pp(A)\xrightarrow{\gamma_A}A.
\]
The unit is given by the map of coalgebras extending the map
\[C\cong\mathrm{I} \circ C\xrightarrow{\eta\circ \id_C}\Pp\circ C,\]
where $\eta$ denotes the unit of the operad $\Pp$. That is,
\[
C \xrightarrow{\Delta_C} \Pp^{\antshrk}_{pp} \circ C 
\xrightarrow{\id_{\Pp^{\text{¡}}_{pp}} \circ \eta \circ \id_C} 
\Pp^{\antshrk}_{pp} \circ \Pp \circ C 
= \br_\kappa \cbr_\kappa C.
\]
We have
\begin{prop}\label{prop:CbrkappaBrkappaWeakeq}
Let \(\Pp\) be a Koszul operad. For every $\Pp$-algebra $A$, the
counit 
\[\varepsilon_{\kappa}:\Omega_\kappa\br_\kappa A\xrightarrow{\sim}A\]
is a pluripotential weak equivalence of $\Pp$-algebras.
\end{prop}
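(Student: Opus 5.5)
Since $\varepsilon_\kappa\colon\Omega_\kappa\br_\kappa A=(\Pp\circ\Pp^{\antshrk}_{pp}\circ A,\del,\delb)\to A$ is already a morphism of $\Pp$-algebras, I only need to check that it is a pluripotential weak equivalence of bicomplexes. I would mimic the classical argument (Theorem 11.3.3 in \cite{LoVa12}): filter by a degree that is preserved by the differentials, show that the map becomes a weak equivalence on the associated graded using Koszulness, and conclude with a spectral sequence. There is an extra complication here. A filtration argument only transports $H_\del$- and $H_\delb$-isomorphisms, not Bott--Chern and Aeppli isomorphisms. So I first reduce to those two cohomologies. The cone $C(\varepsilon_\kappa)$ is acyclic precisely when $\varepsilon_\kappa$ is a pluripotential weak equivalence. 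For bounded bicomplexes, Remark~\ref{rema:boundedbicos} shows that $H_\del$ and $H_\delb$ suffice. In general I would argue arity-wise through the weight grading of $\Pp^{\antshrk}_{pp}$ and the grading of $A$ by number of inputs, so that each graded piece in a fixed bidegree is a finite sum.

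Next I split the differentials of $\Pp\circ_\kappa\Pp^{\antshrk}_{pp}\circ A$ into three parts. The first is the internal part, coming from $\del_A$, $\delb_A$ and the staircase differentials of $\Pp^{\antshrk}_{pp}=\Inf(\Pp^{\antshrk})$. The second is the Koszul part $d_{\kappa^{10}}$, $d_{\kappa^{01}}$, which acts on $\Pp\circ\Pp^{\antshrk}_{pp}$ and lowers the $\Pp^{\antshrk}$-weight by one. The third comes from the bar differential, which feeds the algebra structure $\gamma_A$ into the composite and lowers the total number of elements of $A$ in a tree. I filter by $F_r=\bigoplus_{k\le r}\Pp\circ\Pp^{\antshrk}_{pp}\circ_{(k)}A$, i.e.\ by the number of $A$-inputs, plus the analogous filtration on $A$ itself. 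Both $\del$ and $\delb$ preserve this filtration. On the associated graded only the $\gamma_A$-part disappears, leaving
\[
\mathrm{gr}\,\Omega_\kappa\br_\kappa A\cong (\Pp\circ_\kappa\Pp^{\antshrk}_{pp})\circ A ,
\]
with $A$ carrying its own differentials. The induced map to $\mathrm{gr}\,A=\mathrm{I}\circ A$ is the augmentation of the twisted composite product tensored with $A$.

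To show the graded map is an $H_\del$-isomorphism, I use the preceding proposition: $\Pp\circ_\kappa\Pp^{\antshrk}_{pp}\to\mathrm{I}$ is a pluripotential weak equivalence of $\bbsmod$-modules, and in particular an $H_\del$- and $H_\delb$-isomorphism arity-wise. The functor $-\circ A$ applies $-\otimes_{\SS_n}A^{\otimes n}$. In characteristic zero, taking $\SS_n$-coinvariants is exact, and the Künneth formula for $\del$-cohomology holds over a field. Hence $(\Pp\circ_\kappa\Pp^{\antshrk}_{pp})\circ A\to A$ is an $H_\del$-isomorphism, and the same argument gives an $H_\delb$-isomorphism. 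Alternatively, Proposition~\ref{prop:Kunneth} together with the remark on $\kk[\SS_n]$-modules gives directly a pluripotential weak equivalence after tensoring. Because the filtration is exhaustive and bounded below in each arity-weight piece, the spectral sequences converge, and $\varepsilon_\kappa$ is an $H_\del$- and $H_\delb$-isomorphism.

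The main obstacle is the last step, passing from $H_\del$- and $H_\delb$-isomorphisms to a pluripotential weak equivalence when $A$ is unbounded, since Remark~\ref{rema:boundedbicos} needs boundedness. My first attempt would be to run the filtration argument directly on the cone $C(\varepsilon_\kappa)$ and prove that $\BC(C(\varepsilon_\kappa))=0$. Each graded piece of the cone is acyclic in the pluripotential sense, because it is a cone of a pluripotential weak equivalence. By the decomposition into squares and zig-zags it is therefore a sum of squares, which gives contracting homotopies $h$ with $[\del,[\delb,h]]=\id$. I would then perturb these homotopies degree by degree along the complete, bounded-below filtration, giving a homotopy on the whole cone and hence its acyclicity. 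Convergence of this perturbation on each fixed arity and bidegree is the point I would check most carefully.
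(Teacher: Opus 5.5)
Your filtration by the number of $A$-inputs is preserved by both differentials. Its associated graded is $(\Pp\circ_\kappa\Pp^{\antshrk}_{pp})\circ A$, and it serves as well as the paper's filtration by total weight in $\Pp\circ\Pp^{\antshrk}_{pp}$. The gap is in the conclusion.

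Your main line only shows that $\varepsilon_\kappa$ is an $H_\del$- and $H_\delb$-isomorphism, and that does not imply the statement. $A$ is arbitrary and $\Omega_\kappa\br_\kappa A$ is in general unbounded, so Remark~\ref{rema:boundedbicos} is unavailable. Finite-dimensionality in each bidegree does not help either. The infinite zigzag with basis $x_i,y_i$ ($i\in\ZZ$), $\del x_i=y_i$, $\delb x_i=y_{i-1}$, has $H_\del=H_\delb=0$ but $\BC\neq0$. So everything rests on your last paragraph, which you leave unverified.

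The idea you are missing is that your premise, that a filtration argument only transports $H_\del$- and $H_\delb$-isomorphisms, is false here. By Lemma~\ref{lemm:long-exact-seq}, each short exact sequence $0\to F_{r-1}\to F_r\to F_r/F_{r-1}\to 0$ gives a long exact sequence in $\A$, $\BC$ and Bigolin cohomology. The paper argues exactly this way:
\begin{itemize}
\item $F_r/F_{r-1}$ is pluripotentially acyclic for $r>0$, by Proposition~\ref{prop:Kunneth} and semisimplicity of $\kk[\SS_n]$ (your ``alternative'');
\item induction and the five lemma then give $F_r\xrightarrow{\sim}A$ for all $r$;
\item $\BC$ and $\A$ commute with directed unions, so exhaustiveness finishes the proof.
\end{itemize}
The $H_\del$/$H_\delb$ spectral sequences, and any boundedness, are not needed.

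Your perturbation route can also be closed, with no convergence issue.
\begin{itemize}
\item Split the underlying bigraded space of the cone by number of inputs, and let $\del_0,\delb_0$ be the filtration-preserving parts of the differentials.
\item Choose a block-diagonal $h_0$ of bidegree $(-1,-1)$ with $[\del_0,[\delb_0,h_0]]=\id$. This is possible because each graded piece of the cone is pluripotentially acyclic, hence a sum of squares.
\item Set $\phi:=[\del,[\delb,h_0]]$. By the graded Jacobi identity and $\del^2=\delb^2=\del\delb+\delb\del=0$, $\phi$ commutes with $\del$ and $\delb$.
\item Moreover $\phi=\id+E$, where $E$ strictly lowers the filtration. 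Since the filtration starts at $F_0=0$, $E$ is locally nilpotent, so $\phi$ is an automorphism of bicomplexes.
\item Then $h:=h_0\phi^{-1}$ satisfies $[\del,[\delb,h]]=[\del,[\delb,h_0]]\phi^{-1}=\id$.
\end{itemize}
So the cone is pluripotentially contractible and $\BC(C(\varepsilon_\kappa))=0$. This gives a homotopy-level alternative to the paper's cohomological long-exact-sequence argument.
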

\begin{proof}
By definition, we have
    \[\Omega_\kappa\br_\kappa A:=(\Pp\circ\Pp^{\antshrk}_{pp}\circ A, \del_1 + d^{c}_{\kappa^{10}}+d^{a}_{\kappa^{10}},\delb_1+d^{c}_{\kappa^{01}}+d^{a}_{\kappa^{01}}),\] 
    where $\del_1$ and $\delb_1$ are the induced differentials of $A$ on $\Pp\circ\Pp^{\antshrk}_{pp}\circ A$, $d^{c}_{\kappa^{10}}$ is the induced differential by that of the bar construction
    \[
    \Pp^{\antshrk}_{pp}(A)\to \Pp^{\antshrk}_{pp}\circ_{(1)}\Pp^{\antshrk}_{pp}(A)\xrightarrow{(\id_\Cc \circ_{(1)} \kappa^{10})\circ\id_A} (\Pp^{\antshrk}_{pp}\circ_{(1)}\Pp)\circ A\xrightarrow{(\id;\gamma_A)}\Pp^{\antshrk}_{pp}(A),
    \]
    and $d^{a}_{\kappa^{10}}$ is the differential extending the following map
    \[
    \Pp^{\antshrk}_{pp}(A)\to\Pp^{\antshrk}_{pp}\circ\Pp^{\antshrk}_{pp}(A)\xrightarrow{\kappa^{10}\circ\id\circ\id} \Pp\circ\Pp^{\antshrk}_{pp}(A).
    \]
    The differentials $d^{c}_{\kappa^{01}}$ and $d^{a}_{\kappa^{01}}$ are defined in the same manner but using $\kappa^{01}$. Since $\Pp$ and $\Pp^{\antshrk}_{pp}$ are weight graded, $\cbr_\kappa\br_\kappa A$ is weight graded, and the weight is defined by the sum of the weights of $\Pp$ and $\Pp^{\antshrk}_{pp}$.    
    We define a bounded below and exhaustive increasing filtration
    \[
    F_r(\Omega_\kappa\br_{\kappa}A):=\{\omega\mid\text{weight of }\omega\leq r\}.
    \]
    The differentials of \(\cbr_\kappa\br_\kappa A\) preserve the filtration. Indeed, \(\del_1\) and \(\delb_1\) are weight preserving as well as \(d_{\kappa^{10}}^a\) and \(d_{\kappa^{01}}^a\).  The differentials \(d_{\kappa^{10}}^c\) and \(d_{\kappa^{01}}^c\) are not weight preserving but decrease the weight, in particular we have 
    \[d^c_{\kappa^{**}}(F_r(\cbr_\kappa\br_\kappa A))\subseteq F_{r-1}(\cbr_\kappa\br_{\kappa} A).\] 
    These observations and the fact that \(d^a_{\kappa^{10}}\) and \(d^a_{\kappa^{01}}\) are constructed as the differentials of the twisted composite products (Section~\ref{subsec:TwistingMorph}) imply  
    \[\bigoplus_{r\geq 0}F_r(\cbr_\kappa\br_\kappa A)/F_{r-1}(\cbr_\kappa\br_\kappa A)=(\Pp\circ_\kappa\Pp^{\antshrk}_{pp})(A).\]
    Observe that, since \(\Pp\) is Koszul, by the ABC-Künneth formulae (Proposition~\ref{prop:Kunneth}), and using the fact that \(\kk[\SS_n]\) is a semisimple ring, we obtain
    \[
    \ABC\left((\Pp\circ_\kappa\Pp^{\antshrk}_{pp})(A)\right)\cong\mathrm{I}(\ABC(A))\cong \ABC(A).
    \]
    This implies
    \[
    \BC(F_r(\cbr_\kappa\br_\kappa A)/F_{r-1}(\cbr_\kappa\br_\kappa A))=\A(F_r(\cbr_\kappa\br_\kappa A)/F_{r-1}(\cbr_\kappa\br_\kappa A))=0
    \] for $r>0$.
    
    Endow $A$ with the trivial filtration of weight 0. 
    Then, the counit of the adjunction
    \[\varepsilon_{\kappa}:\cbr_\kappa\br_\kappa A\xrightarrow{\gamma_A(\id_\Pp\circ\varepsilon\circ\id_A)}A\]
     is a filtered map. For $r=0$, we have
    \[F_0(\cbr_\kappa\br_\kappa A)=\mathrm{I}\circ\mathrm{I}\circ A\cong A\xrightarrow{\cong}A.\]
   Assume inductively that, for all $0\leq s<r$, we have a pluripotential weak equivalence
    \[
    F_s:=F_s(\Omega_\kappa\br_{\kappa}A)\xrightarrow{\sim}A.
    \]
The map of short exact sequences
\[\begin{tikzcd}[ampersand replacement=\&]
	0 \& {F_{r-1}} \& {F_r} \& {F_r/F_{r-1}} \& 0 \\
	0 \& A \& A \& 0 \& 0
	\arrow[from=1-1, to=1-2]
	\arrow[from=1-2, to=1-3]
	\arrow[from=1-2, to=2-2]
	\arrow[from=1-3, to=1-4]
	\arrow[from=1-3, to=2-3]
	\arrow[from=1-4, to=1-5]
	\arrow[from=1-4, to=2-4]
	\arrow[from=2-1, to=2-2]
	\arrow[from=2-2, to=2-3]
	\arrow[from=2-3, to=2-4]
	\arrow[from=2-4, to=2-5]
\end{tikzcd}\]
    induces a commutative diagram where the rows are exact (see Lemma~\ref{lemm:long-exact-seq})
\[\begin{adjustbox}{max width=\textwidth,center}\begin{tikzcd}[ampersand replacement=\&]
	\dots \& {\A^{p-1q-1}(F_{r-1})} \& {\A^{p-1q-1}(F_r)} \& {\A^{p-1q-1}(F_r/F_{r-1})} \& {\BC^{pq}(F_{r-1})} \& {\BC^{pq}(F_{r})} \& {\BC^{pq}(F_r/F_{r-1})} \& \dots \\
	\dots \& {\A^{p-1q-1}(A)} \& {\A^{p-1q-1}(A)} \& 0 \& {\BC^{pq}(A)} \& {\BC^{pq}(A)} \& 0 \& \dots
	\arrow[from=1-1, to=1-2]
	\arrow[from=1-2, to=1-3]
	\arrow["\cong", from=1-2, to=2-2]
	\arrow[from=1-3, to=1-4]
	\arrow[from=1-3, to=2-3]
	\arrow[from=1-4, to=1-5]
	\arrow[from=1-4, to=2-4]
	\arrow[from=1-5, to=1-6]
	\arrow["\cong", from=1-5, to=2-5]
	\arrow[from=1-6, to=1-7]
	\arrow[from=1-6, to=2-6]
	\arrow[from=1-7, to=1-8]
	\arrow[from=1-7, to=2-7]
	\arrow[from=2-1, to=2-2]
	\arrow["\cong", from=2-2, to=2-3]
	\arrow[from=2-3, to=2-4]
	\arrow[from=2-4, to=2-5]
	\arrow["\cong", from=2-5, to=2-6]
	\arrow[from=2-6, to=2-7]
	\arrow[from=2-7, to=2-8]
\end{tikzcd}\end{adjustbox}\]
By the observation above, we obtain \(F_r(\cbr_\kappa\br_\kappa A)\xrightarrow{\sim}A.\)
Since the filtration is exhaustive, this implies \(
\cbr_\kappa\br_\kappa A\xrightarrow{\sim} A.\)
\end{proof}

Consider the morphism \[\varphi:\Pp_\infty^{pp}\xrightarrow{\sim}\Pp,\]
it induces the inclusion functor
\[\varphi^{*}:\Palg\hookrightarrow \pinfal.\]
Denote by 
\[i:\Palg\hookrightarrow \pinfal \hookrightarrow \pinfalm\]
the obvious inclusion of categories.
\begin{rema}
    Recall the twisting morphism
    \[
    \iota:\Pp^{\antshrk}_{pp} \longrightarrow \Pp^{pp}_\infty
    \]
    introduced in Section~\ref{subsec:BarCobar}. The twisting morphisms \(\kappa\) and \(\iota\) satisfy \(\kappa = \varphi \, \iota.\)
    From this identity, together with the definition of the bar construction (Section~\ref{subsec:BarCobar}), it follows that
    \[
    \br_\kappa = \br_{\iota} \, i.
    \]

\end{rema}

The following proposition is proved in exactly the same way as in the dg case, we include it here for completeness.

\begin{prop}\label{prop:AdjuncioRectif}
    Let $\Pp$ be a Koszul operad. There is an adjunction
\[\begin{tikzcd}[ampersand replacement=\&]
	{\Omega_\kappa\br_{\iota}:\pinfalm} \& {\Palg:i}
	\arrow[harpoon, from=1-1, to=1-2]
	\arrow[shift left, harpoon, from=1-2, to=1-1]
\end{tikzcd}\]   
\end{prop}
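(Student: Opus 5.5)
We prove the adjunction by composing two natural bijections, exactly as in the dg case. Fix a \(\Pp^{pp}_\infty\)-algebra \(A\) and a \(\Pp\)-algebra \(B\). By Proposition~\ref{prop:BarCobarAdj} applied to the twisting morphism \(\kappa\) and the \(\Pp^{\antshrk}_{pp}\)-coalgebra \(\br_\iota A\), there is a natural bijection
\[
\Hom_{\Palg}(\Omega_\kappa\br_\iota A,B)\cong \Hom_{\Pp^{\antshrk}_{pp}\text{-Coalg}}(\br_\iota A,\br_\kappa B).
\]
By the remark preceding the statement, \(\kappa=\varphi\,\iota\) gives \(\br_\kappa=\br_\iota\, i\), so the right-hand side equals \(\Hom_{\Pp^{\antshrk}_{pp}\text{-Coalg}}(\br_\iota A,\br_\iota\, iB)\). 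This is, by the very definition of \(\ppinfm\)-morphisms, the set \(\Hom_{\pinfalm}(A,iB)\). Since \(\br_\iota\) is fully faithful (Proposition~\ref{prop:fullyfatihfulCoder}), the composite bijection is compatible with composition of \(\ppinfm\)-morphisms.

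Next we make the left adjoint a functor on \(\pinfalm\): a \(\ppinfm\)-morphism \(f\colon A\rightsquigarrow A'\) is a coalgebra map \(\br_\iota A\to\br_\iota A'\), and \(\Omega_\kappa\) is a functor on \(\Pp^{\antshrk}_{pp}\)-coalgebras. So \(\Omega_\kappa\br_\iota f\) is defined, and functoriality follows from that of \(\Omega_\kappa\).

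Finally we check naturality of the bijection in both variables. In \(A\) (for \(\ppinfm\)-morphisms) it follows from naturality of Proposition~\ref{prop:BarCobarAdj} in the coalgebra variable. In \(B\) (for strict \(\Pp\)-algebra maps) it follows from naturality in the algebra variable together with \(\br_\kappa(g)=\br_\iota(i g)\) for strict \(g\).

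The only point needing care is the identity \(\br_\kappa=\br_\iota i\) at the level of differentials. The codifferential \(d^c_\kappa\) on \(\Pp^{\antshrk}_{pp}(B)\) uses \(\gamma_B\circ(\kappa\circ\id)\), while \(d^c_\iota\) uses the \(\Pp^{pp}_\infty\)-action \(\gamma_B\circ(\varphi\circ\id)\) composed with \(\iota\circ\id\). These agree because \(\kappa^{10}=\varphi\iota^{10}\) and \(\kappa^{01}=\varphi\iota^{01}\): \(\varphi\) sends \(\mathfrak{s}\otimes s\) applied to weight-one elements to \(E\) and kills higher weights, matching the definition of \(\kappa\) via \(\mathfrak{s}_1\) and \(\ov{\mathfrak{s}}_1\). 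This is a short check of the conventions and is the main (mild) obstacle; everything else is formal.
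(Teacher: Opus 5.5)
Your proposal is correct and uses the same ingredients as the paper: the $\kappa$ bar--cobar adjunction, the identity $\br_\kappa=\br_\iota\, i$ coming from $\kappa=\varphi\,\iota$, and the fact that $\ppinfm$-morphisms are by definition coalgebra maps between bar constructions. The only difference is presentation: you compose the hom-set bijections directly, while the paper builds an explicit unit $\eta(A)$ from $\eta_\kappa(\br_\iota A)$, takes $\varepsilon_\kappa$ as the counit, and reduces the triangle identities to those of the $\kappa$-adjunction.
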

\begin{proof}
    Let $A$ be a $\Pp_\infty^{pp}$-algebra, consider its bar construction $\br_\iota(A)$. We have the following morphism given by the unit of the adjunction associated to $\kappa$
    \[
    \br_\iota A\to\br_\kappa\cbr_\kappa(B_\iota A).
    \]
    By the previous remark, we obtain a morphism
        \[
        \br_\iota A\to\br_\iota(i\cbr_\kappa(B_\iota A)),
        \]
    which gives a $\ppinfm$-morphism by Proposition~\ref{prop:fullyfatihfulCoder}
    \[
    \eta(A): A\rightsquigarrow i\Omega_\kappa(\br_\iota A),
    \]
    which induces a natural transformation $\eta:\id\to i\Omega_\kappa\br_\iota$.

    Let $A$ be a $\Pp$-algebra, we have the following morphism of $\Pp$-algebras
    \[\varepsilon_\kappa(A):\Omega_\kappa \br_\iota i(A)=\Omega_\kappa\br_\kappa(A)\to A,\]
    which induces a natural transformation of functors $\varepsilon\colon \Omega_\kappa\br_\iota i\to \id$.

    Let $A$ be a $\Pp_\infty^{pp}$-algebra, the composite
    $\varepsilon(\cbr_\kappa\br_\iota)\circ\cbr_\kappa\br_\iota\eta$
    is equal to 
    \[\cbr_\kappa\br_\iota(A)\xrightarrow{\Omega_\kappa(\eta_\kappa (B_\iota A))}\cbr_\kappa\br_\kappa\cbr_\kappa\br_\iota(A)\xrightarrow{\varepsilon_\kappa(\cbr_\kappa(\br_\iota A))}\cbr_\kappa\br_\iota(A),\]
    which is the identity by the adjunction associated to $\kappa$.
    On the other hand, the composite $i(\varepsilon(A))\circ\eta(i(A))$ is a $\ppinfm$-morphism, whose image under $\br_\iota$ is given by
    \[\br_\kappa A\xrightarrow{\eta_\kappa(\br_\kappa)}\br_\kappa\cbr_\kappa\br_\kappa A\xrightarrow{\br_\kappa(\varepsilon_\kappa)}\br_\kappa A,\]
which is the identity again by the adjunction associated to $\kappa$.
\end{proof}

\begin{theo}[Rectification]\label{theo:Rectification}
    Let $\Pp$ be a Koszul operad. There is a natural weak equivalence 
    \[\id_{\pinfalm}\to \Omega_\kappa\br_\iota.\]
\end{theo}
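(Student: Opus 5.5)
The natural transformation is the unit $\eta$ of the adjunction of Proposition~\ref{prop:AdjuncioRectif}: for a $\Pp_\infty^{pp}$-algebra $A$, the $\ppinfm$-morphism $\eta(A)\colon A\rightsquigarrow i\Omega_\kappa\br_\iota A$ corresponds, via Proposition~\ref{prop:fullyfatihfulCoder}, to the unit $\br_\iota A\to\br_\kappa\Omega_\kappa\br_\iota A$ of the $\kappa$-bar/cobar adjunction. Naturality is already built into the adjunction. By definition of $\ppinfm$-weak equivalence, what remains is to show that the arity one component $\eta(A)_1\colon A\to\Omega_\kappa\br_\iota A$ is a pluripotential weak equivalence of bicomplexes.

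I will first unravel $\eta(A)_1$. It is the projection of $C\to\Pp^{\antshrk}_{pp}\circ C\to\Pp^{\antshrk}_{pp}\circ\Pp\circ C$ (with $C=\br_\iota A$) onto the cogenerators, precomposed with the inclusion $A\hookrightarrow \br_\iota A$. This gives the map $A\to \Pp\circ\Pp^{\antshrk}_{pp}\circ A$, $a\mapsto 1\otimes 1\otimes a$, i.e. the inclusion of the weight $0$ part $\mathrm{I}\circ\mathrm{I}\circ A$.

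The plan is then to copy the filtration argument of Proposition~\ref{prop:CbrkappaBrkappaWeakeq}. I filter $\Omega_\kappa\br_\iota A=(\Pp\circ\Pp^{\antshrk}_{pp}\circ A,\dots)$ by the total weight coming from $\Pp$ and $\Pp^{\antshrk}_{pp}$. The differentials now contain the internal pieces $\del_1,\delb_1$, the twisting terms $d^a_{\kappa^{**}}$, which preserve weight, and the bar-construction terms coming from $\iota$. The latter apply the higher operations $\mu\in\Pp_\infty^{pp}$ of $A$ to elements of $\Pp^{\antshrk}_{pp}(A)$, and they strictly decrease the $\Pp^{\antshrk}_{pp}$-weight. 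Hence the filtration $F_r$ is preserved, and the associated graded is $(\Pp\circ_\kappa\Pp^{\antshrk}_{pp})(A)$, where $A$ carries only its underlying bicomplex structure. Since $\Pp$ is Koszul, the pluripotential Koszul complex $\Pp\circ_\kappa\Pp^{\antshrk}_{pp}\to\mathrm{I}$ is a weak equivalence. The Künneth formula (Proposition~\ref{prop:Kunneth}), applied arity-wise together with the semisimplicity of $\kk[\SS_n]$ to pass through coinvariants, then gives $\BC$ and $\A$ of $F_r/F_{r-1}$ equal to zero for $r>0$, while $F_0\cong A$. An induction on $r$ using the long exact sequence of Lemma~\ref{lemm:long-exact-seq} and the five lemma shows that $A\cong F_0\hookrightarrow F_r$ is a pluripotential weak equivalence for every $r$. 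Exhaustiveness then lets me pass to the colimit: $\BC$ and $\A$ commute with filtered colimits of injections.

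I expect the main obstacle to be the weight bookkeeping in the differential. I must check that the terms of the bar differential $d^c_\iota$ carrying the higher operations of $A$ really lower the filtration degree. There is a subtlety: arity $n$ operations of $\Pp_\infty^{pp}$ land in $\Pp^{\antshrk}_{pp}$-weight $n-1$ less, and the bidegree shifts from $\mathfrak{s},\ov{\mathfrak{s}}$ must not interfere. If the naive weight filtration fails to be preserved, I would instead filter by arity in $A$ minus $\Pp^{\antshrk}_{pp}$-weight (equivalently, by the number of $A$-inputs), as is done in the dg proof of Loday--Vallette. I would then verify that the associated graded is still the twisted composite product evaluated on $A$ with its underlying bicomplex. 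Everything else reduces to results already established in the paper.
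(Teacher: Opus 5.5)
Your proposal is correct and follows the paper's proof essentially verbatim. You take the unit of the adjunction of Proposition~\ref{prop:AdjuncioRectif} and identify its arity-one component with the inclusion $A\cong \mathrm{I}\circ\mathrm{I}\circ A\hookrightarrow \Pp\circ\Pp^{\antshrk}_{pp}\circ A$. You then filter by total weight, so that the associated graded is $(\Pp\circ_\kappa\Pp^{\antshrk}_{pp})(A)$, and conclude as in Proposition~\ref{prop:CbrkappaBrkappaWeakeq}; the weight check you worry about goes through directly, since $\iota$ vanishes on the coaugmentation, so $d^c_{\iota^{10}},d^c_{\iota^{01}}$ strictly lower weight while $d^a_{\kappa^{10}},d^a_{\kappa^{01}}$ preserve it, and the fallback filtration is unnecessary.
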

\begin{proof}
    To any $\Pp_\infty^{pp}$-algebra $A$, we apply the unit of the adjunction from Proposition~\ref{prop:AdjuncioRectif}, namely
    \[
    A \rightsquigarrow i\cbr_\kappa\br_\iota A = \cbr_\kappa\br_\iota A.
    \]
    Recall that this $\ppinfm$-morphism is defined by applying the unit of the adjunction associated to $\kappa$ to $\br_\iota A$. To show that this map is a $\ppinfm$-weak equivalence, we need to prove that its first component is a pluripotential weak equivalence.

    The first component is given by the composite:
    \[
    A \xrightarrow{\cong} \mathrm{I} \circ \mathrm{I} \circ A \hookrightarrow \Pp \circ \Pp^{\antshrk}_{pp} \circ A.
    \]
    We equip $\cbr_\kappa\br_\iota A$ with a filtration by the total weight in $\Pp \circ \Pp^{\antshrk}_{pp}$. As in Proposition~\ref{prop:CbrkappaBrkappaWeakeq}, the associated graded object is
    \[
    \bigoplus_{r\geq 0}F^r(\cbr_\kappa\br_\iota A)/F^{r-1}(\cbr_\kappa\br_\iota A)=\Pp\circ_\kappa\Pp^{\antshrk}_{pp}\circ A.
    \]
    By an argument analogous to that in Proposition~\ref{prop:CbrkappaBrkappaWeakeq}, we conclude that the morphism \( A \rightsquigarrow \cbr_\kappa\br_\iota A \) is indeed a $\ppinfm$-weak equivalence.
\end{proof}

\begin{prop}\label{prop:CbrkappaBriotaWeakeq}
    The rectification functor
    \[\cbr_\kappa\br_\iota:\pinfalm\to\Palg\]
    sends \(\ppinfm\)-weak equivalences to pluripotential weak equivalences.
\end{prop}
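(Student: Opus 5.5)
The plan is to derive this from Theorem~\ref{theo:Rectification} by two-out-of-three, with one extra step showing that an $\ppinfm$-weak equivalence and its rectification are compatible with the unit. Let $f\colon A\rightsquigarrow B$ be an $\ppinfm$-weak equivalence of $\Pp_\infty^{pp}$-algebras. By Proposition~\ref{prop:fullyfatihfulCoder}, $f$ corresponds to a coalgebra morphism $\br_\iota A\to\br_\iota B$. Applying the functor $\cbr_\kappa$ then gives a strict morphism of $\Pp$-algebras $\cbr_\kappa\br_\iota(f)$.

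The unit $\eta$ of the adjunction in Proposition~\ref{prop:AdjuncioRectif} is natural, so the square
\[
\eta(B)\circ f \;=\; i\cbr_\kappa\br_\iota(f)\circ \eta(A)
\]
commutes in $\pinfalm$. I would take arity-one components of both sides. For a composite of $\infty$-morphisms, the arity-one component is the composite of the arity-one components, as the composition formula with $n=1$ shows. This gives
\[
\eta(B)_1\circ f_1=\cbr_\kappa\br_\iota(f)_{\vert A\text{-part}}\circ\eta(A)_1 .
\]
Here the first component of a strict morphism is the morphism itself.

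By Theorem~\ref{theo:Rectification}, $\eta(A)_1$ and $\eta(B)_1$ are pluripotential weak equivalences, and $f_1$ is one by hypothesis. So the composite $\cbr_\kappa\br_\iota(f)\circ\eta(A)_1$ induces isomorphisms on $\BC$ and $\A$. Since $\eta(A)_1$ also induces isomorphisms, two-out-of-three for isomorphisms of $\BC$ and $\A$ (Remark~\ref{Asqacyclic}) shows that $\cbr_\kappa\br_\iota(f)$ is a pluripotential weak equivalence.

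The main obstacle is checking that the arity-one component of the $\infty$-morphism $i\cbr_\kappa\br_\iota(f)\circ\eta(A)$ really is the underlying map $\cbr_\kappa\br_\iota(f)\circ\eta(A)_1$. This requires that the higher components $\eta(A)_n$ for $n\geq 2$ do not contribute in arity one, which I expect from the weight and arity grading of the cofree coalgebra $\Pp^{\antshrk}_{pp}(A)$. I would verify it directly from the description of $\infty$-morphisms as maps $\Pp^{\antshrk}_{pp}(A)\to B$ restricted to $\mathrm{I}(A)=A$.

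If naturality of $\eta$ in $\pinfalm$ turns out to be delicate, the fallback is to argue directly with the weight filtration, as in Proposition~\ref{prop:CbrkappaBrkappaWeakeq}. The associated graded of $\cbr_\kappa\br_\iota(f)$ is $\Pp\circ_\kappa\Pp^{\antshrk}_{pp}(f_1)$. By the Künneth formula (Proposition~\ref{prop:Kunneth}) and the Koszul property, its $\ABC$ cohomology is an isomorphism, and the five lemma on the long exact sequences of Lemma~\ref{lemm:long-exact-seq} completes the argument.
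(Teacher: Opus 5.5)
Your proposal is correct and follows essentially the paper's argument: naturality of the unit from Theorem~\ref{theo:Rectification} gives a commutative square of $\ppinfm$-morphisms, and restricting to arity-one components yields a commutative square of bicomplex maps whose vertical maps are weak equivalences, so two-out-of-three finishes. The step you flag as the main obstacle is not one, since in the composition formula with $n=1$ only the term $k=i_1=1$ occurs, giving $(f\circ g)_1=f_1g_1$ directly, so the higher components $\eta(A)_n$ never enter and the weight-filtration fallback is unnecessary.
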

\begin{proof}
By Theorem~\ref{theo:Rectification}, we have the following commutative diagram of $\ppinfm$-morphisms
\[\begin{tikzcd}[ampersand replacement=\&]
	{\Omega_\kappa\br_\iota(A)} \& {\cbr_\kappa\br_ \iota(A')} \\
	A \& {A'}.
	\arrow["\Omega_\kappa\br_\iota(f)",from=1-1, to=1-2]
	\arrow["\sim", squiggly, from=2-1, to=1-1]
	\arrow["f",squiggly, from=2-1, to=2-2]
	\arrow["\sim"', squiggly, from=2-2, to=1-2]
\end{tikzcd}\]
We can restrict the diagram to the first components of the $\ppinfm$-morphisms to obtain the following commutative diagram of maps of bicomplexes
\[\begin{tikzcd}[ampersand replacement=\&]
	{\Omega_\kappa\br_\iota(A)} \& {\cbr_\kappa\br_ \iota(A')} \\
	A \& {A'}
	\arrow["\Omega_\kappa\br_\iota(f)",from=1-1, to=1-2]
	\arrow["\sim", from=2-1, to=1-1]
	\arrow["{f_1}", from=2-1, to=2-2]
	\arrow["\sim"', from=2-2, to=1-2]
\end{tikzcd}\]
so  $f$ is a $\ppinfm$-weak equivalence if and only if $\Omega_\kappa\br_\iota(f)$ is as well.
\end{proof}

\begin{theo}[Equivalence of homotopy categories]\label{theo:EqHoCats}
    Let $\Pp$ be a Koszul operad. We have an equivalence of homotopy categories
    \[\mathrm{Ho}(\Palg)\cong\mathrm{Ho}(\pinfalm).\]
\end{theo}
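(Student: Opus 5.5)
The plan is to show that the inclusion $i\colon\Palg\hookrightarrow\pinfalm$ and the rectification functor $\cbr_\kappa\br_\iota\colon\pinfalm\to\Palg$ both descend to the localizations at pluripotential weak equivalences (resp.\ $\ppinfm$-weak equivalences), and that the resulting functors are mutually inverse up to natural isomorphism.

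First I check that both functors preserve weak equivalences, so that the universal property of localization gives induced functors $\Ho(\Palg)\rightleftarrows\Ho(\pinfalm)$. For $i$ this is immediate: a strict morphism $f$ of $\Pp$-algebras becomes a $\ppinfm$-morphism with first component $f_1=f$, and by Definition~\ref{defi:inftymorph} it is a $\ppinfm$-weak equivalence precisely when $f$ is a pluripotential weak equivalence. For $\cbr_\kappa\br_\iota$ this is Proposition~\ref{prop:CbrkappaBriotaWeakeq}.

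Next I construct the two natural isomorphisms in the homotopy categories. On $\Palg$, the composite $\cbr_\kappa\br_\iota i=\cbr_\kappa\br_\kappa$ (by the remark $\br_\kappa=\br_\iota i$) comes with the counit $\varepsilon_\kappa\colon\cbr_\kappa\br_\kappa A\to A$. This counit is natural and, by Proposition~\ref{prop:CbrkappaBrkappaWeakeq}, a pluripotential weak equivalence, so it becomes a natural isomorphism $\cbr_\kappa\br_\iota i\cong\id$ in $\Ho(\Palg)$. On $\pinfalm$, Theorem~\ref{theo:Rectification} gives a natural $\ppinfm$-weak equivalence $\eta\colon\id\to i\cbr_\kappa\br_\iota$. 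Its naturality is with respect to all $\ppinfm$-morphisms, since it comes from the unit of the adjunction of Proposition~\ref{prop:AdjuncioRectif}. Hence it induces a natural isomorphism $\id\cong i\cbr_\kappa\br_\iota$ in $\Ho(\pinfalm)$. Together these show the induced functors form an equivalence of categories.

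The main obstacle is a well-definedness issue in $\Ho(\pinfalm)$ rather than a computation: $\pinfalm$ is not given a model structure, so $\Ho$ must be taken as the formal localization. I have to make sure that weak equivalences there form a class closed under composition, so that images of weak equivalences are invertible. The relevant fact is that the first component of a composite of $\ppinfm$-morphisms is the composite of the first components, which follows from the composition formula. It also suffices that a natural transformation whose components are weak equivalences induces an isomorphism of the localized functors, which is formal. If one also wants to compare with $\Ho(\pinfal)$ with strict morphisms, an extra step would be needed. For the stated theorem, the formal-localization argument above is what I would carry out.
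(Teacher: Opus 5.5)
Your proposal is correct and follows essentially the same route as the paper. Both arguments show that $i$ and $\cbr_\kappa\br_\iota$ preserve weak equivalences, the latter via Proposition~\ref{prop:CbrkappaBriotaWeakeq}. Both then use that the counit $\varepsilon_\kappa$ (Proposition~\ref{prop:CbrkappaBrkappaWeakeq}) and the unit (Theorem~\ref{theo:Rectification}) of the adjunction of Proposition~\ref{prop:AdjuncioRectif} are natural weak equivalences, and so become isomorphisms after localization.

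Your extra concern about closure under composition is harmless but not needed, since the formal localization inverts every weak equivalence by construction.
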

\begin{proof}
    Consider the adjuction of Proposition~\ref{prop:AdjuncioRectif}
    \[\begin{tikzcd}[ampersand replacement=\&]
	{\Omega_\kappa\br_{\iota}:\pinfalm} \& {\Pp\text{-alg}:i.}
	\arrow[harpoon, from=1-1, to=1-2]
	\arrow[shift left, harpoon, from=1-2, to=1-1]
\end{tikzcd}\] 
    The functor $i$ preserves weak equivalences by definition. The functor $\cbr_\kappa\br_\iota$ also preserves weak equivalences by Proposition~\ref{prop:CbrkappaBriotaWeakeq}. This implies they induce an adjunction at the level of homotopy categories.
    Moreover, the counit of the adjunction is given by 
    \[
    \varepsilon_\kappa:\cbr_\kappa\br_\iota i(A)=\cbr_\kappa\br_\kappa(A)\to A,
    \]
    which is a pluripotential weak equivalence of $\Pp$-algebras
    by Propostition~\ref{prop:CbrkappaBrkappaWeakeq}. And the unit of the adjunction is a \(\ppinfm\)-weak equivalence by Theorem~\ref{theo:Rectification}. This proves that the unit and the counit induce an isomorphism at the level of homotopy categories.
\end{proof}

\appendix

\section{Proof of Lemma~\ref{lemm:HTT}} \label{app:induc}

We prove the key lemma underlying the Homotopy Transfer Theorem for bidifferential bigraded algebras.
Please do not print, save the planet.

Recall from Section~\ref{subsec:HTT} the \(\mathfrak{p}\)-kernels 
\[
\mup_n^{p,q}:A^{\otimes n}\to A
\]
defined for \(n\geq 2\). We shall prove the following result.
\begin{namedtheorem}[\ref{lemm:HTT}]
Let $(A,\del,\delb,\mu)$ be a bidifferential bigraded algebra. 
Given a contraction     
\[
    \begin{tikzcd}[ampersand replacement = \&]
    (A,\del,\delb) \arrow[r, rightarrow, shift left, "f"] \arrow[r, shift right, leftarrow, "g" swap] \arrow[loop left, "h_{11}"] \& (B,\del,\delb)
    \end{tikzcd}
\]
define
\begin{equation} \tag{1} \nu_n^{p,q}:=f\mup_n^{p,q} g^{\otimes n}
\end{equation}
and
\begin{equation}\tag{2}
    g_n^{p,q}:=\sum_{\substack{i-\alpha=p\\j-\beta=q}}(-1)^{(\alpha+\beta)(i+j+n)} h_{\alpha\beta} \mup_n^{i,j}g^{\otimes n}
\end{equation}
for all $n\geq 2$.

Then, $(B,\del,\delb,\nu_n^{p,q})$ is a $\ppinfa$-algebra, and $g=(g,g_n^{p,q})$
is a $\ppinfm$-weak equivalence $g:B\rightsquigarrow A$.
\end{namedtheorem}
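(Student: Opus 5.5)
The plan is a simultaneous induction on the arity $n$, proving the relations for the $\p$-kernels in $A$ first and only afterwards pushing them to $B$ with $f$ and $g$.

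\textbf{Kernel identity.} The key intermediate statement concerns the kernels themselves. For $n\geq 2$ and all admissible $(p,q)$, I would prove
\[
[\del,\mup_n^{p,q}] \;=\; q\,\mup_n^{p+1,q} \;+\; \sum \pm\, \mup_k^{p_1,q_1}\bigl(\id^{\otimes i-1}\otimes (gf-\id)\,\mup_\ell^{p_2,q_2}\otimes \id^{\otimes k-i}\bigr),
\]
with the same index ranges and signs as in $(\mathcal A^{p,q}_n)$. The analogous identity for $\delb$ has $-p\,\mup_n^{p,q+1}$ as its leading term. Here $gf-\id$ is inserted on one internal edge, and the conventions $h_{11}\mup_1=\id$, $h_{10}\mup_1=h_{01}\mup_1=0$ are used.

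The proof of this identity is an induction on $n$ through the recursive formula $\mup_n=\sum \pm\,\mu(h_{\alpha_1\beta_1}\mup_k\otimes h_{\alpha_2\beta_2}\mup_\ell)$. Three ingredients enter:
\begin{itemize}
\item $\del$ and $\delb$ are derivations of $\mu$.
\item The homotopy relations $[\delb,h_{11}]=h_{10}$, $[\del,h_{11}]=-h_{01}$, $[\del,h_{10}]=[\delb,h_{01}]=\id-gf$, and $[\del,h_{01}]=[\delb,h_{10}]=0$. These follow from $[\del,[\delb,h_{11}]]=gf-\id$ and $\del^2=\delb^2=0$.
\item The induction hypothesis, applied to $\mup_k$ and $\mup_\ell$.
\end{itemize}

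When $\del$ hits an edge labelled $h_{11}$, it becomes an $h_{01}$-edge. Summed over trees, this reproduces exactly the term $q\,\mup_n^{p+1,q}$. The counting in Remark~\ref{rema:TransferTrees}, where the coefficients $(n-2)!/|p|!|q|!$ arise, is what makes these terms match. When $\del$ hits an $h_{10}$-edge, it produces $\id-gf$ on that edge. Splitting the tree at that edge gives the quadratic terms, with $gf$ contributing and the $\id$ part cancelling against the corresponding terms coming from the inner kernel. Associativity of $\mu$ is used to match the $\id$ contributions of the two subtrees. This is precisely the classical Markl/Kontsevich–Soibelman mechanism, doubled by the bigrading.

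\textbf{Consequence for $B$.} Compose the kernel identity with $f$ on the left and $g^{\otimes n}$ on the right, and use that $f,g$ are bicomplex maps with $fg=\id_B$. The $gf$-insertions become $\nu_k\circ_i\nu_\ell$. The $-\id$ insertions vanish after applying $f$, because $fh_{\alpha\beta}=0$ and $h_{\alpha\beta}g=0$ (side conditions, which I would either assume or arrange). If these side conditions are not available, I would keep the $\id$ terms and show directly that they cancel. This yields $(\mathcal A^{p,q}_n)$ and $(\overline{\mathcal A}^{p,q}_n)$ for $\nu$.

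\textbf{The $\infty$-morphism.} For $g$, I would use the rewritten form in Remark~\ref{rema:formules-HTT-using-g}, $g_n=\sum \pm\, h_{\alpha\beta}\,\mu(g_k\otimes g_\ell)$, and compute $[\del,g_n]$. The commutator $[\del,h]$ gives $\id-gf$ or another $h$. The $\id$ part is $\mu(g_k\otimes g_\ell)$, which reassembles into $\sum \mu(g_{i_1}\otimes g_{i_2})$; this is the term $\nu_k^A(g\otimes\cdots)$ of the morphism equations, since the only operation on $A$ is $\mu$. The $gf$ part is $g\nu_n$, that is, the $f_1\circ\mu^B$ term. The $\del$ acting on $\mu(g_k\otimes g_\ell)$ is handled inductively. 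Finally, $g_1=g$ is a pluripotential weak equivalence because $fg=\id$ and $gf\simeq\id$ (Proposition~\ref{prop:HomotopicMapsAreEqualInHomology}).

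\textbf{Main obstacle.} I expect the main difficulty to be the sign bookkeeping. The exponents $\varepsilon$ must combine the Koszul signs from moving $\del$ past operations of total degree $d_x$, the $(-1)^{(\alpha+\beta)(\dots)}$ factors on the homotopies, and the block-permutation signs $\theta$. To control this, I would first fix the sign convention by interpreting the kernels as images of the trees under the map $\Inf(s\,\ov{As^{\antshrk}})\to\Endop(A)$ induced by the $\iota_{k,\ell}$. The required signs then follow from Proposition~\ref{prop:fraksiota} and Remark~\ref{rema:fraksdel}, rather than being checked case by case.
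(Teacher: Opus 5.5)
Your architecture matches the paper's (Appendix~\ref{app:induc}): an inductive identity for the $\p$-kernels in $A$, then composition with $f$ and $g^{\otimes n}$, with signs organised through $\ele_n$, $\iota_{k,\ell}$ and $\mathfrak{s}$. The gap is in how you dispose of the $-\id$ insertions, which is exactly where associativity of $\mu$ has to enter.

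\textbf{Where associativity is lost.} Your kernel identity, with $gf-\id$ inserted on one edge, is just the raw Leibniz identity: $\del$ hits each $h$-edge once. It holds for any $\mu$ on which $\del,\delb$ act as derivations, associative or not, so associativity plays no role in it despite your remark. The paper's Lemma~\ref{lemm:pkernelsprimari} has only $gf$ insertions. Inside its induction, the root-adjacent $-\id$ terms are shown to cancel using associativity of $\mu$ together with a coassociativity relation for the maps $\Delta^h_{k,\ell}$.

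\textbf{Why the side-condition argument fails.} In $f\,\mup_k(\id\otimes\cdots\otimes\mup_\ell\otimes\cdots\otimes\id)\,g^{\otimes n}$:
\begin{itemize}
\item $f$ is applied to the output of the root $\mu$;
\item each $g$ feeds straight into a $\mu$-vertex;
\item the undecorated edge joins two $\mu$'s.
\end{itemize}
So no composite $fh_{\alpha\beta}$ or $h_{\alpha\beta}g$ ever occurs. Concretely, take $B=A$, $f=g=\id$, $h_{11}=0$. Then all side conditions hold and all kernels of arity $\geq 3$ vanish, yet the term you discard in arity $3$ is $\pm(\mu(\mu\otimes\id)-\mu(\id\otimes\mu))$. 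As written, your argument would prove the lemma for non-associative $\mu$, where it is false.

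\textbf{What is missing.} You need a proof that $\sum\pm\mup_k^{p_1,q_1}\circ_i\mup_\ell^{p_2,q_2}=0$, over $p_1+p_2=p+1$, $q_1+q_2=q$ (resp.\ $p_1+p_2=p$, $q_1+q_2=q+1$). Pair each decorated tree having one undecorated internal edge with the tree obtained by rebracketing the ternary vertex that this edge creates. Then check, in the same sign formalism, that the two trees come with opposite signs. That is the real content of your ``fallback''.

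\textbf{The same issue for $g$.} If you induct on $g_n=\sum\pm h_{\alpha\beta}\mu(g_k\otimes g_\ell)$, the inductive hypothesis produces terms $h_{\alpha\beta}\mu(\mu(g_a\otimes g_b)\otimes g_c)$ and $h_{\alpha\beta}\mu(g_a\otimes\mu(g_b\otimes g_c))$. These must cancel by associativity, which you do not mention. The paper avoids a second such argument. It writes $\bm g_n=\bm h(\id\otimes\mup_n)(\iota_{-1,2-n}\otimes g^{\otimes n})$ and uses $[\del,\bm h]=(gf-\id)(\mathfrak{s}_1\otimes\id)$ together with the $gf$-only kernel identity. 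Then:
\begin{itemize}
\item the $-\id$ part is $\mup_n g^{\otimes n}=\sum\pm\mu(g_k\otimes g_\ell)$ by the recursion;
\item the $gf$ part is $g\nu_n$;
\item $\bm h[\del,\mup_n]$ yields the terms $g_k(\id\otimes\nu_\ell\otimes\id)$.
\end{itemize}

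(Minor: from $h_{10}=[\delb,h_{11}]$, $h_{01}=-[\del,h_{11}]$ and $[\del,[\delb,h_{11}]]=gf-\id$, one gets $[\del,h_{10}]=[\delb,h_{01}]=gf-\id$, not $\id-gf$.)
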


To do so, we will employ an alternative definition of the $\mathfrak{p}$-kernels for a more compact presentation in the proof. We will use the bicomplexes \(\ele_n\) and the maps
\[
\iota_{k_1,\dots,k_r}:\ele_{k_1+\dots+k_r}\to \ele_{k_1}\otimes\dots\otimes\ele_{k_r}
\]
defined in Section~\ref{sec:staircase}.

Let $(A,\del,\delb,\mu)$ be a bidifferential bigraded algebra and let
    \[
    \begin{tikzcd}[ampersand replacement = \&]
    (A,\del,\delb) \arrow[r, rightarrow, shift left, "f"] \arrow[r, shift right, leftarrow, "g" swap] \arrow[loop left, "h"] \& (B,\del,\delb)
    \end{tikzcd}
    \]
be a contraction.
\begin{rema}\label{rema:H}
    Recall from Section~\ref{sec:staircase}, the maps \[\mathfrak{s}_1,\ov{\mathfrak{s}}_1\colon \ele\to \kk\]
    of bidegree \((1,0)\) and \((0,1)\), respectively.
    A pluripotential homotopy from $f\colon A\to B$ to $g\colon A\to B$ is equivalent to a bidegree preserving map $\bm h\colon \ele\otimes A\to B$ such that
    \[[\del,\bm h]=(f-g)(\mathfrak{s}_1\otimes \id)\quad\text{ and }\quad[\delb, \bm h]=(f-g)(\ov{\mathfrak{s}}_1\otimes \id).\]
\end{rema}

The $\mathfrak{p}$-kernels are equivalent to bidegree preserving linear maps
\[\mup_n\colon \ele_{2-n}\otimes A^{\otimes n}\to A,\]
for $n\geq 2$ given by
\begin{equation*}
\mup_n\colon =\sum_{k+\ell=n}(-1)^{k(\ell+1)}\mu\big(\bm h(\id\otimes\mup_k)\otimes \bm h(\id\otimes\mup_\ell)\big)\Delta^h_{k,\ell}.
\end{equation*}
Here, 
\[\Delta^h_{k,\ell}:=\tau(\iota_{1,2-k,1,2-\ell}\otimes\id^{\otimes n})\colon \ele_{2-n}\otimes A^{\otimes n}\to \ele\otimes\ele_{2-k}\otimes A^{\otimes k}\otimes\ele\otimes\ele_{2-\ell}\otimes A^{\otimes \ell},
\]
for $k,\ell\geq1$ with the formal convention that $\ele_{1}=\antiele$ and $\bm h\big((\mep1,\mep1)\otimes\mup_1((1,1)_1\otimes\_)\big)=\id$ and zero otherwise. The map \(\tau\) denotes the permutation of tensor factors
\[\tau\colon \ele\otimes\ele_{2-k}\otimes\ele\otimes\ele_{2-\ell}\otimes A^{\otimes n}\xrightarrow{\cong}\ele\otimes\ele_{2-k}\otimes A^{\otimes k}\otimes\ele\otimes\ele_{2-\ell}\otimes A^{\otimes \ell}.\]

To simplify even more the notation, from now on, we will write 
\[
{\Delta}^{\del,r}_{k,\ell}:=\tau_r(\iota_{2-k,2-\ell}\mathfrak{s}_{3-k-\ell}\otimes \id^{\otimes k+\ell-1})\colon \ele_{3-k-\ell}\otimes A^{\otimes k+\ell-1}\to\ele_{2-k}\otimes A^{\otimes r-1}\otimes\ele_{2-\ell}\otimes A^{\otimes \ell+k-r},
\]
where \(\mathfrak{s}_{3-k-\ell}\colon \ele_{3-k-\ell}\to\ele_{4-k-\ell}\) is the map defined in Section~\ref{sec:staircase} and \(\tau_r\) is again the obvious permutation of tensor products. Likewise, we define
\[
\Delta^{\delb,r}_{k,\ell}:=\tau_r(\iota_{2-k,2-\ell}\ov{\mathfrak{s}}_{3-k-\ell}\otimes \id^{\otimes k+\ell-1})\colon \ele_{3-k-\ell}\otimes A^{\otimes k+\ell-1}\to\ele_{2-k}\otimes A^{\otimes r-1}\otimes\ele_{2-\ell}\otimes A^{\otimes \ell+k-r}.
\]

\begin{lemm}\label{lemm:pkernelsprimari}
    The $\p$-kernels satisfy the following relations
    \begin{equation}\label{eq:pkerMn}\tag{$\Mm_n$}
        [\del,\mup_n]=\sum_{\substack{k+\ell=n+1\\1\leq r\leq k\\k,\ell\geq 2}}(-1)^{r(1+\ell)+n}\mup_k(\id^{\otimes r}\otimes gf\mup_\ell\otimes \id^{\otimes k-r}){\Delta}^{\del,r}_{k,\ell}
    \end{equation}
    and
    \begin{equation}\label{eq:pkerMnb}\tag{$\ov{\Mm}_n$}
        [\delb,\mup_n]=\sum_{\substack{k+\ell=n+1\\1\leq r\leq k\\k,\ell\geq 2}}(-1)^{r(1+\ell)+n}\mup_k(\id^{\otimes r}\otimes gf\mup_\ell\otimes \id^{\otimes k-r})\Delta^{\delb,r}_{k,\ell}
    \end{equation}
    for every $n\geq 2$.
\end{lemm}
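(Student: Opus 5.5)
We argue by strong induction on $n\geq 2$, using the recursive definition
\[
\mup_n=\sum_{k+\ell=n}(-1)^{k(\ell+1)}\mu\big(\bm h(\id\otimes\mup_k)\otimes \bm h(\id\otimes\mup_\ell)\big)\Delta^h_{k,\ell}.
\]
We treat only \eqref{eq:pkerMn}; \eqref{eq:pkerMnb} follows by the same computation with $\del,\mathfrak{s}$ replaced by $\delb,\ov{\mathfrak{s}}$. The tools are: $\del$ is a derivation of $\mu$; Remark~\ref{rema:H}, i.e.\ $[\del,\bm h]=(gf-\id)(\mathfrak{s}_1\otimes\id)$ (with the sign convention of the contraction); the fact that $\iota$ is a map of bicomplexes together with coassociativity (Proposition~\ref{prop:iotacoass}); and the compatibilities between $\iota$ and $\mathfrak{s}$ (Remark~\ref{rema:fraksdel} and Proposition~\ref{prop:fraksiota}).

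\textbf{Base case $n=2$.} Here $\mup_2=\mu(\bm h\mup_1\otimes\bm h\mup_1)\Delta^h_{1,1}$ is just $\mu$ precomposed with the identity on $\ele_0\otimes A^{\otimes 2}$. Since $\mu$ commutes with the differentials, $[\del,\mup_2]=0$, and the right-hand side is an empty sum because it requires $k,\ell\geq 2$.

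\textbf{Inductive step.} Compute $[\del,\mup_n]$ by moving $\del$ through each summand. It passes through $\mu$ by the Leibniz rule, then through $\Delta^h_{k,\ell}$ because $\iota$ is a chain map and $\tau$ only permutes factors. Each leg $\bm h(\id\otimes\mup_k)$ then produces two kinds of terms:
\begin{itemize}
\item[(a)] a commutator $[\del,\bm h]$, which by Remark~\ref{rema:H} gives $(gf-\id)\mup_k$ tensored with $\mathfrak{s}_1$ on the $\ele$ factor;
\item[(b)] a term $\bm h(\id\otimes[\del,\mup_k])$, which we rewrite using the induction hypothesis $(\Mm_k)$.
\end{itemize}

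\textbf{Cancellation of the $-\id$ parts.} The pieces of type (a) with $-\id$ replace $\bm h(\id\otimes\mup_k)$ by $\mup_k$ (through $\mathfrak{s}_1\colon\ele\to\kk$). Reinserting these into $\mu(\mup_k\otimes\bm h\mup_\ell)$ and unfolding $\mup_k$ one more level produces a sum over ternary trees $\mu(\mu(\bm h\mup_a\otimes\bm h\mup_b)\otimes\bm h\mup_c)$ and its mirror. These cancel pairwise by associativity of $\mu$. Matching the staircase coefficients uses the coassociativity $(\iota\otimes\id)\iota=(\id\otimes\iota)\iota$ together with Proposition~\ref{prop:fraksiota}, which lets $\mathfrak{s}$ move freely between tensor factors up to the sign $(-1)^{k}$. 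The $k=1$ or $\ell=1$ legs are exactly the cases where $\bm h\mup_1=\id$ and no commutator appears; these must be tracked carefully, since they are what make the boundary terms of the associativity cancellation match up.

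\textbf{Reassembly of the $gf$ and inductive parts.} The $gf$ part of (a), say $\mu(gf\mup_k\otimes\bm h\mup_\ell)$, combines with the terms from (b), in which a $gf\mup_{\ell'}$ sits inside a deeper tree. Together these reassemble, by the recursive definition of $\mup$ read backwards, into $\mup_{k'}(\id^{\otimes r}\otimes gf\mup_{\ell'}\otimes\id^{\otimes k'-r})$, with the inserted vertex at every possible position $r$. The staircase factor $\iota_{2-k',2-\ell'}\mathfrak{s}$ comes out of the accumulated $\iota$'s via Proposition~\ref{prop:fraksiota}(1), which gives precisely $\Delta^{\del,r}_{k',\ell'}$. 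The outermost $gf$, at the root of the inner tree, covers the cases where the inserted subtree is a full leg of $\mup_n$.

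\textbf{Main obstacle.} The main obstacle is the sign bookkeeping: the Koszul signs from $\tau$, the factor $(-1)^{k(\ell+1)}$, the signs built into $\iota_{k_0,\dots,k_r}$, and the signs arising from commuting $\mathfrak{s}$ past the staircase factors. We would work in the $\ele_n$-formalism to absorb most of these, and check the identity first for $n=3$ and $n=4$ before writing the general matching.
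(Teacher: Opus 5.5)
Your proposal is correct and follows essentially the same route as the paper: induction on $n$, pushing $\del$ through $\mu$ and $\Delta^h_{k,\ell}$, and rewriting the inner terms with the induction hypothesis. It then uses $[\del,\bm h]=(gf-\id)(\mathfrak{s}_1\otimes\id)$, cancels the $-\id$ pieces by associativity of $\mu$ together with the $\mathfrak{s}$--$\Delta^h$ compatibility, and reassembles the $gf$ pieces with the inductive terms into $\mup_{k}(\id^{\otimes r}\otimes gf\mup_\ell\otimes\id^{\otimes k-r})\Delta^{\del,r}_{k,\ell}$ (the paper's (P5)+(P3.2)+(P4.2)) via the recursive definition of the $\mathfrak{p}$-kernels.
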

\begin{proof} We will only prove the relations~\ref{eq:pkerMn}, the relations~\ref{eq:pkerMnb} are analogous.
    We have $\mup_2=\mu$ and it satisfies the relation $(\Mm_2)$. Assume inductively that $\mup_{m}$ satisfies the equation $(\Mm_m)$ for all $m< n$. By definition,
    \begin{align*}
        [\del,\mup_n]=&\sum_{k+\ell=n}(-1)^{k(\ell+1)}\del\mu\big(\bm h(\id\otimes\mup_k)\otimes \bm h(\id\otimes\mup_\ell)\big)\Delta^h_{k,\ell}\\
        &-\sum_{k+\ell=n}(-1)^{k(\ell+1)}\mu\big(\bm h(\id\otimes\mup_k)\otimes \bm h(\id\otimes\mup_\ell)\big)\Delta^h_{k,\ell}\del,
    \end{align*}
    where, by abuse of notation, $\del$ also denotes the induced differential on the tensor product. Using $\del\mu=\mu\del$ and $\del\Delta^h_{k,\ell}=\Delta^h_{k,\ell}
\del$ yields
\begin{align*}
    [\del,\mup_n]=\\
    &+\sum_{k+\ell=n}(-1)^{k(\ell+1)}\mu\big(\del \bm h(\id\otimes \mup_k)\otimes \bm h(\id\otimes \mup_\ell)\big)\Delta^h_{k,\ell}\tag{P1}\label{P1}\\
    &+\sum_{k+\ell=n}(-1)^{k(\ell+1)}\mu\big(\bm h(\id\otimes \mup_k)\otimes \del \bm h(\id\otimes \mup_\ell)\big)\Delta^h_{k,\ell}\tag{P2}\label{P2}\\
    &-\sum_{k+\ell=n}(-1)^{k(\ell+1)}\mu\big(\bm h(\id\otimes \mup_k)\del\otimes \bm h(\id\otimes \mup_\ell)\big)\Delta^h_{k,\ell}\tag{P3}\label{P3}\\
    &-\sum_{k+\ell=n}(-1)^{k(\ell+1)}\mu\big(\bm h(\id\otimes \mup_k)\otimes \bm h(\id\otimes \mup_\ell)\del\big)\Delta^h_{k,\ell}\tag{P4}\label{P4}. 
\end{align*}
Applying the induction hypothesis
    \[
    -\mup_{k}\del=-\del\mup_k+\sum_{\substack{k'+\ell'=k+1\\
    1\leq r\leq k'\\k',\ell'>1}}(-1)^{{r}(1+{\ell'})+{k}}\mup_{k'}(\id^{\otimes {r}}\otimes gf\mup_{\ell'}\otimes \id^{\otimes k'-r})\Delta^{\del,r}_{{k'},{\ell'}}
    \]
for \(k\geq 2\) to \ref{P3} and \ref{P4} gives
\begin{align*}
    \text{\ref{P3}}=&\\
    &-\sum_{\substack{k+\ell=n\\k>1}}(-1)^{k(\ell+1)}\mu\big(\bm h\del(\id\otimes\mup_k)\otimes \bm h(\id\otimes\mup_\ell)\big)\Delta^h_{k,\ell} \tag{P3.1}\label{P3.1}\\
    &+\sum_{\substack{k+\ell=n\\k>1}}\sum_{\substack{k'+\ell'=k+1\\
    1\leq r\leq k'\\k',\ell'>1}}(-1)^{k(\ell+1)+r(1+\ell')+k}\mu\big(\bm h(\id,\mup_{k'}(\id^{\otimes {r}}, gf\mup_{\ell'},\id^{\otimes {k'-r}})\Delta^{\del,r}_{{k'},{\ell'}}), \bm h(\id, \mup_\ell)\big)\Delta^h_{k,\ell} \tag{P3.2}\label{P3.2}\\
    &-(-1)^n\mu\big(\del\otimes \bm h(\id\otimes \mup_{n-1})\big)\Delta^h_{1,n-1} \tag{P3.3}\label{P3.3}
\end{align*}
and
\begin{align*}
    \text{\ref{P4}}=&\\
    &-\sum_{\substack{k+\ell=n\\\ell>1}}(-1)^{k(\ell+1)}\mu\big(\bm h(\id\otimes\mup_k)\otimes \bm h\del(\id\otimes\mup_\ell)\big)\Delta^h_{k,\ell} \tag{P4.1}\label{P4.1}\\
    &+\sum_{\substack{k+\ell=n\\\ell>1}}\sum_{\substack{k'+\ell'=\ell+1\\
    1\leq r\leq k'\\k',\ell'>1}}(-1)^{k(\ell+1)+r(1+\ell')+\ell}\mu\big(\bm h(\id,\mup_k), \bm h(\id,\mup_{k'}(\id^{\otimes {r}}, gf\mup_{\ell'}, \id^{\otimes {k'-r}})\Delta^{\del,r}_{{k'},{\ell'}})\big)\Delta^h_{k,\ell}\tag{P4.2}\label{P4.2}\\
    &-\mu\big(\bm h(\id\otimes\mup_{n-1})\otimes \del\big)\Delta^h_{n-1,1}.\tag{P4.3}\label{P4.3}
\end{align*}
Now grouping \ref{P1}+\ref{P3.1}+\ref{P3.3} and \ref{P2}+\ref{P4.1}+\ref{P4.3} leads to
\begin{align*}
    \text{\ref{P1}+\ref{P3.1}+\ref{P3.3}+\ref{P2}+\ref{P4.1}+\ref{P4.3}}=&\sum_{\substack{k+\ell=n\\k>1}}(-1)^{k(\ell+1)}\mu\big((\del \bm h-\bm h\del)(\id\otimes\mup_k)\otimes \bm h(\id\otimes\mup_\ell)\big)\Delta^h_{k,\ell}\\
    &+\sum_{\substack{k+\ell=n\\l>1}}(-1)^{k(\ell+1)}\mu\big(\bm h(\id\otimes\mup_k)\otimes (\del \bm h-\bm h\del)(\id\otimes \mup_\ell)\big)\Delta^h_{k,\ell}.
\end{align*}
From Remark~\ref{rema:H}, the identity $[\del,\bm h]=(gf-\id)(\mathfrak{s}_1\otimes \id)$ implies
\begin{align*}
    &\sum_{\substack{k+\ell=n\\k>1}}(-1)^{k(\ell+1)}\mu\big(gf(\mathfrak{s}_1\otimes\mup_k)\otimes \bm h(\id\otimes\mup_\ell)\big)\Delta^h_{k,\ell}\\
    &-\sum_{\substack{k+\ell=n\\k>1}}(-1)^{k(\ell+1)}\mu\big((\mathfrak{s}_1\otimes \mup_k)\otimes \bm h(\id\otimes\mup_\ell)\big)\Delta^h_{k,\ell}\\
    &+\sum_{\substack{k+\ell=n\\\ell>1}}(-1)^{k(\ell+1)}\mu\big(\bm h(\id\otimes \mup_k)\otimes gf(\mathfrak{s}_1\otimes\mup_\ell)\big)\Delta^h_{k,\ell}\\
    &-\sum_{\substack{k+\ell=n\\\ell>1}}(-1)^{k(\ell+1)}\mu\big(\bm h(\id\otimes \mup_k)\otimes (\mathfrak{s}_1\otimes\mup_\ell)\big)\Delta^h_{k,\ell}.
\end{align*}
Expanding the definitions of $\mup_k$ and $\mup_\ell$, we obtain
\begin{align*}
    &-\sum_{\substack{k+\ell=n\\k>1}}(-1)^{k(\ell+1)}\mu\big((\mathfrak{s}_1\otimes\mup_k)\otimes \bm h(\id\otimes \mup_\ell)\big)\Delta^h_{k,\ell}\\
    &-\sum_{\substack{k+\ell=n\\\ell>1}}(-1)^{k(\ell+1)}\mu\big(\bm h(\id\otimes\mup_k)\otimes(\mathfrak{s}_1\otimes\mup_\ell)\big)\Delta^h_{k,\ell}=\\
    =&\,-\sum_{\substack{k'+\ell'+\ell=n}}(-1)^{\varepsilon_1}\mu\Big(\mu\big(\bm h(\id,\mup_{k'}), \bm h(\id,\mup_{\ell'})\big),\bm h(\id,\mup_\ell)\Big)(\mathfrak{s}_1,\Delta^h_{k',\ell'},\id^{\otimes 2+\ell})\Delta^h_{k'+\ell',\ell}\\
    &-\sum_{\substack{k+k'+\ell'=n}}(-1)^{\varepsilon_2}\mu\Big(\bm h(\id,\mup_k),\mu\big(\bm h(\id,\mup_{k'}), \bm h(\id,\mup_{\ell'})\big)\Big)(\id^{\otimes 2+k},\mathfrak{s}_1,\Delta^{h}_{k',\ell'})\Delta^{h}_{k,k'+\ell'}=0,
\end{align*}
where $\varepsilon_1=(k'+\ell')(\ell+1)+k'(\ell'+1)$ and $\varepsilon_2=k(k'+\ell'+1)+k'(\ell'+1)$. Here, we have employed the associativity of $\mu$ and the relation
\[(\mathfrak{s}_1,\Delta^h_{k_1,k_2},\id^{\otimes 2+k_3})\Delta^h_{k_1+k_2,k_3}=(-1)^{k_1+1}(\id^{\otimes 2+k_1},\mathfrak{s}_1,\Delta^{h}_{k_2,k_3})\Delta^{h}_{k_1,k_2+k_3}.\]

We rewrite 
\begin{align*}
    &\sum_{\substack{k+\ell=n\\k>1}}(-1)^{k(\ell+1)}\mu\big(gf(\mathfrak{s}_1\otimes\mup_k)\otimes \bm h(\id\otimes\mup_\ell)\big)\Delta^h_{k,\ell}\\
    &+\sum_{\substack{k+\ell=n\\\ell>1}}(-1)^{k(\ell+1)}\mu\big(\bm h(\id\otimes\mup_k)\otimes gf(\mathfrak{s}_1\otimes\mup_\ell)\big)\Delta^h_{k,\ell}
\end{align*}
    as 
\begin{align*}\tag{P5}\label{P5}
    &\sum_{\substack{k+\ell=n\\k>1}}\mu\big(\id\otimes \bm h(\id\otimes\mup_\ell)\big)\Delta^h_{1,\ell}(\id\otimes gf\mup_k\otimes\id^{\otimes \ell})\Delta^{\del,1}_{\ell+1,k}\\
    &+\sum_{\substack{k+\ell=n\\\ell>1}}(-1)^{k(\ell+1)+k+1}\mu\big(\bm h(\id\otimes\mup_k)\otimes \id)\big)\Delta^h_{k,1}(\id^{\otimes k+1}\otimes i p\mup_\ell)\Delta^{\del,k+1}_{k+1,\ell}.
\end{align*}
The expression \ref{P3.2} becomes
\begin{align*}
        &\sum_{\substack{k'+\ell'+\ell=n+1\\1\leq r\leq k'\\k',\ell'>1}}(-1)^{\varepsilon_1}\mu\big(\bm h(\id\otimes\mup_{k'})\otimes \bm h(\id\otimes\mup_\ell)\big)\Delta^h_{k',\ell}(\id^{\otimes {r}}\otimes gf\mup_{\ell'}\otimes \id^{\otimes {k'-r+\ell}})\Delta^{\del,r}_{{k'+\ell},{\ell'}},
        \end{align*}
where $\varepsilon_1=r(1+\ell')+k'+\ell+1+(k'+\ell'+1)(\ell+1)+\ell'(\ell+1)+1$.

Similarly, we rewrite \ref{P4.2} as
\begin{align*}
    &\sum_{\substack{k+k'+\ell'=n+1\\1\leq r\leq k'\\k',\ell'>1}}(-1)^{\varepsilon_2}\mu\big(\bm h(\id\otimes\mup_k)\otimes \bm h(\id\otimes\mup_{k'})\big)\Delta^h_{k,k'}(\id^{\otimes {k+r}}\otimes gf\mup_{\ell'}\otimes \id^{\otimes {k'-r}})\Delta^{\del,k+r}_{{k+k'},{\ell'}},
\end{align*}
where $\varepsilon_2=k(k'+\ell')+r(1+\ell')+k'+\ell'+k+1$.
Finally, reindexing and summing \ref{P5}+\ref{P3.2}+\ref{P4.2} completes the proof.
\end{proof}

\begin{rema}\label{rema:ppAinftyAEscales}
    A pluripotential $A_\infty$-algebra is equivalently defined as a bicomplex $(A, \del, \delb)$ together with $n$-ary operations
    \[
    \bm\mu_n \colon  \ele_{n-2} \otimes A^{\otimes n} \to A
    \]
    for each $n \geq 2$, each of bidegree $(0,0)$, satisfying the following conditions:
    \begin{align*}
        [\del, \bm\mu_n] &= \sum_{\substack{k + \ell = n + 1 \\ 1\leq r\leq k}} (-1)^{r(1 + \ell) + n} \, \bm\mu_k \left( \id^{\otimes r} \otimes \bm\mu_\ell \otimes \id^{\otimes k-r} \right) \Delta^{\del,r}_{k,\ell}, \\
        [\delb, \bm\mu_n] &=\sum_{\substack{k + \ell = n + 1 \\ 1\leq r\leq k}} (-1)^{r(1 + \ell) + n} \, \bm\mu_k \left( \id^{\otimes r} \otimes \bm\mu_\ell \otimes \id^{\otimes k-r} \right) \Delta^{\delb,r}_{k,\ell}.
    \end{align*}
\end{rema}

\begin{lemm}\label{lemm:pkerAinfy}
    The $\p$-kernels induce a $\ppinfa$-structure on $B$ by the formula \[\bm\nu_n=f\circ\mup_n\circ (\id\otimes g^{\otimes n}).\]
\end{lemm}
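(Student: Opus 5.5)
The plan is to verify the two relations of Remark~\ref{rema:ppAinftyAEscales} for $\bm\nu_n=f\circ\mup_n\circ(\id\otimes g^{\otimes n})$ directly from Lemma~\ref{lemm:pkernelsprimari}. I will treat only the $\del$-relation; the $\delb$-relation follows by the same argument, with $\Delta^{\delb,r}_{k,\ell}$ and relation~\eqref{eq:pkerMnb} in place of their $\del$-counterparts.

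Since $f$ and $g$ are morphisms of bicomplexes, they commute with $\del$, so
\[
[\del,\bm\nu_n]=f\circ[\del,\mup_n]\circ(\id\otimes g^{\otimes n}).
\]
Here I use that $\id\otimes g^{\otimes n}$ commutes with the differential of $\ele_{2-n}\otimes B^{\otimes n}$, because $\del$ acts on $\ele_{2-n}$ and $g$ is a chain map. Substituting relation~\eqref{eq:pkerMn} gives
\[
[\del,\bm\nu_n]=\sum_{\substack{k+\ell=n+1\\1\leq r\leq k\\k,\ell\geq 2}}(-1)^{r(1+\ell)+n}\,f\mup_k\bigl(\id^{\otimes r}\otimes gf\mup_\ell\otimes\id^{\otimes k-r}\bigr)\Delta^{\del,r}_{k,\ell}\circ(\id\otimes g^{\otimes n}).
\]

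The key step is to commute $g^{\otimes n}$ past $\Delta^{\del,r}_{k,\ell}$. This is allowed because $\Delta^{\del,r}_{k,\ell}$ acts on the $\ele$-factors by $\iota_{2-k,2-\ell}\mathfrak{s}$ and on the $A$-factors only by a permutation $\tau_r$, and that permutation involves no signs, since $g$ has bidegree $(0,0)$. The $g$'s then distribute over the inputs, so that $(\id^{\otimes r}\otimes gf\mup_\ell\otimes\id)$ composed with $g$'s becomes $(\id\otimes g^{\otimes k})\circ(\id^{\otimes r}\otimes f\mup_\ell(\id\otimes g^{\otimes\ell})\otimes\id)$. Each summand is therefore $\bm\nu_k(\id^{\otimes r}\otimes\bm\nu_\ell\otimes\id^{\otimes k-r})\Delta^{\del,r}_{k,\ell}$, which is exactly the required relation. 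The restriction $k,\ell\geq 2$ matches the convention in Remark~\ref{rema:ppAinftyAEscales}, where arity-one terms are absorbed into $[\del,-]$.

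Finally, the bidegrees and the support of $\bm\nu_n$ agree with Definition~\ref{defi:ppAinftyA}, because the source $\ele_{2-n}$ is the same one used there. Unwinding $\bm\nu_n$ on the generators $(p,q)_{2-n}$ recovers $\nu_n^{p,q}$.

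I expect the main obstacle to be checking the Koszul signs in the identification of the $\ele$-factor bookkeeping: the permutation $\tau_r$ moves $\ele_{2-\ell}$ past $r-1$ copies of $A$, and these signs must match between the $A$ and $B$ sides. They do, because $g$ preserves bidegree, so no new signs are introduced. I would verify this explicitly on $\ele$ generators using Proposition~\ref{prop:fraksiota}.
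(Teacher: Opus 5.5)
Your proposal is correct and follows essentially the same route as the paper's proof. Both arguments pull $f$ and $\id\otimes g^{\otimes n}$ through $[\del,-]$ because they are bicomplex maps, commute $g^{\otimes n}$ past $\Delta^{\del,r}_{k,\ell}$, and conclude from Lemma~\ref{lemm:pkernelsprimari}. The only superfluous step is the planned appeal to Proposition~\ref{prop:fraksiota}: $g$ never touches the $\ele$-factors or $\iota_{2-k,2-\ell}\mathfrak{s}$, so the sign check reduces to naturality of the symmetry isomorphism $\tau_r$ for the bidegree-$(0,0)$ map $g$.
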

\begin{proof}
    By Remark~\ref{rema:ppAinftyAEscales}, the operations $\bm\nu_n$ constitute an $\ppinfa$-structure if and only if 
    \begin{align*}
        [\del, \bm\nu_n] -\sum_{\substack{k + \ell = n + 1 \\ 1\leq r\leq  k}} (-1)^{r(1 + \ell) + n} \, \bm\nu_k \left( \id^{\otimes r} \otimes \bm\nu_\ell \otimes \id^{\otimes k-r} \right)\Delta^{\del,r}_{k,\ell}=0,
    \end{align*}
    and 
     \begin{align*}
        [\delb, \bm\nu_n] -\sum_{\substack{k + \ell = n + 1 \\ 1 \leq r\leq  k}} (-1)^{r(1 + \ell) + n} \, \bm\nu_k \left( \id^{\otimes r} \otimes \bm\nu_\ell \otimes \id^{\otimes k-r} \right)\Delta^{\delb,r}_{k,\ell}=0.
     \end{align*}
    Expanding the definition of $\bm\nu_n$ in the first expression, we obtain
    \begin{align*}
        \del f\mup_n(\id\otimes g^{\otimes n})-f\mup_n(\del\otimes g^{\otimes n})-\sum_{j=1}^nf\mup_n(\id\otimes g^{\otimes j-1}\otimes g\del\otimes g^{n-j})& \\
        -\sum_{\substack{k + \ell = n + 1 \\ 1\leq r\leq k}} (-1)^{r(1 + \ell) + n} \, f\mup_k \left( \id\otimes g^{\otimes r-1} \otimes gf\mup_\ell(\id\otimes g^{\otimes \ell}) \otimes g^{\otimes k-r} \right)\Delta^{\del,r}_{k,\ell}
    \end{align*}
    which gives
    \begin{align*}
        f\bigg(\del\mup_n-\sum_{j=1}^{n+1}\mup_n(\id^{\otimes j-1}\otimes \del\otimes \id^{\otimes n+1-j})&\\
        -\sum_{\substack{k + \ell = n + 1 \\ 1\leq r\leq  k\\k,\ell\geq 2}} (-1)^{r(1 + \ell) + n} \, \mup_k \left( \id^{\otimes r} \otimes gf\mup_\ell\otimes \id^{\otimes k-r} \right) \Delta^{\del,r}_{k,\ell}\bigg)(\id\otimes g^{\otimes n})&=0
    \end{align*}
by Lemma~\ref{lemm:pkernelsprimari}. Here, we use that $g$ and $f$ are maps of bicomplexes and that 
\[(\id\otimes g^{\otimes r}\otimes \id\otimes g^{\otimes \ell+s})\Delta^{\del,r}_{k,\ell}=\Delta^{\del,r}_{k,\ell}(\id\otimes g^{\otimes n}).\]
A similar proof shows
\begin{align*}
    [\delb, \bm\nu_n] -\sum_{\substack{k + \ell = n + 1 \\ 1 \leq r\leq  k}} (-1)^{r(1 + \ell) + n} \, \bm\nu_k \left( \id^{\otimes r} \otimes \bm\nu_\ell \otimes \id^{\otimes k-r} \right)\Delta^{\delb,r}_{k,\ell}=0.
\end{align*}
\end{proof}

\begin{rema}\label{rema:ppAinftyMEscales}
    Let $(A,\bm\mu_*^A)$ and $(B,\bm\mu_*^B)$ be two pluripotential $A_\infty$-algebras. Similarly to Remark~\ref{rema:ppAinftyAEscales}, a \(\ppinfm\)-morphism $f\colon A \rightsquigarrow B$ is equivalently defined as a family of bidegree-preserving maps
    \[
    \bm f_n \colon  \ele_{n-1}\otimes A^{\otimes n} \to B,\quad n\geq 1,
    \]
    which satisfy the following relations:

\begin{align*}
[\del, \bm f_n] =& -\sum_{i_1+\cdots+i_k=n} (-1)^{\theta(i_1,\dots,i_k)}\, \bm\mu^B_k\Bigl(\id\otimes\bm f_{i_1}\otimes \cdots \otimes \bm f_{i_k}\Bigr)\Delta^{\del}_{k,i_1+1,\dots,i_k+1}\\[1mm]
&-\sum_{\substack{k+\ell=n+1\\ 1\leq r\leq k}} (-1)^{r(1+\ell)+n}\, \bm f_k\Bigl(\id^{\otimes r}\otimes\bm\mu^A_\ell\otimes \id^{\otimes k-r}\Bigr) \Delta^{\del,r}_{k+1,\ell}.
\end{align*}

\begin{align*}
[\delb, \bm f_n] =& -\sum_{i_1+\cdots+i_k=n} (-1)^{\theta(i_1,\dots,i_k)}\, \bm\mu^B_k\Bigl(\id\otimes\bm f_{i_1}\otimes \cdots \otimes \bm f_{i_k}\Bigr) \Delta^{\delb}_{k,i_1+1,\dots,i_k+1}\\[1mm]
&-\sum_{\substack{k+\ell=n+1\\ 1\leq r\leq k}} (-1)^{r(1+\ell)+n}\, \bm f_k\Bigl(\id^{\otimes r}\otimes\bm\mu^A_\ell\otimes \id^{\otimes k-r}\Bigr) \Delta^{\delb,r}_{k+1,\ell}.
\end{align*}

Here, 
\[
\Delta^{\del}_{k,i_1+1,\dots,i_k+1}:=(\tau_{i_1,\dots,i_k})(\iota_{2\mep k,1\mep i_1,\dots,1\mep i_k})\mathfrak{s}_{1\mep n}:\ele_{1\mep n}\otimes A^{\otimes n}\to \ele_{2\mep k}\otimes \ele_{1\mep i_1}\otimes A^{\otimes i_1}\otimes \cdots \otimes \ele_{1\mep i_k}\otimes A^{\otimes i_k}
\]
and
\[
\Delta^{\delb}_{k,i_1+1,\dots,i_k+1}:=(\tau_{i_1,\dots,i_k})(\iota_{2\mep k,1\mep i_1,\dots,1\mep i_k})\ov{\mathfrak{s}}_{1\mep n}:\ele_{1\mep n}\otimes A^{\otimes n}\to \ele_{2\mep k}\otimes \ele_{1\mep i_1}\otimes A^{\otimes i_1}\otimes \cdots \otimes \ele_{1\mep i_k}\otimes A^{\otimes i_k},
\]
where \(\tau_{i_1,\dots,i_k}\) is the isomorphism that permutes the tensor factors.
The sign  $\theta$ is given by \[\theta(i_1,\dots,i_k):=\sum_{1\leq r<s\leq k}i_r(i_s+1).\]
\end{rema}

\begin{lemm}\label{lemm:pkerMinfty}
The $\mathfrak{p}$-kernels define a $\ppinfm$-morphism $g:B\rightsquigarrow A$ via the formula
\[\bm g_n:=\bm h(\id\otimes \mup_n)(\iota_{\mep1,2\mep n}\otimes g^{\otimes n}),\]
for $n\geq 2$ and $\bm g_1=g$.
\end{lemm}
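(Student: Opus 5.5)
We verify directly the two relations of Remark~\ref{rema:ppAinftyMEscales} for the family $\bm g_n$, with $\bm\mu^B=\bm\nu$ from Lemma~\ref{lemm:pkerAinfy} and $\bm\mu^A$ the bba structure (only $\bm\mu^A_2=\mu$ nonzero). We treat only the $\del$-relation; the $\delb$-relation is identical after replacing $\mathfrak{s}$ by $\ov{\mathfrak{s}}$ throughout. The case $n=1$ is just the statement that $g$ is a map of bicomplexes. For $n\geq 2$ we compute $[\del,\bm g_n]=\del\bm h(\id\otimes\mup_n)(\iota_{\mep1,2\mep n}\otimes g^{\otimes n})-\bm g_n\del$ by moving $\del$ past each factor. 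Since $\iota_{\mep1,2\mep n}$ and $g$ are bicomplex maps, $\del$ commutes with $(\iota_{\mep1,2\mep n}\otimes g^{\otimes n})$. This leaves two contributions: the commutator $[\del,\bm h]$ and the term $\bm h(\id\otimes[\del,\mup_n])$.

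For the first contribution we use Remark~\ref{rema:H}, which gives $[\del,\bm h]=(gf-\id)(\mathfrak{s}_1\otimes\id)$. Combining $\mathfrak{s}_1$ with $\iota_{\mep1,2\mep n}$ via Proposition~\ref{prop:fraksiota}, the $gf$ part becomes $g\,f\mup_n(\id\otimes g^{\otimes n})$ precomposed with $\mathfrak{s}_{1\mep n}$, i.e.\ $g\bm\nu_n$. This is the term $\bm\mu^A_1(\bm g_n)$-type contribution in the first sum, with $k=1$. The $-\id$ part gives $-\mup_n(\id\otimes g^{\otimes n})$ composed with $\mathfrak{s}$. Expanding $\mup_n$ by its recursive definition, this equals $\sum_{k+\ell=n}\pm\mu(\bm g_k\otimes\bm g_\ell)$ composed with $\Delta^\del_{2,k+1,\ell+1}$. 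This is where the convention $\bm h(\dots\mup_1)=\id$ produces $\bm g_1=g$, and it is exactly the $k=2$ part of the first sum, since $\bm\mu^A_2=\mu$ is the only higher operation of $A$. Here we need coassociativity of the $\iota$'s (Proposition~\ref{prop:iotacoass}) to identify $(\mathfrak{s}_1\otimes\id)\iota_{\mep1,2\mep n}$ followed by $\Delta^h_{k,\ell}$ with $\iota_{0,1\mep k,1\mep\ell}\mathfrak{s}_{1\mep n}$, up to the reorganization $\tau$.

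For the second contribution we apply Lemma~\ref{lemm:pkernelsprimari}. The terms of $[\del,\mup_n]$ in that lemma's sum have the form $\mup_k(\id^{\otimes r}\otimes gf\mup_\ell\otimes\id^{\otimes k-r})\Delta^{\del,r}_{k,\ell}$. Precomposing with $g^{\otimes n}$ and applying $\bm h$ turns each such term into $\bm g_k(\id^{\otimes r}\otimes g\bm\nu_\ell\otimes\dots)$. We must then reconcile this with the remaining sums in the morphism relation, namely the $\bm f_k(\dots\bm\mu^A_\ell\dots)$ terms. Since $g$ is strict in the source $B$, those are precisely the terms where $B$'s operations $\bm\nu_\ell$ are inserted. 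The identification therefore amounts to showing that the second sum of Remark~\ref{rema:ppAinftyMEscales} in our situation reads $\bm g_k(\id\otimes\bm\nu_\ell)$; that is, the roles of $A$ and $B$ are swapped relative to the morphism direction $B\rightsquigarrow A$, and this matches exactly.

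The main obstacle is the sign and staircase bookkeeping. Each $\iota$/$\mathfrak{s}$ reshuffle produces signs $(-1)^{k}$ from Proposition~\ref{prop:fraksiota} and permutation signs from $\tau$, and these must reproduce $\theta(i_1,\dots,i_k)$ and $(-1)^{r(1+\ell)+n}$. We would organize this by an induction on $n$ in parallel with the proof of Lemma~\ref{lemm:pkernelsprimari}. We would first check $n=2$ explicitly against the example following Definition~\ref{defi:inftymorph}, and use $n=3$ as a sanity check of the sign conventions before writing the general reindexing.
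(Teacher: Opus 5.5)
Your proposal is essentially the paper's proof: expand $[\del,\bm g_n]$, handle $[\del,\bm h]=(gf-\id)(\mathfrak{s}_1\otimes\id)$ via Proposition~\ref{prop:fraksiota}, where the $-\id$ piece unfolds through the recursion for $\mup_n$ into the $\mu(\bm g_k\otimes\bm g_\ell)$ terms, and turn the terms of Lemma~\ref{lemm:pkernelsprimari} into $\bm g_k(\id^{\otimes r}\otimes\bm\nu_\ell\otimes\id^{\otimes k-r})$ with $k,\ell\geq 2$ (no extra $g$ in front of $\bm\nu_\ell$) using the identity $(\id\otimes\Delta^{\del,r}_{k,\ell})\iota_{\mep1,2\mep n}=-(\iota_{\mep1,2\mep k}\otimes\id)\Delta^{\del,r}_{k+1,\ell}$; no further induction on $n$ is needed beyond that lemma. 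One bookkeeping correction: for $g\colon B\rightsquigarrow A$ the roles in Remark~\ref{rema:ppAinftyMEscales} are $\bm\mu^B=\mu$ (target) and $\bm\mu^A=\bm\nu$ (source), the reverse of your opening assignment, and the $gf$ piece $g\bm\nu_n$ is not a first-sum term but exactly the $k=1$ term $\bm g_1(\bm\nu_n)$ of the second sum, which supplies the case excluded by the restriction $k,\ell\geq 2$ in Lemma~\ref{lemm:pkernelsprimari}.
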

\begin{proof}
    Let us compute
    \[\del\bm g_n-\bm g_n\del=\del \bm h(\id\otimes \mup_n)(\iota_{\mep1,2\mep n}\otimes g^{\otimes n})-\bm h(\id\otimes \mup_n)(\iota_{\mep1,2\mep n}\otimes g^{\otimes n})\del.\]
    By Lemma~\ref{lemm:pkernelsprimari}
    \begin{align*}
        \del\bm g_n-\bm g_n\del=\\
        =&\;\del \bm h(\id\otimes \mup_n)(\iota_{\mep1,2\mep n}\otimes g^{\otimes n})-\bm h\del(\id\otimes \mup_n)(\iota_{\mep1,2\mep n}\otimes g^{\otimes n})\\[1mm]
        &+\sum_{\substack{k+\ell=n+1\\1\leq r\leq k \\k,\ell\geq 2}}(-1)^{\theta}\bm h(\id\otimes\mup_k(\id^{\otimes r}, gf\mup_\ell, \id^{\otimes k-r}))(\id\otimes\Delta^{\del,r}_{k,\ell}\otimes\id^{\otimes n})(\iota_{\mep1,2\mep n}\otimes g^{\otimes n}),
    \end{align*}
    with \(\theta=r(1+\ell)+n.\)
    Now we use the relation $[\del,\bm h]=(gf-\id)(\mathfrak{s}_1\otimes\id)$ in Remark~\ref{rema:H} and that \[(\id\otimes\Delta^{\del,r}_{k,\ell})\iota_{\mep1,2\mep n}=-(\iota_{\mep1,2\mep k}\otimes\id)\Delta_{k+1,\ell}^{\del,r}\] to obtain
    \begin{align*}
        gf(\mathfrak{s}_1\otimes \mup_n)(\iota_{\mep1,2\mep n}\otimes g^{\otimes n})-(\mathfrak{s}_1\otimes \mup_n)(\iota_{\mep1,2\mep n}\otimes g^{\otimes n})\\[1mm]
        -\sum_{\substack{k+\ell=n+1\\1\leq r\leq k\\k,\ell\geq 2}}(-1)^{r(1+\ell)+n}\bm h(\id\otimes\mup_k)(\iota_{\mep1,2\mep k}\otimes\id^{\otimes k})(\id^{\otimes r}\otimes gf\mup_\ell\otimes \id^{\otimes k-r}))\Delta_{k+1,\ell}^{\del,r} (\id\otimes g^{\otimes n})
    \end{align*}
        which gives
        \begin{align*}
            gf(\mup_n)(\id\otimes g^{\otimes n})\Delta^{\del,1}_{2,n}-\sum_{k+\ell=n} (-1)^{k(\ell+1)}\, \mu\Bigl(\bm h(\id\otimes \mup_k)\otimes \bm h(\id\otimes\mup_{\ell})\Delta^h_{k,\ell}\Bigr)\Delta^{\del,1}_{2,n}\otimes (\id\otimes g^{\otimes n})\\
            -\sum_{\substack{k+\ell=n+1\\ 1\leq r\leq k \\k,\ell\geq 2}}(-1)^{r(1+\ell)+n}\bm g_k(\id^{\otimes r}\otimes \bm\nu_\ell\otimes\id^{\otimes k-r})\Delta^{\del}_{k+1,\ell}=\\
            =-\sum_{k+\ell=n} (-1)^{k(\ell+1)}\, \mu\Bigl(\bm g_k\otimes \bm g_{\ell}\Bigr)\Delta^{\del}_{2,k+1,\ell+1}\\
            -\sum_{\substack{k+\ell=n+1\\ 1\leq r \leq k}}(-1)^{r(1+\ell)+n}\bm g_k(\id^{\otimes r}\otimes \bm\nu_\ell\otimes\id^{\otimes k-r})\Delta^{\del}_{k+1,\ell,r}.
        \end{align*}
A similar computation shows
\begin{align*}
    \delb\bm g_n-\bm g_n\delb
    =&-\sum_{k+\ell=n} (-1)^{k(\ell+1)}\, \mu\Bigl(\bm g_k\otimes \bm g_{\ell}\Bigr)\Delta^{\delb}_{2,k+1,\ell+1}\\
    &-\sum_{\substack{k+\ell=n+1\\ 1\leq r \leq k}}(-1)^{r(1+\ell)+n}\bm g_k(\id^{\otimes r}\otimes \bm\nu_\ell\otimes\id^{\otimes k-r})\Delta^{\delb}_{k+1,\ell,r}.\qedhere
\end{align*}
\end{proof}

Lemma~\ref{lemm:pkerAinfy} and Lemma~\ref{lemm:pkerMinfty} imply the following, which is equivalent to Lemma~\ref{lemm:HTT} by Remarks~\ref{rema:ppAinftyAEscales} and \ref{rema:ppAinftyMEscales}.

\begin{lemm}
    Let $(A,\del,\delb,\mu)$ be a bidifferential bigraded algebra and let
    \[
    \begin{tikzcd}[ampersand replacement = \&]
    (A,\del,\delb,\mu) \arrow[r, rightarrow, shift left, "f"] \arrow[r, shift right, leftarrow, "g" swap] \arrow[loop left, "h"] \& (B,\del,\delb)
    \end{tikzcd}
    \]
be a contraction. The $\mathfrak{p}$-kernels 
\[\mup_n:\ele_{2-n}\otimes A^{\otimes n}\to A\]
given by
\begin{equation*}
\mup_n:=\sum_{k+\ell=n}(-1)^{k(\ell+1)}\mu\big(\bm h(\id\otimes\mup_k)\otimes \bm h(\id\otimes\mup_\ell)\big)\Delta^h_{k,\ell}
\end{equation*}
for $n\geq 2$ induce 
\begin{enumerate}
    \item a \(\ppinfa\)-structure on \(B\) via \(\bm\nu_n=f\circ\mup_n\circ (\id\otimes g^{\otimes n})\), and 
    \item a \(\ppinfm\)-weak equivalence \(g:B\rightsquigarrow A\) via 
    \(\bm g_n:=\bm h(\id\otimes \mup_n)(\iota_{\mep1,2\mep n}\otimes g^{\otimes n})\) for \(n\geq 2\) and \(\bm g_1=g.\)
\end{enumerate}
\end{lemm}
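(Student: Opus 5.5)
The final lemma is an assembly of Lemma~\ref{lemm:pkerAinfy} and Lemma~\ref{lemm:pkerMinfty}, so the plan is to combine them and check the remaining hypotheses.

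\emph{Part (1).} Lemma~\ref{lemm:pkerAinfy} already says that the operations $\bm\nu_n=f\circ\mup_n\circ(\id\otimes g^{\otimes n})$ satisfy the two families of relations of Remark~\ref{rema:ppAinftyAEscales}. That proof rested on Lemma~\ref{lemm:pkernelsprimari} together with three facts:
\begin{itemize}
\item $f$ and $g$ are maps of bicomplexes;
\item $\Delta^{\del,r}_{k,\ell}$ and $\Delta^{\delb,r}_{k,\ell}$ commute with $\id\otimes g^{\otimes n}$;
\item $fg=\id_B$. This is used to remove the middle $gf$ that appears after substituting $\bm\nu_\ell$ into $\bm\nu_k$, since $f\mup_k(\dots,gf\mup_\ell g^{\otimes\ell},\dots)$ is exactly $\bm\nu_k(\dots,\bm\nu_\ell,\dots)$ with the inner $g$'s restored.
\end{itemize}
I will recheck the $\delb$-version by running the same argument with $\ov{\mathfrak{s}}$ in place of $\mathfrak{s}$, using Remark~\ref{rema:fraksdel} and Proposition~\ref{prop:fraksiota}(2).

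\emph{Part (2).} Lemma~\ref{lemm:pkerMinfty} shows that $\bm g_n$ satisfies the relations of Remark~\ref{rema:ppAinftyMEscales} for $n\ge2$. For $n=1$ the relations reduce to $[\del,g]=[\delb,g]=0$, which holds because $g$ is a morphism of bicomplexes. Together these make $g=(\bm g_1,\bm g_n)$ a $\ppinfm$-morphism $B\rightsquigarrow A$.

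To get a weak equivalence, note that the contraction gives $fg=\id$ and $gf\simeq\id$. By Proposition~\ref{prop:HomotopicMapsAreEqualInHomology}, $\BC(g)$ and $\A(g)$ are then isomorphisms, so $\bm g_1=g$ is a pluripotential weak equivalence, as Definition~\ref{defi:inftymorph} requires.

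\emph{Main obstacle.} The delicate point is the sign bookkeeping in Lemma~\ref{lemm:pkerMinfty}, in two places:
\begin{itemize}
\item the identity $(\id\otimes\Delta^{\del,r}_{k,\ell})\iota_{-1,2-n}=-(\iota_{-1,2-k}\otimes\id)\Delta^{\del,r}_{k+1,\ell}$;
\item reassembling the $\mathfrak{s}_1\otimes\mup_n$ term into $\mu(\bm g_k\otimes\bm g_\ell)\Delta^{\del}_{2,k+1,\ell+1}$.
\end{itemize}
For the first, I would derive it from coassociativity of the maps $\iota$ (Proposition~\ref{prop:iotacoass}) combined with Proposition~\ref{prop:fraksiota}(1). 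The second follows from the recursive definition of $\mup_n$ via $\Delta^h_{k,\ell}$.

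Finally, the lemma is equivalent to Lemma~\ref{lemm:HTT}. Translating $\bm\mu_n\colon\ele_{2-n}\otimes A^{\otimes n}\to A$ into its components $\mu_n^{p,q}=\bm\mu_n((p,q)_{2-n}\otimes-)$, the explicit form of $\iota_{k,\ell}$ yields the signs $\varepsilon$ of Definition~\ref{defi:ppAinftyA}. In particular, $\bm g_n$ decomposes into the stated sum $\sum h_{\alpha\beta}\mup_n^{i,j}g^{\otimes n}$, with $h_{\alpha\beta}$ read off from $\bm h$ via Remark~\ref{rema:H}.
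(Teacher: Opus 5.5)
Your proposal matches the paper, which likewise obtains this lemma directly by combining Lemma~\ref{lemm:pkerAinfy} and Lemma~\ref{lemm:pkerMinfty}; your explicit checks of the arity-one relation and of $\bm g_1=g$ being a pluripotential weak equivalence (via $fg=\id$, $gf\simeq\id$ and Proposition~\ref{prop:HomotopicMapsAreEqualInHomology}) fill in details the paper leaves implicit. One small correction: $fg=\id_B$ plays no role in part (1), because the middle $gf$ in $\bm\nu_k(\dots,\bm\nu_\ell,\dots)$ comes straight from the definitions and is exactly the $gf$ appearing in Lemma~\ref{lemm:pkernelsprimari}, so there is nothing to remove.
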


\bibliographystyle{alpha}
\bibliography{biblio}

\end{document}